\definecolor{lightblue}{rgb}{0.8,0.8,1}
\numberwithin{equation}{section}
\numberwithin{figure}{section}
\newtheoremstyle{italicised}
        {\topsep}{\topsep}  
        {\itshape}  
        {}  
        {\bfseries}  
        {}  
        {1ex}  
        {}  
\theoremstyle{italicised}
\newtheorem{thm}{Theorem}[section]
\newtheorem{lem}[thm]{Lemma}
\newtheorem{sublem}[thm]{Sublemma}
\newtheorem{prop}[thm]{Proposition}
\newtheorem{coro}[thm]{Corollary}
\newtheorem{fact}[thm]{Fact}
\newtheorem{athm}{Theorem}
\newtheoremstyle{upright}
        {\topsep}{\topsep}  
        {\upshape}  
        {}  
        {\bfseries}  
        {}  
        {1ex}  
        {}  
\theoremstyle{upright}
\newtheorem{defn}[thm]{Definition}
\newtheorem{rmk}[thm]{Remark}
\newtheorem{recap}[thm]{Recap}
\newtheorem{eg}[thm]{Example}
\newtheorem{notation}[thm]{Notation}
\newtheorem{assumption}[thm]{Assumption}
\newtheorem{convention}[thm]{Convention}
\newtheoremstyle{italicised-restate}
        {\topsep}{\topsep}  
        {\itshape}  
        {}  
        {\bfseries}  
        {}  
        {1ex}  
        {\thmname{#1}\thmnote{ \bfseries #3}}  
\theoremstyle{italicised-restate}
\renewcommand*{\@seccntformat}[1]{\upshape\csname the#1\endcsname.\hspace{1ex}}
\renewcommand*{\section}{\@startsection{section}{1}{\z@}%
	{2.5ex \@plus 1ex \@minus 0.2ex}%
	{1.5ex \@plus 0.2ex}%
	{\normalfont\large\bfseries}}
\renewcommand*{\subsection}{\@startsection{subsection}{2}{\z@}%
	{2.5ex \@plus 1ex \@minus 0.2ex}%
	{-1.5ex \@plus -0.2ex}%
	{\normalfont\normalsize\bfseries}}
\renewcommand*{\subsubsection}{\@startsection{subsubsection}{3}{\z@}%
	{2.5ex \@plus 1ex \@minus 0.2ex}%
	{-1.5ex \@plus -0.2ex}%
	{\normalfont\normalsize\bfseries}}
\renewcommand*{\paragraph}{\@startsection{paragraph}{4}{\z@}%
	{2.5ex \@plus 1ex \@minus 0.2ex}%
	{-1.5ex \@plus -0.2ex}%
	{\normalfont\normalsize\bfseries}}
\renewcommand*{\subparagraph}{\@startsection{subparagraph}{5}{\z@}%
	{2.5ex \@plus 1ex \@minus 0.2ex}%
	{-1.5ex \@plus -0.2ex}%
	{\normalfont\normalsize\slshape}}
\renewcommand{\qedsymbol}{\qedsquareb}
\newcommand{\lhto}{\lhook\joinrel\longrightarrow}
\newcommand{\xbar}{\ensuremath{{\,\overline{\! X}}}}
\newcommand{\ybar}{\ensuremath{{\overline{Y}}}}
\newcommand{\xbarp}{\ensuremath{\smash{\,\overline{\! X}}\vphantom{X}^{\prime}}}
\newcommand{\ybarp}{\ensuremath{\smash{\overline{Y}}\vphantom{Y}^{\prime}}}
\newcommand{\abar}{\ensuremath{{\,\overline{\! A}}}}
\newcommand{\bbar}{\ensuremath{{\,\overline{\! B}}}}
\newcommand{\iotab}{\ensuremath{\bar{\iota}}}
\newcommand{\iotabb}{\ensuremath{\hat{\iota}}}
\newcommand{\Emb}{\mathrm{Emb}}
\newcommand{\hconn}{\ensuremath{h\mathrm{conn}}}
\newcommand{\hofib}{\mathrm{hofib}}
\newcommand{\fib}{\mathrm{fib}}
\newcommand{\Aut}{\mathrm{Aut}}
\newcommand{\Diff}{\mathrm{Diff}}
\newcommand{\cref}{\ensuremath{\mathit{cr}}}
\newcommand{\Map}{\ensuremath{\mathrm{Map}}}
\newcommand{\geomr}[1]{\lVert #1 \rVert}
\newcommand{\mbar}{\ensuremath{{\,\,\overline{\!\! M\!}\,}}}
\newcommand{\incl}[3][right]%
{%
	\draw[<-,>=#1 hook] #2 to ($ #2!0.5!#3 $);
	\draw[->] ($ #2!0.5!#3 $) to #3;%
}
\newcommand{\inclusion}[5][right]%
{%
	\draw[<-,>=#1 hook] #4 to ($ #4!0.5!#5 $) node[#2,font=\small]{#3};
	\draw[->,>=stealth'] ($ #4!0.5!#5 $) to #5;%
}
\newenvironment{itemizeb}%
{\begin{compactitem}

}%
{\end{compactitem}}
\newcommand{\cC}{\mathcal{C}}
\newcommand{\cE}{\mathcal{E}}
\newcommand{\cH}{\mathcal{H}}
\newcommand{\cI}{\mathcal{I}}
\newcommand{\cU}{\mathcal{U}}
\newcommand{\cV}{\mathcal{V}}
\newcommand{\cW}{\mathcal{W}}
\newcommand{\cY}{\mathcal{Y}}
\newcommand{\bN}{\mathbb{N}}
\newcommand{\bR}{\mathbb{R}}
\newcommand{\bZ}{\mathbb{Z}}
\renewcommand{\geq}{\geqslant}
\renewcommand{\leq}{\leqslant}
\renewcommand{\footnoterule}{%
  \kern -3pt
  \hrule width \textwidth height 0.4pt
  \kern 2.6pt
}
\newcommand{\colim}{\ensuremath{\mathrm{colim}}}
\newcommand{\X}{\ensuremath{\mathscr{X}}}
\newcommand{\doubleDelta}{\ensuremath{\Delta\!\!\!\!\Delta}}
\newcommand{\anglenumber}[1]{{$\langle\,\text{#1}\,\rangle$}}
\newcommand{\purebraid}{\ensuremath{P\!\beta}}
\newcommand{\cf}{\textit{cf}.\ }
\begin{document}
\title{\Large\bfseries Homological stability for moduli spaces of disconnected submanifolds, I\vspace{-1ex}}
\author{\small Martin Palmer\quad $/\!\!/$\quad 29\textsuperscript{th} April 2020\vspace{-1ex}}
\date{}
\maketitle
{
\makeatletter
\renewcommand*{\BHFN@OldMakefntext}{}
\makeatother
\footnotetext{2010 \textit{Mathematics Subject Classification}: Primary 55R80, 57S05; Secondary 57N20, 58B05.}
\footnotetext{\textit{Key words}: Moduli spaces of submanifolds, homological stability, embedding spaces, configuration spaces.}
\footnotetext{[---Also available at \href{https://mdp.ac/papers/hsfmsods1}{mdp.ac/papers/hsfmsods1}, where any addenda or informal related notes will also be posted.---]}
}
\begin{abstract}
A well-known property of unordered configuration spaces of points (in an open, connected manifold) is that their homology \emph{stabilises} as the number of points increases. We generalise this result to moduli spaces of submanifolds of higher dimension, where stability is with respect to the number of components having a fixed diffeomorphism type and isotopy class. As well as for unparametrised submanifolds, we prove this also for partially-parametrised submanifolds -- where a \emph{partial parametrisation} may be thought of as a superposition of parametrisations related by a fixed subgroup of the mapping class group.

In a companion paper (\cite{Palmer2018-hs-msods-II}) this is further generalised to submanifolds equipped with labels in a bundle over the embedding space, from which we deduce corollaries for the stability of diffeomorphism groups of manifolds with respect to parametrised connected sum and addition of singularities.
\end{abstract}
\tableofcontents

\section{Introduction}

Let $M$ be an open connected manifold of dimension at least $2$. The configuration space of $n$ points in $M$ is defined to be $C_n(M) = (M^n \smallsetminus \doubleDelta)/\Sigma_n$, where $\doubleDelta$ is the ``collision set'' of $M^n$
\[
\doubleDelta = \{ (p_1,\dotsc,p_n)\in M^n \;|\; p_i = p_j \text{ for some } i\neq j \},
\]
and $\Sigma_n$ denotes the symmetric group on $n$ letters. Alternatively, it can be written as $C_n(M) = \Emb(n,M)/\Sigma_n$, where $\Emb(n,M)$ denotes the space of embeddings of the zero-dimensional manifold $n$ into $M$. More generally, one can define a labelled configuration space, for a given label space $X$, by $C_n(M,X) = \Emb(n,M) \times_{\Sigma_n} X^n$. An important property of these spaces is that they satisfy homological stability as $n$ varies:

\begin{thm}[{\cite{Segal1973Configurationspacesand, McDuff1975Configurationspacesof, Segal1979topologyofspaces, Randal-Williams2013Homologicalstabilityunordered}}]\label{tClassical}
There is a natural map $C_n(M,X)\to C_{n+1}(M,X)$ which is split-injective on homology in all degrees, and if $X$ is path-connected it induces isomorphisms on homology up to degree $\frac{n}{2}$.
\end{thm}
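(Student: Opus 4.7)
The plan is to follow the semi-simplicial resolution strategy pioneered by McDuff and Segal and streamlined by Randal-Williams. Since $M$ is open, I fix a collar-like embedding of a half-space at some end of $M$, together with a basepoint $\infty$ far out in that end, and define the stabilisation map $s\colon C_n(M,X) \to C_{n+1}(M,X)$ by shifting an existing labelled configuration slightly inward along the collar and then inserting one new point near $\infty$; labelling this extra point requires a basepoint of $X$, which is available precisely when $X$ is path-connected.

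For the isomorphism range in degrees $\leq n/2$, I would build an augmented semi-simplicial space $W_\bullet(n) \to C_n(M,X)$ whose space of $p$-simplices is the space of pairs $(c,(\gamma_0,\dotsc,\gamma_p))$, where $c \in C_n(M,X)$ and $\gamma_0,\dotsc,\gamma_p$ are pairwise disjoint embedded arcs in $M$ joining $p+1$ distinct points of $c$ to $\infty$, with face maps forgetting one arc together with its configuration endpoint. The main obstacle, and the geometric heart of the argument, will be proving that $\lVert W_\bullet(n)\rVert$ is approximately $(n-2)/2$-connected; this is where openness of $M$ and the hypothesis $\dim M \geq 2$ are used to disjoin families of arcs by small isotopies, and to reduce the problem to a standard high-connectivity estimate for a simplicial complex of disjoint embedded arcs.

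Granting this connectivity, the third step is to run the associated spectral sequence. The $p$-simplex space fibres with contractible fibre (a space of $p{+}1$ disjoint arcs to $\infty$) over $C_{n-p-1}(M,X)$, so $E^1_{p,q} \cong H_q(C_{n-p-1}(M,X))$, and the $d^1$-differentials alternate with stabilisation maps. An induction on $n$ using the vanishing line supplied by the connectivity of $\lVert W_\bullet(n)\rVert$ then delivers the isomorphism through degree $\tfrac{n}{2}$. Split-injectivity in all degrees, without the path-connectedness hypothesis on $X$, I would obtain separately: either by a transfer-type construction exhibiting a homological left inverse to $s_\ast$, or by passing to a variant resolution whose augmentation splits on homology via the symmetric group combinatorics --- crucially, neither requires a basepoint of $X$.
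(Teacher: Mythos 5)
The paper does not prove Theorem~\ref{tClassical}: it is quoted with citations, and your proposal is essentially the argument of the last cited reference. The useful comparison is with the paper's own proof of the generalisation Theorem~\ref{tmain} in \S\ref{s:proof}, which runs the same engine in a more elaborate configuration. Your single resolution by arcs to $\infty$ corresponds to the paper's \emph{second} resolution, by tubes to the boundary (\S\ref{s:secondr}); note that there its connectivity is not obtained by a combinatorial estimate but via the Galatius--Randal-Williams criterion (Theorem~\ref{t:grw}), which gives a weak equivalence rather than a mere $\tfrac{n}{2}$ bound, and general position of arcs/tubes is precisely where the dimension hypothesis enters. The paper additionally \emph{precedes} this with a first resolution by marked subconfigurations (\S\ref{s:firstr}), where the complex of injective words appears; this extra layer exists to set up the weak factorisation condition (2w) of the axiomatic criterion Theorem~\ref{tAxiomatic}, which for $P$ of positive dimension requires threading the newly adjoined copy of $P$ past the others along one of the tubes (Figure~\ref{fhomotopy}). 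For $P$ a point the factorisation is easy and a single resolution suffices, so your shorter route is available; the double resolution is what the general case buys. Your treatment of split-injectivity via symmetric-power transfers is exactly \S\ref{s:split-injectivity} (Dold's lemma, Lemma~\ref{l:Dold}, with the maps $\tau_{k,n}\colon X_n\to\mathrm{Sp}^{\binom{n}{k}}(X_k)$). One small slip in the proposal: if the face maps of $W_\bullet(n)$ deleted the arc's configuration endpoint as well as the arc, the target would have only $n-1$ points and you would not get a semi-simplicial space augmented over $C_n(M,X)$; the face maps should forget the arc only, and the identification $W_p\simeq C_{n-p-1}(M,X)$ then comes from contracting the (contractible) space of $p+1$ disjoint arcs with free endpoints.
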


A natural question to consider is what happens for higher-dimensional embedded submanifolds. Certain special cases have been considered before, up to dimension $2$ (embedded circles and surfaces) -- see \hyperref[para:related-results]{\S\anglenumber{v.}} for a brief overview. The main theorem of this paper is a general stability result for moduli spaces of disconnected submanifolds of a fixed diffeomorphism type and isotopy class, where stabilisation occurs by adding new connected components to the submanifold.

\paragraph{\anglenumber{i.} Unparametrised submanifolds.}
\addcontentsline{toc}{subsection}{Unparametrised submanifolds.}

Let $P$ be a closed manifold. A first guess for the natural analogue of $C_n(M)$ in the case of embedded copies of $P$ may be
\[
\mathbf{C}_{nP}(M) = \Emb(nP,M) / (\mathrm{Diff}(P) \wr \Sigma_n).
\]
The wreath product $\mathrm{Diff}(P) \wr \Sigma_n$ is naturally a subgroup of $\mathrm{Diff}(nP)$, where $nP$ is shorthand for $\{1,\ldots,n\} \times P$, and they are equal if $P$ is connected. However, in general (consider for example $M=\bR^3$ and $P=S^1$) this has a diverging number of path-components as $n\to\infty$, so it will not satisfy homological stability even in degree zero. Instead, we study the path-components of this space separately. Assume that $M$ is the interior of a manifold with boundary $\mbar$, and choose:
\begin{itemizeb}
\item[$\circ$] a self-embedding $e \colon \mbar \hookrightarrow \mbar$ that is isotopic to the identity,
\item[$\circ$] an embedding $\iota \colon P \hookrightarrow \partial\mbar$ such that $\iota(P) \cap e(\mbar) = \varnothing$ and $e(\iota(P)) \subseteq M$.
\end{itemizeb}
Then we may define a map
\begin{equation}\label{eStabMapIntro}
\mathbf{C}_{nP}(M) \longrightarrow \mathbf{C}_{(n+1)P}(M)
\end{equation}
by pushing a given collection of submanifolds inwards along $e$ and adjoining a new copy of $P$ near the boundary using $e \circ \iota$, in the region ``vacated'' by $e$.

\begin{center}
\begin{tikzpicture}
[x=1mm,y=1mm]
\draw[fill,black!5] (10,-10) rectangle (30,10);
\draw[dashed] (0,10) -- (30,10) -- (30,-10) -- (0,-10);
\draw (0,-10) -- (0,10);
\draw (10,-10) -- (10,10);
\draw[fill] (-0.3,-3) rectangle (0.3,3);
\draw[fill] (9.4,-3) rectangle (10,3);
\draw[fill] (-20.3,-13) rectangle (-19.7,-7);
\draw[->,black!50] (2,0) -- (8,0);
\draw[->,black!50] (2,5) -- (8,5);
\draw[->,black!50] (2,-5) -- (8,-5);
\node at (5,2.5) [font=\small,black!80] {$e$};
\node at (5,-2.5) [font=\small,black!80] {$e$};
\node at (5,-7.5) [font=\small,black!80] {$e$};
\node at (5,7.5) [font=\small,black!80] {$e$};
\draw[<-,>=right hook,black!50] (-18,-10) to ($ (-18,-10)!0.5!(-2,0) $);
\draw[->,black!50] ($ (-18,-10)!0.5!(-2,0) $) to (-2,0);
\node at (-11,-4) [font=\small,black!80] {$\iota$};
\node at (20,0) [font=\small] {$e(\mbar)$};
\node at (-23,-10) [font=\small,black!80] {$P$};
\end{tikzpicture}
\end{center}

There is a natural basepoint $[e \circ \iota]$ of $\mathbf{C}_P(M) = \Emb(P,M)/\Diff(P)$. Via the maps \eqref{eStabMapIntro} this determines basepoints of each of the spaces $\mathbf{C}_{nP}(M)$. We now define $C_{nP}(M)$ to be the path-component of $\mathbf{C}_{nP}(M)$ containing this basepoint. The maps \eqref{eStabMapIntro} restrict to \emph{stabilisation maps}
\begin{equation}\label{eStabMapIntro2}
C_{nP}(M) \longrightarrow C_{(n+1)P}(M).
\end{equation}

We can now state a special case of our main theorem.

\begin{thm}[Special case of Theorem \ref{tmain}]
If $\dim(P)\leq \frac12 (\dim(M)-3)$, the stabilisation maps \eqref{eStabMapIntro2} induce split-injections on homology in all degrees and isomorphisms up to degree $\frac{n}{2}$.
\end{thm}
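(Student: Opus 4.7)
The plan is to use the Quillen--Randal-Williams semisimplicial-resolution approach, generalised from the case $\dim(P) = 0$ treated in \cite{Randal-Williams2013Homologicalstabilityunordered}. I construct an augmented semisimplicial space $X_\bullet \to C_{nP}(M)$ whose $p$-simplices over a configuration $c$ consist of a $(p+1)$-tuple of pairwise disjoint \emph{stabilisation arcs}: thickened embedded arcs $I \times P \hookrightarrow \mbar$ starting on $\partial \mbar$, ending on a chosen component of $c$, and whose trace exhibits that component as added by the stabilisation map $e \circ \iota$. Face maps forget an arc; the augmentation forgets all of them. Crucially, $X_0$ admits a natural map down to $C_{(n-1)P}(M)$ by deleting the singled-out component, and by construction the composition of this map with the stabilisation map back up to $C_{nP}(M)$ is homotopic to the augmentation. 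This places the stabilisation map inside a double complex whose spectral sequence detects its homological behaviour.

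The key technical step is then a connectivity estimate: the augmentation $|X_\bullet| \to C_{nP}(M)$ has homotopy fibres that are approximately $\tfrac{n}{2}$-connected. I would reduce this to an arc-complex-type connectivity argument in the style of Hatcher--Vogtmann, adapted to higher codimension. The dimension assumption $2\dim(P) + 3 \leq \dim(M)$ enters critically here: two stabilisation arcs have domains of dimension $\dim(P) + 1$, and the inequality $2(\dim(P)+1) < \dim(M)$ ensures that pairs of arcs can be made disjoint by transversality, while the codimension $\dim(M) - \dim(P) - 1 \geq \dim(P) + 2$ is large enough to apply a (parametric) Whitney trick for resolving intersections of arcs with the components of the submanifold. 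These arguments must be carried out parametrically in families so as to give actual connectivity of the fibres as spaces, not merely connectedness at the level of discrete sets of arc-systems.

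Given the fibrewise connectivity estimate, the standard Quillen-type spectral-sequence argument, inducting on $n$, delivers the stability isomorphisms $H_*(C_{nP}(M)) \to H_*(C_{(n+1)P}(M))$ for $* \leq n/2$. For split-injectivity in all homological degrees, I would appeal to an auxiliary construction analogous to the one used for the classical configuration-space case: using that $M$ is open, one iterates the stabilisation through many disjoint copies of a fundamental domain for $e$ and assembles the result into a symmetric-power-like target, exhibiting the stabilisation as a retract on homology independently of the dimension range.

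The main obstacle is the fibrewise connectivity estimate for the arc-system resolution. Defining the relevant spaces of embedded submanifolds-with-arcs so that the semisimplicial structure maps are continuous, realising the disjunction and Whitney-trick moves smoothly in families, and propagating these through the inductive Hatcher--Vogtmann-style argument is the most delicate part of the proof and is where the codimension hypothesis genuinely enters; the rest of the argument, once this is in hand, is a fairly mechanical adaptation of the classical $P = \mathrm{pt}$ template.
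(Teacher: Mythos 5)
Your overall strategy — resolve the stabilisation map by a semi-simplicial space, estimate the connectivity of the augmentation, and feed this into an inductive spectral sequence argument together with a factorisation condition — is correct and is the one the paper follows (via the axiomatic criterion of Theorem~\ref{tAxiomatic}). Your identification of where the codimension hypothesis enters (disjunction of tubes of dimension $\dim(P)+1$ and pushing embedded copies of $P$ off submanifolds of codimension $\geq \dim(P)+2$) is also essentially right, although the paper uses Thom transversality / general position (Corollary~\ref{c:transversality}, Lemma~\ref{l:path-of-embeddings}) rather than the Whitney trick, and the split-injectivity claim can indeed be handled by a Dold-type argument with the maps $\tau_{k,n}\colon X_n \to \mathrm{Sp}^{\binom{n}{k}}(X_k)$ as in \S\ref{s:split-injectivity}.

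However, there is a genuine gap in your connectivity estimate, and the paper structures the proof differently to avoid it. You propose a \emph{single} resolution whose $i$-simplices are $(i+1)$-tuples of ``stabilisation arcs'' ending on distinct components of the configuration, and you assert without argument that its augmentation has homotopy fibres of connectivity roughly $\tfrac{n}{2}$. This combined arc complex mixes two sources of connectivity that are hard to disentangle: a combinatorial one (choosing which components to mark) and a geometric one (finding disjoint arcs to those components), and it is not at all clear how to establish the claimed bound — the Hatcher--Vogtmann template you invoke gives results for arc complexes of surfaces with very different combinatorics, and no direct adaptation is supplied. The paper instead uses a \emph{double} resolution (see \S\ref{s:firstr}--\S\ref{s:seconda} and condition (2w) of Theorem~\ref{tAxiomatic}): first resolve by marked ordered subconfigurations \emph{without} arcs, whose homotopy fibre is exactly the complex of injective words on $n$ letters — a purely combinatorial object known to be $(n-2)$-connected — and then, on the level-$0$ fibres of this first resolution (which are stabilisation maps in $M$ with a copy of $P$ removed), resolve a second time by tubes $P\times[0,2]\hookrightarrow M$ from $\partial M$ to a fixed target near depth $2$. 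The second resolution is a weak equivalence by the Galatius--Randal-Williams criterion (Theorem~\ref{t:grw}) and a transversality argument, so no delicate connectivity count is needed there at all. Beyond the connectivity gap, your proposal also elides the ``approximation'' step: the levels $X_i$ of your resolution are not themselves stabilisation maps for smaller $n$ but only fibre over spaces of arc systems with fibres of that form, and one needs the fibration lemmas of \S\ref{s:fibre-bundles} (Cerf's theorem for manifolds with boundary, local retractility of quotients of embedding spaces by open subgroups of $\Diff(P)$) to make these Serre fibrations and run the relative Serre spectral sequence; this is a substantial technical component, not a routine adaptation of the $P=\mathrm{pt}$ case.
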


\paragraph{\anglenumber{ii.} Superpositions of parametrised submanifolds.}
\addcontentsline{toc}{subsection}{Superpositions of parametrised submanifolds.}

Instead of taking the orbit space by the action of $\mathrm{Diff}(P) \wr \Sigma_n$, we also consider the orbit space
\[
\mathbf{C}_{nP}(M;G) = \mathrm{Emb}(nP,M)/(G \wr \Sigma_n)
\]
for an open subgroup $G \leq \mathrm{Diff}(P)$. The diffeomorphism group $\mathrm{Diff}(P)$ is locally path-connected, so its open subgroups are in one-to-one corresponedence with the subgroups of the \emph{mapping class group} $\pi_0(\mathrm{Diff}(P))$. A point in $\mathrm{Emb}(nP,M)/(G \wr \Sigma_n)$ is a collection of $n$ pairwise disjoint submanifolds of $M$ that are each diffeomorphic to $P$ and each equipped with a parametrisation up to the action of $G$, in other words, a $G$-orbit of parametrisations, which may be more poetically described as a ``\emph{superposition}'' of parametrisations.

Proceeding exactly as above (see \S\ref{s:detailed-statements} for precise constructions), we restrict to a particular sequence $C_{nP}(M;G)$ of path-components and obtain stabilisation maps
\begin{equation}\label{eStabMapIntro3}
C_{nP}(M;G) \longrightarrow C_{(n+1)P}(M;G).
\end{equation}

Our main theorem is then the following.

\begin{thm}[Equivalent to Theorem \ref{tmain}]\label{t:G}
If $\dim(P)\leq \frac12 (\dim(M)-3)$, the stabilisation maps \eqref{eStabMapIntro3} induce split-injections on homology in all degrees and isomorphisms up to degree $\frac{n}{2}$.
\end{thm}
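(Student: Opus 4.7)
The plan is to follow the Quillen-style semi-simplicial resolution strategy used by Randal-Williams in the unordered configuration-space case, adapting it to the higher-dimensional submanifold setting. Concretely, I would construct a semi-simplicial space $W_n^\bullet$ augmented over $C_{nP}(M;G)$, whose $p$-simplices are (roughly) points of $C_{nP}(M;G)$ equipped with an ordered $(p+1)$-tuple of pairwise disjoint copies of $P$ sitting in a collar of $\partial\mbar$ in a ``standard'' form, so that each such copy could have arisen from a single application of the stabilisation construction. Face maps forget a copy, and the augmentation records the underlying configuration with the distinguished copies removed. The proof then reduces to two ingredients: (i) the augmentation $|W_n^\bullet|\to C_{nP}(M;G)$ is roughly $\lfloor n/2\rfloor$-connected, and (ii) each $W_n^p$ is weakly equivalent to $C_{(n-p-1)P}(M;G)$ (up to a contractible or discrete factor parametrising the ``new'' copies), in such a way that the face maps are identified with iterated stabilisation maps.

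The geometric heart of the argument, and the step I expect to be the main obstacle, is (i). The dimension bound $\dim(P)\leq \tfrac12(\dim(M)-3)$, equivalent to $2\dim(P)+3\leq \dim(M)$, is exactly the codimension needed to carry out general-position isotopies on families of embeddings of $P$ into $M$ parametrised by small discs, and in particular to simultaneously ambient-isotope several disjoint copies of $P$ into a boundary collar while avoiding a given configuration. I would establish the connectivity estimate by first bounding below the connectivity of the space of embeddings of a single copy of $P$ into $M$ disjoint from a fixed configuration and in the prescribed isotopy class, and then bootstrapping via a standard ``complex of disjoint embeddings'' argument in the style of Randal-Williams and Galatius--Randal-Williams.

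With (i) and (ii) in hand, the associated spectral sequence
\[
E^1_{p,q} = H_q(W_n^p) \Longrightarrow H_{p+q}(|W_n^\bullet|)
\]
has $E^1_{p,q}\cong H_q(C_{(n-p-1)P}(M;G))$, the $d^1$-differentials are alternating sums of iterated stabilisation maps, and a standard induction on $n$ (using the augmentation connectivity of (i) to control the abutment via the comparison with $H_*(C_{nP}(M;G))$) yields the desired isomorphism range $*\leq n/2$.

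Finally, for split-injectivity on homology in all degrees, I would adapt the transfer argument of McDuff--Segal and Randal-Williams: the $(n+1)$-fold symmetric-group cover $\tilde C_{(n+1)P}(M;G)\to C_{(n+1)P}(M;G)$ obtained by marking one distinguished copy of $P$, composed with the forgetful map to $C_{nP}(M;G)$, exhibits the stabilisation map as a rational retract after multiplication by $(n+1)$. An integral splitting then follows from a scanning-type stable splitting argument, exactly as in the classical zero-dimensional configuration space case, the only new input being that the relevant ``scanning'' construction extends to embeddings of $P$ because each such embedding has a tubular neighbourhood that provides the required local-to-global data.
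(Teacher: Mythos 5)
Your overall architecture (semi-simplicial resolution, relative Serre spectral sequence over the positions of the marked copies, induction on $n$) matches the first half of the paper's proof, but the plan has a genuine gap at the step you bury in the phrase ``in such a way that the face maps are identified with iterated stabilisation maps''. After resolving and passing to a fibre of the approximation, the relevant map is ``adjoin a copy of $P$ at the fixed interior position $\psi_0$'' from configurations in $M\smallsetminus\psi_0(P)$, and to close the induction one must show this agrees up to homotopy with the boundary stabilisation map $f$ --- a factorisation condition that must hold \emph{continuously over the whole fibre}, not configuration by configuration. For configuration spaces of points this is comparatively easy, which is why a single resolution suffices there; for positive-dimensional $P$ one needs, continuously in the (varying) configuration, an embedded tube $P\times[0,2]$ joining $\psi_0(P)$ to $\partial M$ and disjoint from all other components, along which the new copy can be slid. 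The space of such tubes is not contractible or even obviously connected, and this is precisely why the paper runs a \emph{second} semi-simplicial resolution (``by tubes to the boundary''), proves its realisation is weakly equivalent to the base via the Galatius--Randal-Williams criterion and transversality --- the second of the two places where $\dim(P)\leq\tfrac12(\dim(M)-3)$ is actually used --- and only then verifies the weak factorisation condition by an explicit homotopy sliding the new copy along a fixed tube. Your single-resolution plan never addresses this, so the spectral-sequence induction does not close. (Relatedly, your connectivity input (i): if the marked copies are arbitrary components of the configuration, the augmentation is $(n-1)$-connected by the complex of injective words and is not the main obstacle; if they are required to sit in standard form in the collar, then proving the connectivity already contains the tube problem.)

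The split-injectivity argument also has a problem. The transfer over the $(n+1)$-fold cover only gives a rational splitting, and your proposed upgrade to an integral one via ``a scanning-type stable splitting'' is unavailable here: the paper exhibits $C_{nS^1}(\bR^3)$ as a case where the scanning map fails to be a homology equivalence, and no stable splitting of these moduli spaces is known. The correct (and elementary, integral) route is Dold's lemma applied to the maps $X_n\to\mathrm{Sp}^{\binom{n}{k}}(X_k)$ that send a configuration to the formal sum of its $k$-element subconfigurations; no transfer or scanning is needed.
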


\paragraph{\anglenumber{iii.} Corollaries for mixed configurations and different kinds of stability.}
\addcontentsline{toc}{subsection}{Corollaries for mixed configurations and different kinds of stability.}

\begin{rmk}[{\textit{Mixed configurations}.}]
So far we have only discussed one path-component of the space $\mathbf{C}_{nP}(M;G)$, namely the one in which the embedded copies of $P$ are isotopic (modulo $G$) to the standard embedding $\{[e \circ \iota] , [e^2 \circ \iota] , \ldots, [e^n \circ \iota] \}$, in particular the different copies of $P$ are isotopic to each other. However, once we know Theorem \ref{t:G}, we also immediately obtain homological stability for ``mixed configurations'' of submanifolds, in which there are $n$ copies of $P$ that are embedded so as to be isotopic to the standard embedding $\{[e \circ \iota] , [e^2 \circ \iota] , \ldots, [e^n \circ \iota] \}$, as well as finitely many other embedded submanifolds (possibly with different diffeomorphism types or even dimensions), such that these two parts of the configuration may be isotoped to be ``far apart'' (more precisely, so that all copies of $P$ are in a specified collar neighbourhood of $\partial\mbar$ and all other components of the configuration are disjoint from this collar neighbourhood).

A rough sketch of how to deduce homological stability for such moduli spaces of mixed configurations is as follows. We consider the forgetful map that forgets the copies of $P$ and remembers only the other components of the configuration, which turns out to be a fibre bundle. The stabilisation map for the moduli spaces of mixed configurations is then a map of fibre bundles over a fixed base space, and its restriction to each fibre is a stabilisation map of the form \eqref{eStabMapIntro3}, which is homologically stable by Theorem \ref{t:G}. Via the induced map of Serre spectral sequences this implies that the stabilisation map for the moduli spaces of mixed configurations is also homologically stable.
\end{rmk}

\begin{rmk}[{\textit{Twisted homological stability}.}]\label{rmk:ths}
Unordered configuration spaces of points are also homologically stable with respect to so-called \emph{finite-degree} -- or \emph{polynomial} -- \emph{twisted coefficient systems}. For the symmetric groups (corresponding to configurations in $\bR^\infty$) this is due to \cite{Betley2002Twistedhomologyof}, and the result was extended to configuration spaces (in any connected, open manifold) in \cite{Palmer2018Twistedhomologicalstability}. The method of proof in \cite{Palmer2018Twistedhomologicalstability} is to relate the statement of homological stability with polynomial twisted coefficients, via a decomposition of the coefficients, a version of Shapiro's lemma and a spectral sequence, to homological stability with constant coefficients, and then deduce the twisted version from the constant version. This method is compatible with the setting of moduli spaces of disconnected submanifolds in this paper, so a generalisation of that argument, together with Theorem \ref{t:G}, implies that the stabilisation maps \eqref{eStabMapIntro3} are also homologically stable with respect to (appropriately-defined) polynomial twisted coefficient systems.

See Theorem D of \cite{Palmer2018-hs-msods-II} for a precise statement of this result, and \S 9 of \cite{Palmer2018-hs-msods-II} for a detailed explanation of how to adapt the method of proof of \cite{Palmer2018Twistedhomologicalstability} to this situation.
\end{rmk}

\begin{rmk}[{\textit{Representation stability}.}]
Via an argument of S{\o}ren Galatius [personal communication], which uses only elementary representation theory of the symmetric groups, representation stability \cite{ChurchFarb2013Representationtheoryand} for the rational cohomology of \emph{ordered} configuration spaces (which was first proved by other means in \cite{Church2012Homologicalstabilityconfiguration}) may be deduced from twisted homological stability for the unordered configuration spaces, for a certain polynomial twisted coefficient system. The same argument applied to Theorem \ref{t:G} (and Remark \ref{rmk:ths} above) implies that the rational cohomology of the \emph{ordered} moduli spaces of disconnected submanifolds, in which the different copies of $P$ are equipped with a total ordering, is representation stable.
\end{rmk}

\paragraph{\anglenumber{iv.} Labelled submanifolds and corollaries for diffeomorphism groups.}\label{para:labelled-submanifolds}
\addcontentsline{toc}{subsection}{Labelled submanifolds and corollaries for diffeomorphism groups.}

In the companion paper (\cite{Palmer2018-hs-msods-II}) we lift Theorem \ref{t:G} to the setting of moduli spaces of \emph{labelled} submanifolds. Recall that we have chosen an open subgroup $G \leq \mathrm{Diff}(P)$. Now let
\[
\pi \colon Z \longrightarrow \mathrm{Emb}(P,\mbar)
\]
be a $G$-equivariant Serre fibration with path-connected fibres. We may consider its $n$th power $\pi^n$, and let $Z_n$ be the preimage of $\mathrm{Emb}(nP,M) \subseteq \mathrm{Emb}(P,\mbar)^n$ under this map. This is a right $(G \wr \Sigma_n)$-space and the map
\[
\pi_n \colon Z_n \longrightarrow \mathrm{Emb}(nP,M)
\]
is $(G \wr \Sigma_n)$-equivariant. We define $\mathbf{C}_{nP}(M,Z;G) = Z_n / (G \wr \Sigma_n)$. The map $\pi_n$ induces a map
\[
\bar{\pi}_n \colon \mathbf{C}_{nP}(M,Z;G) \longrightarrow \mathbf{C}_{nP}(M;G),
\]
which is again a Serre fibration with path-connected fibres (see Corollary \ref{c:fibration-orbit-spaces2}). The preimage of the path-component $C_{nP}(M;G)$ of $\mathbf{C}_{nP}(M;G)$ is therefore a single path-component of $\mathbf{C}_{nP}(M,Z;G)$, which we denote by $C_{nP}(M,Z;G)$. Using some mild auxiliary data (see \S 6 of \cite{Palmer2018-hs-msods-II} for the details) we may also lift the stabilisation maps \eqref{eStabMapIntro3} to maps
\begin{equation}\label{eStabMapIntro4}
C_{nP}(M,Z;G) \longrightarrow C_{(n+1)P}(M,Z;G).
\end{equation}

In \cite{Palmer2018-hs-msods-II} we prove that Theorem \ref{t:G} lifts to this setting:

\begin{thm}[{\cite[Theorem B]{Palmer2018-hs-msods-II}}]
Let $G \leq \mathrm{Diff}(P)$ be an open subgroup and $\pi \colon Z \to \mathrm{Emb}(P,\mbar)$ a $G$-equivariant Serre fibration with path-connected fibres. Assume that $\mathrm{dim}(P) \leq \tfrac12(\mathrm{dim}(M) - 3)$. Then the stabilisation maps \eqref{eStabMapIntro4} induce isomorphisms on homology with field coefficients up to degree $\frac{n}{2}$ and (thus) isomorphisms on homology with integral coefficients up to degree $\frac{n}{2} - 1$.
\end{thm}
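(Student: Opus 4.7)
The plan is to apply a comparison of Serre spectral sequences to the fibrations
\[
\bar\pi_n \colon C_{nP}(M,Z;G) \longrightarrow C_{nP}(M;G),
\]
which by Corollary~\ref{c:fibration-orbit-spaces2} have path-connected fibres. Writing $F_n$ for the fibre of $\bar\pi_n$, the Serre spectral sequence with coefficients in a field $\bF$ has $E^2$-page
\[
E^2_{p,q}(n) = H_p\bigl(C_{nP}(M;G);\mathcal{H}_q(F_n;\bF)\bigr) \;\Longrightarrow\; H_{p+q}(C_{nP}(M,Z;G);\bF),
\]
and the stabilisation maps \eqref{eStabMapIntro4} induce a morphism of spectral sequences lying over the stabilisation maps \eqref{eStabMapIntro3}.

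The first step is to identify the local coefficient system $\mathcal{H}_q(F_n;\bF)$. Over a configuration $\{[\eta_1],\ldots,[\eta_n]\}$ the fibre is the product $\prod_{i=1}^n \pi^{-1}(\eta_i)$, and over a field, the K\"unneth theorem decomposes its $q$th homology into a direct sum, indexed by weak compositions $q_1+\cdots+q_n = q$, of tensor products $\bigotimes_i H_{q_i}(\pi^{-1}(\eta_i);\bF)$. As the configuration varies, $\pi_1$ of the base permutes the tensor factors and acts on each factor via the monodromy of $\pi$. This structure is formally analogous to the polynomial twisted coefficient systems on unordered configuration spaces studied in \cite{Palmer2018Twistedhomologicalstability}: because at most $q$ of the exponents $q_i$ are positive (the remaining factors contribute only the constant $H_0(\pi^{-1}(\eta_i);\bF) = \bF$), the dependence of $\mathcal{H}_q$ on the configuration has \emph{finite degree} bounded in terms of $q$. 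This is exactly the hypothesis needed to apply the twisted version of Theorem~\ref{t:G} stated in Remark~\ref{rmk:ths} (Theorem~D of \cite{Palmer2018-hs-msods-II}).

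That twisted homological stability result then gives isomorphisms on $E^2_{p,q}(n) \to E^2_{p,q}(n+1)$ in a range of the form $p \leq \tfrac12(n-c(q))$ for an explicit bound $c(q)$ depending on the polynomial degree. Choosing the bounds so that $p+q \leq n/2$, Zeeman's comparison theorem (applied to the morphism of Serre spectral sequences) yields isomorphisms on the abutment $H_i(-;\bF)$ for $i \leq n/2$ and every field $\bF$. The integral statement in degrees $i \leq n/2 - 1$ then follows from the standard short-exact-sequence manipulation of universal coefficients, combined over $\bQ$ and $\bF_p$ for each prime $p$, which costs one degree.

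The main obstacle I expect is setting up the coefficient system compatibly with the lifted stabilisation map \eqref{eStabMapIntro4}: one has to use the auxiliary data defining this lift --- in particular a distinguished point in the fibre of $\pi$ over the newly adjoined submanifold --- to obtain the comparison maps of coefficient systems across the stabilisation, and to verify that the resulting sequence is genuinely polynomial of the expected degree in the precise sense required by Theorem~D of \cite{Palmer2018-hs-msods-II}, rather than merely finite-dimensional on each fibre. This is the bookkeeping that \S 6 of \cite{Palmer2018-hs-msods-II} is devoted to; once it is in place, the remaining spectral-sequence machinery is essentially formal.
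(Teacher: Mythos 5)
This theorem is not proved in the present paper --- it is quoted from the companion paper, which establishes it by essentially the route you describe: the Serre spectral sequence of the fibration $\bar{\pi}_n$ from Corollary \ref{c:fibration-orbit-spaces2}, the K\"unneth decomposition (over a field) of the homology of the product fibre into summands forming polynomial coefficient systems of degree bounded by the fibre degree, the twisted stability result discussed in Remark \ref{rmk:ths}, and a universal-coefficient argument over $\bQ$ and all $\bF_p$ accounting for the one-degree loss in the integral statement. Your outline is correct and correctly isolates the real technical point (making the K\"unneth coefficient system functorial under the lifted stabilisation map via the distinguished label over the adjoined component, and verifying polynomiality in the precise sense required), so it matches the intended proof.
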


This allows us to deduce corollaries about homological stability of different kinds of diffeomorphism groups, including:
\begin{itemizeb}
\item[$\circ$] \emph{Symmetric diffeomorphism groups}, with respect to the operation of parametrised connected sum along a submanifold (see \cite[Theorem A]{Palmer2018-hs-msods-II}). This extends results of \cite{Tillmann2016Homologystabilitysymmetric}, which are concerned with symmetric diffeomorphism groups and the operation of connected sum (at a point).
\item[] {[}If we have two embeddings $e \colon L \hookrightarrow M$ and $f \colon L \hookrightarrow N$ and an isomorphism of normal bundles $\theta \colon \nu_e \cong \nu_f$, the \emph{parametrised connected sum} is obtained by removing tubular neighbourhoods of $e(L)$ and $f(L)$ and identifying the resulting boundaries using $\theta$.]
\item[$\circ$] Diffeomorphism groups of manifolds with \emph{conical singularities} (a special type of Baas-Sullivan singularity \cite{Sullivan1967Hauptvermutungmanifolds,Baas1973bordismtheorymanifolds}), with respect to the number of singularities of a given type (see \cite[Corollary C]{Palmer2018-hs-msods-II}).
\end{itemizeb}

\paragraph{\anglenumber{v.} Related results.}\label{para:related-results}
\addcontentsline{toc}{subsection}{Related results.}

There are several other homological stability results which are closely related to Theorem \ref{t:G}. We will give a brief overview in order of increasing dimension.

\paragraph{Configurations of finite sets on a surface.}

In \cite{Tran2014Homologicalstabilitysubgroups} it is shown that the \emph{partitioned surface braid groups} are homologically stable. Let $S$ be an open connected surface and write $\beta_r^S$ for the \emph{surface braid group} on $S$ on $r$ strands, namely $\beta_r^S = \pi_1(C_r(S))$. Now fix positive integers $\xi$ and $n$. There is a short exact sequence
\[
1 \to \purebraid_{n\xi}^S \longrightarrow \beta_{n\xi}^S \longrightarrow \Sigma_{n\xi} \to 1,
\]
where the kernel $\purebraid_{n\xi}^S$ is the \emph{pure surface braid group} on $n\xi$ strands. The wreath product $\Sigma_\xi \wr \Sigma_n = (\Sigma_\xi)^n \rtimes \Sigma_n$ is naturally a subgroup of $\Sigma_{n\xi}$, and the $n$th \emph{partitioned braid group} $\beta_n^{\xi,S}$ is defined to be its preimage in $\beta_{n\xi}^S$. Geometrically, it is the subgroup of the $n\xi$th braid group on $S$ consisting of braids that preserve a given partition $n\xi = \xi + \xi + \cdots + \xi$ of the endpoints. This is a subgroup of index
\[
\frac{(n\xi)!}{n!\,(\xi!)^n},
\]
corresponding to a covering space of $C_{n\xi}(S)$ with this number of sheets. Alternatively, it may be thought of as the \emph{moduli space of $n$ pairwise disjoint submanifolds of $M$ of diffeomorphism type $P$}, where $P = \{ 1 ,\ldots, \xi \}$. We note that this result is not included as a special case of Theorem \ref{t:G}, since $\mathrm{dim}(P) = 0 \nleqslant \tfrac12(\mathrm{dim}(S) - 3) = -\tfrac12$.

\paragraph{Unlinks in $3$-manifolds.}

Going up from dimension zero to dimension one, in \cite{Kupers2013Homologicalstabilityunlinked} it is shown that the sequence of moduli spaces
\[
C_{nS^1}(M) \longrightarrow C_{(n+1)S^1}(M)
\]
is homologically stable when $M$ is a connected $3$-manifold and $S^1 \hookrightarrow \partial M$ is an embedding into a coordinate chart. These are the spaces of $n$-component unlinks in $M$, for varying $n$. Note that Theorem \ref{t:G} only applies once the dimension of $M$ is at least $5$, so the two results are disjoint.

The subspace $\cE_n \subset C_{nS^1}(D^3)$ of unknotted, unlinked \emph{Euclidean} circles was studied in \cite{BrendleHatcher2013Configurationspacesrings} (the adjective ``Euclidean'' means that a circle is the image of $\{ (x,y,z)\in \bR^3 \mid z=0, x^2+y^2=1 \}$ under rotation, translation and dilation), who showed that the inclusion is a homotopy equivalence. As a corollary, the spaces $\cE_n$ are also homologically stable.

\paragraph{String motion groups.}

Closely related to this is the sequence of fundamental groups of the spaces $\cE_n \simeq C_{nS^1}(D^3)$, which are called the \emph{circle-braid groups} or the \emph{string motion groups}. They are isomorphic to the symmetric automorphism groups $\Sigma\Aut(F_n)$ of the free groups $F_n$ and also to certain quotients of mapping class groups of $3$-manifolds. In this latter guise they were proved in \cite[Corollary 1.2]{HatcherWahl2010Stabilizationmappingclass} to satisfy homological stability. More generally \cite[Corollary 1.3]{HatcherWahl2010Stabilizationmappingclass} proves homological stability for $\Sigma\Aut(G^{*n})$, where $G^{*n}$ denotes the iterated free product $G*\dotsb *G$, for $G=\pi_1(P)$ for certain $3$-manifolds $P$ (in particular $P=S^1 \times D^2$). By \cite[Th\'{e}or\`{e}me 1.2]{CollinetDjamentGriffin2013Stabilitehomologiquepour} these symmetric automorphism groups actually satisfy homological stability for any group $G$. Moreover, their integral homology may in principle be calculated using the methods of \cite{Griffin2013Diagonalcomplexesand}.\footnote{See Theorem C of \cite{Griffin2013Diagonalcomplexesand}. The statement is for $\Aut(G^{*n})$ and doesn't permit $G$ to be $\bZ$, but this restriction is removed by replacing $\Aut(G^{*n})$ with $\Sigma\Aut(G^{*n})$. In particular the integral homology of $\pi_1(\cE_n)=\Sigma\Aut(F_n)$ is identified with the direct sum of $H_*((\bZ/2)\wr\Sigma_n;\bZ)$ together with $H_{*-e}((\bZ/2)\wr\Aut(f);\bZ)$ where $f$ runs over certain directed, labelled, rooted trees with $n$ vertices and $e$ is the number of edges of $f$. The wreath product is formed using the obvious homomorphism $\Aut(f)\to\Sigma_n$ given by the action of an automorphism of $f$ on its vertices.}

Note that the spaces $\cE_n$ are not aspherical (they are finite-dimensional manifolds whose fundamental groups contain torsion), so homological stability for $\pi_1(\cE_n)$ is independent of homological stability for the spaces $\cE_n$ themselves. 

Rationally, homological stability for $\pi_1(\cE_n)$ is also known via a different route. There is a homomorphism to the hyperoctahedral group $\pi_1(\cE_n) \to W_n = (\bZ/2)\wr\Sigma_n$ which just remembers the permutation of the $n$ circles and their orientations; the kernel of this is called the \emph{pure string motion group}. By \cite{Wilson2012Representationstabilitycohomology} the rational cohomology of the pure string motion groups is \emph{uniformly representation stable} (see \cite[Definition 2.6]{ChurchFarb2013Representationtheoryand}) with respect to the action of $W_n$, from which it follows that the groups $\pi_1(\cE_n)$ are rationally homologically stable. In fact, by Theorem 7.1 of \cite{Wilson2012Representationstabilitycohomology}, the rational homology of $\pi_1(\cE_n)$ is trivial.

\paragraph{Spaces of connected subsurfaces.}

One dimension higher again, \cite{CanteroRandal-Williams2017Homologicalstabilityspaces} proves a homological stability result for spaces of \emph{connected} subsurfaces of a manifold. Denote the connected, orientable surface of genus $g$ with $b$ boundary components by $\Sigma_{g,b}$ and let
\[
\cE(\Sigma_{g,b},M) = \Emb(\Sigma_{g,b},M)/\Diff^+(\Sigma_{g,b}),
\]
where the embeddings are prescribed on a sufficiently small collar neighbourhood of the boundary $\partial \Sigma_{g,b}$ and the diffeomorphisms are required to be the identity on a sufficiently small collar neighbourhood of $\partial M$. Given a surface embedded in $\partial M\times [0,1]$ with the correct boundary there is an induced stabilisation map $\cE(\Sigma_{g,b},M) \to \cE(\Sigma_{g^\prime,b^\prime},M)$. Theorem 1.3 of \cite{CanteroRandal-Williams2017Homologicalstabilityspaces} says that these maps are homologically stable, under the assumption that $M$ is simply-connected and of dimension at least $6$ (or dimension $5$ with an extra restriction on the permissible stabilisation maps). The stable ranges for the various stabilisation maps are the same as the best-known ranges for Harer stability \cite{Harer1985Stabilityofhomology, Ivanov1993homologystabilityTeichmuller}, which were obtained in \cite{Boldsen2012Improvedhomologicalstability} and \cite{Randal-Williams2016Resolutionsmodulispaces}; see also \cite{Wahl2013Homologicalstabilitymapping}. In particular Theorem 1.3 of \cite{CanteroRandal-Williams2017Homologicalstabilityspaces} recovers Harer stability when $M=\bR^\infty$.

Moreover, \cite{CanteroRandal-Williams2017Homologicalstabilityspaces} also identifies the homology in the stable range. There is a well-defined scanning map from each $\cE(\Sigma_{g,b},M)$ to a certain explicit limiting space (a union of path-components of the space of compactly-supported sections of a bundle over $M$) which induces an isomorphism on homology in the stable range, assuming that $M$ is simply-connected and at least $5$-dimensional.

\paragraph{\anglenumber{vi.} Stable homology.}
\addcontentsline{toc}{subsection}{Stable homology.}

The next natural question after Theorem \ref{t:G} is \emph{what is the limiting homology?} As mentioned in the paragraph just above, this is known for moduli spaces of connected subsurfaces. For unordered configuration spaces this question was answered by McDuff in \cite{McDuff1975Configurationspacesof}:
\[
\mathrm{colim}_{n\to\infty} H_*(C_n(M,X)) \;\cong\; H_* \bigl( \Gamma_{c,\circ} \bigl( \dot{T}M \wedge_{\mathrm{fib}} X_+ \to M \bigr)\bigr) .
\]
The colimit is taken along the stabilisation maps, $\dot{T}M\to M$ is the fibrewise one-point compactification of the tangent bundle of $M$, $\dot{T}M \wedge_{\mathrm{fib}} X_+$ is its fibrewise smash product with $X_+$ and $\Gamma_{c,\circ}$ denotes any path-component of the space of compactly-supported sections of this bundle.

For moduli spaces of disconnected submanifolds one natural first guess is that the \emph{scanning map} is a homology equivalence in the limit. This is the map which identifies the limiting homology for unordered configuration spaces and for moduli spaces of connected subsurfaces, and it is also well-defined for moduli spaces of disconnected submanifolds. For simplicity we will consider just \emph{unparametrised} submanifolds, and discuss the scanning map for $C_{nP}(M)$.

For a finite-dimensional real vector space $V$ let $\mathit{Gr}_p(V)$ be the Grassmannian of $p$-planes in $V$, let $\gamma_p^\perp(V)$ be the orthogonal complement of the canonical $p$-plane bundle over $\mathit{Gr}_p(V)$ and denote its total space by $A_p(V)$; this is the affine Grassmannian of $p$-planes in $V$. Since $\mathit{Gr}_p(V)$ is compact the Thom space $\mathrm{Th}(\gamma_p^\perp(V))$ is the one-point compactification of $A_p(V)$, denoted $\dot{A}_p(V)$. This construction is functorial in $V$, so one can apply it fibrewise to real vector bundles, and in particular form the bundle $\dot{A}_p(TM) \to M$ of affine $p$-planes in the tangent bundle of $M$. The \emph{scanning map for $C_{nP}(M)$} may be constructed exactly as for spaces of connected subsurfaces (see page 1388 of \cite{CanteroRandal-Williams2017Homologicalstabilityspaces}),\footnote{Strictly speaking, one actually constructs a zig-zag of two maps where the reversed map is a weak equivalence.} and is of the form
\[
C_{nP}(M) \longrightarrow \Gamma_{c,\circ} \bigl( \dot{A}_p(TM)\to M \bigr) .
\]

In the case $M=\bR^m$ the codomain is $\Omega^m_\circ \dot{A}_p(\bR^m)$, so if we specialise to $m=3$ and $P=S^1$ we have
\[
\cE_n \simeq C_{nS^1}(\bR^3) \longrightarrow \Omega_0^3 \dot{A}_1(\bR^3).
\]
In \cite[\S 2]{Kupers2013Homologicalstabilityunlinked} it is shown that the first rational homology of the right-hand side is one-dimensional, whereas (abelianising the computation of $\pi_1(\cE_n)$ in Proposition 3.7 of \cite{BrendleHatcher2013Configurationspacesrings}) the first integral homology of the left-hand side is $(\bZ/2)^3$ for all $n \geq 2$. So the first guess that the scanning map identifies the stable homology of moduli spaces of disconnected submanifolds cannot be correct.

\paragraph{\anglenumber{vii.} Open questions.}
\addcontentsline{toc}{subsection}{Open questions.}

Following on from the previous section, one open problem is of course to identify the stable homology of moduli spaces of disconnected submanifolds $C_{nP}(M;G)$ in the cases when it is known to stabilise. This is -- to the best of the author's knowledge -- currently unknown unless $P$ is a point.

Another question is whether the ``stability slope'' may be improved beyond $\tfrac12$ (i.e., whether there is a homological stability result for the sequence of spaces $C_{nP}(M;G)$ that holds in degrees $* \leq \lambda n$, for some constant $\lambda > \tfrac12$), perhaps depending on the choice of coefficient ring.

Another open question is whether the dimension condition $p \leq \tfrac12(m-3)$ on the dimensions $(m,p) = (\mathrm{dim}(M),\mathrm{dim}(P))$ can be weakened. The evidence suggests that it ``should'' be possible, since the dimension pairs $(2,0)$ and $(3,1)$ are excluded by the condition $p \leq \tfrac12(m-3)$, but configuration spaces of points (or collections of $\xi$ points) on surfaces are certainly homologically stable, as are moduli spaces of unlinks in $3$-manifolds, by \cite{Kupers2013Homologicalstabilityunlinked}. Almost all of the cases in which homological stability is known for spaces of submanifolds are within the range in which there can be no non-trivial knotting --- the condition $p \leq \tfrac12(m-3)$ ensures this, as does the condition that $M$ must have dimension at least $5$ in the case of connected oriented subsurfaces. One exception is of course $C_{nS^1}(M)$ for a $3$-manifold $M$, but in this case only the path-component of the unlink has been shown to be homologically stable; it is not known what happens if one tries to stabilise by adding new non-trivially-knotted components to a configuration.

\begin{rmk}[\emph{Dependence on the dimension hypothesis.}]
\label{r:dimension-hypothesis}
The dimension hypothesis $p \leq \tfrac12(m-3)$ is used several times in the proof of homological stability in \S\ref{s:proof}. It is used for the proof of Proposition \ref{p:condition-iv} (and hence it is also needed for Corollary \ref{c:condition-iv}), and it is also used for Corollary \ref{c:Xn1}, where it is used to verify the dimension hypotheses of Lemma \ref{l:path-of-embeddings} (see Remark \ref{r:dimension-hypothesis-lemma} for a discussion of how these hypotheses are used in the proof of Lemma \ref{l:path-of-embeddings}). In addition, Remark \ref{r:connectivity-of-M-breve} uses the weaker assumption that $p \leq m-3$. Proposition \ref{p:iotabb} uses the hypothesis that the embedding $\iota \colon P \hookrightarrow \partial M$ admits a non-vanishing section of its normal bundle, which is automatically true if we assume that $p \leq \tfrac12(m-2)$. Proposition \ref{p:two-conditions} and Corollary \ref{c:two-conditions} depend on Proposition \ref{p:iotabb}, so they also use the same hypothesis (either that $p \leq \tfrac12(m-2)$ or the weaker hypothesis that $\iota$ admits a non-vanishing section of its normal bundle).
\end{rmk}

\paragraph{Outline.}
\addcontentsline{toc}{subsection}{Outline.}

In the next section we give a more precise statement of the main result of the paper, Theorem \ref{t:G}, which is reformulated there as Theorem \ref{tmain}. This will be proved in \S\ref{s:proof}. In \S\ref{s:axioms} we first isolate the spectral sequence part of the argument by giving an ``axiomatic'' homological stability theorem, or ``homological stability criterion'', Theorem \ref{tAxiomatic}. In \S\ref{s:fibre-bundles} we prove some preliminary results about fibre bundles and fibrations that we will need, notably using the notion of locally retractile group actions to show that certain maps between smooth mapping spaces are fibre bundles. Then in \S\ref{s:proof} we apply the homological stability criterion Theorem \ref{tAxiomatic} to prove Theorem \ref{tmain}. The appendix \S\ref{s:appendix} contains a proof of a technical lemma about homotopy fibres of augmented semi-simplicial spaces, which is used in a key step of the proof.

\paragraph{Acknowledgements.}
\addcontentsline{toc}{subsection}{Acknowledgements.}

The author would like to thank Federico Cantero Mor{\'a}n, S{\o}ren Galatius, Geoffroy Horel, Alexander Kupers, Oscar Randal-Williams and Ulrike Tillmann for many enlightening discussions during the preparation of this article. Additionally, he would like to thank the anonymous referee for helpful and detailed suggestions for improvements to an earlier draft.

\section{Precise formulation of the main result}\label{s:detailed-statements}

Let $M$ be a smooth, connected manifold of dimension $m$, with non-empty boundary. Let $P$ be a smooth, closed manifold of dimension $p$ and choose an embedding
\[
\iota \colon P \lhto \partial M,
\]
as well as a collar neighbourhood for the boundary of $M$, in other words a proper embedding
\[
\lambda \colon \partial M \times [0,2] \lhto M
\]
such that $\lambda(z,0) = z$ for all $z \in \partial M$. We also assume that $\lambda$ has the property that it may be extended to a slightly larger proper embedding $\partial M \times [0,2+\epsilon] \hookrightarrow M$ for some $\epsilon > 0$, but we do not fix this data. Choose an open (therefore closed) subgroup $G \leq \mathrm{Diff}(P)$.

\begin{notation}
For $\epsilon \in [0,2]$ we will write $M_\epsilon = M \smallsetminus \lambda (\partial M \times [0,\epsilon])$. In particular, $M_0$ is the interior of $M$. We will use the embedding space $\Emb(P,M)$ very often, so we abbreviate it to $E = \Emb(P,M)$. For a non-negative integer $k$, we write $kP$ for the disjoint union of $k$ copies of the manifold $P$, which is $\varnothing$ if $k=0$.
\end{notation}

Let $n$ be a non-negative integer. Consider the embedding space $\Emb(nP,M_1)$, which has a left-action of the group $G \wr \Sigma_n \leq \mathrm{Diff}(nP)$. The quotient space $\Emb(nP,M_1) / (G \wr \Sigma_n)$ may be thought of as the subspace of the symmetric product $\mathrm{Sp}^n(E/G)$ consisting of configurations $\{[\varphi_1],\ldots,[\varphi_n]\}$ such that the images $\varphi_i(P)$ are contained in $M_1 \subseteq M$ and are pairwise disjoint. Analogously, we consider the quotient $\Emb((n+1)P,M_0)/(G \wr \Sigma_{n+1})$ as a subspace of $\mathrm{Sp}^{n+1}(E/G)$.

\begin{defn}\label{d:standard}
A \emph{standard configuration} in $\Emb(nP,M_1) / (G \wr \Sigma_n) \subseteq \mathrm{Sp}^n(E/G)$ is one of the form
\[
\{ [\lambda(-,t_1)\circ\iota], \ldots, [\lambda(-,t_n)\circ\iota] \},
\]
where $t_1,\ldots,t_n$ are $n$ distinct numbers in the interval $(1,2)$. Note that all standard configurations are in the same path-component. A \emph{standard configuration} in $\Emb((n+1)P,M_0)/(G \wr \Sigma_{n+1})$ is one of the form
\[
\{ [\lambda(-,t_1)\circ\iota], \ldots, [\lambda(-,t_{n+1})\circ\iota] \},
\]
where $t_1,\ldots,t_{n+1}$ are $n+1$ distinct numbers in the interval $(0,2)$.
\end{defn}

\begin{defn}\label{d:input-data}
We now define a map $f \colon X \to Y$ depending on the data $(M,P,\lambda,\iota,G,n)$. First, $X$ is the path-component of $\Emb(nP,M_1) / (G \wr \Sigma_n)$ containing the standard configurations and $Y$ is the path-component of $\Emb((n+1)P,M_0)/(G \wr \Sigma_{n+1})$ containing the standard configurations. The map $f$ is then defined as follows:
\[
\{ [\varphi_1] , \ldots , [\varphi_n] \}  \;\longmapsto\; \{ [\lambda(-,\tfrac12)\circ\iota] , [\varphi_1] , \ldots , [\varphi_n] \}.
\]
\end{defn}

\begin{rmk}
This is an explicit version of the more intuitively-defined stabilisation maps \eqref{eStabMapIntro3} in the introduction.
\end{rmk}

\begin{lem}\label{l:injectivity}
The induced map $f_* \colon H_*(X) \to H_*(Y)$ is split-injective.
\end{lem}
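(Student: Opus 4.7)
\emph{Plan.} The strategy is to factor $f$ through an auxiliary space so that it becomes a section of a (trivial) fibre bundle, and then argue that this section survives to a split-injection on homology of $Y$.

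Let $C := \lambda(\partial M \times (0,1))$ denote the open collar region. By construction $C \cap M_1 = \varnothing$, so any embedding $\phi \colon P \hookrightarrow C$ is automatically disjoint from every configuration in $X$ (whose support lies in $M_1$). I would form the product space
\[
\widetilde{Y} := X \times \bigl(\Emb(P,C)/G\bigr),
\]
together with the projection $\pi \colon \widetilde{Y} \to X$ and the ``combine'' map $q \colon \widetilde{Y} \to Y$ sending $(x,[\phi])$ to the unordered union $\{[\phi]\} \cup x$. Since $[\lambda(-,1/2)\circ\iota]$ lies in $\Emb(P,C)/G$, the section $s \colon X \to \widetilde{Y}$ defined by $s(x) := (x,[\lambda(-,1/2)\circ\iota])$ of $\pi$ satisfies $f = q \circ s$.

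The projection $\pi$ is trivially a fibre bundle with fibre $F := \Emb(P,C)/G$, and this fibre is path-connected. Indeed, since $C \cong \partial M \times (0,1)$ is a product, any embedding of $P$ into $C$ can be isotoped through the collar to the slice $\partial M \times \{1/2\}$, and then to $\lambda(-,1/2)\circ\iota$ using path-connectedness of $G \leq \Diff(P)$ (recall $G$ is open and hence a union of components of the diffeomorphism group, with the standard orbit in view). Because $s$ is a section of a fibration with path-connected fibre, the Serre spectral sequence gives a canonical splitting $H_\ast(\widetilde{Y}) \cong H_\ast(X) \oplus K$ with $s_\ast$ corresponding to the first summand, so $s_\ast$ is split-injective.

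The remaining task --- and the main obstacle --- is to show that $q_\ast \circ s_\ast = f_\ast$ still splits. The difficulty is that $q$ is not itself injective: a configuration in $Y$ with several copies of $P$ lying in the collar has several preimages under $q$. To circumvent this, I would restrict to the open subspace $\widetilde{Y}^\circ \subseteq \widetilde{Y}$ of pairs $(x,[\phi])$ for which $\phi(P)$ is the \emph{only} component of the corresponding configuration meeting the collar $C$; note that $s(X) \subseteq \widetilde{Y}^\circ$, so the factorisation $f = (q|_{\widetilde Y^\circ}) \circ s$ still holds and $q|_{\widetilde{Y}^\circ}$ is an open embedding into $Y$. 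The final step, which is the hard part, is to verify that the inclusion $q(\widetilde{Y}^\circ) \hookrightarrow Y$ is a weak equivalence onto the whole path-component of the standard configuration; this amounts to constructing a canonical isotopy that pushes any configuration in $Y$ to one with exactly one collar copy, using the product structure of $C$ and the fact that we are restricted to a single path-component. Once this deformation retraction is in place, $f_\ast$ inherits the splitting from $s_\ast$, completing the proof.
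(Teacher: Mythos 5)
The central gap is the claim that $q(\widetilde{Y}^\circ)\hookrightarrow Y$ is a weak equivalence; this is false. First, note that the restriction to $\widetilde{Y}^\circ$ is vacuous: configurations in $X$ are supported in $M_1$, which is disjoint from $C=\lambda(\partial M\times(0,1))$, so the added component $\phi(P)$ is automatically the only one meeting $C$ and $\widetilde{Y}^\circ = \widetilde{Y}$. The real problem is that $q(\widetilde{Y})$ is the open subspace of $Y$ consisting of configurations with exactly one component in $C$ and the remaining $n$ components in $M_1$, and $Y$ does not deformation retract onto this subspace. Any retraction would have to continuously decide, for a configuration with several (or zero) components in the collar, which component is ``the distinguished one''; this choice cannot be made continuously (a loop in $Y$ permuting two collar components obstructs it). Concretely, take $P$ a point, $M$ a $2$-disc, $n=1$: then $\widetilde{Y}\simeq S^1$ (coming from the annulus $C$), $Y=C_2(\mathring{D}^2)\simeq S^1$, and $q$ induces multiplication by $2$ on $\pi_1$ --- not a weak equivalence. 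There is also a secondary issue: the fibre $\Emb(P,C)/G$ is not path-connected in general (there can be many isotopy classes of embeddings $P\hookrightarrow C$, and also many components of $\partial M$), so the claim that $s_*$ splits off $H_*(X)$ in the Serre spectral sequence already needs to be restricted to a path-component. Note that Lemma \ref{l:injectivity} carries no dimension hypothesis, so you cannot invoke transversality to fix these problems.

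The paper's actual argument sidesteps this entirely by avoiding any attempt to find a single-valued retraction. Instead it uses Dold's splitting criterion (Lemma \ref{l:Dold}) together with the ``multi-valued'' transfer maps $\tau_{k,n}\colon X_n\to\mathrm{Sp}^{\binom{n}{k}}(X_k)$ that send a configuration to the formal sum of all its $k$-element subconfigurations. There is no need to choose which component to forget; one sums over all choices and checks the resulting homomorphisms on $\widetilde{H}_*$ satisfy Dold's hypotheses via the evident homotopies $\tau_{k,n+1}\circ f_n\simeq\tau_{k,n}+f_{k-1}\circ\tau_{k-1,n}$. The moral of the failure of your approach is precisely that the map adjoining a new component is far from being an equivalence onto an open subset of $Y$, and that the splitting must be implemented by a transfer rather than by a geometric retraction.
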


This is not difficult to prove: see \S\ref{s:split-injectivity}. The main result of this paper is that these maps are also \emph{surjective} on homology in a range of degrees, if we assume that the dimension $p$ of $P$ is sufficiently small compared to the dimension $m$ of $M$.

\begin{athm}\label{tmain}
The induced map $f_* \colon H_*(X) \to H_*(Y)$ is an isomorphism if $* \leq \frac{n}{2}$ and $p \leq \frac12(m-3)$.
\end{athm}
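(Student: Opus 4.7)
The plan is to apply the axiomatic homological stability criterion Theorem \ref{tAxiomatic}. This reduces the task to (i) constructing an augmented semi-simplicial resolution $Y_\bullet \to Y$ whose space of $k$-simplices fits into a controlled fibre bundle over a smaller stage of the stabilisation tower, (ii) establishing the fibre bundle axioms via the results of \S\ref{s:fibre-bundles}, and (iii) proving a high-connectivity estimate for the augmentation, with connectivity range growing linearly in $n$. Once these are in place, comparing the resolution spectral sequence for $Y_\bullet \to Y$ with the analogous spectral sequence over earlier stages yields surjectivity of $f_*$ in the stated range, and combined with Lemma \ref{l:injectivity} this gives the claimed isomorphism.

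For (i), I would define a $k$-simplex of $Y_\bullet$ to be a point $c = \{[\varphi_0], \ldots, [\varphi_n]\} \in Y$ equipped with an ordered $(k+1)$-tuple of pairwise disjoint embedded tubes $h_j \colon P \times [0,1] \hookrightarrow M$ (for $0 \leq j \leq k$), each of whose $0$-end lies in standard position in the collar $\lambda(\partial M \times (0,1))$ as $\lambda(-,s_j)\circ \iota$ for some distinct $s_j$, and whose $1$-end recovers $k+1$ distinct components $[\varphi_{i_j}]$ of $c$ up to the action of $G$. Face maps forget a tube, and the augmentation forgets the entire tuple. The locally retractile action arguments in \S\ref{s:fibre-bundles} should identify the forgetful map $Y_k \to Y$ as a fibre bundle, and should also exhibit $Y_k$ (or a homotopy-equivalent model) as a fibre bundle over the analogue of $X$ with $n - k$ components, with contractible fibre the space of $(k+1)$-tuples of disjoint arcs in the collar. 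The resulting $E^1$-page then involves only the homologies of earlier stages of the tower, which is the hypothesis required by Theorem \ref{tAxiomatic}.

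The hard part will be (iii): showing that the geometric realisation $|Y_\bullet|$ is roughly $\lfloor n/2 \rfloor$-connected over $Y$. Via the technical lemma of \S\ref{s:appendix} on homotopy fibres of augmented semi-simplicial spaces, this translates into a connectivity statement for a simplicial complex of disjoint tubes connecting a fixed configuration to the collar. The dimension hypothesis $p \leq \tfrac12(m-3)$ enters here through general position: when $2(p+1)+1 \leq m$, any finite family of embedded tubes $P \times [0,1] \hookrightarrow M$ can be made pairwise disjoint by a small perturbation, and any two such tubes agreeing at the endpoints are ambient isotopic rel boundary. The parametrised version of this fact is Lemma \ref{l:path-of-embeddings}, and this is exactly where Proposition \ref{p:condition-iv} uses the dimension bound, as flagged in the footnote of \S\ref{para:related-results}. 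The connectivity estimate is then assembled by a standard bad-simplex or nerve-cover argument in the complex of arcs, with Remark \ref{r:connectivity-of-M-breve} (requiring only $p \leq m-3$) providing the ambient connectivity needed for the inductive step.

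Putting (i)--(iii) together, Theorem \ref{tAxiomatic} supplies the inductive step that closes the argument: assuming the theorem for values of $n$ less than the current one, the resolution spectral sequence collapses sufficiently to show that the relative homology $H_*(Y, X)$ vanishes in degrees $* \leq n/2$, which yields surjectivity of $f_*$ in that range. Combined with split-injectivity from Lemma \ref{l:injectivity}, this gives the full isomorphism $f_* \colon H_*(X) \xrightarrow{\cong} H_*(Y)$ for $* \leq n/2$ under the hypothesis $p \leq \tfrac12(m-3)$.
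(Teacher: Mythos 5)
Your outline captures the general strategy -- apply Theorem \ref{tAxiomatic} via a resolution with fibre bundle approximations -- but it misses the structure of the paper's argument in two ways that are not cosmetic, and it has one genuine gap.

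First, the paper does not use a single resolution but a \emph{double} one (condition (2w), not (1w) of Theorem \ref{tAxiomatic}). The first resolution $X_\bullet \to X$, $Y_\bullet \to Y$ (\S\ref{s:firstr}) marks ordered subconfigurations and is the source of \emph{all} the connectivity in $n$: the levelwise covering-space fibres are complexes of injective words, giving $\hconn \geq n-1$ and $n$ essentially for free. The second resolution (\S\ref{s:secondr}), by tubes to $\lambda(T\times\{2\})$, is a \emph{weak equivalence} on realisations -- infinite connectivity, verified via the Galatius--Randal-Williams criterion (Theorem \ref{t:grw}) together with transversality (Proposition \ref{p:condition-iv}). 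Your plan to combine tubes and subconfigurations into one resolution and then establish $\lfloor n/2\rfloor$-connectivity ``by a standard bad-simplex or nerve-cover argument'' conflates these two sources: the tube complex does not obviously have connectivity that grows with $n$, and the paper deliberately avoids having to prove any such bound for it. This is not a small simplification you could fill in; it would be a substantial new argument whose feasibility is unclear.

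Second -- and this is the real gap -- your spectral-sequence summary claims too much. The resolution spectral sequence does not by itself show $\widetilde{H}_*(Cf)=0$ in the stable range: it only shows that the edge map from $\widetilde H_*$ of the wedge of cones of the fibre maps onto $\widetilde H_*(Cf)$ is \emph{surjective}. The inductive step in Theorem \ref{tAxiomatic} then needs the factorisation condition to show that this same map is \emph{zero}, which is what actually forces $\widetilde{H}_*(Cf)=0$. You never address this. In the paper the factorisation step (\S\ref{s:weak-factorisation}, the homotopy $\cH$ of Figure \ref{fhomotopy}) is exactly where the tube data is consumed: the tube $\tau_0$ provides a path along which the adjoined copy of $P$ can be dragged from the collar out to the marked component $\psi_0(P)$ in the interior, exhibiting the augmentation $\bar q_0^{-1}(\bar a)\to Y$ as homotopic to $f\circ\zeta$. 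This is precisely why the paper needs a second resolution at all: after the first approximation the fibres are configurations avoiding an interior copy of $P$, and without a tube connecting it to the boundary there is no way to perform the factorisation homotopy. Your single-resolution plan needs to supply an analogous factorisation, and as written it doesn't; without it, split-injectivity plus the spectral sequence gives only surjectivity of $\widetilde H_*(Cf_0)\to\widetilde H_*(Cf)$, not the vanishing of $\widetilde H_*(Cf)$.

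Finally, note that your fibre-bundle description inverts the roles compared to the paper: you want $Y_k$ fibred over the smaller moduli space with contractible tube-fibre, but the approximation structure \eqref{eAxApproximation} in Theorem \ref{tAxiomatic} requires the opposite -- a fibration to a base $A_i$ (tuples of tubes/marked points), with the \emph{fibre} being the smaller map $f_{i\alpha}'\in\X(k)$. This matters because the inductive hypothesis applies to the fibre, not the base.
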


This is equivalent to Theorem \ref{t:G}, and will be proved in \S\ref{s:proof}. Before that, in \S\ref{s:axioms} we establish some sufficient axiomatic criteria for homological stability.

\section{Homological stability criteria}\label{s:axioms}

In order to separate the geometric part of the proof from the more technical manipulation of spectral sequences, we give an axiomatic criterion for homological stability in this section, and apply it to moduli spaces of disconnected submanifolds in \S\ref{s:proof}. The idea of this method of proving homological stability -- finding ``resolutions'' of the maps one wishes to prove stability for -- is due to \cite{Randal-Williams2016Resolutionsmodulispaces} and has been used many times subsequently, for example in \cite{CanteroRandal-Williams2017Homologicalstabilityspaces, GalatiusRandal-Williams2018Homologicalstabilitymoduli, KupersMiller2016Homologicalstabilitytopological, Perlmutter2016Linkingformsstabilization, Randal-Williams2013Homologicalstabilityunordered}.

\subsection{Setup}\label{ss:axioms-setup}

Suppose that for each integer $n\geq 0$ we have a collection $\X(n)$ of maps between path-connected spaces. We want to find conditions implying that each $f\in\X(n)$ is $\frac{n}{2}$-homology-connected, meaning that it is an isomorphism on homology up to degree $\frac{n}{2}-1$ and a surjection up to degree $\frac{n}{2}$, equivalently that its mapping cone $Cf$ has trivial reduced homology up to degree $\frac{n}{2}$. As a convention we take $\X(n) = \{ \text{all continuous maps} \}$ for $n<0$.\footnote{This is compatible with our aim that each $f\in\X(n)$ is $\frac{n}{2}$-homology-connected: for $n=-1$ or $-2$ this says that the mapping cone $Cf$ has trivial reduced homology in degree $-1$, in other words $Cf\neq\varnothing$, but mapping cones are always non-empty (the mapping cone of $\varnothing\to X$ is $X_+$). For $n\leq -3$ the condition is vacuous.}

We will show that this holds if each $f\in\X(n)$ admits a \emph{resolution}, each level of which can be \emph{approximated} by a map in $\X(k)$ for $k<n$, plus a certain factorisation condition that these data must satisfy. We begin by making this last sentence precise.

\begin{notation}[Homology-connectivity.]
For a map $f\colon X\to Y$, the number $\hconn(f)$ is the largest integer $n$ such that $f_* \colon H_*(X) \to H_*(Y)$ is an isomorphism for $*\leq n-1$ and surjective for $*=n$. Equivalently it is the largest integer $n$ such that the reduced homology of the mapping cone of $f$ is trivial up to degree $n$.
\end{notation}

\begin{defn}[Resolution of a space.]\label{dResolutionSpace}
Recall that an augmented semi-simplicial space $Y_\bullet$ is a diagram of the form
\begin{center}
\begin{tikzpicture}
[x=0.8mm,y=1mm]
\node at (0,0) {$\cdots$};
\node at (20,0) {$Y_1$};
\node at (40,0) {$Y_0$};
\node at (56,0) [anchor=west] {$Y_{-1}=Y$};
\draw[->] (5,2)--(15,2);
\draw[->] (5,0)--(15,0);
\draw[->] (5,-2)--(15,-2);
\draw[->] (25,1)--(35,1);
\draw[->] (25,-1)--(35,-1);
\draw[->] (45,0)--(55,0);
\end{tikzpicture}
\end{center}

with face maps $d_i\colon Y_k\to Y_{k-1}$ for $1\leq i\leq k+1$ which satisfy the simplicial identities $d_i d_j = d_{j-1}d_i$ when $i<j$. Its geometric realisation $\geomr{Y_\bullet}$ is the quotient of $\bigsqcup_{k\geq 0} Y_k \times \Delta^k$ by the face relations $(d_i(y),z) \sim (y,\delta_i(z))$, where $\delta_i$ is the inclusion of the $i$th face of the standard simplex $\Delta^{k+1}$. This depends only on the unaugmented part $Y_{\geq 0}$ of $Y_\bullet$, and the augmentation map $Y_0\to Y$ induces a well-defined map $\geomr{Y_\bullet}\to Y$. The augmented semi-simplicial space $Y_\bullet$ is a \emph{$c$-resolution} of $Y$ if $\hconn(\geomr{Y_\bullet}\to Y) \geq \lfloor c\rfloor$.
\end{defn}

\begin{defn}[Resolution of a map.]\label{dResolutionMap}
If we have a map of augmented semi-simplicial spaces
\begin{equation}\label{eAxResolution}
\centering
\begin{split}
\begin{tikzpicture}
[x=1mm,y=1mm]
\node (la) at (0,0) {$X$};
\node (ra) at (20,0) {$Y$};
\node (lb) at (0,10) {$X_0$};
\node (rb) at (20,10) {$Y_0$};
\node (lc) at (0,20) {$X_1$};
\node (rc) at (20,20) {$Y_1$};
\node (ld) at (0,32) {$\vdots$};
\node (rd) at (20,32) {$\vdots$};
\draw[->] (la) to node[above,font=\small]{$f$} (ra);
\draw[->] (lb) to node[above,font=\small]{$f_0$} (rb);
\draw[->] (lc) to node[above,font=\small]{$f_1$} (rc);
\draw[->] (lb) to (la);
\draw[->] (rb) to (ra);
\draw[->] ($ (lc.south)+(-1,0) $) -- ($ (lb.north)+(-1,0) $);
\draw[->] ($ (lc.south)+(1,0) $) -- ($ (lb.north)+(1,0) $);
\draw[->] ($ (rc.south)+(-1,0) $) -- ($ (rb.north)+(-1,0) $);
\draw[->] ($ (rc.south)+(1,0) $) -- ($ (rb.north)+(1,0) $);
\draw[->] ($ (ld.south)+(-2,0) $) -- ($ (lc.north)+(-2,0) $);
\draw[->] ($ (ld.south)+(0,0) $) -- ($ (lc.north)+(0,0) $);
\draw[->] ($ (ld.south)+(2,0) $) -- ($ (lc.north)+(2,0) $);
\draw[->] ($ (rd.south)+(-2,0) $) -- ($ (rc.north)+(-2,0) $);
\draw[->] ($ (rd.south)+(0,0) $) -- ($ (rc.north)+(0,0) $);
\draw[->] ($ (rd.south)+(2,0) $) -- ($ (rc.north)+(2,0) $);
\end{tikzpicture}
\end{split}
\end{equation}
where $X_\bullet$ is a $(c-1)$-resolution and $Y_\bullet$ is a $c$-resolution, then we say that the semi-simplicial map $f_\bullet \colon X_\bullet \to Y_\bullet$ is a \emph{$c$-resolution} of $f\colon X\to Y$.
\end{defn}

\begin{defn}[Approximation of a resolution.]\label{dApproximationMap}
We say that a given $c$-resolution $f_\bullet \colon X_\bullet \to Y_\bullet$ is \emph{approximated} by maps $\{ f_{i\alpha}^\prime \colon X_{i\alpha}^\prime \to Y_{i\alpha}^\prime \}$ if the following holds (the index $\alpha$ runs over an indexing set which depends on $i$). For each $i\geq 0$ there is a commutative square
\begin{equation}\label{eAxApproximation}
\centering
\begin{split}
\begin{tikzpicture}
[x=1mm,y=1mm]
\node (tl) at (0,10) {$X_i$};
\node (tr) at (20,10) {$Y_i$};
\node (bl) at (0,0) {$A_i$};
\node (br) at (20,0) {$B_i$};
\draw[->] (tl) to node[above,font=\small]{$f_i$} (tr);
\draw[->] (tl) to node[left,font=\small]{$p_i$} (bl);
\draw[->] (tr) to node[right,font=\small]{$q_i$} (br);
\draw[->] (bl) to node[below,font=\small]{$\phi_i$} (br);
\end{tikzpicture}
\end{split}
\end{equation}
in which $p_i$ and $q_i$ are Serre fibrations and $\phi_i$ is a weak equivalence. Additionally, for at least one point $a_{i\alpha}$ in each path-component $A_{i\alpha}$ of $A_i$, the restriction of $f_i$ to $p_i^{-1}(a_{i\alpha}) \to q_i^{-1}(\phi_i(a_{i\alpha}))$ is equal to $f_{i\alpha}^\prime \colon X_{i\alpha}^\prime \to Y_{i\alpha}^\prime$.
\end{defn}

\begin{defn}[Double resolution and approximation]
A \emph{double resolution and approximation} of a map $f \colon X \to Y$ is a $c_1$-resolution $f_\bullet \colon X_\bullet \to Y_\bullet$ approximated by maps $\{ f_{i\alpha}^\prime \colon X_{i\alpha}^\prime \to Y_{i\alpha}^\prime \}$, a commutative square
\begin{equation}\label{eAxApproximation2}
\centering
\begin{split}
\begin{tikzpicture}
[x=1mm,y=1mm]
\node (tl) at (0,12) {$\xbar_\alpha$};
\node (tr) at (50,12) {$\ybar_\alpha$};
\node (bl) at (0,0) {$p_0^{-1}(a_{0\alpha})$};
\node (br) at (50,0) {$q_0^{-1}(\phi_0(a_{0\alpha}))$};
\node at (-13,0) {$X_{0\alpha}^\prime =$};
\node at (65,0) {$= Y_{0\alpha}^\prime$};
\draw[->] (tl) to node[above,font=\small]{$g_\alpha$} (tr);
\draw[->] (tl) to (bl);
\draw[->] (tr) to (br);
\draw[->] (bl) to node[below,font=\small]{$f_{0\alpha}^\prime$} (br);
\end{tikzpicture}
\end{split}
\end{equation}
for each point $a_{0\alpha} \in A_0$ as above, in which the vertical maps are weak equivalences, followed by (for each $\alpha$) a $c_2$-resolution $g_{\alpha\bullet} \colon \xbar_{\alpha\bullet} \to \ybar_{\alpha\bullet}$ approximated by maps $\{ \xbarp_{\alpha i \beta} \to \ybarp_{\alpha i \beta} \}$. This can of course be continued in the obvious way to define \emph{iterated resolutions and approximations}, but we will only need double resolutions and approximations later.
\end{defn}

\begin{defn}[Three factorisation conditions.]\label{dFactorisationConditions}
Consider a commutative square of continuous maps
\begin{equation}\label{e:commutative-square}
\centering
\begin{split}
\begin{tikzpicture}
[x=1mm,y=1mm]
\node (tl) at (0,10) {$A$};
\node (tr) at (20,10) {$B$};
\node (bl) at (0,0) {$C$};
\node (br) at (20,0) {$D$};
\draw[->] (tl) to (tr);
\draw[->] (bl) to node[below,font=\small]{$f$} (br);
\draw[->] (tl) to (bl);
\draw[->] (tr) to node[right,font=\small]{$h$} (br);
\end{tikzpicture}
\end{split}
\end{equation}
This satisfies the \emph{weak factorisation condition} if $h$ factors up to homotopy through $f$. It satisfies the \emph{strong factorisation condition} if it has a triangle decomposition
\begin{equation}\label{e:triangle-decomposition}
\centering
\begin{split}
\begin{tikzpicture}
[x=1mm,y=1mm]
\node (tl) at (0,10) {$A$};
\node (tr) at (20,10) {$B$};
\node (bl) at (0,0) {$C$};
\node (br) at (20,0) {$D$};
\draw[->] (tl) to (tr);
\draw[->] (bl) to node[below,font=\small]{$f$} (br);
\draw[->] (tl) to (bl);
\draw[->] (tr) to node[right,font=\small]{$h$} (br);
\draw[->] (tr) to (bl);
\node at (6,7) [font=\small] {$H$};
\node at (14,4) [font=\small] {$J$};
\end{tikzpicture}
\end{split}
\end{equation}
(meaning that there exists a diagonal map $B \to C$ and homotopies $H$ and $J$), and the composite homotopy $H \! J \colon S^1 \times A \to D$ is homotopic to the identity homotopy. It satisfies the \emph{moderate factorisation condition} if there is a triangle decomposition \eqref{e:triangle-decomposition} and a map $\ell \colon Z \to A$ such that the composite map $H \! J \circ (\mathrm{id} \times \ell) \colon S^1 \times Z \to D$ factors up to homotopy through $f$, in other words there is a dotted map making the square
\begin{equation}\label{e:triangle-decomposition-2}
\centering
\begin{split}
\begin{tikzpicture}
[x=1mm,y=1mm]
\node (tl) at (0,10) {$S^1 \times Z$};
\node (tr) at (30,10) {$S^1 \times A$};
\node (bl) at (0,0) {$C$};
\node (br) at (30,0) {$D$};
\draw[->] (tl) to node[above,font=\small]{$\mathrm{id} \times \ell$} (tr);
\draw[->] (bl) to node[below,font=\small]{$f$} (br);
\draw[->,dashed] (tl) to (bl);
\draw[->] (tr) to node[right,font=\small]{$H \! J$} (br);
\end{tikzpicture}
\end{split}
\end{equation}
commute up to homotopy.
\end{defn}

\begin{rmk}\label{r:strong-moderate}
The moderate factorisation condition implies the weak factorisation condition: if there is a triangle decomposition \eqref{e:triangle-decomposition}, then $h$ factors (via $J$) up to homotopy through $f$. Also, the strong factorisation condition implies the moderate factorisation condition: if $H \! J \colon S^1 \times A \to D$ is homotopic to the identity homotopy, then we may take $Z=A$, $\ell=\mathrm{id}$ and the dotted map $S^1 \times A \to C$ to be the projection onto the second factor followed by the map $A \to C$ from \eqref{e:commutative-square}.
\end{rmk}

\subsection{Sufficient criteria for stability}

\begin{thm}\label{tAxiomatic}
For each integer $n\geq 0$, let $\X(n)$ be a collection of maps between path-connected spaces. Then any one of the following six conditions implies that $\hconn(f) \geq \lfloor \frac{n}{2} \rfloor$ for each $f \in \X(n)$ and each $n\geq 0$, in other words, the sequence $\X(n)$ is homologically stable.

For any $m\leq n$ define $\X_m^n = \{ f \mid f \text{ is weakly equivalent to a map in } \X(k) \text{ for } m \leq k \leq n \}$.

The six conditions are as follows. For each $n\geq 2$, each $f \colon X \to Y$ of $\X(n)$ has
\begin{itemizeb}
\item[\textup{(1)}] an $\frac{n}{2}$-resolution $f_\bullet \colon X_\bullet \to Y_\bullet$ approximated by maps $f_{i\alpha}^\prime$ with $f_{0\alpha}^\prime \in \X_{n-2}^{n-1}$ and $f_{i\alpha}^\prime \in \X_{n-2i}^{n-1}$ for $i\geq 1$. In addition, for each $\alpha$, the square
\begin{equation}\label{eSquare}
\centering
\begin{split}
\begin{tikzpicture}
[x=1mm,y=1mm]
\node (tl) at (0,10) {$X_{0\alpha}^\prime$};
\node (tr) at (20,10) {$Y_{0\alpha}^\prime$};
\node (bl) at (0,0) {$X$};
\node (br) at (20,0) {$Y$};
\draw[->] (tl) to node[above,font=\small]{$f_{0\alpha}^\prime$} (tr);
\draw[->] (bl) to node[below,font=\small]{$f$} (br);
\draw[->] (tl) to (bl);
\draw[->] (tr) to (br);
\end{tikzpicture}
\end{split}
\end{equation}
  \begin{itemizeb}
  \item[\textup{(1s)}] satisfies the strong factorisation condition.
  \item[\textup{(1m)}] satisfies the moderate factorisation condition with $\ell \in \X_{n-2}^{n-1}$.
  \item[\textup{(1w)}] satisfies the weak factorisation condition, and we assume that every map in $\bigcup_{n\geq 0}\X(n)$ induces injections on homology in all degrees.
  \end{itemizeb}
\item[\textup{(2)}] an $\frac{n}{2}$-resolution $f_\bullet \colon X_\bullet \to Y_\bullet$ approximated by maps $f_{i\alpha}^\prime$ with $f_{0\alpha}^\prime \in \X_{n-2}^{n-1}$ and $f_{i\alpha}^\prime \in \X_{n-2i}^{n-1}$ for $i\geq 1$. Also, for each $\alpha$, the map $g_\alpha \colon \xbar_\alpha \to \ybar_\alpha$ has an $\frac{n}{2}$-resolution $g_{\alpha\bullet} \colon \xbar_{\alpha\bullet} \to \ybar_{\alpha\bullet}$ approximated by maps $g_{\alpha i \beta}^\prime$ with $g_{\alpha 0 \beta}^\prime \in \X_{n-2}^{n-1}$ and $g_{\alpha i \beta}^\prime \in \X_{n-2i}^{n-1}$ for $i\geq 1$. In addition, for each $\alpha$ and $\beta$, the square
\begin{equation}\label{eSquare2}
\centering
\begin{split}
\begin{tikzpicture}
[x=1mm,y=1mm]
\node (tl) at (0,10) {$\xbarp_{\alpha 0 \beta}$};
\node (tr) at (25,10) {$\ybarp_{\alpha 0 \beta}$};
\node (bl) at (0,0) {$X$};
\node (br) at (25,0) {$Y$};
\draw[->] (tl) to node[above,font=\small]{$g_{\alpha 0 \beta}^\prime$} (tr);
\draw[->] (bl) to node[below,font=\small]{$f$} (br);
\draw[->] (tl) to (bl);
\draw[->] (tr) to (br);
\end{tikzpicture}
\end{split}
\end{equation}
  \begin{itemizeb}
  \item[\textup{(2s)}] satisfies the strong factorisation condition.
  \item[\textup{(2m)}] satisfies the moderate factorisation condition with $\ell \in \X_{n-2}^{n-1}$.
  \item[\textup{(2w)}] satisfies the weak factorisation condition, and we assume that every map in $\bigcup_{n\geq 0}\X(n)$ induces injections on homology in all degrees.
  \end{itemizeb}
\end{itemizeb}
\end{thm}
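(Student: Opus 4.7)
The proof is by strong induction on $n$; cases $n \leq 1$ are immediate since $\frac{n}{2}$-homology-connectivity then reduces to surjectivity on $H_0$, guaranteed by path-connectedness. Fix $n \geq 2$ and $f \colon X \to Y$ in $\X(n)$ with its given resolution and approximation data. By the inductive hypothesis, $\hconn(f_{0\alpha}') \geq \lfloor n/2 \rfloor - 1$ and $\hconn(f_{i\alpha}') \geq \lfloor n/2 \rfloor - i$ for $i \geq 1$. The approximation square \eqref{eAxApproximation} expresses each $f_i$ as a map of Serre fibrations over a weak equivalence, with fiber map $f_{i\alpha}'$ over a basepoint of each path-component of $A_i$. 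Since fibres of a Serre fibration over a path-connected base are weakly equivalent, a Serre spectral sequence comparison over each path-component of $A_i$ promotes these bounds to $\hconn(f_i) \geq \lfloor n/2 \rfloor - i$ for $i \geq 1$ and $\hconn(f_0) \geq \lfloor n/2 \rfloor - 1$.

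Form the level-wise mapping cone augmented semi-simplicial space $(Cf)_\bullet$ with $(Cf)_i = Cf_i$, so that $\geomr{(Cf)_\bullet}$ is the mapping cone of $\geomr{f_\bullet} \colon \geomr{X_\bullet} \to \geomr{Y_\bullet}$. Applying the five lemma to the mapping-cone long exact sequences of $\geomr{f_\bullet}$ and $f$, using the connectivities of the two resolution augmentations ($X_\bullet$ is a $(\tfrac{n}{2}-1)$-resolution and $Y_\bullet$ a $\tfrac{n}{2}$-resolution), shows that $\geomr{(Cf)_\bullet} \to Cf$ is a homology isomorphism in degrees $\leq \lfloor n/2 \rfloor - 1$ and a surjection in degree $\lfloor n/2 \rfloor$. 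The spectral sequence $E^1_{p,q} = \tilde H_q(Cf_p) \Rightarrow \tilde H_{p+q}\geomr{(Cf)_\bullet}$ has $E^1_{p,q} = 0$ whenever either $p = 0$ and $q \leq \lfloor n/2 \rfloor - 1$, or $p \geq 1$ and $q \leq \lfloor n/2 \rfloor - p$. A diagonal count then gives $\tilde H_*\geomr{(Cf)_\bullet} = 0$ for $* \leq \lfloor n/2 \rfloor - 1$ and exhibits $\tilde H_{\lfloor n/2 \rfloor}\geomr{(Cf)_\bullet}$ as a quotient of $\tilde H_{\lfloor n/2 \rfloor}(Cf_0)$; combining, the theorem reduces to showing that the augmentation-induced map $\tilde H_{\lfloor n/2 \rfloor}(Cf_0) \to \tilde H_{\lfloor n/2 \rfloor}(Cf)$ is zero.

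Splitting $Cf_0$ by path-components of $A_0$ and running the Serre spectral sequence for the (homotopy) fibration $Cf_{0,\alpha} \to A_{0\alpha}$ with fiber $Cf_{0\alpha}'$, the vanishing $\tilde H_q(Cf_{0\alpha}') = 0$ for $q \leq \lfloor n/2 \rfloor - 1$ (from the inductive hypothesis on $f_{0\alpha}'$) forces $\tilde H_{\lfloor n/2 \rfloor}(Cf_{0\alpha}') \twoheadrightarrow \tilde H_{\lfloor n/2 \rfloor}(Cf_{0,\alpha})$, so it suffices to kill the map on $H_{\lfloor n/2 \rfloor}$ induced by the square \eqref{eSquare} --- which is where the factorisation condition enters.

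Under (1s), the strong factorisation constructs an explicit null-homotopy of $Cf_{0\alpha}' \to Cf$: the diagonal $k \colon Y_{0\alpha}' \to X$ and homotopies $H, J$ assemble, using the hypothesis that $H\!J$ is homotopic to the constant homotopy, into a factorisation of this map through the cone on $X$, which is contractible. Under (1m), an analogous construction yields a null-homotopy only after precomposing with $\ell \in \X_{n-2}^{n-1}$; by induction $\ell$ has enough connectivity to kill the obstruction class after this precomposition. Under (1w), the weak factorisation $h \simeq f \circ k$ combined with the homology-injectivity assumption, which makes $\tilde H_*(Cf) \cong \operatorname{coker}(f_*)$, forces the image of $H_*(Y_{0\alpha}')$ in $\tilde H_*(Cf)$ to lie in $\operatorname{im}(f_*)$ and hence vanish. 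Cases (2s/m/w) run the identical template with one extra layer of resolution: the obstruction in $\tilde H_{\lfloor n/2 \rfloor}(Cf_{0\alpha}')$ is pushed further down via the $\tfrac{n}{2}$-resolution $g_{\alpha\bullet}$ of $g_\alpha \simeq f_{0\alpha}'$ and its approximation to a quotient of $\tilde H_{\lfloor n/2 \rfloor}(Cg_{\alpha 0 \beta}')$, at which point the factorisation condition on \eqref{eSquare2} applies as before. The main obstacle is the first stage: tracking the connectivity bounds $\lfloor n/2 \rfloor - i$ through multiple Serre spectral sequences, handling possibly non-equivalent fibres over different path-components of $A_i$, and the parity bookkeeping in the floors --- once the connectivity setup is in place, the factorisation arguments at the top of each tower are essentially formal.
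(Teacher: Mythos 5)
Your proposal is correct and follows essentially the same strategy as the paper's proof: strong induction on $n$, two nested spectral-sequence comparisons (the semi-simplicial one for the resolution and the relative Serre one for the approximation) to reduce the problem to showing that the composite $\widetilde{H}_{\lfloor n/2\rfloor}(Cf_{0\alpha}^\prime)\to\widetilde{H}_{\lfloor n/2\rfloor}(Cf)$ is zero, which is then handled by the factorisation condition. The differences are cosmetic --- you phrase the first reduction via a five-lemma argument on the mapping-cone long exact sequences rather than by exhibiting the first differential $E^1_{0,t}\to E^1_{-1,t}$ as a surjection in the extra-column spectral sequence, and your treatment of (1s) sketches a space-level null-homotopy through the cone of $X$ rather than reducing it to (1m) via Remark \ref{r:strong-moderate} and working at the level of homology groups as the paper does --- but these lead to the same conclusions.
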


\begin{rmk}
Our convention from the beginning of this section implies that the condition $f\in\X(n)$ is vacuous when $n<0$, so there is no condition on $f_{i\alpha}^\prime$ or $g_{\alpha i \beta}^\prime$ for $i>\frac{n}{2}$.
\end{rmk}

\begin{rmk}
There is an obvious generalisation of this theorem, in which one takes arbitrarily many resolutions and approximations of a given $f \in \X(n)$, before verifying the strong/moderate/weak factorisation condition. The number of iterated resolutions and approximations is permitted to depend on $f$. The proof of this generalisation is an immediate extension of the proof of Theorem \ref{tAxiomatic} given below.
\end{rmk}

\begin{rmk}
Theorem \ref{tAxiomatic} also admits improvements in terms of the range of homological degrees to which it applies, namely its conclusion that $\hconn(f) \geq \lfloor \tfrac{n}{2} \rfloor$ for each $f \in \X(n)$, which represents a \emph{stability slope} of $\tfrac12$. For example, the homological stability results of \cite{Randal-Williams2016Resolutionsmodulispaces} and \cite{CanteroRandal-Williams2017Homologicalstabilityspaces} have a stability slope of $\tfrac23$ rather than $\tfrac12$. In fact, Theorem \ref{tAxiomatic} does not apply directly to the situations of \cite{Randal-Williams2016Resolutionsmodulispaces, CanteroRandal-Williams2017Homologicalstabilityspaces}, since in those cases one has a collection of maps graded by a semigroup which is not $\bN$. Restricting attention to a subcollection of maps graded by a subsemigroup isomorphic to $\bN$ allows one in principle to apply Theorem \ref{tAxiomatic}, but its conclusion would not then be optimal. However, Theorem \ref{tAxiomatic} admits generalisations for collections of maps $\X(-)$ graded by other semigroups, which would recover this improved stability slope.

Another setting to which Theorem \ref{tAxiomatic} does not apply directly is that of \cite{KupersMiller2016Homologicalstabilitytopological}, since in that case the stability slope is strictly \emph{smaller} than $\tfrac12$ in general. However, a slight modification of Theorem \ref{tAxiomatic} would apply to this setting too: modifying the condition that $f_{i\alpha}^{\prime} \in \X_{n-2i}^{n-1}$ to a condition of the form $f_{i\alpha}^{\prime} \in \X_{n-v(i)}^{n-1}$ for some affine function $v$, the argument may be modified slightly to conclude homological stability with a stability slope depending on the function $v$.
\end{rmk}

\begin{proof}[Proof of Theorem \ref{tAxiomatic}]
The proof is by induction on $n$. First note that when $n=0,1$ the claim is just that each $f\in\X(n)$ induces a surjection on path-components, which is true since we have taken all spaces to be path-connected. So let $n\geq 2$ and assume by induction that the conclusion holds for smaller values of $n$. Let $f\in\X(n)$.

\vspace{1ex}
\textbf{\slshape Spectral sequences.}
We have a resolution and approximation of $f$, so we may consider the square \eqref{eAxApproximation} for each $i\geq 0$. First replace it by an objectwise weakly-equivalent one in which all spaces have path-components that are open. (This is possible because, for any space $Z$ with path-components $Z_\alpha$, the natural map $\bigsqcup_\alpha Z_\alpha \to Z$ is a weak equivalence.) It now splits as the topological disjoint union of the squares
\begin{equation}\label{eAxApproximationComponent}
\centering
\begin{split}
\begin{tikzpicture}
[x=1mm,y=1mm]
\node (tl) at (0,10) {$X_{i\alpha}$};
\node (tr) at (20,10) {$Y_{i\alpha}$};
\node (bl) at (0,0) {$A_{i\alpha}$};
\node (br) at (20,0) {$B_{i\alpha}$};
\draw[->] (tl) to node[above,font=\small]{$f_{i\alpha}$} (tr);
\draw[->] (tl) to node[left,font=\small]{$p_{i\alpha}$} (bl);
\draw[->] (tr) to node[right,font=\small]{$q_{i\alpha}$} (br);
\draw[->] (bl) to node[below,font=\small]{$\phi_{i\alpha}$} (br);
\end{tikzpicture}
\end{split}
\end{equation}
where $A_{i\alpha}$ runs through the path-components of $A_i$ as $\alpha$ varies, $B_{i\alpha}$ is the path-component of $B_i$ that contains $\phi_i(A_{i\alpha})$ (hence $B_{i\alpha}$ also runs through the path-components of $B_i$ as $\alpha$ varies, since $\phi_i$ is assumed to be a weak equivalence, in particular a $\pi_0$-bijection), $X_{i\alpha} = p_i^{-1}(A_{i\alpha})$, $Y_{i\alpha} = q_i^{-1}(B_{i\alpha})$ and the maps $-_{i\alpha}$ are the corresponding restrictions of the maps $-_i$. For each $\alpha$, this square can be replaced by an objectwise homotopy-equivalent one in which $A_{i\alpha} = B_{i\alpha}$ and $\phi_{i\alpha}$ is the identity, and $p_{i\alpha}$, $q_{i\alpha}$ are still Serre fibrations. There is then, for each $i\geq 0$ and $\alpha$, a relative Serre spectral sequence (\cf Remark 2 on page 351 of \cite{Switzer1975Algebraictopologyhomotopy} and Exercise 5.6 on page 178 of \cite{McCleary2001usersguideto}) converging to the reduced homology of $Cf_{i\alpha}$, the mapping cone of $f_{i\alpha}$, and whose $E^2$ page can be identified as
\begin{equation}\label{eRSSS}
E^2_{s,t} \cong H_s(A_{i\alpha};\widetilde{H}_t(Cf_{i\alpha}^\prime)).
\end{equation}
This is first quadrant and its $r$th differential has bidegree $(-r,r-1)$. Moreover, the edge homomorphism
\begin{equation}\label{eRSSSedgehom}
\widetilde{H}_t(Cf_{i\alpha}^\prime) \cong E^2_{0,t} \twoheadrightarrow E^{\infty}_{0,t} \hookrightarrow \widetilde{H}_t(Cf_{i\alpha})
\end{equation}
is the map on reduced homology induced the two inclusions $X_{i\alpha}^\prime = p_i^{-1}(a_{i\alpha})\hookrightarrow p_i^{-1}(A_{i\alpha}) = X_{i\alpha}$ and $Y_{i\alpha}^\prime = q_i^{-1}(\phi_i(a_{i\alpha}))\hookrightarrow q_i^{-1}(B_{i\alpha}) = Y_{i\alpha}$.

Secondly, the map of augmented semi-simplicial spaces $f_\bullet \colon X_\bullet \to Y_\bullet$ induces a spectral sequence converging to the shifted reduced homology $\widetilde{H}_{*+1}$ of the total homotopy cofibre (twice iterated mapping cone) of
\begin{equation}\label{eTotalHomotopyCofibre}
\centering
\begin{split}
\begin{tikzpicture}
[x=1mm,y=1mm]
\node (tl) at (0,10) {$\lVert X_\bullet \rVert$};
\node (tr) at (20,10) {$\lVert Y_\bullet \rVert$};
\node (bl) at (0,0) {$X$};
\node (br) at (20,0) {$Y$};
\draw[->] (tl) to node[above,font=\small]{$\lVert f_\bullet \rVert$} (tr);
\draw[->] (bl) to node[below,font=\small]{$f$} (br);
\draw[->] (tl) to (bl);
\draw[->] (tr) to (br);
\end{tikzpicture}
\end{split}
\end{equation}
and whose $E^1$ page can be identified as
\begin{equation}\label{eSSSS}
E^1_{s,t} \cong \widetilde{H}_t(Cf_s).
\end{equation}
This is slightly larger than first quadrant -- it lives in $\{t\geq 0, s\geq -1\}$ -- and its $r$th differential has bidegree $(-r,r-1)$. The first differential $E^1_{0,t}\to E^1_{-1,t}$ can be identified with the map $\widetilde{H}_t(Cf_0) \to \widetilde{H}_t(Cf)$ on homology induced by the augmentation maps. See \cite[\S 2.3]{Randal-Williams2016Resolutionsmodulispaces} (a construction is also given in \cite[Appendix B]{Palmer2013Homologicalstabilityoriented}).

\vspace{1ex}
\textbf{\slshape Strategy.}
Since each $X_{0\alpha} \subseteq X_0$ is a union of path-components (and similarly for $Y_{0\alpha} \subseteq Y_0$), the mapping cone $Cf_0$ decomposes as the wedge $\bigvee_\alpha Cf_{0\alpha}$. For each $\alpha$ we have a map $Cf_{0\alpha}^\prime \to Cf_{0\alpha}$. Assuming one of the variants of condition (1), the strategy is to prove that the composite map $\bigvee_\alpha Cf_{0\alpha}^\prime \to \bigvee_\alpha Cf_{0\alpha} = Cf_0 \to Cf$ is both surjective on homology up to degree $\lfloor \frac{n}{2} \rfloor$ and the zero map on reduced homology in this range. If instead we assume one of the variants of condition (2), then we also have a homology equivalence $Cg_\alpha \to Cf_{0\alpha}^\prime$ for each $\alpha$, as well as a decomposition $Cg_{\alpha 0} = \bigvee_\beta Cg_{\alpha 0 \beta}$ and maps $Cg_{\alpha 0 \beta}^\prime \to Cg_{\alpha 0 \beta}$. The strategy in this case is to prove that the composite map
\[
\textstyle
\bigvee_{\alpha\beta} Cg_{\alpha 0 \beta}^\prime \to
\bigvee_{\alpha\beta} Cg_{\alpha 0 \beta} =
\bigvee_\alpha Cg_{\alpha 0} \to
\bigvee_\alpha Cg_\alpha \to
\bigvee_\alpha Cf_{0\alpha}^\prime \to \bigvee_\alpha Cf_{0\alpha} = Cf_0 \to Cf
\]
is both surjective and zero on reduced homology up to degree $\lfloor \frac{n}{2} \rfloor$.

\vspace{1ex}
\textbf{\slshape Surjectivity on homology.}
We first prove that each map $Cf_{0\alpha}^\prime \to Cf_{0\alpha}$ is surjective on homology in the required range. By condition (1) and the inductive hypothesis we have that $\hconn(f_{0\alpha}^\prime)\geq \lfloor\frac{n}{2}\rfloor -1$, in other words $\widetilde{H}_t(Cf_{0\alpha}^\prime)=0$ for $t\leq \lfloor\frac{n}{2}\rfloor -1$. Hence the $E^2$ page of the relative Serre spectral sequence \eqref{eRSSS} (for $i=0$) vanishes for $t\leq \lfloor\frac{n}{2}\rfloor -1$ and any $s\geq 0$. So in the slightly larger range $t\leq\lfloor\frac{n}{2}\rfloor$ the second map of \eqref{eRSSSedgehom} is the identity (there are no extension problems for this total degree) and therefore $\widetilde{H}_t(Cf_{0\alpha}^\prime) \to \widetilde{H}_t(Cf_{0\alpha})$ is surjective.

Secondly, we show that $Cf_0 \to Cf$ is also surjective on homology in this range. By condition~(1) and the inductive hypothesis we have that $\hconn(f_{i\alpha}^\prime) \geq \lfloor\frac{n}{2}\rfloor -i$ for $i\geq 1$, in other words $\widetilde{H}_t(Cf_{i\alpha}^\prime)=0$ for $t\leq\lfloor\frac{n}{2}\rfloor -i$ and $i\geq 1$. Hence the $E^2$ page of the relative Serre spectral sequence \eqref{eRSSS} (for $i\geq 1$) vanishes for $t\leq\lfloor\frac{n}{2}\rfloor -i$ and any $s\geq 0$. Therefore in the limit we have
\[
\widetilde{H}_*(Cf_{i\alpha})=0 \qquad\text{for } *\leq\lfloor\tfrac{n}{2}\rfloor -i\quad (\text{when } i\geq 1) \qquad\text{and for } *\leq\lfloor\tfrac{n}{2}\rfloor -1\quad (\text{when } i=0).
\]
The $E^1$ page of the spectral sequence \eqref{eSSSS} is the direct sum over $\alpha$ of $\widetilde{H}_t(Cf_{s\alpha})$, and so it has a trapezium of zeros as shown in Figure \ref{fSSSS}. Note that since $f_\bullet \colon X_\bullet \to Y_\bullet$ is an $\frac{n}{2}$-resolution we have that $\widetilde{H}_{*+1}$ of the total homotopy cofibre of \eqref{eTotalHomotopyCofibre} is trivial for $*+1\leq\lfloor\frac{n}{2}\rfloor$, so the spectral sequence \eqref{eSSSS} converges to zero in total degree $*\leq\lfloor\frac{n}{2}\rfloor -1$. In particular for $t\leq\lfloor\frac{n}{2}\rfloor$ we have that $E^{\infty}_{-1,t}=0$. Moreover, the second, third and later differentials that hit $E^{r}_{-1,t}$ all have trivial domain and so cannot kill it. Hence $E^1_{-1,t}$ must already be killed by the first differential -- in other words, the first differential $\widetilde{H}_t(Cf_0) \cong E^1_{0,t} \to E^1_{-1,t} \cong \widetilde{H}_t(Cf)$ must be surjective.

If instead we assume condition (2), then all of the arguments above remain valid, and in addition we repeat these arguments, using the spectral sequences associated to the resolutions $g_{\alpha\bullet} \colon \xbar_{\alpha\bullet} \to \ybar_{\alpha\bullet}$ and the relative Serre spectral sequences associated to the corresponding approximations to prove that the maps $Cg_{\alpha 0 \beta}^\prime \to Cg_{\alpha 0 \beta}$ and $Cg_{\alpha 0} \to Cg_\alpha$ are all surjective on homology up to degree $\lfloor \frac{n}{2} \rfloor$.
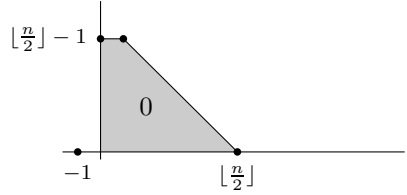
\begin{figure}[ht]
\centering
\begin{tikzpicture}
[x=1mm,y=1mm]
\fill[black!20] (0,0)--(0,15)--(3,15)--(18,0)--cycle;
\draw (0,-1)--(0,20);
\draw (-5,0)--(40,0);
\node (minusone) at (-3,0) [circle,fill,inner sep=1pt] {};
\node at (minusone.south) [anchor=north,font=\footnotesize] {$-1$};
\node (tl) at (0,15) [circle,fill,inner sep=1pt] {};
\node (tr) at (3,15) [circle,fill,inner sep=1pt] {};
\node (br) at (18,0) [circle,fill,inner sep=1pt] {};
\node at (br.south) [anchor=north,font=\footnotesize] {$\lfloor\frac{n}{2}\rfloor$};
\node at (tl.west) [anchor=east,font=\footnotesize] {$\lfloor\frac{n}{2}\rfloor -1$};
\draw (tl.center)--(tr.center)--(br.center);
\node (middle) at (6,6) {$0$};
\end{tikzpicture}
\caption{Zeros in the $E^1$ page of the spectral sequence \eqref{eSSSS}.}\label{fSSSS}
\end{figure}

\vspace{1ex}
\textbf{\slshape Zero on homology.}
If we assume one of the variants of condition (1), we now want to show that, for each $\alpha$, the composite map $Cf_{0\alpha}^\prime \to Cf_{0\alpha} \hookrightarrow Cf_0 \to Cf$ is zero on homology up to degree $\lfloor\frac{n}{2}\rfloor$. This is the map on mapping cones induced by the square \eqref{eSquare}. If we assume one of the variants of condition (2), we instead want to show that, for each $\alpha$ and $\beta$, the composite map
\[
Cg_{\alpha 0 \beta}^\prime \to Cg_{\alpha 0 \beta} \hookrightarrow Cg_{\alpha 0} \to Cg_\alpha \to Cf_{0\alpha}^\prime \to Cf_{0\alpha} \hookrightarrow Cf_0 \to Cf
\]
is zero on homology up to degree $\lfloor\frac{n}{2}\rfloor$. This is the map on mapping cones induced by the square \eqref{eSquare2}. So we just have to show that either \eqref{eSquare} or \eqref{eSquare2} induces the zero map on the homology of the mapping cones (of the horizontal maps) in a range of degrees. In fact, this is what we will do under the moderate and strong factorisation conditions, but under the weak factorisation condition we will do something slightly different. We start with this case.

\vspace{1ex}
\textbf{\slshape The weak factorisation condition.}
We first assume condition (1w), the weak factorisation condition and injectivity on homology of all maps in $\bigcup_{n\geq 0} \X(n)$. The square \eqref{eSquare} induces a map of long exact sequences on homology:
\begin{center}
\begin{tikzpicture}
[x=1.2mm,y=1.5mm]
\node (t2) at (25,10) {$\cdots$};
\node (t3) at (40,10) {$\widetilde{H}_t(Y_{0\alpha}^\prime)$};
\node (t4) at (60,10) {$\widetilde{H}_t(Cf_{0\alpha}^\prime)$};
\node (t5) at (80,10) {$\widetilde{H}_{t-1}(X_{0\alpha}^\prime)$};
\node (t6) at (105,10) {$\widetilde{H}_{t-1}(Y_{0\alpha}^\prime)$};
\node (t7) at (120,10) {$\cdots$};
\node (b1) at (5,0) {$\cdots$};
\node (b2) at (20,0) {$\widetilde{H}_t(X)$};
\node (b3) at (40,0) {$\widetilde{H}_t(Y)$};
\node (b4) at (60,0) {$\widetilde{H}_t(Cf)$};
\node (b5) at (75,0) {$\cdots$};
\draw[->] (t2) to (t3);
\draw[->] (t3) to node[above,font=\small]{$(**)$} (t4);
\draw[->] (t4) to (t5);
\draw[->] (t5) to node[above,font=\small]{$(f_{0\alpha}^\prime)_*$} (t6);
\draw[->] (t6) to (t7);
\draw[->] (b1) to (b2);
\draw[->] (b2) to node[below,font=\small]{$f_*$} (b3);
\draw[->] (b3) to (b4);
\draw[->] (b4) to (b5);
\draw[->] (t3) to (b2);
\draw[->] (t3) to (b3);
\draw[->] (t4) to node[right,font=\small]{$(*)$} (b4);
\end{tikzpicture}
\end{center}
The triangle on the left-hand side comes from the factorisation assumed in the weak factorisation condition. It implies that the composition $(*)\circ (**)$ is zero. Since we have assumed that maps in $\bigcup_{n\geq 0} \X(n)$ all induce injective maps on homology in all degrees, the map $(f_{0\alpha}^\prime)_*$ in the diagram is injective for any $t$, and so by exactness the map $(**)$ is surjective for any $t$.\footnote{It would not be enough to invoke the inductive hypothesis here, since it only tells us that the map $(f_{0\alpha}^\prime)_*$ in the diagram is injective for $t\leq\lfloor\frac{n}{2}\rfloor -1$, and we need the range $t\leq\lfloor\frac{n}{2}\rfloor$.} In the range $t\leq\lfloor\frac{n}{2}\rfloor$, and after taking the direct sum of the top line over $\alpha$, the map $(*)$ is surjective by what we proved above. So we have a map $(*) \circ (**)$ with target $\widetilde{H}_t(Cf)$ which is both surjective and zero in the required range, which finishes the inductive step of the proof.\footnote{Equivalently, one may organise the logic by deducing from the vanishing of $(*) \circ (**)$ and the surjectivity of $(**)$ that $(*)$ also vanishes, and hence, since $(*)$ is also surjective, its target must be zero.}

If we assume condition (2w), then we apply exactly the same argument to the map of long exact sequences induced by the square \eqref{eSquare2} instead.

\vspace{1ex}
\textbf{\slshape The moderate and strong factorisation conditions.}
We first assume either (1m) or (1s) and show that the map $Cf_{0\alpha}^\prime \to Cf$ is zero on reduced homology up to degree $\lfloor\frac{n}{2}\rfloor$. The triangle decomposition \eqref{e:triangle-decomposition} of \eqref{eSquare} induces a decomposition of this map on homology as follows:
\begin{center}
\begin{tikzpicture}
[x=1.2mm,y=1.5mm]
\node (t2) at (25,10) {$\cdots$};
\node (t3) at (40,10) {$\widetilde{H}_t(Cf_{0\alpha}^\prime)$};
\node (t4) at (60,10) {$\widetilde{H}_{t-1}(X_{0\alpha}^\prime)$};
\node (t5) at (75,10) {$\cdots$};
\node (b1) at (5,0) {$\cdots$};
\node (b2) at (20,0) {$\widetilde{H}_t(Y)$};
\node (b3) at (40,0) {$\widetilde{H}_t(Cf)$};
\node (b4) at (55,0) {$\cdots$};
\draw[->] (t2) to (t3);
\draw[->] (t3) to (t4);
\draw[->] (t4) to (t5);
\draw[->] (b1) to (b2);
\draw[->] (b2) to (b3);
\draw[->] (b3) to (b4);
\draw[->] (t3) to (b3);
\draw[->] (t4) to (b2);
\end{tikzpicture}
\end{center}
where the diagonal map is
\[
\widetilde{H}_{t-1}(X_{0\alpha}^\prime) \lhook\joinrel\xrightarrow{\;\text{K{\"u}nneth}\;} \widetilde{H}_t(S^1 \times X_{0\alpha}^\prime) \xrightarrow{\;(H \! J)_*\;} \widetilde{H}_t(Y).
\]
This is proved in \cite[Appendix A]{Palmer2013Homologicalstabilityoriented}, and it also follows from \cite[Lemma 7.5]{CanteroRandal-Williams2017Homologicalstabilityspaces}, which in fact gives a space-level decomposition. Putting this together with \eqref{e:triangle-decomposition-2} we have a commutative diagram
\begin{center}
\begin{tikzpicture}
[x=1.2mm,y=1.5mm]
\node (tl) at (0,20) {$\widetilde{H}_{t-1}(Z)$};
\node (tm) at (30,20) {$\widetilde{H}_{t-1}(X_{0\alpha}^\prime)$};
\node (tr) at (60,20) {$\widetilde{H}_t(Cf_{0\alpha}^\prime)$};
\node (ml) at (0,10) {$\widetilde{H}_t(S^1 \times Z)$};
\node (mm) at (30,10) {$\widetilde{H}_t(S^1 \times X_{0\alpha}^\prime)$};
\node (bl) at (0,0) {$\widetilde{H}_t(X)$};
\node (bm) at (30,0) {$\widetilde{H}_t(Y)$};
\node (br) at (60,0) {$\widetilde{H}_t(Cf)$};
\draw[->>] (tl) to node[above,font=\small]{$\ell_*$} (tm);
\draw[->] (tr) to (tm);
\draw[->] (ml) to (mm);
\draw[->] (bl) to node[above,font=\small]{$f_*$} (bm);
\draw[->] (bm) to (br);
\draw[->] (ml) to (bl);
\draw[->] (mm) to node[right,font=\small]{$(H \! J)_*$} (bm);
\draw[->] (tr) to node[right,font=\small]{$(\dagger)$} (br);
\incl{(tl)}{(ml)}
\incl{(tm)}{(mm)}
\draw[->] (bl.south)--(0,-4)-- node[below,font=\small]{$0$} (60,-4)--(br.south);
\end{tikzpicture}
\end{center}
with either $\ell \in \X_{n-2}^{n-1}$ (if we assume (1m)) or $\ell = \mathrm{id}$ (if we assume (1s), see Remark \ref{r:strong-moderate}). In either case we know that the induced map $\ell_*$ in the diagram is surjective for $t\leq\lfloor\frac{n}{2}\rfloor$ (by the inductive hypothesis if we assume (1m)). The bottom two horizontal maps are consecutive maps in a long exact sequence, so their composition is zero. A diagram chase therefore shows that the map $(\dagger)$ is zero for $t\leq\lfloor\frac{n}{2}\rfloor$, as required.

If we assume (2m) or (2s), then we use exactly the same argument, applied instead to the triangle decomposition \eqref{e:triangle-decomposition} of \eqref{eSquare2}.
\end{proof}

\section{Fibre bundle and fibration lemmas}\label{s:fibre-bundles}

\subsection{Smooth mapping spaces.}\label{ss:smooth-mapping-spaces}

In this section we prove some technical lemmas that certain maps between smooth mapping spaces are fibre bundles. First we discuss briefly the topology that we use for our smooth mapping spaces, and collect some basic facts that will be useful later.

For smooth\footnote{By ``smooth'' we always mean $C^\infty$-smooth. Also, manifolds in this section have empty boundary unless explicitly stated otherwise.} manifolds $L$ and $N$, we will always use the \emph{strong $C^\infty$ topology} on the space $C^\infty(L,N)$ of smooth maps $L \to N$, unless explicitly stated otherwise. This topology is also sometimes known as the \emph{Whitney $C^\infty$ topology}.

Whenever $L$ is compact, this coincides with the \emph{weak $C^\infty$ topology}, also known as the \emph{compact-open $C^\infty$ topology}, but when $L$ is non-compact it is strictly finer than the weak $C^\infty$ topology.\footnote{On the other hand, even when $L$ is non-compact, the strong and weak $C^\infty$ topologies do coincide on the subset $C_{\mathrm{pr}}^\infty(L,N)$ of all proper maps $L \to N$. In particular this means that they coincide on the diffeomorphism group $\mathrm{Diff}(N)$ for any manifold $N$. In practice, we will almost exclusively work with smooth mapping spaces with compact domain and diffeomorphism groups, so the distinction between strong and weak $C^\infty$ topologies does not arise.} In fact, the weak $C^\infty$ topology is metrisable and therefore paracompact, whereas the strong $C^\infty$ topology is not even first-countable when $L$ is non-compact and $\mathrm{dim}(N)>0$. On the other hand, the strong $C^\infty$ topology will be useful for us since various properties of smooth maps, notably the property of being an embedding, are open conditions in this topology, and moreover the strong $C^\infty$ topology makes the mapping space into a Baire space, so the Thom transversality theorem applies. We refer to \cite[\S 2]{Mather1969StabilityofC}, \cite[\S II.3]{GolubitskyGuillemin1973Stablemappingsand} and \cite[\S 2.1]{Hirsch1976Differentialtopology} for the definitions and further discussion of these topologies.

In addition to the facts mentioned above, we record three facts more formally, for future reference.

\begin{fact}\label{fact:locally-contractible}
For compact $L$, the space $C^\infty(L,N)$ is locally contractible. Its open subspace $\mathrm{Emb}(L,N)$ of smooth embeddings is therefore also locally contractible, and in particular $\mathrm{Diff}(L) = \mathrm{Emb}(L,L)$ is locally contractible.

For non-compact manifolds $N$, the diffeomorphism group $\mathrm{Diff}(N)$ is not even locally path-connected, but its subgroup $\mathrm{Diff}_c(N)$ of compactly-supported diffeomorphisms is locally contractible.
\end{fact}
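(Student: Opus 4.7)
The plan is a standard tubular-neighbourhood construction. Fix a complete Riemannian metric on $N$ with positive injectivity-radius function $r_N$, so that the exponential map gives a diffeomorphism from an open neighbourhood of the zero section in $TN$ onto an open neighbourhood of the diagonal in $N \times N$. For a fixed smooth map $f \colon L \to N$ with $L$ compact, I would first exhibit a homeomorphism from a suitable strong-$C^\infty$-open neighbourhood $V_f$ of $f$ in $C^\infty(L,N)$ onto an open neighbourhood of $0$ in the space $\Gamma(f^*TN)$ of smooth sections of the pullback bundle (equipped with its own $C^\infty$ topology), given by $g \mapsto (x \mapsto \exp_{f(x)}^{-1}(g(x)))$. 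The neighbourhood exists because compactness of $L$ combined with positivity of $r_N \circ f$ provides a uniform lower bound on the allowed perturbation, and the strong $C^\infty$ topology is generated precisely by conditions of this form. Since $\Gamma(f^*TN)$ is a topological vector space, straight-line contraction to $0$ contracts $V_f$ onto $\{f\}$.

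To pass to $\mathrm{Emb}(L,N)$, I would use that this is an open subset of $C^\infty(L,N)$ in the strong $C^\infty$ topology --- being an embedding is an open condition (see \cite[\S 2.1]{Hirsch1976Differentialtopology}). Given an embedding $f$ and the section neighbourhood $V_f$ constructed above, compactness of $L$ and openness of $\mathrm{Emb}(L,N)$ let us shrink $V_f$ so that every point of the straight-line homotopy still corresponds to an embedding; this yields a contractible open neighbourhood of $f$. Specialising to $L = N$ handles $\mathrm{Diff}(L) = \mathrm{Emb}(L,L)$.

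For the compactly-supported case I would repeat the argument with one modification: for $\varphi \in \mathrm{Diff}_c(N)$ supported in a compact $K \subset N$, restrict attention to sections of $\varphi^* TN$ whose support lies in a slightly enlarged compact set $K'$, and cut off the straight-line contraction by a fixed bump function adapted to $K'$ to ensure that compact support is preserved along the homotopy. The main obstacle --- and the part I would be content to quote from the literature rather than reprove --- is the negative assertion that $\mathrm{Diff}(N)$ fails even to be locally path-connected when $N$ is non-compact. The standard argument uses a locally finite family of pairwise disjoint coordinate disks marching off to infinity, on which one builds ``twist'' diffeomorphisms lying in every prescribed basic strong-$C^\infty$ neighbourhood of $\mathrm{id}_N$ but not joinable to $\mathrm{id}_N$ by any continuous path in the strong topology; a clean treatment can be found in \cite[\S 2.1]{Hirsch1976Differentialtopology}.
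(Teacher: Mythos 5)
Your approach is genuinely different from the paper's: where the paper simply cites Cerf for local contractibility of $C^\infty(L,N)$ (compact $L$) and Guran--Zarichnyui / Banakh--Mine--Sakai--Yagasaki for the statements about $\mathrm{Diff}(N)$ and $\mathrm{Diff}_c(N)$, you reconstruct the exponential-chart linearisation directly. For the compact case this is fine and is indeed the content of the cited result; note though that the second paragraph is slightly redundant, since once $C^\infty(L,N)$ is known to have a basis of contractible open sets, any open subspace is automatically locally contractible, with no need to re-examine the homotopy.

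Two points in the non-compact case deserve scrutiny. First, the bump-function device for $\mathrm{Diff}_c(N)$ does not obviously do what you want: restricting to sections supported in a fixed $K'$ does \emph{not} describe an open neighbourhood of $\varphi$ in $\mathrm{Diff}_c(N)$ (nearby compactly-supported diffeomorphisms can have support well outside any prescribed $K'$), and cutting off an arbitrary nearby section by a fixed bump changes the endpoint of the straight-line homotopy. In fact no cutoff is needed --- $t\sigma$ is automatically compactly supported if $\sigma$ is --- but the genuine issue you must verify is joint continuity of $(t,\sigma)\mapsto t\sigma$ in the strong-$C^\infty$ subspace topology, which holds near a compactly-supported basepoint but fails in general; this is precisely the delicate point that the paper's citation of \cite[Theorem 4]{BanakhMineSakaiYagasaki2011Homeomorphismanddiffeomorphism} (local homeomorphism to $l_2\times\bR^\infty$) is brought in to handle. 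Second, the attribution of the negative statement to \cite[\S 2.1]{Hirsch1976Differentialtopology} is a stretch: that section establishes (as an exercise) that the ``agree outside a compact set'' classes are open and closed, which is the main ingredient, but the conclusion that $\mathrm{Diff}(N)$ is not locally path-connected is not treated there; the paper's references \cite{GuranZarichnyui1984Whitneytopologyand, BanakhMineSakaiYagasaki2011Homeomorphismanddiffeomorphism} are the appropriate ones.
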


\begin{proof}
For the first statement, see for example \cite[Corollary of Proposition $4'$, page 281]{Cerf1961Topologiedecertains}. For the non-local-path-connectedness of $\mathrm{Diff}(N)$ when $N$ is non-compact (but $\sigma$-compact), see \cite{GuranZarichnyui1984Whitneytopologyand} or Theorem 4 of \cite{BanakhMineSakaiYagasaki2011Homeomorphismanddiffeomorphism}, which states that for such $N$ the full diffeomorphism group $\mathrm{Diff}(N)$ is locally homeomorphic to the infinite box power $\square^\omega l_2$, which is not locally path-connected. Theorem 4 of \cite{BanakhMineSakaiYagasaki2011Homeomorphismanddiffeomorphism} also states that the subgroup $\mathrm{Diff}_c(N)$ is locally homeomorphic to $l_2 \times \bR^\infty \cong \boxdot^\omega l_2$, which is locally contractible.
\end{proof}

\begin{fact}\label{fact:composition-continuous}
The function
\[
C^\infty(L,M) \times C^\infty(M,N) \longrightarrow C^\infty(L,N)
\]
given by composition of smooth maps is in general discontinuous --- in fact it always fails to be continuous if $L$ is non-compact --- but it becomes continuous when it is restricted to the subset $C_{\mathrm{pr}}^\infty(L,M) \times C^\infty(M,N)$ of the domain, where $C_{\mathrm{pr}}^\infty$ denotes the subspace of proper smooth maps.
\end{fact}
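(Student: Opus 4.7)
The two halves require separate arguments. For the positive statement, the plan is to work directly from the definition of the strong $C^\infty$ topology: a neighbourhood basis at $h \in C^\infty(L,N)$ consists of sets $\mathcal{W}(\{U_i\},\{V_i\},\{k_i\},\{\epsilon_i\})$ of maps $h'$ such that $h'(\overline{U_i}) \subseteq V_i$ and the $C^{k_i}$-norm of $h'-h$ (computed in fixed local coordinates on $\overline{U_i}$ and $V_i$) is less than $\epsilon_i$, where $\{U_i\}$ is a locally finite open cover of $L$ by relatively compact coordinate charts and $\{V_i\}$ is a corresponding family of coordinate charts on $N$ with $h(\overline{U_i}) \subseteq V_i$. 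Given $(f_0,g_0) \in C^\infty_{\mathrm{pr}}(L,M)\times C^\infty(M,N)$ and a basic neighbourhood $\mathcal{W}$ of $g_0 \circ f_0$ of this form, I would produce neighbourhoods of $f_0$ and $g_0$ whose composition lies in $\mathcal{W}$.

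The key input is that since $f_0$ is proper and each $\overline{U_i}$ is compact, each $f_0(\overline{U_i})$ is compact in $M$, and moreover the family $\{f_0(\overline{U_i})\}$ is locally finite in $M$: any compact $K \subseteq M$ meets $f_0(\overline{U_i})$ iff $\overline{U_i}$ meets the compact set $f_0^{-1}(K)$, which happens for only finitely many $i$ by local finiteness of $\{U_i\}$. Thicken each $f_0(\overline{U_i})$ to a relatively compact open set $W_i \subseteq V_i$ so that $\{W_i\}$ remains locally finite in $M$. Define a neighbourhood of $f_0$ by requiring $f(\overline{U_i}) \subseteq W_i$ together with a $C^{k_i}$-closeness to $f_0$ on $\overline{U_i}$, and a neighbourhood of $g_0$ by requiring $C^{k_i}$-closeness to $g_0$ on $\overline{W_i}$ (the local finiteness of $\{W_i\}$ is essential for the latter to be a valid basic neighbourhood in $C^\infty(M,N)$). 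Applying the Faà di Bruno / chain rule estimates then bounds the $C^{k_i}$-norm of $g \circ f - g_0 \circ f_0$ on $\overline{U_i}$ by the chosen small quantities, making the tolerances explicit in a standard way.

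For the negative statement, the plan is to exhibit an explicit counterexample at a non-proper $f$. Working in charts it suffices to consider $L = M = N = \mathbb{R}$. Choose a non-proper $f$ whose image contains some $y_0$ with $f^{-1}(y_0)$ non-compact; for instance, $f(x) = \sin(x)$ and $y_0 = 0$ with $f^{-1}(0) = \pi\mathbb{Z}$. Take $g_0 = 0$ and fix $\phi \in C^\infty_c(\mathbb{R})$ with $\phi(y_0) = 1$; set $g_n = \phi/n$. Then $g_n \to g_0$ in the strong $C^\infty$ topology since all $g_n$ have support in a fixed compact set and $\|g_n\|_{C^k} \to 0$ for every $k$. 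However $(g_n \circ f)(x) = \phi(f(x))/n$ takes the value $1/n$ at every point of the non-compact set $f^{-1}(y_0)$, so $g_n \circ f$ fails to vanish outside any compact subset of $L$, and hence cannot converge to $g_0 \circ f_0 = 0$ in the strong $C^\infty$ topology. For an arbitrary non-compact $L$ one transports this construction through a coordinate chart containing a non-compact closed discrete subset.

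The main obstacle is the bookkeeping with locally finite covers in the positive direction; the essential use of properness is exactly the step showing that $\{f_0(\overline{U_i})\}$ is locally finite in $M$, which is what allows the tolerances prescribed on $L$ to be transferred to tolerances on $M$ in a coherent way.
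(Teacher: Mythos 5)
The paper does not prove this Fact itself; it cites Mather (Prop.\ 1 and Remark 2, p.\ 259) and Golubitsky--Guillemin (Prop.\ II.3.9). Your argument is the standard one and, modulo write-up slips, is the proof behind those citations: you correctly isolate the role of properness (it is precisely what makes the pushed-forward family $\{f_0(\overline{U_i})\}$, and hence the thickenings $\{W_i\}$, locally finite in $M$, so that the prescribed tolerances on $L$ can be converted into a legitimate basic strong-$C^\infty$ neighbourhood of $g_0$ in $C^\infty(M,N)$), and the rest is chain-rule/Fa\`a di Bruno bookkeeping. For the negative direction the bump-function counterexample captures exactly the phenomenon: strong convergence $h_n \to h$ in $C^\infty(L,N)$ forces $h_n = h$ outside some compact set for large $n$, and non-properness of $f$ defeats this by spreading the perturbation of $g_n$ over a non-compact fibre.

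A few points worth tightening. First, you write ``thicken $f_0(\overline{U_i})$ to a relatively compact open set $W_i\subseteq V_i$'', but $V_i$ lives in $N$ while $W_i$ must live in $M$: you want $W_i\subseteq M$ relatively compact with $f_0(\overline{U_i})\subseteq W_i$ and $g_0(\overline{W_i})\subseteq V_i$, and small enough to sit inside (or be covered by finitely many) coordinate charts of $M$ so that $C^{k_i}$-norms make sense. Second, controlling $\lVert g_0\circ f - g_0\circ f_0\rVert_{C^{k_i}(\overline{U_i})}$ in terms of $\lVert f - f_0\rVert_{C^{k_i}(\overline{U_i})}$ requires a Lipschitz bound on the $k_i$-jet of $g_0$, hence a $C^{k_i+1}$-bound on $g_0$ over $\overline{W_i}$; this is harmless since $g_0$ is smooth and $\overline{W_i}$ compact, but the order should be bumped. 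Third, in the negative direction the closing sentence (``a coordinate chart containing a non-compact closed discrete subset'') is not quite right --- such a chart need not exist, e.g.\ if $L$ has infinitely many components. It is cleaner, and works for every non-compact $L$ and every $M$ with $\dim M\geq 1$, to take $f_0$ to be a constant map, so that $f_0^{-1}(y_0)=L$; then $g_n\circ f_0$ is a non-zero constant while $g_0\circ f_0$ is identically $z_0$, and these cannot be strong-$C^\infty$ close on a non-compact $L$.
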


This is originally due to \cite[Proposition 1 and Remark 2 on page 259]{Mather1969StabilityofC}. See also Proposition II.3.9 and the remark following it in \cite{GolubitskyGuillemin1973Stablemappingsand}. As a consequence, the right action of $\mathrm{Diff}(L)$ on the embedding space $\mathrm{Emb}(L,N)$ is always continuous, whereas we have to assume that $L$ is compact in order for the left action of $\mathrm{Diff}(N)$ on this space to be continuous.

\begin{fact}\label{fact:topological-embedding}
If $M$ is a submanifold of $N$, then the continuous injection
\[
C_{\mathrm{pr}}^\infty(L,M) \longrightarrow C^\infty(L,N)
\]
given by post-composition with the inclusion is a topological embedding, i.e.\ a homeomorphism onto its image. Thus, if $e \colon L \hookrightarrow N$ is a smooth embedding, the map
\[
e \circ - \colon \mathrm{Diff}(L) \longrightarrow \mathrm{Emb}(L,N)
\]
is a topological embedding.
\end{fact}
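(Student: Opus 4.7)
The plan is to deduce the first statement from Fact \ref{fact:composition-continuous} together with the existence of a smooth tubular neighbourhood of the submanifold $M \subseteq N$, and then to obtain the second statement from the first by factoring $e \circ -$ through the submanifold $e(L) \subseteq N$.

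For the first statement, continuity of $\iota_*$ and its injectivity are immediate (the former from Fact \ref{fact:composition-continuous}, applied to post-composition with the proper inclusion $\iota$; the latter from injectivity of $\iota$). To show that $\iota_*$ is a homeomorphism onto its image $\mathcal{I} = \iota_*(C_{\mathrm{pr}}^\infty(L,M))$, I would fix a smooth tubular neighbourhood $U$ of $M$ in $N$ together with a smooth retraction $r \colon U \to M$. The open inclusion $U \hookrightarrow N$ induces a topological embedding $C^\infty(L,U) \hookrightarrow C^\infty(L,N)$ with image the open subset $\{g : g(L) \subseteq U\}$; this is a standard property of the strong $C^\infty$ topology, since the charts on $U$ used to define its topology are also charts on $N$, and any locally finite chart cover of $L$ adapted to a map with image in $U$ may be chosen with each piece landing in $U$. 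In particular $\mathcal{I}$ is identified with a subset of $C_{\mathrm{pr}}^\infty(L,U)$. For any $g \in \mathcal{I}$ the unique preimage under $\iota_*$ is $r \circ g$, so $\iota_*^{-1}$ coincides on $\mathcal{I}$ with the restriction of $r_* \colon C_{\mathrm{pr}}^\infty(L,U) \to C^\infty(L,M)$, which is continuous by Fact \ref{fact:composition-continuous}. Hence $\iota_*$ is a topological embedding.

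For the second statement, the smooth embedding $e \colon L \hookrightarrow N$ factors as $L \xrightarrow{\bar e} e(L) \xrightarrow{\iota_e} N$, where $\bar e$ is a diffeomorphism (onto its image, which is a smooth submanifold since $e$ is an embedding) and $\iota_e$ is the submanifold inclusion. Accordingly $e \circ -$ factors as
\[ \Diff(L) \xrightarrow{\bar e \circ -} C_{\mathrm{pr}}^\infty(L, e(L)) \xrightarrow{(\iota_e)_*} C^\infty(L, N). \]
The first map is a homeomorphism onto its image (the subspace of diffeomorphisms $L \to e(L)$), with continuous two-sided inverse $\bar e^{-1} \circ -$: continuity in both directions follows from Fact \ref{fact:composition-continuous}, using that $\bar e$ and $\bar e^{-1}$ are proper. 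The second map is a topological embedding by the first statement. Composing, $e \circ - \colon \Diff(L) \to \Emb(L, N) \subseteq C^\infty(L, N)$ is a topological embedding.

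The only technical point requiring care is the standard fact that $C^\infty(L, U) \hookrightarrow C^\infty(L, N)$ is a topological embedding with open image when $U$ is open in $N$ --- a feature of the strong $C^\infty$ topology that has no analogue in the weak topology. Beyond this, the proof amounts to invoking Fact \ref{fact:composition-continuous} and the existence of a smooth tubular neighbourhood, and I anticipate no serious obstacle.
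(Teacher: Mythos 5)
Your proof is correct. For the second statement you take essentially the same route as the paper: factor $e\circ -$ through the submanifold $e(L)$, observe that $\bar{e}\circ - \colon \mathrm{Diff}(L)\to C_{\mathrm{pr}}^\infty(L,e(L))$ is a homeomorphism onto the space of diffeomorphisms $L\to e(L)$, and then apply the first statement (the paper phrases this by restricting to embedding spaces and noting $\mathrm{Diff}(L,e(L)) = \mathrm{Emb}_{\mathrm{pr}}(L,e(L))$, which is the same observation).

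For the first statement, however, you take a genuinely different route. The paper simply refers to Remark 1 on page 259 of Mather's paper for this point, whereas you give a self-contained argument: choose a tubular neighbourhood $U$ of $M$ in $N$ with retraction $r\colon U\to M$, identify $\cI = \iota_*(C_{\mathrm{pr}}^\infty(L,M))$ with a subset of $C_{\mathrm{pr}}^\infty(L,U)$ via the standard open topological embedding $C^\infty(L,U)\hookrightarrow C^\infty(L,N)$ (valid since the strong topology on $C^\infty(L,U)$ agrees with the subspace topology from $C^\infty(L,N)$ when $U$ is open), and observe that $r_*$ is a continuous inverse on $\cI$. This works: properness of elements of $\cI$ as maps into $U$ holds because $M$ is closed in its tubular neighbourhood, so Fact \ref{fact:composition-continuous} applies to both $\iota_*$ and $r_*$. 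What your version buys is independence from the cited reference, at the cost of invoking the tubular neighbourhood theorem and the open-subspace property of the strong $C^\infty$ topology; the latter is indeed standard but is itself one of the points Mather's remark is addressing, so the gain in self-containedness is partial rather than complete. Both arguments apply without any compactness assumption on $L$, which is appropriate since the Fact is stated before the compactness hypothesis of Assumption \ref{notation-convention} comes into force.
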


The first statement follows from Remark 1 on page 259 of \cite{Mather1969StabilityofC}. To deduce the second statement, set $M = e(L)$ and restrict the domain and codomain to embedding spaces, to obtain a topological embedding $\mathrm{Emb}_{\mathrm{pr}}(L,e(L)) \hookrightarrow \mathrm{Emb}(L,N)$. Now note that a proper embedding between manifolds of the same dimension is a diffeomorphism, so $\mathrm{Diff}(L,e(L)) = \mathrm{Emb}_{\mathrm{pr}}(L,e(L))$, and pre-compose with the homeomorphism $e \circ - \colon \mathrm{Diff}(L) \to \mathrm{Diff}(L,e(L))$.

\begin{assumption}\label{notation-convention}
For the rest of this section, unless otherwise specified, $G$ and $H$ denote topological groups, $L$ and $N$ denote manifolds without boundary, with $L$ assumed to be compact, such that $\mathrm{dim}(L) < \mathrm{dim}(N)$.
\end{assumption}

\subsection{A fibre bundle criterion.}

Let $G$ be a topological group with a left-action on a space $X$, i.e.\ a group homomorphism $G\to \mathrm{Homeo}(X)$ such that the adjoint $a\colon G\times X\to X$ is continuous.\footnote{Note that we do not need to specify a topology on the group of self-homeomorphisms of $X$, since the continuity condition is on the action map $G \times X \to X$ and not on the group homomorphism $G \to \mathrm{Homeo}(X)$.}

\begin{defn}\label{d:G-locally-retractile}
We say that $X$ is \emph{$G$-locally retractile} with respect to this action (equivalently, that the action $G\curvearrowright X$ \emph{admits local sections}) if every $x\in X$ has an open neighbourhood $V_x \subseteq X$ and a continuous map $\gamma_x \colon V_x \to G$ sending $x$ to the identity such that $\gamma_x(y)\cdot x = y$ for all $y\in V_x$, in other words the composition
\[
V_x \xrightarrow{\gamma_x} G \xrightarrow{-\cdot x} X
\]
is the inclusion. Equivalently, each orbit map $G \times \{ x \} \hookrightarrow G\times X \xrightarrow{a} X$ admits a (pointed) section on some open neighbourhood of $x\in X$.
\end{defn}

The notion of $G$-locally retractile spaces comes from \cite[\S 0.4.4]{Cerf1961Topologiedecertains}, where the definition is given in a more general categorical setting; the definition above corresponds to taking $\cC$ to be the category of topological spaces and $T'$ to be the identity functor in \S 0.4.4 of \cite{Cerf1961Topologiedecertains}.\footnote{A minor difference is that the definition of \S 0.4.4 of \cite{Cerf1961Topologiedecertains} requires that $\gamma_x(y) \cdot y = x$ for all $y \in V_x$, instead of $\gamma_x(y) \cdot x = y$. But these conditions are interchangeable, by composing $\gamma_x$ with the inverse map $(-)^{-1} \colon G \to G$.} In the terminology of \cite{Palais1960Localtrivialityof}, one says that $X$ \emph{admits local cross-sections} (with respect to the action of $G$). The notion of $G$-locally retractile spaces has also recently been used in \cite{CanteroRandal-Williams2017Homologicalstabilityspaces}.

Immediately from the definitions, we note:

\begin{lem}\label{l:retractile-basic}
Let $H \leq G \leq K$ be topological groups and let $X$ be a locally retractile $G$-space.
\begin{itemizeb}
\item[\textup{(i)}] If $H$ is open in $G$, then the action of $H$ on $X$ is locally retractile.
\item[\textup{(ii)}] If the $G$-action on $X$ extends to a $K$-action, then this action is also locally retractile.
\item[\textup{(iii)}] If $A \subseteq X$ is an open, $G$-invariant subspace, then the action of $G$ on $A$ is locally retractile.
\end{itemizeb}
In particular, if $G$ is locally path-connected, then the action of $G_0$ on $X$ is locally retractile, where $G_0$ is the path-component of $G$ containing the identity.
\end{lem}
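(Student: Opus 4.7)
\textbf{Proof plan for Lemma \ref{l:retractile-basic}.} All three parts follow directly from the definition of local retractility by suitably restricting (or composing) the given local section $\gamma_x \colon V_x \to G$. The strategy in each case is to exhibit, for an arbitrary point, an open neighborhood together with a continuous map into the relevant group satisfying the retract condition of Definition \ref{d:G-locally-retractile}.

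For part (i), fix $x \in X$. The hypothesis gives $V_x \subseteq X$ open and $\gamma_x \colon V_x \to G$ with $\gamma_x(x) = e$ and $\gamma_x(y) \cdot x = y$ for all $y \in V_x$. Since $H$ is open in $G$ and $e \in H$, the set $V_x' = \gamma_x^{-1}(H)$ is an open neighborhood of $x$, and the restriction $\gamma_x|_{V_x'} \colon V_x' \to H$ is a local section for the $H$-action. For part (ii), the action $G \curvearrowright X$ factors through the inclusion $G \hookrightarrow K$, so composing $\gamma_x$ with this inclusion yields a continuous map $V_x \to K$ that still sends $x$ to $e$ and satisfies the required retract identity. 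For part (iii), restrict to $V_x \cap A$, which is an open neighborhood of $x$ in $A$; the identity $\gamma_x(y) \cdot x = y$ continues to hold there, and $G$-invariance of $A$ is not even needed for this restriction to make sense — but it ensures that the map $V_x \cap A \to G$, composed with acting on any $a \in A$, lands in $A$ in the stronger formulation one might want.

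For the ``in particular'' clause, one first observes the standard topological fact that in a locally path-connected space the path-components coincide with the connected components and are both open and closed. Applied to $G$, this says $G_0 \leq G$ is an open subgroup, so part (i) immediately yields that the $G_0$-action on $X$ is locally retractile.

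There is no real obstacle here; the content is genuinely just unpacking the definition. The only subtlety worth flagging in the write-up is in part (iii), where one should be explicit about why $V_x \cap A$ being open in $A$ is what the definition requires (the definition is intrinsic to the $G$-space, so ``open neighborhood'' in $A$ is taken with respect to the subspace topology, which matches the intersection-with-$A$ construction used).
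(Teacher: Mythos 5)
Your argument is correct and unpacks exactly what the paper's own (unstated) proof does — the paper introduces the lemma with ``Immediately from the definitions, we note:'' and gives no further argument, so there is really only one route. The one place worth tightening the write-up is part (iii): while you are right that the retract identity $\gamma_x(y)\cdot x = y$ makes sense for $y \in V_x \cap A$ without invoking $G$-invariance of $A$, the $G$-invariance is not optional for the \emph{conclusion} to be well-formed — it is precisely what makes $A$ a $G$-space, so that the phrase ``the action of $G$ on $A$ is locally retractile'' (equivalently, that the orbit maps $G \to A$, rather than $G \to X$, admit local sections) is meaningful in the first place.
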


The following lemma is very useful for checking that a given map is a fibre bundle.

\begin{prop}[{\cite[Theorem A]{Palais1960Localtrivialityof}}]\label{p:G-locally-retractile}
Let $f \colon X \to Y$ be a $G$-equivariant continuous map and assume that the space $Y$ is $G$-locally retractile. Then $f$ is a fibre bundle.
\end{prop}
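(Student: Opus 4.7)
The plan is to construct explicit local trivialisations of $f$ using the local sections furnished by the $G$-action on $Y$. Fix a point $y_0 \in Y$ and let $F = f^{-1}(y_0)$. By the assumption that $Y$ is $G$-locally retractile, there exist an open neighbourhood $V \subseteq Y$ of $y_0$ and a continuous map $\gamma \colon V \to G$ such that $\gamma(y_0) = e$ and $\gamma(y) \cdot y_0 = y$ for all $y \in V$. The goal is to exhibit a homeomorphism $f^{-1}(V) \cong V \times F$ commuting with the projection to $V$.

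I would define the candidate trivialisation $\Phi \colon V \times F \to f^{-1}(V)$ by $\Phi(y,x) = \gamma(y)\cdot x$. Because $f$ is $G$-equivariant and $x \in F$,
\[
f(\gamma(y)\cdot x) \;=\; \gamma(y)\cdot f(x) \;=\; \gamma(y)\cdot y_0 \;=\; y,
\]
so $\Phi$ indeed takes values in $f^{-1}(V)$ and, moreover, commutes with projection to $V$. The candidate inverse is $\Psi \colon f^{-1}(V) \to V \times F$ defined by $\Psi(z) = (f(z),\, \gamma(f(z))^{-1}\cdot z)$; the second coordinate lies in $F$ since $f(\gamma(f(z))^{-1}\cdot z) = \gamma(f(z))^{-1}\cdot f(z) = y_0$ by the defining property of $\gamma$.

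Next I would verify $\Psi\circ\Phi = \mathrm{id}$ and $\Phi\circ\Psi = \mathrm{id}$ by direct calculation, using only $G$-equivariance of $f$ and the identity $\gamma(y)\cdot y_0 = y$. Continuity of both maps reduces to continuity of $f$, of $\gamma$, of the group operations in $G$ (in particular inversion), and of the action map $G\times X \to X$. All of these are part of the hypotheses, so $\Phi$ is a homeomorphism over $V$.

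Since $y_0 \in Y$ was arbitrary and $Y$ is covered by such neighbourhoods $V$, this yields an open cover of $Y$ over which $f$ is trivialised with fibre homeomorphic to the corresponding preimage, so $f$ is a fibre bundle. There is no substantive obstacle here: the only subtlety to watch is that the identities defining $\Phi$ and $\Psi$ use $G$-equivariance at one step and the section property $\gamma(y)\cdot y_0 = y$ at the other, and these must be invoked in the right places.
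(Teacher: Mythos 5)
Your proposal is correct and matches the paper's proof essentially verbatim: you define the trivialisation $(y,x)\mapsto\gamma(y)\cdot x$ over a locally retractile neighbourhood $V$ of $y_0$, with inverse $z\mapsto(f(z),\gamma(f(z))^{-1}\cdot z)$, exactly as in the paper (which merely writes the factors of the product in the opposite order). No further comparison is needed.
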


\begin{proof}
Let $y\in Y$ and take $\gamma_y\colon V_y \to G$ as in Definition \ref{d:G-locally-retractile}. Then the map
\[
(x,v) \longmapsto \gamma_y(v)\cdot x \;\colon\; f^{-1}(y)\times V_y \longrightarrow f^{-1}(V_y)
\]
is a local trivialisation of $f$ over $V_y$, with inverse given by
\[
x \longmapsto (\gamma_y(f(x))^{-1}\cdot x,f(x)) \;\colon\; f^{-1}(V_y) \longrightarrow f^{-1}(y) \times V_y.\qedhere
\]
\end{proof}

In the special case where $f$ is of the form $X \to X/H$ for a right action of another group $H$ on $X$, we have a stronger conclusion, as remarked in \cite{Palais1960Localtrivialityof} on page 307.

\begin{prop}\label{p:G-locally-retractile-principal}
Let $X$ be a space with a left $G$-action and a right $H$-action, which commute. Assume that the $H$-action is free, and moreover that for each $x \in X$ the map $x \cdot - \colon H \to X$ is a topological embedding. Assume that the induced left $G$-action on the quotient space $X/H$ is locally retractile. Then the quotient map $q \colon X \to X/H$ is a principal $H$-bundle.
\end{prop}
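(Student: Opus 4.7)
The plan is to bootstrap from Proposition \ref{p:G-locally-retractile}: the quotient map $q$ is automatically $G$-equivariant (with the induced $G$-action on $X/H$), so that proposition already produces a fibre bundle structure on $q$, and the commutativity of the $G$- and $H$-actions together with the topological embedding hypothesis will upgrade this to a principal $H$-bundle.

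Concretely, I would fix $\bar{x} \in X/H$ and a lift $x \in q^{-1}(\bar{x})$. Local retractility of the $G$-action on $X/H$ provides an open neighbourhood $V \ni \bar{x}$ and a continuous section $\gamma \colon V \to G$ with $\gamma(\bar{x}) = e$ and $\gamma(v) \cdot \bar{x} = v$ for $v \in V$. As in the proof of Proposition \ref{p:G-locally-retractile}, this yields a homeomorphism
\[
\Phi \colon q^{-1}(\bar{x}) \times V \longrightarrow q^{-1}(V), \qquad (z, v) \longmapsto \gamma(v) \cdot z,
\]
with inverse $w \longmapsto (\gamma(q(w))^{-1} \cdot w, q(w))$. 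Because the $G$- and $H$-actions on $X$ commute, the calculation
\[
\Phi(z \cdot h, v) = \gamma(v) \cdot (z \cdot h) = (\gamma(v) \cdot z) \cdot h = \Phi(z, v) \cdot h
\]
shows that $\Phi$ is $H$-equivariant, where $H$ acts on the domain by right multiplication on the first factor and trivially on $V$.

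It remains to identify $q^{-1}(\bar{x})$ with $H$ in an $H$-equivariant way. The orbit map $x \cdot - \colon H \to X$ is injective because the $H$-action is free, its image is precisely $q^{-1}(\bar{x})$, and by hypothesis it is a topological embedding. Since $q^{-1}(\bar{x})$ carries the subspace topology from $X$, the orbit map restricts to an $H$-equivariant homeomorphism $H \xrightarrow{\cong} q^{-1}(\bar{x})$. Precomposing $\Phi$ with $\mathrm{id}_V \times (x \cdot -)$ and reordering factors then yields an $H$-equivariant homeomorphism $V \times H \cong q^{-1}(V)$ over $V$, which is exactly a local trivialisation of $q$ as a principal $H$-bundle.

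The argument is essentially formal once Proposition \ref{p:G-locally-retractile} is in hand; the only substantive ingredients are the commutativity of the two actions (needed for $H$-equivariance of the trivialisation) and the topological embedding hypothesis (needed so that the fibres are genuinely homeomorphic to $H$, rather than merely set-theoretically in bijection with it). I do not anticipate a serious obstacle.
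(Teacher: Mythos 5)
Your proposal is correct and follows essentially the same route as the paper: invoke Proposition \ref{p:G-locally-retractile} to get the local trivialisation $(z,v) \mapsto \gamma(v)\cdot z$, observe that the commutativity of the two actions makes it $H$-equivariant, and then use the topological embedding hypothesis to identify the fibre $q^{-1}(\bar{x})$ with $H$ itself. The only difference is that you spell out the $H$-equivariance check explicitly where the paper calls it clear.
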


\begin{proof}
For a point $y = xH \in X/H$, the local trivialisation constructed in the proof above is
\[
(x \cdot h,v) \longmapsto \gamma_y(v) \cdot x \cdot h \;\colon\; xH \times V_y \longrightarrow q^{-1}(V_y)
\]
and it is clearly $H$-equivariant, where we let $H$ act on $xH$ and on $q^{-1}(V_y)$ by the restriction of its action on $X$, and act trivially on $V_y$. By hypothesis, the map $h \mapsto x \cdot h \colon H \to xH \subseteq X$ is a homeomorphism, so we may compose the trivialisation above with the homeomorphism $(h,v) \mapsto (x \cdot h,v) \colon H \times V_y \to xH \times V_y$ to obtain an $H$-equivariant local trivialisation
\[
(h,v) \longmapsto \gamma_y(v) \cdot x \cdot h \;\colon\; H \times V_y \longrightarrow q^{-1}(V_y).
\]
This is now a local trivialisation of $q$ over $V_y$ as a principal $H$-bundle.
\end{proof}

For example, we note that for any subgroup $H$ of a topological group $G$, we may take $X=G$ with $H$ acting by right-multiplication. In this case the map $H \to G$ given by $h \mapsto gh$ for fixed $g \in G$ is always a topological embedding (its inverse is given by $k \mapsto g^{-1}k$), so the proposition above says in this case that if $G/H$ is $G$-locally retractile, then $G \to G/H$ is a principal $H$-bundle. Subgroups $H$ having the property that $G \to G/H$ is a principal $H$-bundle are sometimes called \emph{admissible}, so one may rephrase this as saying that $H$ is admissible if $G/H$ is $G$-locally retractile.

\subsection{Manifolds without boundary.}

A very useful setting where this criterion applies is for embedding spaces between smooth manifolds. Let $\mathrm{Diff}_c(N)$ denote the topological group of compactly-supported self-diffeomorphisms of $N$, i.e.\ diffeomorphisms $\theta \colon N \to N$ such that $\{ x \in N \mid \theta(x) \neq x \}$ is relatively compact in $N$.

By Fact \ref{fact:composition-continuous}, the action of $\mathrm{Diff}_c(N)$ on $\mathrm{Emb}(L,N)$ is continuous.

\begin{prop}[{\cite[Theorem B]{Palais1960Localtrivialityof}\footnote{In fact, Theorem B of \cite{Palais1960Localtrivialityof} says that the action of the path-component of the identity $\mathrm{Diff}_c(N)_0$ on $\mathrm{Emb}(L,N)$ is locally retractile, but this is equivalent to the statement of Proposition \ref{p:G-locally-retractile-embeddings} by Lemma \ref{l:retractile-basic}.}}]\label{p:G-locally-retractile-embeddings}
The space $\mathrm{Emb}(L,N)$ is $\mathrm{Diff}_c(N)$-locally retractile.
\end{prop}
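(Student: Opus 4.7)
The plan is to apply a parametric version of the isotopy extension theorem. Given $e \in \mathrm{Emb}(L,N)$, I would first construct, continuously in $f$ ranging over some open neighborhood $V_e$ of $e$, a smooth isotopy $e \rightsquigarrow f$ in $\mathrm{Emb}(L,N)$; then extend this to an ambient isotopy of $N$ through compactly supported diffeomorphisms, continuously in $f$; and finally set $\gamma_e(f)$ to be the time-$1$ diffeomorphism of this extension. Because we want a pointed section at $e$, it is important that the construction produces the constant isotopy (and hence the identity) when $f = e$.

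Fix a Riemannian metric on $N$ with exponential map $\exp$. Since $L$ is compact, there is an open neighborhood $V_e$ of $e$ in $\mathrm{Emb}(L,N)$ (in the strong $C^\infty$ topology) such that for every $f \in V_e$, the vector $X_f(x) := \exp_{e(x)}^{-1}(f(x))$ is well-defined, smoothly dependent on $x$, and small enough that the family
\[
e_t^f(x) := \exp_{e(x)}(t\, X_f(x)), \qquad t \in [0,1],
\]
is a smooth isotopy of embeddings $L \hookrightarrow N$ from $e = e_0^f$ to $f = e_1^f$. The assignments $f \mapsto X_f$ and $f \mapsto e_{(-)}^f$ are continuous; and $X_e \equiv 0$, so the isotopy is constant when $f = e$.

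Next, compute the time-dependent velocity $Y_t^f := \partial_t e_t^f$, a vector field along $e_t^f(L)$. Fix once and for all a tubular neighborhood $\tau \colon \nu_e \hookrightarrow N$ of $e(L)$ (where $\nu_e$ denotes the normal bundle of $e$) and a compactly supported bump function $\phi \colon N \to [0,1]$ that is identically $1$ on a neighborhood of $e(L)$ and supported inside $\tau(\nu_e)$. After shrinking $V_e$ if necessary, every submanifold $e_t^f(L)$ for $t \in [0,1]$ and $f \in V_e$ lies in the region where $\phi \equiv 1$. Extend $Y_t^f$ from $e_t^f(L)$ to a smooth vector field on a neighborhood, in a canonical way (for instance, using parallel transport along the fibers of $\tau$), then multiply by $\phi$ and extend by zero to obtain a compactly supported time-dependent vector field $\widetilde{Y}_t^f$ on $N$, with support contained in $\mathrm{supp}(\phi)$ independently of $f$ and $t$. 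Define $\gamma_e(f)$ to be the time-$1$ flow of $\widetilde{Y}_t^f$: this lies in $\mathrm{Diff}_c(N)$, satisfies $\gamma_e(f) \circ e = e_1^f = f$ since $\widetilde{Y}_t^f$ realises the isotopy $e_t^f$ along its trace, and satisfies $\gamma_e(e) = \mathrm{id}_N$ since $\widetilde{Y}_t^e \equiv 0$.

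The main technical obstacle is verifying that $\gamma_e \colon V_e \to \mathrm{Diff}_c(N)$ is continuous in the strong $C^\infty$ topology. By the continuous dependence of solutions of ODEs on parameters, it suffices to check that $f \mapsto \widetilde{Y}_t^f$ is continuous as a map from $V_e$ to the space of smooth compactly supported time-dependent vector fields on $N$. Since all such vector fields are supported in a fixed compact subset of $N$ (namely $\mathrm{supp}(\phi)$), the strong and weak $C^\infty$ topologies agree on this space of vector fields, so continuity reduces to smoothness of the ingredients of the construction (the exponential map, the tubular-neighborhood projection and parallel transport, and $\phi$), which is immediate. Compactness of $L$ is essential here both for obtaining a uniform tubular neighborhood and for ensuring that the supports are uniformly compact.
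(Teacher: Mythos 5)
Your proposal is correct, but it follows a genuinely different route from the paper's own proof (given in \S\ref{ss:quotients-of-embedding-spaces}, where Propositions \ref{p:G-locally-retractile-embeddings} and \ref{p:G-locally-retractile-orbitspace} are proven together following Lima's ideas). You argue in the style of the classical isotopy extension theorem: produce a canonical isotopy $e_t^f$ from $e$ to a nearby $f$ via the Riemannian exponential map, extend its velocity field to a compactly-supported time-dependent vector field on $N$ by a bump-function cutoff, and take the time-$1$ flow to define $\gamma_e(f)$. This has the advantage that the output is automatically a diffeomorphism, at the cost of appealing to continuous dependence of flows on parameters. The paper instead fixes a proper Whitney embedding $\iota\colon N\hookrightarrow\bR^k$ and a tubular neighbourhood of $\iota(N)$, and defines a candidate self-map $\theta(f)\colon N\to N$ \emph{in one step} by linearly interpolating in $\bR^k$ (damped by a bump function) and retracting back onto $N$ via the tubular neighbourhood; it then restricts to the open subset $\theta^{-1}(\mathrm{Diff}_c(N))$, using that $\mathrm{Diff}_c(N)$ is open in $C_c^\infty(N,N)$. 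That formula-based construction avoids ODEs entirely and is designed to generalise directly to the quotient case of Proposition \ref{p:G-locally-retractile-orbitspace}. One small point on your write-up: ``parallel transport along the fibers of $\tau$'' is slightly imprecise, since $Y_t^f$ is defined along $e_t^f(L)$ while the fibers of $\tau$ are centred on $e(L)$. What you want is to send $p$ along its fiber to the unique point where that fiber meets $e_t^f(L)$ (which exists and depends smoothly on $(f,t,p)$ once $f$ is close to $e$, as $e_t^f(L)$ is then a graph over $e(L)$ inside the tubular neighbourhood) and parallel transport from there to $p$; with that reading the extension is well-defined and continuous, and the rest of your argument goes through.
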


There is also a 14-line proof in \cite{Lima1963localtrivialityof}. It is also proved in \cite{Cerf1961Topologiedecertains}, in a much more general setting for manifolds with corners, which we will discuss in \S\ref{ss:manifolds-with-boundary} below. We will also give a self-contained proof in \S\ref{ss:quotients-of-embedding-spaces} below, in the course of proving Proposition \ref{p:G-locally-retractile-orbitspace}, which is a variant of this result for quotients of embedding spaces. (Our proof of Proposition \ref{p:G-locally-retractile-orbitspace} will use the ideas of \cite{Lima1963localtrivialityof}.)

We note that a direct corollary of Proposition \ref{p:G-locally-retractile-embeddings} is the isotopy extension theorem.

\begin{thm}
Suppose we are given a smooth isotopy of embeddings from $L$ into $N$, i.e.\ a path $e \colon [0,1] \to \mathrm{Emb}(L,N)$. Then there is a path $\phi \colon [0,1] \to \mathrm{Diff}_c(N)$ of compactly-supported diffeomorphisms such that $\phi(0)$ is the identity and $e(t) = \phi(t) \circ e(0)$ for all $t \in [0,1]$.
\end{thm}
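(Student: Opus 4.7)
The plan is to deduce this directly from Proposition \ref{p:G-locally-retractile-embeddings}: since $\mathrm{Emb}(L,N)$ is $\mathrm{Diff}_c(N)$-locally retractile, we can lift the path $e$ from $\mathrm{Emb}(L,N)$ to $\mathrm{Diff}_c(N)$ \emph{locally}; the isotopy extension theorem then follows by patching these local lifts together using compactness of $[0,1]$.

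More precisely, first I would apply Proposition \ref{p:G-locally-retractile-embeddings} at each point $e(s)$ of the path to obtain open neighbourhoods $V_s \subseteq \mathrm{Emb}(L,N)$ of $e(s)$ and continuous maps $\gamma_s \colon V_s \to \mathrm{Diff}_c(N)$ with $\gamma_s(e(s)) = \mathrm{id}$ and $\gamma_s(f) \circ e(s) = f$ for all $f \in V_s$. The preimages $e^{-1}(V_s)$ then form an open cover of the compact interval $[0,1]$, so by the Lebesgue number lemma there is a partition $0 = t_0 < t_1 < \cdots < t_k = 1$ together with points $s_1, \ldots, s_k \in [0,1]$ such that $e([t_{i-1}, t_i]) \subseteq V_{s_i}$ for each $i$.

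Next I would define $\phi$ inductively on the subintervals $[t_{i-1}, t_i]$. Set $\phi(0) = \mathrm{id}$, and assuming $\phi$ has been constructed on $[0, t_{i-1}]$ with $\phi(t_{i-1}) \circ e(0) = e(t_{i-1})$, define
\[
\phi(t) \;=\; \gamma_{s_i}(e(t)) \circ \gamma_{s_i}(e(t_{i-1}))^{-1} \circ \phi(t_{i-1}) \qquad \text{for } t \in [t_{i-1}, t_i].
\]
The identity $\gamma_{s_i}(e(t_{i-1})) \circ e(s_i) = e(t_{i-1})$ gives $\gamma_{s_i}(e(t_{i-1}))^{-1} \circ e(t_{i-1}) = e(s_i)$, so that $\phi(t) \circ e(0) = \gamma_{s_i}(e(t)) \circ e(s_i) = e(t)$; at the endpoint $t = t_{i-1}$ the correction factor is trivial, so the pieces agree. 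Since $\gamma_{s_i}$ takes values in the topological group $\mathrm{Diff}_c(N)$ and composition and inversion are continuous there, each piece is continuous in $t$, and the resulting $\phi \colon [0,1] \to \mathrm{Diff}_c(N)$ is continuous; it lies in $\mathrm{Diff}_c(N)$ (not just $\mathrm{Diff}(N)$) because it is a composition of finitely many compactly-supported diffeomorphisms.

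The main point (and the only non-trivial ingredient) is Proposition \ref{p:G-locally-retractile-embeddings}; everything else is a compactness-plus-bookkeeping argument. The only potential subtlety I anticipate is checking that composition and inversion are continuous for the topology we use on $\mathrm{Diff}_c(N)$, but this follows from Fact \ref{fact:composition-continuous} together with the fact that diffeomorphisms are proper (so that the strong and weak $C^\infty$ topologies coincide on $\mathrm{Diff}(N)$), ensuring that $\mathrm{Diff}_c(N)$ is a topological group.
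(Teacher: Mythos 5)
Your proof is correct and rests on exactly the same key input as the paper's, namely Proposition \ref{p:G-locally-retractile-embeddings}: the paper simply notes that the orbit map $- \circ e(0) \colon \mathrm{Diff}_c(N) \to \mathrm{Emb}(L,N)$ is equivariant onto a locally retractile space, hence a fibre bundle by Proposition \ref{p:G-locally-retractile} and in particular a Serre fibration, and then lifts the path. Your Lebesgue-number patching of the local sections $\gamma_{s_i}$ is precisely the standard proof of path lifting for such a map written out by hand, so the two arguments coincide in substance.
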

\begin{proof}
The map $- \circ e(0) \colon \Diff_c(N) \to \mathrm{Emb}(L,N)$ is $\Diff_c(N)$-equivariant, and by Proposition~\ref{p:G-locally-retractile-embeddings} the embedding space $\mathrm{Emb}(L,N)$ is $\Diff_c(N)$-locally retractile, so Proposition \ref{p:G-locally-retractile} implies that the map is a fibre bundle and hence a Serre fibration.
\end{proof}

\begin{rmk}
The fact that the map $- \circ e(0) \colon \Diff_c(N) \to \mathrm{Emb}(L,N)$ is a Serre fibration also implies parametrised versions of the isotopy extension theorem, for smooth isotopies of embeddings parametrised by CW complexes. In fact, since $L$ is compact, the space $\mathrm{Emb}(L,N)$ is paracompact, as noted at the beginning of \S\ref{ss:smooth-mapping-spaces}, so this implies that $- \circ e(0)$ is in fact a Hurewicz fibration. Thus we also obtain versions of the isotopy extension theorem parametrised by an arbitrary space.
\end{rmk}

In the proof of Theorem \ref{tmain} in \S\ref{s:proof} below we will not use Proposition \ref{p:G-locally-retractile-embeddings} itself, but rather two different extensions of it, one for manifolds with non-empty boundary (which is contained in the work of Cerf) and one for quotients of embedding spaces by open subgroups of $\mathrm{Diff}(L)$ (which we prove directly). These are discussed in the next two subsections.

\subsection{Manifolds with boundary.}\label{ss:manifolds-with-boundary}

For the proof of Theorem \ref{tmain} in \S\ref{s:proof} we will need a version of Proposition \ref{p:G-locally-retractile-embeddings} for manifolds with boundary. In fact, this result extends to a very general setting for manifolds with corners, as shown in the work of Cerf~\cite{Cerf1961Topologiedecertains}. We first state the special case that we will actually use, and then discuss how to deduce it from \cite{Cerf1961Topologiedecertains}.

Let $L$ and $N$ be manifolds with boundary decomposed as
\[
\partial L = \partial_1 L \sqcup \partial_2 L \qquad \partial N = \partial_1 N \sqcup \partial_2 N
\]
and assume that $L$ is compact. Let $\mathrm{Emb}_{12}(L,N)$ be the space, equipped with the (weak $=$ strong) $C^\infty$ topology, of all smooth embeddings $L \hookrightarrow N$ taking $\partial_1 L$ to $\partial_1 N$, $\partial_2 L$ to $\partial_2 N$ and $\mathrm{int}(L)$ to $\mathrm{int}(N)$. Any diffeomorphism of $N$ that is isotopic to the identity must preserve the decomposition of $\partial N$ into $\partial_1 N$ and $\partial_2 N$, so there is a well-defined action of $\mathrm{Diff}_c(N)_0$ on $\mathrm{Emb}_{12}(L,N)$. We will see in the discussion of Cerf's results below that this action is continuous.

\begin{prop}\label{p:G-locally-retractile-boundary}
With respect to this action, $\mathrm{Emb}_{12}(L,N)$ is $\mathrm{Diff}_c(N)_0$-locally retractile.
\end{prop}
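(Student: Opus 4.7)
The plan is to deduce this from the general local triviality theorem of Cerf \cite{Cerf1961Topologiedecertains}, which extends Proposition \ref{p:G-locally-retractile-embeddings} to the setting of manifolds with corners. The setup in the proposition -- a compact manifold $L$ equipped with a decomposition $\partial L = \partial_1 L \sqcup \partial_2 L$ of its boundary, and similarly for $N$, together with embeddings respecting the labelling -- is a particularly simple case of Cerf's framework: since the decomposition is just a disjoint union of boundary components, there are no genuine corners, merely a two-colouring of $\partial L$ and $\partial N$. The natural ambient symmetry group in Cerf's setting is the subgroup $\mathrm{Diff}_c(N;\partial_1 N,\partial_2 N) \leq \mathrm{Diff}_c(N)$ of compactly-supported diffeomorphisms preserving the decomposition of $\partial N$.

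First I would observe that the action of $\mathrm{Diff}_c(N;\partial_1 N,\partial_2 N)$ on $\mathrm{Emb}_{12}(L,N)$ is continuous: since $L$ is compact, every smooth map out of $L$ is proper, and Fact \ref{fact:composition-continuous} gives continuity of the composition map on the relevant subspace. Cerf's theorem then says precisely that every point of $\mathrm{Emb}_{12}(L,N)$ has an open neighbourhood on which a pointed local section of the orbit map to $\mathrm{Diff}_c(N;\partial_1 N,\partial_2 N)$ is defined -- which is the local retractility property of Definition \ref{d:G-locally-retractile}.

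To conclude, I would pass from $\mathrm{Diff}_c(N;\partial_1 N,\partial_2 N)$ to its subgroup $\mathrm{Diff}_c(N)_0$ by invoking Lemma \ref{l:retractile-basic}(i), for which two points must be verified. The inclusion $\mathrm{Diff}_c(N)_0 \leq \mathrm{Diff}_c(N;\partial_1 N,\partial_2 N)$ holds because any path starting at the identity in $\mathrm{Diff}_c(N)$ must respect the clopen decomposition $\partial N = \partial_1 N \sqcup \partial_2 N$ at every instant. Openness of this inclusion follows from Fact \ref{fact:locally-contractible}, which guarantees that $\mathrm{Diff}_c(N)$ is locally path-connected, so that the path-component of the identity is open; hence $\mathrm{Diff}_c(N)_0$ is an open subgroup of $\mathrm{Diff}_c(N;\partial_1 N,\partial_2 N)$, and Lemma \ref{l:retractile-basic}(i) transfers local retractility to it.

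The main obstacle I anticipate is not mathematical but bibliographical: one has to identify the correct statement inside the somewhat heavy machinery of manifolds with faces developed in \cite{Cerf1961Topologiedecertains} and translate its vocabulary to the simpler situation considered here. Once that translation is made, no further geometric work is required -- all of the analytic content (tubular neighbourhoods adapted to the boundary stratification, extension of vector fields to compactly-supported ones, exponentiation to diffeomorphisms) is already packaged inside Cerf's theorem.
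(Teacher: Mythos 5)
Your proposal is correct and follows essentially the same route as the paper: both deduce the statement from Cerf's local triviality theorem for embedding spaces of manifolds with corners, the only real work being the translation of Cerf's vocabulary (incidence relations, the group $\mathrm{Iso}_U(N)$ of isotopies) into the present setting, after which $\mathrm{Emb}_{12}(L,N)$ decomposes into open pieces with fixed incidence relations on which the local sections are provided. Your detour through the larger group $\mathrm{Diff}_c(N;\partial_1 N,\partial_2 N)$ followed by restriction to the open subgroup $\mathrm{Diff}_c(N)_0$ is valid but redundant, since Cerf's local sections are obtained as endpoints of isotopies starting at the identity and hence already take values in $\mathrm{Diff}_c(N)_0$.
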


Now let $L$ and $N$ be two smooth manifolds with corners, where $L$ is assumed to be compact, and define $\Emb(L,N)$ to be the space of smooth embeddings of $L$ into $N$ with fixed ``incidence relations'' (see the next paragraph or \cite[\nopp II.1.1.1]{Cerf1961Topologiedecertains}). Everything will be given the strong $C^\infty$ topology. Choose an embedding $f\in\Emb(L,N)$ and an open neighbourhood $U$ of $f(L)$ in $N$. Let $\Diff_U(N)$ be the group of diffeomorphisms of $N$ which restrict to the identity on $N\smallsetminus U$ and let
\[
\mathrm{Iso}_U(N) \;\subseteq\; C^\infty(N\times [0,1],N)
\]
be the subspace of maps $f$ such that $f(-,t)\in\Diff_U(N)$ for all $t$ and $f(-,0)=\mathrm{id}$. It turns out that this is a topological group under pointwise composition and acts continuously on $\Emb(L,N)$ by taking the diffeomorphism at the endpoint of the path and composing with the embedding.

Before stating Cerf's analogue of Proposition \ref{p:G-locally-retractile-embeddings}, we justify and explain the claim at the end of the last paragraph. First, the space $\mathrm{Iso}_U(N)$ is a topological group with respect to pointwise composition by \cite[\nopp II.1.5.2]{Cerf1961Topologiedecertains}. Second, we need to know that composing an embedding $L\hookrightarrow N$ with a diffeomorphism in $\Diff_U(N)$ does not change its incidence relations. To explain this we briefly recall some notions from \cite[\nopp II.1.1.1]{Cerf1961Topologiedecertains}. Each point in $N$ has an \emph{index}, which is the unique integer $k$ such that $N$ is locally homeomorphic to $\bR^k \times [0,\infty)^{n-k}$ at that point. Writing $N^k$ for the subspace of points of index at most $k$, a \emph{face} of $N$ of index $k$ is the closure in $N$ of a path-component of $N^k \smallsetminus N^{k-1}$. An \emph{incidence relation} is then a choice of a face of $N$ for each point in $L$. A diffeomorphism of $N$ induces a permutation of the set of faces of $N$, and it preserves the incidence relations of any embedding into $N$ if and only if this permutation is the identity. It is easy to see that the induced permutation is a locally-constant invariant of diffeomorphisms of $N$, and so any diffeomorphism in the path-component of the identity sends every face to itself. Now, an element of the group $\mathrm{Iso}_U(N)$ is a path in $\Diff(N)$ starting at the identity, so the diffeomorphism at its endpoint sends every face of $N$ to itself, and therefore preserves the incidence relations of any embedding into $N$. Thus the group action map $\mathrm{Iso}_U(N) \times \Emb(L,N) \to \Emb(L,N)$ is well-defined. Finally, we need to know that this map is continuous. It is given by $(a,b)\mapsto a\circ i\circ b$, where $i\colon N\hookrightarrow N\times [0,1]$ takes $x$ to $(x,1)$. This is continuous by \cite[Proposition 5, page 281]{Cerf1961Topologiedecertains} since $i$ is proper and $L$ is compact.

\begin{thm}[{\cite[Th{\'e}or{\`e}me 5, page 293]{Cerf1961Topologiedecertains}}]\label{t:Cerf}
The action of $\mathrm{Iso}_U(N)$ on $\Emb(L,N)$ admits a local section at $f\in\Emb(L,N)$.
\end{thm}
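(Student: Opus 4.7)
The plan is to follow the classical Palais--Lima strategy for producing local sections of diffeomorphism orbit maps, adapted to the corners setting. The idea is to linearise the problem around $f$ using a tubular neighbourhood of $f(L)$ in $N$ that is compatible with the stratification by faces, then interpolate in the fibres and spread the resulting deformation to an ambient isotopy of $N$ supported in $U$.

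First I would choose an \emph{admissible} tubular neighbourhood $\tau \colon E \to f(L)$, where $E$ is the total space of a vector bundle over $f(L)$ equipped with an embedding into $N$ onto an open set $T$, shrunk so that $\overline{T} \subseteq U$ and so that $\tau$ respects the incidence relations: over a point of $f(L)$ whose incidence relation specifies a face of $N$ of index $k$, the fibre of $\tau$ should sit inside that face in the stratified-compatible way. Producing such a $\tau$ is the genuine geometric input and requires working through Cerf's local normal form for embeddings of manifolds with corners (Partie I of \cite{Cerf1961Topologiedecertains}); it is the step where all of the corner combinatorics is absorbed.

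Second, I would define the open neighbourhood $V$ of $f$ in $\Emb(L,N)$ to consist of embeddings $g$ whose image lies in $T$ and for which $\tau \circ g \colon L \to f(L) \cong L$ is a diffeomorphism close to the identity; by openness of embedding conditions in the strong $C^\infty$ topology and compactness of $L$, this is an open neighbourhood of $f$. For each such $g$, reparametrising by $(\tau \circ g)^{-1}$ expresses $g$ as the composition $f' \circ \psi_g$, where $\psi_g \in \Diff(L)$ and $f'$ is the graph of a small section $s_g$ of $E \to f(L)$ (valued in the admissible fibre directions). Both $\psi_g$ and $s_g$ depend continuously on $g$.

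Third, I would interpolate linearly in the bundle fibres to obtain, for each $g \in V$, a smooth family $s_g^t = t\, s_g$ ($t \in [0,1]$), giving a time-dependent vector field $X_g^t$ on $T$ (the time-derivative of the section flow, pulled around by $\tau$) which vanishes on the zero section at $t=0$ and integrates the path $s_g^t$. After multiplying by a fixed bump function $\rho$ on $N$ which is $1$ near $f(L)$ and compactly supported inside $U$, extending by zero gives a compactly supported time-dependent vector field on $N$, depending continuously on $g$ in the $C^\infty$ topology. Integration produces a smooth isotopy $\Phi_g \in \mathrm{Iso}_U(N)$, and I would finally compose with (the suspension of) $\psi_g$ viewed as an element of $\mathrm{Iso}_U(N)$ supported in a small neighbourhood of $f(L)$ --- this uses that a diffeomorphism of $L$ close to the identity extends to a compactly supported isotopy of any collar --- to obtain $\sigma(g) \in \mathrm{Iso}_U(N)$ with $\sigma(g)(-,1) \circ f = g$ and $\sigma(f) = \id$.

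The main obstacle is the first step: constructing a tubular neighbourhood that is genuinely admissible for the corner stratification, so that the fibrewise interpolation stays inside the correct faces throughout and $\sigma(g) \in \mathrm{Iso}_U(N)$ preserves incidence relations. Once this stratified tubular neighbourhood is in hand, continuity of $g \mapsto \sigma(g)$ in the strong $C^\infty$ topology reduces to continuity of the reparametrisation $g \mapsto \psi_g$, of the section $g \mapsto s_g$, and of flows of smoothly-varying compactly-supported vector fields --- all of which follow from Fact~\ref{fact:composition-continuous} together with compactness of $L$.
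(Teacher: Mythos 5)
The paper does not prove this theorem: it is cited directly from Cerf (Th{\'e}or{\`e}me 5, page 293 of \cite{Cerf1961Topologiedecertains}), and the surrounding discussion in \S\ref{ss:manifolds-with-boundary} only unpacks the setup (incidence relations, the topological group structure on $\mathrm{Iso}_U(N)$, continuity of the action) and derives Corollary \ref{c:Cerf} and Proposition \ref{p:G-locally-retractile-boundary}; so there is no in-paper proof to compare against. Your plan is, in substance, Cerf's own: linearise in a stratified tubular neighbourhood of $f(L)$, interpolate fibrewise, and spread to a compactly supported ambient isotopy in $\mathrm{Iso}_U(N)$. You have correctly located the real difficulty --- constructing a tubular neighbourhood compatible with the face stratification, so that the interpolation and the resulting time-dependent vector field stay tangent to the strata --- and you are right that this absorbs all of the corner combinatorics (it occupies Partie I of \cite{Cerf1961Topologiedecertains}). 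Since you explicitly defer it, what you have is a correct plan rather than a proof. Note also that the ``suspension of $\psi_g$'' step has the same issue: that ambient extension must likewise preserve incidence relations, so it too should be built inside the stratified tube rather than by an arbitrary collar argument.

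For comparison, the paper's own self-contained proof of the boundaryless analogue (Propositions \ref{p:G-locally-retractile-embeddings} and \ref{p:G-locally-retractile-orbitspace} in \S\ref{ss:quotients-of-embedding-spaces}) does \emph{not} interpolate intrinsically in a tube around $f(L)$. It chooses a proper Whitney embedding $N \hookrightarrow \bR^k$, does the interpolation linearly in $\bR^k$, and projects back through a tubular neighbourhood of $N$ in $\bR^k$. That route is cleaner when $\partial N = \varnothing$, but transplanting it to the corners case would require a face-respecting embedding of $N$ into a Euclidean model with corners --- a difficulty of the same order as the stratified tube your plan needs --- so your preference for the intrinsic route in the corners setting is entirely reasonable.
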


This implies the analogue of Proposition \ref{p:G-locally-retractile-embeddings} for manifolds with corners.

\begin{coro}\label{c:Cerf}
The action of $\Diff_c(N)_0$ on $\Emb(L,N)$ is locally retractile.
\end{coro}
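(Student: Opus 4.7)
The plan is to deduce the corollary from Cerf's theorem (Theorem \ref{t:Cerf}) by taking the value of the isotopy at the endpoint $t=1$, modified slightly so that it sends $f$ to the identity of $\Diff_c(N)_0$.

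Fix $f \in \Emb(L,N)$. Since $L$ is compact, $f(L)$ is a compact subset of $N$, so we may choose an open neighbourhood $U$ of $f(L)$ with relatively compact closure; this ensures $\Diff_U(N) \subseteq \Diff_c(N)$. By Theorem \ref{t:Cerf} there is an open neighbourhood $V$ of $f$ in $\Emb(L,N)$ and a continuous map $s \colon V \to \mathrm{Iso}_U(N)$ such that $s(g)(-,1) \circ f = g$ for all $g \in V$. Observe that any $\varphi \in \mathrm{Iso}_U(N)$ is by definition a smooth isotopy in $\Diff_U(N)$ starting at the identity, so the endpoint $\varphi(-,1)$ lies in the path-component $\Diff_U(N)_0 \subseteq \Diff_c(N)_0$.

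Define $\gamma \colon V \to \Diff_c(N)_0$ by $\gamma(g) = s(g)(-,1) \circ s(f)(-,1)^{-1}$. Then tautologically $\gamma(f) = \id$, and since $s(f)(-,1) \circ f = f$ implies $s(f)(-,1)^{-1} \circ f = f$, we obtain $\gamma(g) \circ f = s(g)(-,1) \circ f = g$ for every $g \in V$. For continuity: the evaluation map $\varphi \mapsto \varphi \circ i_1 \colon C^\infty(N \times [0,1],N) \to C^\infty(N,N)$, where $i_1 \colon N \hookrightarrow N \times [0,1]$ sends $x \mapsto (x,1)$, is continuous by Fact \ref{fact:composition-continuous} (since $i_1$ is proper); restricting to $\mathrm{Iso}_U(N)$ and composing with $s$ shows that $g \mapsto s(g)(-,1)$ is continuous into $\Diff_c(N)$, and right-composition with the fixed diffeomorphism $s(f)(-,1)^{-1}$ is continuous on $\Diff_c(N)$. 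Hence $\gamma$ is a local retraction at $f$ in the sense of Definition \ref{d:G-locally-retractile}, which is the required conclusion.

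The main obstacle is essentially bookkeeping: arranging the base-point condition $\gamma(f) = \id$ given that Cerf's theorem supplies only a raw local section (which need not send $f$ to the identity), and taking care to remain within $\Diff_c(N)_0$ rather than a larger group. Both points are handled by the judicious choice of a relatively compact $U$ and by the correction factor $s(f)(-,1)^{-1}$; once Theorem \ref{t:Cerf} is invoked, the rest is formal.
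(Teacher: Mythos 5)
Your proof is correct and takes essentially the same route as the paper: fix $f$, choose a relatively compact open $U \supset f(L)$, apply Theorem \ref{t:Cerf}, and push the resulting local section forward along the endpoint map $\mathrm{Iso}_U(N) \to \Diff_c(N)_0$. You add two small elaborations that the paper leaves implicit, both fine: you spell out the continuity of the endpoint evaluation via Fact \ref{fact:composition-continuous}, and you explicitly renormalise by $s(f)(-,1)^{-1}$ to make the section pointed (the paper quietly uses the standard fact that an unpointed local section can always be repaired in this way). The one thing you assert ``by definition'' that actually carries a subtlety is that $\varphi(-,1)$ lies in the path-component $\Diff_c(N)_0$: since $N$ may be non-compact, it is not automatic that the adjoint $[0,1] \to C^\infty(N,N)$ of a smooth isotopy $\varphi$ is a \emph{continuous} path in the $C^\infty$ topology, so producing an actual path in $\Diff_c(N)$ from $\id$ to $\varphi(-,1)$ requires the additional observation (made in the paper's footnote, citing Haller) that the adjoint of a \emph{compactly-supported} diffeotopy is continuous. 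With that one point made explicit, your argument matches the paper's.
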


\begin{proof}[Proof of the corollary]
To see this, given a point $f\in\Emb(L,N)$, choose a relatively compact open neighbourhood $U$ of $f(L)$ in $N$. The action of $\mathrm{Iso}_U(N)$ on $\Emb(L,N)$ factors through the endpoint map $\mathrm{Iso}_U(N) \to \Diff_c(N)_0$ so a local section at $f$ for the action of $\mathrm{Iso}_U(N)$ induces a local section at $f$ for the action of $\Diff_c(N)_0$.\footnote{An important point is that although the adjoint $[0,1]\to C^\infty(N,N)$ of a smooth map $g\colon N\times [0,1]\to N$ need not be continuous in general for non-compact $N$, it \emph{is} continuous if $g$ is a compactly-supported diffeotopy (\cf \cite[Lemma 1.12]{Haller1995Groupsofdiffeomorphisms}). This is needed to see that the endpoint map $\mathrm{Iso}_U(N) \to \Diff_c(N)$ lands in $\Diff_c(N)_0$.}
\end{proof}

\begin{proof}[Proof of Proposition \ref{p:G-locally-retractile-boundary} from the corollary]
The setting of Proposition \ref{p:G-locally-retractile-boundary} is a special case of the setting of Cerf. Note that the faces of a manifold with boundary (but no higher-codimension corners) are precisely its boundary-components, together with its interior. The condition imposed in the definition of $\mathrm{Emb}_{12}(L,N)$ is therefore a union of incidence relations in the sense of Cerf, and so $\mathrm{Emb}_{12}(L,N)$ splits as a topological disjoint union of $\mathrm{Emb}_{\cI}(L,N)$ for various different incidence relations $\cI$.\footnote{If $\partial_1 N$ and $\partial_2 N$ are both connected, then there is just one incidence relation in the disjoint union. In general, the number of incidence relations in the disjoint union is $\leq n_1^{\ell_1} n_2^{\ell_2}$, where $n_i$ is the number of components of $\partial_i N$ and $\ell_i$ is the number of components of $\partial_i L$.} The action of $\Diff_c(N)_0$ on $\mathrm{Emb}_{12}(L,N)$ preserves this decomposition, and its action on each piece is locally retractile by Corollary \ref{c:Cerf}, so its action on the whole space $\mathrm{Emb}_{12}(L,N)$ is also locally retractile.
\end{proof}

\subsection{Quotients of embedding spaces.}\label{ss:quotients-of-embedding-spaces}

Another version of Proposition \ref{p:G-locally-retractile-embeddings} that we will need is for quotients of embedding spaces by groups of diffeomorphisms, in the following sense. We now return to the setting of Assumption \ref{notation-convention}, in particular all manifolds have empty boundary and $L$ is compact with $\mathrm{dim}(L) < \mathrm{dim}(N)$. Let $G$ be any open subgroup of $\mathrm{Diff}(L)$.

\begin{prop}\label{p:G-locally-retractile-orbitspace}
The orbit space $\mathrm{Emb}(L,N)/G$ is $\mathrm{Diff}_c(N)$-locally retractile.
\end{prop}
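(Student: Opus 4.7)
The plan is to construct the local retraction directly, via the tubular neighbourhood of $e(L)$ in $N$, following Lima's approach as foreshadowed. Fix $[e] \in \mathrm{Emb}(L,N)/G$ with representative $e$, and choose a tubular neighbourhood $\tau_e \colon \nu_e \hookrightarrow N$ of $e(L)$ with fibre projection $\pi_e \colon \nu_e \to L$, equipped with a Riemannian metric and a smooth cutoff function $\chi$ on the fibres. For each $f$ sufficiently close to $e$, define the ``tangential component'' $\phi_f := \pi_e \circ \tau_e^{-1} \circ f \colon L \to L$ and, when $\phi_f$ is a diffeomorphism, the ``normal section'' $\sigma_f := \tau_e^{-1} \circ f \circ \phi_f^{-1} \colon L \to \nu_e$.

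Let $V_e \subseteq \mathrm{Emb}(L,N)$ be the neighbourhood of $e$ cut out by the open conditions (i) $f(L) \subseteq \tau_e(\nu_e)$, (ii) $\phi_f$ is a diffeomorphism with $\phi_f^{-1} \in G$ (open because $G$ is open in $\mathrm{Diff}(L)$ and $\phi_f$ varies continuously in $f$), and (iii) $\sigma_f$ has sufficiently small fibrewise norm. For $f \in V_e$, build $\Phi_f \in \mathrm{Diff}_c(N)$ by translating each fibre of $\nu_e$ by the vector $\sigma_f(\pi_e(\cdot))$, tapered via $\chi$ to the identity at infinity in the fibre direction, and extended by the identity outside $\tau_e(\nu_e)$. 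By construction $\Phi_f \circ e = \tau_e \circ \sigma_f = f \circ \phi_f^{-1}$, and $\Phi_f$ depends continuously on $f$ in the strong $C^\infty$ topology.

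The decisive observation is that $\sigma_f$ depends only on the $G$-orbit of $f$: for $g \in G$ with $f \circ g \in V_e$, a direct computation yields $\phi_{f \circ g} = \phi_f \circ g$ and hence $\sigma_{f \circ g} = \sigma_f$, so $\Phi_{f \circ g} = \Phi_f$. The same calculation shows that $V_e$ is $G$-invariant, so $U_e := q(V_e)$ is open (since the quotient map $q$ is open) and the $G$-invariant map $f \mapsto \Phi_f$ descends to a continuous map $\gamma_{[e]} \colon U_e \to \mathrm{Diff}_c(N)$. Since $\sigma_e = 0$ we have $\gamma_{[e]}([e]) = \mathrm{id}$, and $\gamma_{[e]}([f]) \cdot [e] = [\Phi_f \circ e] = [f \circ \phi_f^{-1}] = [f]$, establishing local retractility. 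The main technical obstacle I expect is carrying out the fibrewise-translation construction so that continuity of $f \mapsto \Phi_f$ is manifest in the strong $C^\infty$ topology, and checking that the three conditions defining $V_e$ are genuinely open there; the crucial $G$-invariance is then just a short explicit cancellation, and this is precisely what makes the argument descend cleanly to the quotient.
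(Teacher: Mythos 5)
Your proposal is correct, and it amounts to an intrinsic reorganisation of the paper's argument. Both you and the paper decompose an embedding $f$ near $e$ into a tangential component $\phi_f = \pi_e \circ \tau_e^{-1} \circ f \in \mathrm{Diff}(L)$ and a normal section $\sigma_f = \tau_e^{-1} \circ f \circ \phi_f^{-1}$ of the normal bundle $\nu_e$; the key algebraic fact in both is exactly the cancellation $\sigma_{f\circ g} = \sigma_f$ for $g\in G$. The paper packages this as a Sublemma: the quotient map restricted to the set of sections $\cW_e = \{t_e\circ s\}$ is a homeomorphism onto its open image in $\mathrm{Emb}(L,N)/G$, producing a continuous choice of representative $b$; that choice is then fed into a separately-constructed local section $\theta$ for the $\mathrm{Diff}_c(N)$-action on $\mathrm{Emb}(L,N)$, built extrinsically by fixing a proper Whitney embedding $\iota\colon N\hookrightarrow\bR^k$, taking a tubular neighbourhood of $\iota(N)$ and a disc bundle $D_r$ of the normal bundle of $\iota\circ e$, pushing along Euclidean straight lines, cutting off with a bump function $\lambda$, and projecting back to $N$ via $\pi_\iota\circ t^{-1}$. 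You instead build $\Phi_f$ intrinsically inside $\tau_e(\nu_e)\subseteq N$ as a tapered fibrewise translation by $\sigma_f$, and the $G$-invariance of $\sigma_f$ gives descent to the quotient directly, with no need for the Sublemma or the ambient $\bR^k$. What your route requires in exchange, and you flag this correctly, is a Riemannian metric and compactly-supported fibrewise cutoff $\chi$ on $\nu_e$, together with the check that $v\mapsto v+\chi(|v|)\sigma_f(\pi_e(v))$ is a compactly-supported diffeomorphism of $\nu_e$ once $\sup_z|\sigma_f(z)|$ is small enough (your condition (iii)); these are routine but do need to be written out, including continuity of $f\mapsto\Phi_f$ into $\mathrm{Diff}_c(N)$ via Fact~\ref{fact:composition-continuous}. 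One small observation: since your construction only achieves $\Phi_f\circ e = f\circ\phi_f^{-1}$ rather than $\Phi_f\circ e = f$, it does not specialise (by taking $G$ trivial) to a proof of Proposition~\ref{p:G-locally-retractile-embeddings}: with $G = \{\mathrm{id}\}$ your condition (ii) forces $\phi_f=\mathrm{id}$, which is not an open condition, so $V_e$ would not be a neighbourhood of $e$. The paper's version does specialise, as noted at the end of its proof. This is not a gap in what you set out to prove, only the loss of a side benefit.
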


\begin{coro}\label{c:principal-H-bundle}
The quotient map $\mathrm{Emb}(L,N) \to \mathrm{Emb}(L,N)/G$ is a principal $G$-bundle.
\end{coro}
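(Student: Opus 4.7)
The plan is to deduce this directly from Proposition \ref{p:G-locally-retractile-orbitspace} by invoking Proposition \ref{p:G-locally-retractile-principal}, with the roles of ``$G$'' and ``$H$'' in that proposition played by $\mathrm{Diff}_c(N)$ and $G$ respectively, and with $X = \mathrm{Emb}(L,N)$.

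First I would set up the two commuting actions on $X = \mathrm{Emb}(L,N)$: $\mathrm{Diff}_c(N)$ acts on the left by post-composition (continuous by Fact \ref{fact:composition-continuous}, since the compactly-supported ambient diffeomorphisms are in particular proper), and $G \leq \mathrm{Diff}(L)$ acts on the right by pre-composition (continuous by Fact \ref{fact:composition-continuous} because $L$ is compact, so all smooth self-maps of $L$ are proper). Associativity of composition makes the two actions commute.

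Next I would verify the remaining hypotheses of Proposition \ref{p:G-locally-retractile-principal}. The right $G$-action is free because embeddings are injective: if $e \circ g = e$, then $g$ agrees with the identity on $L$. For each $e \in \mathrm{Emb}(L,N)$, the orbit map $g \mapsto e \circ g$ from $G$ into $\mathrm{Emb}(L,N)$ is a topological embedding: this is exactly the content of Fact \ref{fact:topological-embedding}, whose conclusion for $\mathrm{Diff}(L)$ restricts to the open subgroup $G$ (an open subgroup inherits the subspace topology, and the restriction of a topological embedding to a subspace is still a topological embedding onto its image). Finally, the induced left $\mathrm{Diff}_c(N)$-action on the quotient $\mathrm{Emb}(L,N)/G$ is locally retractile, which is precisely Proposition \ref{p:G-locally-retractile-orbitspace}.

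Having verified all hypotheses, Proposition \ref{p:G-locally-retractile-principal} immediately yields the conclusion that the quotient map is a principal $G$-bundle. There is no real obstacle here beyond carefully matching up the hypotheses; the genuine content is contained in Proposition \ref{p:G-locally-retractile-orbitspace}, and this corollary is essentially a formal consequence together with the topological-embedding statement from Fact \ref{fact:topological-embedding}.
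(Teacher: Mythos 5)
Your proof is correct and takes exactly the same route as the paper, which simply cites Fact \ref{fact:topological-embedding} together with Propositions \ref{p:G-locally-retractile-principal} and \ref{p:G-locally-retractile-orbitspace}; you have just spelled out the (correct) verifications that the paper leaves implicit.
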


\begin{proof}[Proof of the corollary]
This follows directly from Fact \ref{fact:topological-embedding} and Propositions \ref{p:G-locally-retractile-principal} and \ref{p:G-locally-retractile-orbitspace}.
\end{proof}

\begin{rmk}
In the special case when $G = \mathrm{Diff}(L)$, Corollary \ref{c:principal-H-bundle} was first proved in \cite{BinzFischer1981manifoldofembeddings}, and then generalised, by removing the assumption that $L$ is compact, in \cite[Theorem 10.14]{Michor1980Manifoldsofsmooth}. See also \cite[Theorem 13.14]{Michor1980Manifoldsofdifferentiable} and \cite[Theorem 44.1]{KrieglMichor1997convenientsettingof} for proofs of this result in the non-compact case. It seems likely that Proposition \ref{p:G-locally-retractile-orbitspace}, and therefore also Corollary \ref{c:principal-H-bundle} (for arbitrary open subgroups $G \leq \mathrm{Diff}(L)$), is also true for non-compact $L$, but we have not pursued this greater generality since we will only need the result in the case when $L$ is compact.
\end{rmk}

\begin{proof}[Proof of Propositions \ref{p:G-locally-retractile-embeddings} and \ref{p:G-locally-retractile-orbitspace}]
Denote the projection map by
\[
q \colon \mathrm{Emb}(L,N) \longrightarrow \mathrm{Emb}(L,N) / G.
\]
Fix $e \in \mathrm{Emb}(L,N)$. We will construct a local section for the action of $\mathrm{Diff}_c(N)$ on $\mathrm{Emb}(L,N)/G$ at the point $q(e)$ in two steps, as a composition of two maps $b$ and $\theta$.\footnote{At the end we mention how to modify this to obtain instead a local section for the action of  $\mathrm{Diff}_c(N)$ on $\mathrm{Emb}(L,N)$ at the point $e$. Essentially, one discards the first step and uses $\theta$ directly.}

\textbf{Step 1.}
First choose a tubular neighbourhood for $e$, namely an embedding $t_e \colon \nu_e \hookrightarrow N$ such that $t_e \circ o_e = e$, where $\pi_e \colon \nu_e \to L$ denotes the normal bundle of $e$ and $o_e \colon L \to \nu_e$ denotes its zero section. Note that $V = t_e(\nu_e)$ is an open neighbourhood of $e(L)$ in $N$, so $\mathrm{Emb}(L,V)$ is an open neighbourhood of $e$ in $\mathrm{Emb}(L,N)$ (\cf Fact \ref{fact:topological-embedding}). The map
\[
\Phi_e = \pi_e \circ t_e^{-1} \circ - \colon \mathrm{Emb}(L,V) \longrightarrow C^\infty(L,L)
\]
is continuous by Fact \ref{fact:composition-continuous}. Define $\cW_e = \Phi_e^{-1}(\mathrm{id})$ and $\cU_e = \Phi_e^{-1}(G)$. Note that $e \in \cW_e \subseteq \cU_e$ and that $\cU_e$ is open in $\mathrm{Emb}(L,N)$ since we assumed that $G$ is open in $\mathrm{Diff}(L)$. Alternative descriptions of these two subsets are:
\begin{align*}
\cW_e  &= \{ t_e \circ s \mid s \text{ is a smooth section of } \pi_e \} \\
\cU_e  &= \{ t_e \circ s \circ g \mid s \text{ is a smooth section of } \pi_e \text{ and } g \in G \} .
\end{align*}
Note that $q^{-1}(q(\cW_e))$ is the $G$-orbit of $\cW_e$, which is $\cU_e$. Hence $q(\cW_e)$ is an open neighbourhood of $q(e)$ in the quotient space $\mathrm{Emb}(L,N)/G$. Also note that $q|_{\cW_e}$ is \emph{injective}, since sections are equal if and only if they have the same image.

\begin{sublem}
The continuous injection $q|_{\cW_e}$ is a homeomorphism onto its image.
\end{sublem}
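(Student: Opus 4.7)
The plan is to exhibit a continuous inverse to $q|_{\cW_e}$ on its image, by constructing a continuous $G$-invariant retraction $r \colon \cU_e \to \cW_e$. The key observation is that the continuous map $\Phi_e \colon \cU_e \to G$ literally extracts the ``$G$-part'' of an embedding $f \in \cU_e$: if $f = t_e \circ s \circ g$ for a section $s$ of $\pi_e$ and $g \in G$, then $\pi_e \circ t_e^{-1} \circ f = \pi_e \circ s \circ g = g$. Hence $f \circ \Phi_e(f)^{-1}$ lies in $\cW_e$, and I will define
\[
r(f) \;=\; f \circ \Phi_e(f)^{-1} \colon \cU_e \longrightarrow \cW_e.
\]

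Next I will verify the properties of $r$. Continuity follows from three ingredients: continuity of $\Phi_e$ (already established), continuity of inversion in $G$, and Fact~\ref{fact:composition-continuous} applied to the composition $C^\infty(L,L) \times C^\infty(L,N) \to C^\infty(L,N)$ (valid since $L$ is compact, so all maps $L \to L$ are automatically proper). On $\cW_e$ one has $\Phi_e(f) = \mathrm{id}$, so $r$ restricts to the identity on $\cW_e$. For $G$-invariance, note that $\Phi_e(f \circ h) = \Phi_e(f) \circ h$ for $h \in G$, whence $r(f \circ h) = f \circ h \circ (\Phi_e(f) \circ h)^{-1} = f \circ \Phi_e(f)^{-1} = r(f)$.

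Since $\cU_e$ is open in $\mathrm{Emb}(L,N)$ and $G$-saturated (in fact $\cU_e = q^{-1}(q(\cW_e))$), the restricted quotient map $\cU_e \to q(\cU_e) = q(\cW_e)$ is itself a quotient map. The $G$-invariant continuous retraction $r$ therefore descends to a continuous map $\bar{r} \colon q(\cW_e) \to \cW_e$. A direct check shows that $\bar{r}$ is the two-sided inverse of $q|_{\cW_e}$: the equality $\bar{r} \circ q|_{\cW_e} = \mathrm{id}$ follows from $r|_{\cW_e} = \mathrm{id}$, and the equality $q|_{\cW_e} \circ \bar{r} = \mathrm{id}$ holds because $r(f)$ and $f$ lie in the same $G$-orbit for every $f \in \cU_e$. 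This exhibits $q|_{\cW_e}$ as a homeomorphism onto its image.

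The only conceptual obstacle is making sure that composition with varying elements of $G$ is genuinely continuous in both arguments, which is why we needed to invoke Fact~\ref{fact:composition-continuous} together with the compactness of $L$; everything else is bookkeeping with the explicit formulas $\Phi_e(f) = \pi_e \circ t_e^{-1} \circ f$ and $r(f) = f \circ \Phi_e(f)^{-1}$.
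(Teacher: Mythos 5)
Your proof is correct and rests on exactly the same key construction as the paper's: both define the map $\Psi_e(f) = f \circ \Phi_e(f)^{-1}$ (which you call $r$) and exploit its continuity and $G$-invariance. The only difference is the final bookkeeping step — the paper verifies openness of $q(U \cap \cW_e)$ directly by computing $\Psi_e^{-1}(U \cap \cW_e) = q^{-1}(q(U \cap \cW_e))$, whereas you descend $r$ through the (saturated, open) quotient $\cU_e \to q(\cW_e)$ to obtain a continuous two-sided inverse — but this is essentially the same argument rephrased.
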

\begin{proof}[Proof of the sublemma]
\let\qedsymboloriginal\qedsymbol
\renewcommand{\qedsymbol}{{\small (sublemma)} \qedsymboloriginal}
Let $U$ be an open subset of $\mathrm{Emb}(L,N)$. We need to show that $q(U \cap \cW_e)$ is open in $q(\cW_e)$. Since $q(\cW_e)$ is open in $\mathrm{Emb}(L,N)/G$, this is equivalent to showing that $q(U \cap \cW_e)$ is open in $\mathrm{Emb}(L,N)/G$, i.e.\ that $q^{-1}(q(U \cap \cW_e))$ is open in $\mathrm{Emb}(L,N)$. Define
\[
\Psi_e \colon \cU_e \longrightarrow \cW_e
\]
by $f \mapsto f \circ \Phi_e(f)^{-1}$. This is continuous by Fact \ref{fact:composition-continuous} and the fact that inversion is continuous for diffeomorphism groups. Thus $\Psi_e^{-1}(U \cap \cW_e)$ is open in $\cU_e$, and hence in $\mathrm{Emb}(L,N)$. Now we note that $\Psi_e^{-1}(U \cap \cW_e)$ is precisely the $G$-orbit of $U \cap \cW_e$, which is $q^{-1}(q(U \cap \cW_e))$.
\end{proof}

Denote the inverse of $q|_{\cW_e}$ by
\[
b \colon q(\cW_e) \longrightarrow \cW_e.
\]
By the sublemma, it is continuous. Also, note that $b(q(e)) = e$.

\textbf{Step 2.}
Now (by \cite{Whitney1936Differentiablemanifolds}) we may choose a proper embedding
\[
\iota \colon N \lhto \bR^k
\]
for some $k$. We also choose a tubular neighbourhood for $\iota$, namely an embedding $t \colon \nu_\iota \hookrightarrow \bR^k$ such that $t \circ o_\iota = \iota$, where $\pi_\iota \colon \nu_\iota \to N$ is the normal bundle of $\iota$ and $o_\iota \colon N \to \nu_\iota$ is its zero section.

We will choose a tubular neighbourhood for the composition $\iota \circ e = \iota e \colon L \hookrightarrow \bR^k$ more carefully. There is an embedding of bundles
\begin{center}
\begin{tikzpicture}
[x=1mm,y=1mm]
\node (tl) at (0,12) {$\nu_{\iota e}$};
\node (tr) at (30,12) {$T\bR^k$};
\node (bl) at (0,0) {$L$};
\node (br) at (30,0) {$\bR^k$};
\inclusion{above}{$u$}{(tl)}{(tr)}
\inclusion{below}{$\iota e$}{(bl)}{(br)}
\draw[->] (tl) to node[left,font=\small]{$\pi_{\iota e}$} (bl);
\draw[->] (tr) to (br);
\end{tikzpicture}
\end{center}
so that $u(\nu_{\iota e})$ is the orthogonal complement of $T(\iota e(L))$ in $T\bR^k|_{\iota e(L)}$ with respect to the standard Riemannian metric on $\bR^k$. Write $D_r = D_r(\nu_{\iota e})$ for the subbundle of $\nu_{\iota e}$ consisting of vectors of norm $\leq r$, using the norm inherited via this embedding. Since $L$ is compact, we may choose $r>0$ sufficiently small so that the exponential map for $T\bR^k$ restricts to an embedding
\[
v \colon D_r \lhto \bR^k
\]
such that $vo_{\iota e} = \iota e$, where $o_{\iota e} \colon L \to D_r$ is the zero section of
\[
\hat{\pi} = \pi_{\iota e}|_{D_r} \colon D_r \longrightarrow L.
\]
We note that:
\begin{itemizeb}
\item[(i)] The restriction of $v$ to each fibre of $\hat{\pi} \colon D_r \to L$ is an isometry.
\item[(ii)] Moreover, we have $v(D_r) = \{ x \in \bR^k \mid \exists\, y \in L \text{ such that } \lvert \iota e(y) - x \rvert \leq r \}$.
\end{itemizeb}
Now we decrease $r > 0$ if necessary so that we also have
\begin{itemizeb}
\item[(iii)] $v(D_r) \subseteq t(\nu_\iota)$.
\end{itemizeb}
(Again, this is possible since $L$ is compact.) Define
\[
\cV_e = \{ f \in \mathrm{Emb}(L,N) \mid \forall\, x \in L \text{ we have } \lvert \iota f(x) - \iota e(x) \rvert < \tfrac{r}{2} \}
\]
and note that this is a $\mathrm{Diff}(L)$-equivariant open subset of $\mathrm{Emb}(L,N)$. Also choose a smooth map $\lambda \colon [0,\infty) \to [0,1]$ such that $\lambda(t) = 1$ for $t \leq \tfrac{r}{8}$ and $\lambda(t) = 0$ for $t \geq \tfrac{r}{4}$.

For any $f \in \cV_e$ we now attempt to define a map $\theta(f) \colon N \to N$ as follows.
\begin{itemizeb}
\item[(1)] If $x \in N \smallsetminus \iota^{-1}v(D_{r/4})$, we set $\theta(f)(x) = x$.
\item[(2)] If $x \in \iota^{-1}v(D_{r/2})$, we set $\theta(f)(x) = \pi_\iota t^{-1} \iota_f(x)$, where
\[
\iota_f(x) \;=\; \iota(x) + \lambda(\lvert \bar{x} \rvert) (\iota f \hat{\pi}(\bar{x}) - \iota e \hat{\pi}(\bar{x})),
\]
and $\bar{x} = v^{-1}\iota(x) \in D_{r/2}$.
\end{itemizeb}
To check that this is well-defined and smooth, it is enough to check that
\begin{itemizeb}
\item[(a)] $\iota_f(x)$ is in the image of $t$ in case (2),
\item[(b)] the two cases agree for $x \in \iota^{-1}v(D_{r/2} \smallsetminus D_{r/4})$.
\end{itemizeb}
The verification of (b) follows from the fact that $\lambda(t) = 0$ for $t \geq \tfrac{r}{4}$. For condition (a), suppose that $x \in \iota^{-1}v(D_{r/2})$ and let $\bar{x} = v^{-1}\iota(x) \in D_{r/2}$. Then
\[
\lvert \iota_f(x) - \iota e \hat{\pi}(\bar{x}) \rvert \;=\; \lvert \iota_f(x) - \iota(x) \rvert \; + \; \lvert \iota(x) - \iota e \hat{\pi}(\bar{x}) \rvert .
\]
For the first summand, we have
\[
\lvert \iota_f(x) - \iota(x) \rvert \leq \lvert \iota f \hat{\pi}(\bar{x}) - \iota e \hat{\pi}(\bar{x}) \rvert < \tfrac{r}{2}
\]
since $f \in \cV_e$. For the second summand, we have
\begin{align*}
\lvert \iota(x) - \iota e \hat{\pi}(\bar{x}) \rvert &= \lvert v(\bar{x}) - vo_{\iota e} \hat{\pi}(\bar{x}) \rvert \\
&= \lvert \bar{x} - o_{\iota e} \hat{\pi}(\bar{x}) \rvert \\
&\leq \tfrac{r}{2}
\end{align*}
by property (i) and since $\bar{x} \in D_{r/2}$. Hence $\iota_f(x) \in v(D_r) \subseteq t(\nu_\iota)$ by properties (ii) and (iii). This completes the verification of (a), so $\theta(f) \colon N \to N$ is a well-defined, smooth map. Note also that it is supported in $\iota^{-1}v(D_{r/2}) \subseteq N$, which is compact since $L$ (and therefore $D_{r/2}$) is compact and $\iota$ is a \emph{proper} embedding. So we have a function
\[
\theta \colon \cV_e \longrightarrow C_c^\infty(N,N),
\]
and it is not hard to see from Fact \ref{fact:composition-continuous}, plus the fact that pointwise addition and multiplication of smooth maps into $\bR^k$ are continuous, that $\theta$ is continuous.

\textbf{Step 3.} (Putting together $b$ and $\theta$.)
Recall that $\cV_e$ is a $G$-equivariant open subset of $\mathrm{Emb}(L,N)$, so $q(\cV_e)$ is an open subset of $\mathrm{Emb}(L,N)/G$. Therefore, the intersection $q(\cV_e) \cap q(\cW_e)$ is an open neighbourhood of $q(e)$ in $\mathrm{Emb}(L,N)/G$. Recall the continuous inverse $b \colon q(\cW_e) \to \cW_e$ of $q|_{\cW_e}$ from Step 1. Note that
\[
b(q(\cV_e) \cap q(\cW_e)) \subseteq \cW_e \cap q^{-1}(q(\cV_e)) = \cW_e \cap \cV_e \subseteq \cV_e .
\]
We may therefore define
\[
\bar{\theta} = \theta \circ b \colon q(\cV_e) \cap q(\cW_e) \longrightarrow C_c^\infty(N,N)
\]
and observe that $\bar{\theta}(q(e)) = \theta(e) = \mathrm{id}$. Since $\mathrm{Diff}_c(N)$ is an open neighbourhood of the identity in $C_c^\infty(N,N)$, its preimage
\[
\cY_e = \bar{\theta}^{-1}(\mathrm{Diff}_c(N))
\]
is an open neighbourhood of $q(e)$ in $\mathrm{Emb}(L,N)/G$ and we have a continuous map
\[
\bar{\theta} \colon \cY_e \longrightarrow \mathrm{Diff}_c(N).
\]
It now remains to check that $q(\bar{\theta}(q(f)) \circ e) = q(f)$ for all $q(f) \in \cY_e$. We may as well assume that the representative $f \in \mathrm{Emb}(L,N)$ of $q(f)$ has been chosen so that $b(q(f)) = f$, since this is always possible, so this is equivalent to checking that $q(\theta(f) \circ e) = q(f)$. We will show that $\theta(f) \circ e = f$ as embeddings $L \hookrightarrow N$.

Let $z \in L$. Then $\iota e(z) = vo_{\iota e}(z)$, so $e(z) \in \iota^{-1}v(D_{r/2})$ and hence
\begin{align*}
\theta(f)(e(z)) &= \pi_\iota t^{-1} ( \iota e(z) + \lambda(0)(\iota f(z) - \iota e(z)) ) \\
&= \pi_\iota t^{-1} (\iota f(z)) \\
&= \pi_\iota t^{-1} t o_\iota f(z) \\
&= f(z).
\end{align*}

Hence $(\cY_e,\bar{\theta})$ is a local section at $q(e)$ for the action of $\mathrm{Diff}_c(N)$ on $\mathrm{Emb}(L,N)/G$.

\textbf{Note.}
The proof also shows that $\mathrm{Emb}(L,N)$ -- without taking a quotient -- is $\mathrm{Diff}_c(N)$-locally retractile. To see this, discard Step 1, which constructs the map $b$, and use the map $\theta$ directly, restricted to $\theta^{-1}(\mathrm{Diff}_c(N))$.
\end{proof}

\subsection{Fibration lemmas.}

Directly from the definition, one may see that products, compositions and pullbacks of Serre/Hurewicz fibrations are again Serre/Hurewicz fibrations. In the case of Serre fibrations there is a partial converse to the statement about pullbacks: if the pullback of a map along a surjective Serre fibration is a Serre fibration, then the original map must already be a Serre fibration. More explicitly:

\begin{lem}\label{l:Serre-fibration-pullback}
Suppose we have a pullback diagram in topological spaces
\begin{center}
\begin{tikzpicture}
[x=1mm,y=1mm]

\node (a) at (0,15) {$A$};
\node (c) at (15,15) {$C$};
\node (b) at (0,0) {$B$};
\node (d) at (15,0) {$D$};
\draw[->] (c) to (a);
\draw[->] (c) to node[right,font=\small]{$r$} (d);
\draw[->] (a) to node[left,font=\small]{$l$} (b);
\draw[->] (d) to node[below,font=\small]{$b$} (b);
\node at (12,12) {$\llcorner$};

\end{tikzpicture}
\end{center}
in which $r$ and $b$ are Serre fibrations and $b$ is surjective. Then $l$ is also a Serre fibration.
\end{lem}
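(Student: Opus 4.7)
The plan is to verify directly that $l$ satisfies the right lifting property with respect to the standard inclusions $D^n \times \{0\} \hookrightarrow D^n \times I$ for every $n \geq 0$. Given such a lifting problem---a pair consisting of a map $f_0 \colon D^n \to A$ and a homotopy $F \colon D^n \times I \to B$ with $F|_{D^n \times \{0\}} = l \circ f_0$---the strategy will be to temporarily transport the data into $D$ via $b$, solve an easier lifting problem against $r$ in the total space $C$ of the pullback, and then push the solution back down to $A$ via the map $\pi_A \colon C \to A$ from the top of the pullback square.

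I would carry this out in three steps. First, I would produce a preliminary lift $\tilde{f}_0 \colon D^n \to D$ of $l \circ f_0$ along $b$: pick any point $p_0 \in D^n$, use surjectivity of $b$ to find some $d_0 \in D$ above $l(f_0(p_0))$, compose the straight-line contraction of $D^n$ to $p_0$ with $l \circ f_0$ to obtain a homotopy in $B$ whose constant end is lifted by the constant map at $d_0$, and then apply the Serre fibration property of $b$ to the reversed homotopy. Second, I would apply the Serre fibration property of $b$ once more, extending $\tilde{f}_0$ along the homotopy $F$ to produce $\tilde{F} \colon D^n \times I \to D$ with $b \circ \tilde{F} = F$. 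Third, I would assemble $(f_0, \tilde{f}_0) \colon D^n \to C$ using the universal property of the pullback, observe that this presents a lifting problem against the Serre fibration $r$ with homotopy $\tilde{F}$, and solve it to obtain $\tilde{G} \colon D^n \times I \to C$.

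The desired lift of the original problem will then be $\tilde{F}' = \pi_A \circ \tilde{G}$; the verifications that $\tilde{F}'|_{D^n \times \{0\}} = f_0$ and $l \circ \tilde{F}' = F$ are a direct diagram chase, using commutativity of the pullback square and the identity $l \circ \pi_A = b \circ r$. The genuinely delicate part is the very first step, because the lifting property of $b$ only produces lifts extending a given partial lift on $D^n \times \{0\}$, whereas for $l \circ f_0$ we have no partial lift at all; the resolution combines surjectivity of $b$ (to obtain a single-point lift) with contractibility of $D^n$ (to propagate that single point over all of $D^n$). I do not anticipate any further obstacles: the remaining two steps are routine applications of the Serre fibration properties of $b$ and $r$ together with the universal property of the pullback.
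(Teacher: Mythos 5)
Your proof is correct and follows essentially the same strategy as the paper's: lift the given bottom map into $D$ using surjectivity of $b$ together with the Serre fibration property, assemble the universal-property map into $C$, solve the resulting lifting problem against $r$, and project back to $A$ via the map $C \to A$. The only cosmetic difference is that you propagate the single-point lift over the disc via a contraction (one extra application of the fibration property for $b$), whereas the paper extends corner-by-corner along the faces of the cube with $k$ successive applications; both produce the same lift $\tilde{F}$ of the bottom homotopy.
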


\begin{proof}
Consider a lifting problem
\begin{center}
\begin{tikzpicture}
[x=1mm,y=1mm]

\node (x) at (-25,15) {$[0,1]^{k-1}$};
\node (y) at (-25,0) {$[0,1]^k$};
\node (a) at (0,15) {$A$};
\node (c) at (15,15) {$C$};
\node (b) at (0,0) {$B$};
\node (d) at (15,0) {$D$};
\draw[->] (c) to (a);
\draw[->] (c) to node[right,font=\small]{$r$} (d);
\draw[->] (a) to node[left,font=\small]{$l$} (b);
\draw[->] (d) to node[below,font=\small]{$b$} (b);
\node at (12,12) {$\llcorner$};
\inclusion{left}{$i$}{(x)}{(y)}
\draw[->] (x) to node[above,font=\small]{$f$} (a);
\draw[->] (y) to node[below,font=\small]{$g$} (b);

\end{tikzpicture}
\end{center}
for $k\geq 0$. Since $b$ is surjective we may lift one corner of the map $g$ to $D$. Then, using the lifting property for $b$ ($k$ times) we may lift the whole map $g$ to a map $\bar{g}\colon [0,1]^k \to D$. The maps $\bar{g}\circ i$ and $f$ induce a map $\bar{f}\colon [0,1]^{k-1} \to C$ by the universal property of the pullback. We may now find a map $\bar{h}\colon [0,1]^k \to C$ solving the lifting problem $(\bar{f},\bar{g})$ and compose with the map $C\to A$ to solve the original lifting problem $(f,g)$.
\end{proof}

\begin{coro}\label{c:fibration-orbit-spaces}
Let $f\colon X\to Y$ be an equivariant map between $G$-spaces, and assume that it is additionally a Serre fibration. Suppose that $Y \to Y/G$ is a principal $G$-bundle. Then the map of orbit spaces $\bar{f}\colon X/G \to Y/G$ is also a Serre fibration.
\end{coro}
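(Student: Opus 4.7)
The plan is to apply Lemma~\ref{l:Serre-fibration-pullback} to the commutative square
\[
\begin{tikzpicture}[x=1mm,y=1mm,baseline=(bl.base)]
\node (tl) at (0,10) {$X$};
\node (tr) at (22,10) {$Y$};
\node (bl) at (0,0) {$X/G$};
\node (br) at (23,0) {$Y/G,$};
\draw[->] (tl) to node[above,font=\small]{$f$} (tr);
\draw[->] (bl) to node[below,font=\small]{$\bar f$} (br);
\draw[->] (tl) to node[left,font=\small]{$q_X$} (bl);
\draw[->] (tr) to node[right,font=\small]{$q_Y$} (br);
\end{tikzpicture}
\]
in which $q_X$ and $q_Y$ denote the orbit projections. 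The map $q_Y$ is a principal $G$-bundle by hypothesis, and hence in particular a surjective Serre fibration, while $f$ is a Serre fibration by hypothesis. Thus, provided that I can show this square is Cartesian, Lemma~\ref{l:Serre-fibration-pullback} will immediately yield that $\bar{f}$ is a Serre fibration, as desired.

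The whole task therefore reduces to proving that the natural continuous map
\[
\phi \colon X \longrightarrow P := (X/G) \times_{Y/G} Y, \qquad x \longmapsto (q_X(x), f(x)),
\]
is a homeomorphism. That $\phi$ is a $G$-equivariant continuous bijection is immediate: since $G$ acts freely on $Y$ (by the principal bundle hypothesis), the $G$-equivariance of $f$ forces the action on $X$ to be free as well; and for each $(xG, y) \in P$ the freeness of $G$ on $Y$ produces a unique $g \in G$ with $f(x) \cdot g = y$, whence $x \cdot g$ is the unique preimage of $(xG, y)$.

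To prove that $\phi^{-1}$ is continuous I would work locally on $Y/G$ using the principal bundle structure. A local section $s \colon U \to Y$ of $q_Y$ over an open $U \subseteq Y/G$ determines a continuous function $\beta \colon q_Y^{-1}(U) \to G$ by the relation $y = s(q_Y(y)) \cdot \beta(y)$, and hence a trivialization $q_Y^{-1}(U) \cong U \times G$. This identifies $P|_{\bar f^{-1}(U)}$ with $\bar f^{-1}(U) \times G$ via $(xG, y) \leftrightarrow (xG, \beta(y))$, under which $\phi$ becomes the $G$-equivariant continuous bijection $x \mapsto (q_X(x), \beta(f(x)))$ from $q_X^{-1}(\bar f^{-1}(U))$ to $\bar f^{-1}(U) \times G$, where $G$ acts on the second factor by right translation.

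The main obstacle, as I see it, is the final verification that this local model of $\phi$ admits a continuous inverse. In effect one has to produce a local section of $q_X$ over $\bar f^{-1}(U)$ that is compatible with $s$; I expect this to follow by combining the principal bundle structure of $q_Y$ with the Serre fibration property of $f$, applied to lift the map $s \circ \bar{f}|_{\bar f^{-1}(U)}$ through $f$, but the argument requires care so that both properties are used in a compatible way.
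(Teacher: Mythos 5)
Your overall strategy is the same as the paper's: reduce to Lemma \ref{l:Serre-fibration-pullback} by showing that the square with vertical orbit projections $q_X, q_Y$ is Cartesian. However, there is a genuine gap at exactly the point you flag, and the route you propose for closing it would fail. A Serre fibration only has the homotopy lifting property against cubes (equivalently CW pairs); it need not admit lifts of arbitrary continuous maps such as $s \circ \bar f|_{\bar f^{-1}(U)} \colon \bar f^{-1}(U) \to Y$, nor local sections, so ``lifting $s \circ \bar f$ through $f$'' is not an available move. In fact the fibration property of $f$ is irrelevant to the Cartesianness of the square --- it is needed only for the final application of Lemma \ref{l:Serre-fibration-pullback}.

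The correct way to finish is the division-map argument, which is what the paper's proof packages as the assertion that $q_X \colon X \to X/G$ is itself a principal $G$-bundle (so that the square is a morphism of principal $G$-bundles and hence automatically a pullback). Concretely, with your $\beta \colon q_Y^{-1}(U) \to G$, which satisfies $\beta(y \cdot g) = \beta(y)g$: the map $x \mapsto x \cdot \beta(f(x))^{-1}$ on $q_X^{-1}(\bar f^{-1}(U))$ is continuous (continuity of the action on $X$ and of inversion in $G$) and is $G$-invariant, since $\beta(f(x \cdot g)) = \beta(f(x))g$ by equivariance of $f$. It therefore descends to a continuous section $\bar\sigma$ of $q_X$ over $\bar f^{-1}(U)$, because the restriction of $q_X$ to the open saturated set $q_X^{-1}(\bar f^{-1}(U))$ is again a quotient map. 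Then $(b,g) \mapsto \bar\sigma(b) \cdot g$ is a continuous inverse to your local model $x \mapsto (q_X(x), \beta(f(x)))$, so $\phi$ is a homeomorphism. Note that this step uses only the continuity and equivariance of $f$ together with the principal-bundle structure of $q_Y$; with it in place, your application of Lemma \ref{l:Serre-fibration-pullback} goes through as stated.
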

\begin{proof}
The facts that $Y \to Y/G$ is a principal $G$-bundle and $f \colon X \to Y$ is $G$-equivariant imply that $X \to X/G$ is also a principal $G$-bundle. Hence the square
\begin{center}
\begin{tikzpicture}
[x=1mm,y=1mm]
\node (tl) at (0,15) {$X/G$};
\node (tr) at (20,15) {$X$};
\node (bl) at (0,0) {$Y/G$};
\node (br) at (20,0) {$Y$};
\draw[->] (tr) to node[right,font=\small]{$f$} (br);
\draw[->] (tl) to node[left,font=\small]{$\bar{f}$} (bl);
\draw[->] (tr) to (tl);
\draw[->] (br) to (bl);
\end{tikzpicture}
\end{center}
is a morphism of principal $G$-bundles, and therefore a pullback square. Any principal $G$-bundle is a surjective Serre fibration, so this square satisfies the hypotheses of Lemma \ref{l:Serre-fibration-pullback}.
\end{proof}

\begin{coro}\label{c:fibration-orbit-spaces2}
The map $\bar{\pi}_n$ in \textup{\hyperref[para:labelled-submanifolds]{\S\anglenumber{iv.}}} is a Serre fibration with path-connected fibres.
\end{coro}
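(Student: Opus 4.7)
The plan is to apply Corollary \ref{c:fibration-orbit-spaces} to the $(G \wr \Sigma_n)$-equivariant map $\pi_n \colon Z_n \to \Emb(nP,M)$, and then to read off the fibres of $\bar{\pi}_n$ from the pullback square that this produces.

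First I would verify the two hypotheses of Corollary \ref{c:fibration-orbit-spaces}. The map $\pi_n$ is a Serre fibration because it is obtained by pulling back the Cartesian power $\pi^n \colon Z^n \to \Emb(P,\mbar)^n$ --- itself a Serre fibration, since $\pi$ is --- along the inclusion $\Emb(nP,M) \hookrightarrow \Emb(P,\mbar)^n$. Next I would invoke Corollary \ref{c:principal-H-bundle} to conclude that $\Emb(nP,M) \to \mathbf{C}_{nP}(M;G)$ is a principal $(G \wr \Sigma_n)$-bundle, once it is checked that $G \wr \Sigma_n$ is open in $\Diff(nP)$. This verification is the only genuinely non-routine step, and is the main place where the openness hypothesis on $G \leq \Diff(P)$ is used. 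The argument is that any diffeomorphism of $nP$ isotopic to the identity preserves each connected component of $nP$, and hence sends each copy of $P$ inside $nP = \{1,\ldots,n\} \times P$ to itself; in particular, the identity component of $\Diff(nP)$ is contained in $\Diff(P)^n \leq \Diff(P) \wr \Sigma_n$, so the latter is open in $\Diff(nP)$. Since $G$ is open in $\Diff(P)$, the subgroup $G \wr \Sigma_n = G^n \rtimes \Sigma_n$ is then open in $\Diff(P) \wr \Sigma_n$, and hence in $\Diff(nP)$, as required.

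With these two inputs in hand, Corollary \ref{c:fibration-orbit-spaces} gives that $\bar{\pi}_n$ is a Serre fibration and moreover exhibits the commutative square with vertical maps $\pi_n$ and $\bar{\pi}_n$ as a pullback. From this pullback description I would read off that the fibre of $\bar{\pi}_n$ over any point $[\varphi_1,\ldots,\varphi_n] \in \mathbf{C}_{nP}(M;G)$ is homeomorphic to the fibre of $\pi_n$ over the representative $(\varphi_1,\ldots,\varphi_n)$, which is the product $\pi^{-1}(\varphi_1) \times \cdots \times \pi^{-1}(\varphi_n)$. Since each factor is path-connected by hypothesis on $\pi$, so is the product, which completes the proof.
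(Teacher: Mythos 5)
Your proof is correct and follows the same route as the paper: verify that $\pi_n$ is a $(G\wr\Sigma_n)$-equivariant Serre fibration by pulling back $\pi^n$, invoke Corollary~\ref{c:principal-H-bundle} to obtain the principal $(G\wr\Sigma_n)$-bundle, then apply Corollary~\ref{c:fibration-orbit-spaces} and read off the fibres from the resulting pullback square. The paper takes the openness of $G\wr\Sigma_n$ in $\mathrm{Diff}(nP)$ as given, so your explicit verification of that point (via the identity component preserving connected components) and your product description of the fibre $\pi^{-1}(\varphi_1)\times\cdots\times\pi^{-1}(\varphi_n)$ are welcome elaborations rather than a divergence of method.
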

\begin{proof}
By assumption, the map $\pi \colon Z \to \mathrm{Emb}(P,\mbar)$ is a $G$-equivariant Serre fibration with path-connected fibres. Hence its $n$th power $\pi^n \colon Z^n \to \mathrm{Emb}(P,\mbar)^n$ is a $(G \wr \Sigma_n)$-equivariant Serre fibration with path-connected fibres. The pullback $\pi_n$ of $\pi^n$ along the inclusion $\mathrm{Emb}(nP,M) \hookrightarrow \mathrm{Emb}(P,\mbar)^n$ is therefore also a $(G \wr \Sigma_n)$-equivariant Serre fibration with path-connected fibres.

Since $G \wr \Sigma_n$ is an open subgroup of $\mathrm{Diff}(nP)$, Corollary \ref{c:principal-H-bundle} implies that
\[
\mathrm{Emb}(nP,M) \longrightarrow \mathrm{Emb}(nP,M)/(G \wr \Sigma_n) = \mathbf{C}_{nP}(M;G)
\]
is a principal $(G \wr \Sigma_n)$-bundle. Then Corollary \ref{c:fibration-orbit-spaces} implies that $\bar{\pi}_n$ is a Serre fibration. Moreover, the proof of Corollary \ref{c:fibration-orbit-spaces} shows that
\begin{center}
\begin{tikzpicture}
[x=1mm,y=1mm]
\node (tl) at (-35,15) {$\mathbf{C}_{nP}(M,Z;G)$};
\node (tm) at (0,15) {$Z_n/(G \wr \Sigma_n)$};
\node (tr) at (40,15) {$Z_n$};
\node (bl) at (-35,0) {$\mathbf{C}_{nP}(M;G)$};
\node (bm) at (0,0) {$\mathrm{Emb}(nP,M)/(G \wr \Sigma_n)$};
\node (br) at (40,0) {$\mathrm{Emb}(nP,M)$};
\draw[->] (tr) to node[right,font=\small]{$\pi_n$} (br);
\draw[->] (tm) to (bm);
\draw[->] (tl) to node[left,font=\small]{$\bar{\pi}_n$} (bl);
\draw[->] (tr) to (tm);
\draw[->] (br) to (bm);
\draw (tm) edge[double equal sign distance] (tl);
\draw (bm) edge[double equal sign distance] (bl);
\end{tikzpicture}
\end{center}
is a pullback square, so the fibres of $\bar{\pi}_n$ are homeomorphic to fibres of $\pi_n$ -- in particular, they are path-connected.
\end{proof}

\section{Proof of stability}\label{s:proof}

In this section we apply Theorem \ref{tAxiomatic} to prove our main theorem. Fix integers $m$ and $p$ satisfying $0 \leq p \leq \frac12(m-3)$ and let $n$ be a non-negative integer. Let $\X(n)$ be the collection of all maps $f \colon X \to Y$ as constructed in Definition \ref{d:input-data}, for all choices of $(M,P,\lambda,\iota,G)$, where $M$ is a smooth, connected $m$-manifold, $P$ is a smooth, closed $p$-manifold, $\iota$ is an embedding $P \hookrightarrow \partial M$, $\lambda$ is a proper embedding $\partial M \times [0,2] \hookrightarrow M$ with $\lambda(-,0) = \text{inclusion}$ and $G$ is an open subgroup of $\mathrm{Diff}(P)$.

To prove Lemma \ref{l:injectivity} and Theorem \ref{tmain} we will show that every map in $\bigcup_{n\geq 0}\X(n)$ is split-injective on homology and the sequence $\X(n)$ satisfies condition (2w) of Theorem \ref{tAxiomatic}. In the next section we spell this out precisely, as a guide to the proof.

As a side remark, although we have fixed the dimensions $m$ and $p$ such that they satisfy the hypothesis $p \leq \frac12(m-3)$ throughout this section, whenever this dimension hypothesis (or something weaker than it) is used in the proof of a lemma, proposition or corollary, it will be stated explicitly, in order to keep track of where it is needed in the proof.

\subsection{Overview of the proof.}\label{s:overview}

We first show in \S\ref{s:split-injectivity} that every map $f \in \bigcup_{n\geq 0}\X(n)$ induces split injections on homology in all degrees.

Now fix $n\geq 2$ and $f \colon X \to Y$ in $\X(n)$. We construct in \S\ref{s:firstr} a map $f_\bullet \colon X_\bullet \to Y_\bullet$ of augmented semi-simplicial spaces whose $(-1)$st level is $f \colon X \to Y$, and show that $\hconn(\lVert X_\bullet \rVert \to X) \geq n-1$ and $\hconn(\lVert Y_\bullet \rVert \to Y) \geq n$.

Fix $i\geq 0$. In \S\ref{s:firsta} we construct a commutative square
\begin{equation}\label{e:firsta}
\centering
\begin{split}
\begin{tikzpicture}
[x=1mm,y=1mm]
\node (tl) at (0,10) {$X_i$};
\node (tr) at (20,10) {$Y_i$};
\node (bl) at (0,0) {$A_i$};
\node (br) at (20,0) {$B_i$};
\draw[->] (tl) to node[above,font=\small]{$f_i$} (tr);
\draw[->] (tl) to node[left,font=\small]{$p_i$} (bl);
\draw[->] (tr) to node[right,font=\small]{$q_i$} (br);
\incl{(bl)}{(br)}
\end{tikzpicture}
\end{split}
\end{equation}
where $A_i$ is path-connected, $p_i$ and $q_i$ are Serre fibrations and the inclusion $A_i \hookrightarrow B_i$ is a homotopy equivalence, and we show, for a certain point $a_i \in A_i$, that the restriction of $f_i$ to $p_i^{-1}(a_i) \to q_i^{-1}(a_i)$ is in $\X(n-i-1)$. In the case $i=0$ we also construct a commutative square
\begin{equation}
\centering
\begin{split}
\begin{tikzpicture}
[x=1mm,y=1mm]
\node (tl) at (0,10) {$\xbar$};
\node (tr) at (40,10) {$\ybar$};
\node (bl) at (0,0) {$p_0^{-1}(a_0)$};
\node (br) at (40,0) {$q_0^{-1}(a_0)$};
\draw[->] (tl) to node[above,font=\small]{$g$} (tr);
\draw[->] (tl) to (bl);
\draw[->] (tr) to (br);
\draw[->] (bl) to node[below,font=\small]{restriction of $f_0$} (br);
\end{tikzpicture}
\end{split}
\end{equation}
in which the vertical maps are homeomorphisms. (This last step is of course not very significant: the map $g$ is just slightly more convenient to work with than the restriction of $f_0$ for the next constructions.)

In \S\ref{s:secondr} we construct a map $g_\bullet \colon \xbar_\bullet \to \ybar_\bullet$ of augmented semi-simplicial spaces whose $(-1)$st level is $g \colon \xbar \to \ybar$, and we show that $\lVert \xbar_\bullet \rVert \to \xbar$ and $\lVert \ybar_\bullet \rVert \to \ybar$ are weak equivalences.

Fix $j\geq 0$. In \S\ref{s:seconda} we construct another commutative square
\begin{equation}\label{e:seconda}
\centering
\begin{split}
\begin{tikzpicture}
[x=1mm,y=1mm]
\node (tl) at (0,12) {$\xbar_j$};
\node (tr) at (20,12) {$\ybar_j$};
\node (bl) at (0,0) {$\abar_j$};
\node (br) at (20,0) {$\bbar_j$};
\draw[->] (tl) to node[above,font=\small]{$g_j$} (tr);
\draw[->] (tl) to node[left,font=\small]{$\bar{p}_j$} (bl);
\draw[->] (tr) to node[right,font=\small]{$\bar{q}_j$} (br);
\incl{(bl)}{(br)}
\end{tikzpicture}
\end{split}
\end{equation}
where $\bar{p}_j$ and $\bar{q}_j$ are Serre fibrations and the inclusion $\abar_j \hookrightarrow \bbar_j$ is a homotopy equivalence. We also show that, for every point $\bar{a} \in \abar_j$, the restriction of $g_j$ to $\bar{p}_j^{-1}(\bar{a}) \to \bar{q}_j^{-1}(\bar{a})$ is in $\X(n-1)$.

Finally, fix $a_0 \in A_0$ as above and any point $\bar{a} \in \abar_0$. In \S\ref{s:weak-factorisation} we show that the composite map
\[
\bar{q}_0^{-1}(\bar{a}) \lhto \ybar_0 \longrightarrow \ybar \xrightarrow{\;\;\cong\;\;} q_0^{-1}(a_0) \lhto Y_0 \longrightarrow Y
\]
factors up to homotopy through $f \colon X \to Y$. (See Figure \ref{fhomotopy} for a picture.)

Combining all of these constructions from \S\S\ref{s:firstr}--\ref{s:weak-factorisation} with split-injectivity from \S\ref{s:split-injectivity} verifies condition (2w) for the sequence $\X(n)$, and therefore Theorem \ref{tAxiomatic} implies Theorem \ref{tmain}.

\subsection{Split-injectivity.}\label{s:split-injectivity}

Fix the input data $M,P,\lambda,\iota,G$ and temporarily write the map $f \colon X \to Y$ of Definition \ref{d:input-data} as $f_n \colon X_n \to Y_n$ to make the dependence on $n$ explicit (but we continue to hide the dependence on the other choices from the notation). By construction, $X_{n+1}$ is a subspace of $Y_n$. The only difference is that, in $Y_n$, the submanifolds are required to have image contained in $M_0$, whereas in $X_{n+1}$ they are required to have image contained in the slightly smaller manifold $M_1 \subset M_0$. Using the collar neighbourhood $\lambda$, it is easy to construct a deformation retraction for the inclusion $M_1 \hookrightarrow M_0$, which induces a deformation retraction for the inclusion $X_{n+1} \hookrightarrow Y_n$.

Now choosing a homotopy inverse for each of these inclusions, the maps $f_n$ give us a sequence
\[
\cdots \longrightarrow X_{n-1} \longrightarrow X_n \longrightarrow X_{n+1} \longrightarrow \cdots ,
\]
and taking reduced homology in any fixed degree $*$ we get a sequence
\[
0 = \widetilde{H}_*(X_0) \longrightarrow \widetilde{H}_*(X_1) \longrightarrow \widetilde{H}_*(X_2) \longrightarrow \widetilde{H}_*(X_3) \longrightarrow \cdots
\]
of abelian groups and homomorphisms. (Note that $X_0$ is the one-point space.)

\begin{lem}[Lemma 2 of \cite{Dold1962DecompositiontheoremsSn}]\label{l:Dold}
Suppose we have a sequence of abelian group homomorphisms $s_n \colon A_n \to A_{n+1}$ for $n\geq 0$ with $A_0 = 0$, together with homomorphisms $\tau_{k,n} \colon A_n \to A_k$ for $1 \leq k \leq n$ such that $\tau_{n,n} = \mathrm{id}$ and $\tau_{k,n} = \tau_{k,n+1} \circ s_n \;\mathrm{mod}\; \mathrm{im}(s_{k-1})$, i.e.\ $\mathrm{im}(\tau_{k,n} - \tau_{k,n+1} \circ s_n) \subseteq \mathrm{im}(s_{k-1})$. Then every $s_n$ is split-injective.
\end{lem}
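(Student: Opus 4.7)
I would prove the lemma by induction on $n$, but in order to make the inductive step tractable I would strengthen the statement and construct, for every $n \geq 0$, an entire family of homomorphisms $r_{n,k} \colon A_n \to A_k$ indexed by $0 \leq k \leq n$ satisfying
\[
r_{n,n} = \mathrm{id}_{A_n}, \qquad r_{n,0} = 0, \qquad r_{n,k} \circ s_{n-1} = r_{n-1,k} \text{ for all } 0 \leq k < n.
\]
Specialising the last identity to $k = n-1$ gives $r_{n,n-1} \circ s_{n-1} = r_{n-1,n-1} = \mathrm{id}_{A_{n-1}}$, so $r_{n,n-1}$ is automatically a splitting of $s_{n-1}$ and the lemma follows. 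The base case $n = 0$ is trivial because $A_0 = 0$.

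\textbf{Explicit formula.}
In the inductive step, assuming the family $\{r_{n-1,k}\}_k$ and the earlier splittings $\{r_{k,k-1}\}_{k < n}$ are all available, I would initialise $r_{n,0} = 0$ and $r_{n,n} = \mathrm{id}$, and then define recursively on $k = 1, 2, \ldots, n-1$
\[
r_{n,k} \;=\; \bigl(\mathrm{id}_{A_k} - s_{k-1} \circ r_{k,k-1}\bigr) \circ \tau_{k,n} \;+\; s_{k-1} \circ r_{n,k-1}.
\]
The first summand is the natural guess $\tau_{k,n}$ projected away from $\mathrm{im}(s_{k-1})$ using the earlier splitting of $s_{k-1}$, and the second summand re-introduces a specific contribution in $\mathrm{im}(s_{k-1})$ tuned so that precomposition with $s_{n-1}$ recovers $r_{n-1,k}$.

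\textbf{Verification.}
Checking $r_{n,k} \circ s_{n-1} = r_{n-1,k}$ is then a direct computation. Rewrite the hypothesis of the lemma as $\tau_{k,n} \circ s_{n-1} = \tau_{k,n-1} + s_{k-1} \circ \xi$ for a (uniquely determined, by the injectivity of $s_{k-1}$) homomorphism $\xi \colon A_{n-1} \to A_{k-1}$. Expanding $r_{n,k} \circ s_{n-1}$, one uses $r_{k,k-1} \circ s_{k-1} = \mathrm{id}$ (from an earlier inductive stage) and the inner-inductive identity $r_{n,k-1} \circ s_{n-1} = r_{n-1,k-1}$; the $\xi$-terms cancel thanks to the elementary relation $(\mathrm{id} - s_{k-1} r_{k,k-1}) \circ s_{k-1} = 0$, and what remains is precisely the defining expression for $r_{n-1,k}$.

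\textbf{Main obstacle.}
The real difficulty is identifying the correct strengthened inductive statement. A naive attempt to prove only that each individual $s_n$ is split-injective quickly runs aground on an extension problem: the error $\mathrm{id}_{A_n} - \tau_{n,n+1} \circ s_n$ lies in $\mathrm{im}(s_{n-1})$ and can be lifted to a map $A_n \to A_{n-1}$ using the splitting of $s_{n-1}$, but there is no \emph{a priori} reason it should extend along $s_n$ to a map $A_{n+1} \to A_{n-1}$, which is what one would need in order to correct the naive splitting. Carrying along the full compatible family $\{r_{n,k}\}$ resolves this by absorbing the required extensions into the induction itself, one level of $k$ at a time.
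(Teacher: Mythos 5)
The paper does not reprove this lemma: it quotes Dold's result with a citation and supplies no argument of its own, so there is nothing internal to compare against. Your proof is correct and is, in substance, Dold's argument: one strengthens the inductive statement to produce an entire compatible family $\{r_{n,k}\}_{0\leq k\leq n}$ rather than a lone retraction of $s_{n-1}$, and the retraction falls out at $k=n-1$ --- this is exactly the fix for the extension obstruction you identify in your final paragraph, which is the real content of the lemma. Two small points worth making explicit if you write the verification out in full: (i) when you extract the homomorphism $\xi$ with $\tau_{k,n}\circ s_{n-1}=\tau_{k,n-1}+s_{k-1}\circ\xi$, you are using that $s_{k-1}$ is already known to be injective, which is legitimate because $k-1\leq n-2$ and hence $s_{k-1}$ was split at an earlier stage of the outer induction, but this dependence should be flagged; (ii) the boundary case $k=n-1$ of the inner verification does not literally reproduce the recursive formula for $r_{n-1,n-1}$ (which was set to $\mathrm{id}$ by convention rather than by the formula) --- instead the surviving expression $(\mathrm{id}-s_{n-2}r_{n-1,n-2})\circ\tau_{n-1,n-1}+s_{n-2}r_{n-1,n-2}$ collapses to $\mathrm{id}$ using the hypothesis $\tau_{n-1,n-1}=\mathrm{id}$, so the convention is automatically consistent.
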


To prove Lemma \ref{l:injectivity} it would therefore suffice to define maps $X_n \to X_k$ for each $1 \leq k \leq n$ such that the induced maps on reduced homology satisfy the hypotheses of this lemma. In fact we will define maps $X_n \to \mathrm{Sp}^b (X_k)$ where $b = \binom{n}{k}$. These induce homomorphisms $\widetilde{H}_*(X_n) \to \widetilde{H}_*(X_k)$ by sending a cycle in $X_n$ first to a cycle in $\mathrm{Sp}^b(X_k)$ and then viewing it as a formal sum of cycles in $X_k$. Alternatively, we may apply the Dold-Thom theorem \cite{DoldThom1958Quasifaserungenundunendliche} that $H_* = \pi_* \circ \mathrm{Sp}^\infty$ for $* \geq 1$ and note that $\mathrm{Sp}^\infty \circ \mathrm{Sp}^b = \mathrm{Sp}^\infty$.

\begin{defn}
Let $\tau_{k,n} \colon X_n \to \mathrm{Sp}^{\binom{n}{k}}(X_k)$ be the map taking $\{[\varphi_1],\ldots,[\varphi_n]\}$ to the formal sum of $\{[\varphi_i] \mid i \in S \}$ over all subsets $S \subseteq \{1,\ldots,n\}$ with $\lvert S \rvert = k$.
\end{defn}

Clearly $\tau_{n,n} = \mathrm{id}$, and there are homotopies
\[
\tau_{k,n+1} \circ f_n \;\simeq\; \tau_{k,n} + f_{k-1} \circ \tau_{k-1,n}
\]
of maps $X_n \longrightarrow \mathrm{Sp}^{\binom{n+1}{k}}(X_k)$. The hypotheses of Lemma \ref{l:Dold} are therefore satisfied after taking reduced homology, so Lemma \ref{l:Dold} implies Lemma \ref{l:injectivity}.

\subsection{Resolution by subconfigurations.}\label{s:firstr}

Let $n\geq 2$ and fix the input data $(M,P,\lambda,\iota,G)$ determining a map $f \colon X \to Y$ in $\X(n)$. Recall that we denote $\Emb(P,M)$ by $E$ and that $G$ is an open subgroup of $\mathrm{Diff}(P)$.

\begin{defn}
For $i\geq -1$ let $X_i$ be the subspace of $X \times (E/G)^{i+1}$ consisting of tuples
\[
(\{ [\varphi_1],\ldots,[\varphi_n] \}, ([\psi_0],\ldots,[\psi_i]) )
\]
such that the $[\psi_0],\ldots,[\psi_i]$ are pairwise distinct and $\{ [\psi_0],\ldots,[\psi_i] \} \,\subseteq\, \{ [\varphi_1],\ldots,[\varphi_n] \}$. There are obvious face maps $d_j \colon X_i \to X_{i-1}$, given by forgetting $[\psi_j]$, that turn this into an augmented semi-simplicial space $X_\bullet$ with $X_{-1} = X$. We define an augmented semi-simplicial space $Y_\bullet$ with $Y_{-1} = Y$ in exactly the same way, and there is a semi-simplicial map $f_\bullet \colon X_\bullet \to Y_\bullet$, defined by adjoining $[\lambda(-,\frac12)\circ\iota]$ to a configuration, such that $f_{-1} = f$.
\end{defn}

We now show that the given embedding $\iota \colon P \hookrightarrow \partial M$ may be extended to an embedding
\begin{equation}
\label{eq:iotabb}
\iotabb \colon P \times (-1,1) \times [0,2] \lhto M,
\end{equation}
which will be used several times for the constructions in the following sections. To make explicit how the relative dimension hypothesis on $M$ and $P$ is used, we split this into two statements in the proposition below. Recall that $m = \mathrm{dim}(M)$ and $p = \mathrm{dim}(P)$. In this section we are assuming that $p \leq \tfrac12(m-3)$, so in particular $p \leq \tfrac12(m-2)$.

\begin{prop}
\label{p:iotabb}
The assumption that $p \leq \tfrac12(m-2)$ implies that the normal bundle of the embedding $\iota \colon P \hookrightarrow \partial M$ admits a non-vanishing section. Whenever $\iota$ satisfies this property, it may be extended to an embedding of the form \eqref{eq:iotabb}.
\end{prop}
\begin{proof}
Write $\nu_\iota \colon N \to P$ for the normal bundle of $\iota$ and $o_\iota \colon P \to N$ for its zero section. Choose a metric on $\nu_\iota$ and denote by $S(\nu_\iota) \colon S(N) \to P$ its unit sphere subbundle with respect to this metric. This has fibres homeomorphic to $S^{m-p-2}$, so the obstructions to the existence of a section of $S(\nu_\iota)$ live in cohomology groups of the form $H^i(P;\pi_{i-1}(S^{m-p-2}))$. Since $P$ has dimension $p \leq m-p-2$, these groups all vanish, and so there exists a section of $S(\nu_\iota)$, thus a non-vanishing section of $\nu_\iota$.

For the second statement, choose a tubular neighbourhood $\iotab$ for $\iota$, i.e.\ an embedding $\iotab \colon N \hookrightarrow \partial M$ such that $\iotab \circ o_\iota = \iota$. A choice of non-vanishing section of $\nu_\iota$ determines a trivial one-dimensional subbundle $P \times \bR \to P$ of $\nu_\iota$, in particular an embedding $P \times \bR \hookrightarrow N$. Composing this with $\iotab$ and restricting to $P \times (-1,1)$, we obtain an embedding $\check{\iota} \colon P \times (-1,1) \hookrightarrow \partial M$. Note that $\check{\iota}(-,0) = \iota$. Now we may define the desired embedding $\iotabb$ by $\iotabb(z,s,t) = \lambda(\check{\iota}(z,s),t)$, using the given collar neighbourhood $\lambda$ of $\partial M$. Intuitively, the coordinate $s \in (-1,1)$ specifies a ``shift'' of the standard embedding $\iota \colon P \hookrightarrow \partial M$ parallel to the boundary, and the coordinate $t \in [0,2]$ specifies a shift orthogonal to the boundary, into the interior of $M$ if $t>0$.
\end{proof}

For a space $Z$, denote the configuration space of $n$ unordered, distinct points in $Z$ by $C_n(Z)$. There is then a map
\[
\mathit{st}_n \colon C_n((-1,1) \times (1,2)) \longrightarrow X
\]
given by $\mathit{st}_n(\{ (s_1,t_1),\ldots,(s_n,t_n) \}) = \{ [\iotabb(-,s_1,t_1)],\ldots,[\iotabb(-,s_n,t_n)] \}$. Note that the image of $\mathit{st}_n$ contains the \emph{standard configurations} of Definition \ref{d:standard}, corresponding to the configurations with $s_1 = s_2 = \cdots = s_n = 0$. Similarly, there is a map
\[
\mathit{st}_{n+1}^\prime \colon C_{n+1}((-1,1) \times (0,2)) \longrightarrow Y
\]
whose image contains the standard configurations in $Y$.

Now we show that $\hconn(\lVert X_\bullet \rVert \to X) \geq n-1$ and $\hconn(\lVert Y_\bullet \rVert \to Y) \geq n$. We will do this just in the first case, since the second case is almost identical. We first note that all face maps (and compositions of face maps) of $X_\bullet$ are covering maps. In particular, the augmentation $X_0 \to X$ is a covering map. Now, $X$ is path-connected by definition, and it is not hard to see that $X_0$ is also path-connected: we just need to show that two points in the same fibre of $X_0 \to X$ over $x \in X$ can be connected by a path in $X_0$, which we can do by first moving $x$ into the image of $\mathit{st}_n$ and then using the image of a braid in $C_n((-1,1) \times (1,2))$ to permute the components of the configuration of submanifolds $x$. Since the space of vertices $X_0$ is path-connected, so is the geometric realisation $\lVert X_\bullet \rVert$.\footnote{An alternative argument for path-connectivity of $\lVert X_\bullet \rVert$, avoiding the use of the map $\mathit{st}_n$ and hence the hypothesis of Proposition \ref{p:iotabb}, is as follows. We show below that the homotopy fibre of $\lVert X_\bullet \rVert \to X$ is $(n-2)$-connected, where $n\geq 2$, so it is at least path-connected. The codomain $X$ is path-connected by definition. Hence the domain $\lVert X_\bullet \rVert$ must also be path-connected.} The relative Hurewicz theorem therefore tells us that $\hconn(\lVert X_\bullet \rVert \to X) \geq n-1$ as long as the map $\lVert X_\bullet \rVert \to X$ is $(n-1)$-connected, in other words its homotopy fibre is $(n-2)$-connected.

We will use the following lemma, which appeared in an earlier, preprint version of \cite{Randal-Williams2016Resolutionsmodulispaces} (but not in the final, published version), and which is also very similar to Lemma 2.14 of \cite{EbertRandal-Williams2019Semi-simplicialspaces}. We also give a self-contained proof in the appendix \S\ref{s:appendix}.

\begin{lem}\label{l:homotopy-fibres}
For any augmented semi-simplicial space $X_\bullet$ and point $x \in X = X_{-1}$, the canonical map $\lVert \mathrm{hofib}_x(k_\bullet) \rVert \to \mathrm{hofib}_x(\lVert X_\bullet \rVert \to X)$ is a weak equivalence, where $k_n \colon X_n \to X$ denotes the unique composition of face maps $X_n \to X_{n-1} \to \cdots \to X_0 \to X$.
\end{lem}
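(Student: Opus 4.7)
\textbf{Plan for Lemma \ref{l:homotopy-fibres}.} The strategy is to identify both sides of the comparison map with explicit strict fibered products (along a Hurewicz fibration model of the homotopy fiber) and then deduce weak equivalence from a skeletal induction combined with a compactness argument.

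First, I would write $PX^\ast = \{\gamma \colon [0,1] \to X \mid \gamma(0) = x\}$ for the based path space and $\mathrm{ev}_1 \colon PX^\ast \to X$ for the evaluation-at-$1$ Hurewicz fibration. For any map $f \colon Z \to X$ one has $\mathrm{hofib}_x(f) \cong Z \times_X PX^\ast$. Applying this to $k_n$, and using that face maps in $X_\bullet$ are compatible with the augmentation, the spaces $Y_n := X_n \times_X PX^\ast$ assemble into an augmented semi-simplicial space $Y_\bullet$ with $Y_n = \mathrm{hofib}_x(k_n)$. Similarly $\mathrm{hofib}_x(\|X_\bullet\| \to X) = \|X_\bullet\| \times_X PX^\ast$, and the canonical comparison map becomes
\[
\Phi \colon \|Y_\bullet\| \longrightarrow \|X_\bullet\| \times_X PX^\ast, \qquad [(z,\gamma),t] \longmapsto ([z,t],\gamma),
\]
which is a continuous bijection by inspection of the defining equivalence relations.

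Second, I would analyse $\Phi$ skeleton by skeleton. Writing $F_n = \|X_\bullet\|^{(n)}$, the standard construction gives an inductive pushout $F_n = F_{n-1} \cup_{X_n \times \partial\Delta^n} (X_n \times \Delta^n)$, and all four corners are canonically spaces over $X$ (on the $X_n \times \Delta^n$ factors via $(z,t) \mapsto k_n(z)$). Base change along $\mathrm{ev}_1$ is a left adjoint $\mathrm{Top}/X \to \mathrm{Top}/PX^\ast$ and hence preserves pushouts; combined with the obvious identity $(X_n \times \Delta^n) \times_X PX^\ast = Y_n \times \Delta^n$, this shows by induction on $n$ that $\Phi$ restricts to a homeomorphism $\|Y_\bullet\|^{(n)} \xrightarrow{\cong} F_n \times_X PX^\ast$ for each $n$.

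Third, I would pass from finite skeleta to the whole geometric realisation by a compactness argument. Any continuous map from a compact space (such as $S^k$ or $S^k \times [0,1]$) into $\|X_\bullet\| \times_X PX^\ast$ projects to a compact subset of $\|X_\bullet\|$, which lies in some $F_n$; the map therefore factors through $F_n \times_X PX^\ast = \|Y_\bullet\|^{(n)} \subseteq \|Y_\bullet\|$. Applying this to spheres and to their nullhomotopies (and to homotopies, to handle injectivity on $\pi_k$) yields that $\Phi$ induces a bijection on $\pi_k$ for every $k \geq 0$ and every basepoint, and similarly on path components.

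The main obstacle is precisely the last step: since $PX^\ast$ is not in general locally compact, fibered product does not commute with the sequential colimit defining $\|X_\bullet\|$, and so $\Phi$ need not itself be a homeomorphism. The compactness argument sidesteps this topological pathology by working one homotopy class at a time, which is all that is needed for a weak equivalence.
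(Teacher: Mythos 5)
Your approach is genuinely different from the paper's, and the idea of identifying both sides with strict fibered products over the path fibration $\mathrm{ev}_1\colon PX^\ast\to X$ and then comparing skeleton-by-skeleton is natural. However, the second step has a real gap. The claim that ``base change along $\mathrm{ev}_1$ is a left adjoint $\mathrm{Top}/X\to\mathrm{Top}/PX^\ast$ and hence preserves pushouts'' is false in plain $\mathrm{Top}$: pullback is a \emph{right} adjoint (to postcomposition), so it preserves limits, not colimits. It is a left adjoint only in a locally cartesian closed category such as $k$-spaces, which $\mathrm{Top}$ is not. Your argument is in fact internally inconsistent: the final paragraph correctly observes that $-\times_X PX^\ast$ fails to commute with the sequential colimit $\colim_n F_n$ precisely because $PX^\ast$ is not locally compact, but if base change were a left adjoint it would preserve \emph{all} colimits, sequential ones included. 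The same non-local-compactness obstruction hits the finite skeleta: the quotient map $F_{n-1}\sqcup (X_n\times\Delta^n)\to F_n$ need not remain a quotient after applying $-\times_X PX^\ast$, so the continuous bijection $\lVert Y_\bullet\rVert^{(n)}\to F_n\times_X PX^\ast$ need not be a homeomorphism, and the compactness argument then loses its foothold (a map $S^k\to F_n\times_X PX^\ast$ need not lift to the possibly-finer topology on $\lVert Y_\bullet\rVert^{(n)}$).

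The paper avoids exactly this pitfall by never pulling back along the path fibration at the level of realisations. Instead it factors each $k_n$ through a Serre fibration, CW-approximates the base by $Z$, and works with the \emph{strict} fibre $h_n^{-1}(z)$ over a single point $z$. Since $\{z\}$ is closed in the CW complex $Z$, the spaces $h_n^{-1}(z)$ form a full semi-simplicial subspace given by closed inclusions, and Lemma~\ref{lem:closed-inclusion} yields a genuine homeomorphism $\lVert h_\bullet^{-1}(z)\rVert\cong h^{-1}(z)$ — here the relevant base-change is along the locally compact space $\{z\}\hookrightarrow Z$, which is unproblematic. The comparison of strict and homotopy fibres is then handled globally by the Dold--Thom quasifibration theorem. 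If you wish to salvage your approach, you would either need to work from the outset in a convenient category of spaces (and justify that the lemma reduces to that case), or supply a direct argument that pullback along $\mathrm{ev}_1$ preserves these particular pushouts, which is not automatic.
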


We note that we are implicitly using a specific point-set model for the homotopy fibre of a map, so that $\mathrm{hofib}_x(k_\bullet)$ is a semi-simplicial \emph{space}, and not just a semi-simplicial object in the homotopy category. This is explained in more detail in \S\ref{ss:homotopy-fibres}.

In our case, the maps $k_n \colon X_n \to X$ are covering maps, therefore Serre fibrations, so there is a levelwise weak equivalence $k_{\bullet}^{-1}(x) \to \mathrm{hofib}_x(f_\bullet)$, which then induces a weak equivalence on geometric realisations (\cf Theorem 2.2 of \cite{EbertRandal-Williams2019Semi-simplicialspaces}). Thus the homotopy fibre of $\lVert X_\bullet \rVert \to X$ is weakly equivalent to $\lVert k_{\bullet}^{-1}(x) \rVert$.

The semi-simplicial set $k_{\bullet}^{-1}(x)$ has as its set of $i$-simplices all ordered $(i+1)$-tuples of pairwise distinct elements of the set $k_0^{-1}(x) = x = \{ [\varphi_1],\ldots,[\varphi_n] \} \in X$. This is often called the \emph{complex of injective words} on $n$ letters, and its geometric realisation is known to be homotopy equivalent to a wedge of $(n-1)$-spheres, see for example \cite[Proposition 3.3]{Randal-Williams2013Homologicalstabilityunordered}. In particular, this means that $\lVert k_{\bullet}^{-1}(x) \rVert$, and therefore the homotopy fibre of $\lVert X_\bullet \rVert \to X$, is $(n-2)$-connected, as claimed.

From the above discussion (and an identical argument in the case of $Y_\bullet$) we conclude:

\begin{lem}
The map $f_\bullet \colon X_\bullet \to Y_\bullet$ of augmented semi-simplicial spaces is an $n$-resolution of $f \colon X \to Y$, in other words $\hconn(\lVert X_\bullet \rVert \to X) \geq n-1$ and $\hconn(\lVert Y_\bullet \rVert \to Y) \geq n$.
\end{lem}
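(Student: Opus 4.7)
The statement for $X_\bullet$ has effectively been argued in the paragraphs just before the lemma, so my plan is to carry out the same argument for $Y_\bullet$ and then bundle both cases together. The only differences are that in $Y$ each configuration has $n+1$ components (rather than $n$) and that the allowable range of the second coordinate is $(0,2)$ rather than $(1,2)$ (so that the standard parametrisations of components sit in $M_0$ rather than in $M_1$), but these differences are inessential for the connectivity argument.

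First I would observe that exactly as for $X_\bullet$, each face map of $Y_\bullet$ (and each iterated composition of face maps, including the augmentation $Y_0 \to Y$) is a covering map, since removing one or several labelled components from an unordered configuration of pairwise distinct components is a covering projection. Next I would establish path-connectedness of $Y_0$: by definition $Y$ is path-connected, so any two points in a fibre of $Y_0 \to Y$ over $y \in Y$ can be connected in $Y_0$ by first isotoping $y$ into the image of the map $\mathit{st}_{n+1}^\prime \colon C_{n+1}((-1,1)\times(0,2)) \to Y$ and then lifting a suitable braid in $C_{n+1}((-1,1)\times(0,2))$ that realises the required permutation of the $n+1$ components, via $\iotabb$. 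Since $Y_0$ is path-connected, so is $\lVert Y_\bullet \rVert$, and the relative Hurewicz theorem reduces the claim $\hconn(\lVert Y_\bullet \rVert \to Y) \geq n$ to showing that the homotopy fibre of $\lVert Y_\bullet \rVert \to Y$ is $(n-1)$-connected.

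Now I apply Lemma \ref{l:homotopy-fibres}: for any point $y \in Y$, the homotopy fibre of $\lVert Y_\bullet \rVert \to Y$ at $y$ is weakly equivalent to $\lVert \mathrm{hofib}_y(k_\bullet) \rVert$, where $k_\ell \colon Y_\ell \to Y$ is the unique iterated face map. Since each $k_\ell$ is a covering map, hence a Serre fibration, the levelwise fibre inclusion $k_\bullet^{-1}(y) \hookrightarrow \mathrm{hofib}_y(k_\bullet)$ is a levelwise weak equivalence of semi-simplicial spaces, which gives a weak equivalence of their realisations. The semi-simplicial set $k_\bullet^{-1}(y)$ is then precisely the complex of injective words on the $n+1$-element set $y = \{[\psi_1],\ldots,[\psi_{n+1}]\}$, whose realisation is homotopy equivalent to a wedge of $n$-spheres by \cite[Proposition 3.3]{Randal-Williams2013Homologicalstabilityunordered} (the result of Farmer). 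Hence the homotopy fibre is $(n-1)$-connected, as required.

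The argument for $X_\bullet$ having already been written out gives $\hconn(\lVert X_\bullet \rVert \to X) \geq n-1$ in exactly the same way, using the complex of injective words on an $n$-element set (which is $(n-2)$-connected). Combining the two bounds matches the definition of an $n$-resolution (Definition \ref{dResolutionMap} with $c=n$: $Y_\bullet$ is an $n$-resolution and $X_\bullet$ an $(n-1)$-resolution). There is really no hard step here: everything is a direct adaptation of the argument already given for $X_\bullet$, and the only point to double-check is the path-connectedness of $Y_0$, which amounts to noting that any permutation of the components of a standard configuration in $Y$ can be realised by a loop in $Y_0$ via a braid in $C_{n+1}((-1,1)\times(0,2))$.
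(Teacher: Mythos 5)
Your argument is correct and is exactly the one the paper gives: the paper proves the $X_\bullet$ case in full and then asserts the $Y_\bullet$ case is "almost identical", which is precisely the adaptation you have written out — covering maps, path-connectedness of $Y_0$ via $\mathit{st}_{n+1}^\prime$ and a braid, relative Hurewicz, Lemma~\ref{l:homotopy-fibres}, and the complex of injective words on $n+1$ letters. No gaps.
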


\subsection{The first approximation.}\label{s:firsta}

Fix $i\geq 0$. For a space $Z$, write $\widetilde{C}_k(Z)$ for the configuration space of $k$ ordered points in $Z$ and define a map
\[
\widetilde{\mathit{st}}_k \colon \widetilde{C}_k((-1,1) \times (1,2)) \longrightarrow (E/G)^k
\]
by $\widetilde{\mathit{st}}_k((s_1,t_1),\ldots,(s_k,t_k)) = ([\iotabb(-,s_1,t_1)],\ldots,[\iotabb(-,s_k,t_k)])$. The domain of $\widetilde{\mathit{st}}_k$ (the $k$th ordered configuration space of the plane) is path-connected, and therefore so is its image in $(E/G)^k$.

\begin{defn}
Let $A_i$ be the path-component of $\Emb((i+1)P,M_1)/G^{i+1} \subseteq (E/G)^{i+1}$ containing the standard configurations, i.e.\ the image of $\widetilde{\mathit{st}}_{i+1}$. Equivalently, $A_i$ is the subspace of $(E/G)^{i+1}$ consisting of ordered tuples $([\psi_0],\ldots,[\psi_i])$ such that the images $\psi_\alpha(P)$ are pairwise disjoint and contained in $M_1$, and there exists a path of such configurations starting at $([\psi_0],\ldots,[\psi_i])$ and ending at a standard configuration.

We define $B_i$ in exactly the same way, except that we replace $M_1$ with $M_0$.
\end{defn}

\begin{lem}
The inclusion $A_i \hookrightarrow B_i$ is a homotopy equivalence.
\end{lem}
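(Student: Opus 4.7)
The key idea is that $A_i$ and $B_i$ differ only in whether the submanifolds' images are required to lie in $M_1$ or merely in $M_0 \supseteq M_1$, and these two open subsets of $M$ differ exactly by the open collar $\lambda(\partial M \times (0, 1])$. The plan is therefore to construct a smooth isotopy $(\Lambda_s)_{s \in [0, 1]}$ of self-embeddings of $M$ that ``pushes everything inward just far enough'' to send $M_0$ into $M_1$, and then to post-compose embeddings with this isotopy to produce a deformation retraction of $B_i$ onto $A_i$.

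Concretely, using the given extension of $\lambda$ to $\partial M \times [0, 2+\epsilon]$, I would take $\Lambda_s$ to be the identity outside this extended collar and, on it, to have the form $(z, t) \mapsto \lambda(z, \phi_s(t))$, where $\phi_s(t) := t + s\,\eta(t)$ for a non-negative smooth bump function $\eta \colon [0, 2+\epsilon] \to \mathbb{R}$ with $\eta(0) = 3/2$, $\eta \equiv 0$ on $[2, 2+\epsilon]$, and $\eta'(t) > -1$ everywhere. These conditions ensure that each $\phi_s$ is a smooth embedding of $[0, 2+\epsilon]$ into itself (since $\phi_s'(t) = 1 + s\,\eta'(t) > 0$), that $\phi_s(t) \geq t$ always (so $\Lambda_s(M_1) \subseteq M_1$ for every $s$), and that $\phi_1(t) \geq 3/2 > 1$ for all $t$ (so $\Lambda_1(M_0) \subseteq M_1$). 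Access to the extension of $\lambda$ past $[0,2]$ is exactly what gives $\eta$ enough room to decrease smoothly from $3/2$ to $0$ with slope bounded below by $-1$.

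Post-composition then gives a continuous map
\[
H \colon [0, 1] \times \mathrm{Emb}((i+1)P, M_0) \longrightarrow \mathrm{Emb}((i+1)P, M_0), \qquad (s, \psi) \longmapsto \Lambda_s \circ \psi,
\]
continuity being Fact~\ref{fact:composition-continuous} (since $(i+1)P$ is compact, any smooth map from it is automatically proper). This $H$ is $G^{i+1}$-equivariant for the right action and hence descends to a homotopy $\bar{H}$ on the quotient by $G^{i+1}$. By the properties of $\Lambda_s$ above, $\bar{H}$ restricts to a homotopy $[0, 1] \times B_i \to B_i$ and further to $[0, 1] \times A_i \to A_i$, and $r(x) := \bar{H}(1, x)$ maps $B_i$ into the image of $\mathrm{Emb}((i+1)P, M_1)/G^{i+1}$. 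A short path-connectivity argument, starting from the observation that $\Lambda_1$ sends a standard configuration (all $t$-coordinates in $(1,2)$) to another standard configuration (hence in $A_i$), then shows that in fact $r(B_i) \subseteq A_i$.

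It then follows that $\bar{H}|_{[0, 1] \times B_i}$ is a homotopy $\mathrm{id}_{B_i} \simeq j \circ r$ in $B_i$ (where $j \colon A_i \hookrightarrow B_i$ denotes the inclusion in question), while $\bar{H}|_{[0, 1] \times A_i}$ is a homotopy $\mathrm{id}_{A_i} \simeq r \circ j$ in $A_i$, so $j$ is a homotopy equivalence with inverse $r$. The only real obstacle is the explicit construction of the bump function $\eta$ satisfying simultaneously the ``preserves $M_1$'' condition ($\phi_s(t) \geq t$) and the ``pushes $M_0$ into $M_1$'' condition ($\phi_1(t) > 1$), while still giving a smooth family of smooth embeddings matching the identity at the far end of the collar; everything after that is formal.
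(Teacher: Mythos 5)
Your proof is correct, and it takes essentially the same approach as the paper: the paper's one-line proof says to "construct a deformation retraction for the inclusion $M_1 \hookrightarrow M_0$, which induces a deformation retraction for the inclusion $A_i \hookrightarrow B_i$", and your collar-compression isotopy $(\Lambda_s)$ is a concrete realization of exactly that idea. The only thing I'd note is that what you build is, strictly speaking, a homotopy inverse rather than a deformation retraction fixing $A_i$ pointwise (indeed, no compression through self-embeddings of $M$ can fix $M_1$ pointwise while being injective on $M_0$), but the paper's phrase is clearly meant informally and the conclusion — that the inclusion is a homotopy equivalence — is exactly what your argument establishes.
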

\begin{proof}
Using the collar neighbourhood $\lambda$, we may construct a deformation retraction for the inclusion $M_1 \hookrightarrow M_0$, which induces a deformation retraction for the inclusion $A_i \hookrightarrow B_i$.
\end{proof}

\begin{defn}
Define $p_i \colon X_i \to A_i$ by sending $(\{ [\varphi_1],\ldots,[\varphi_n] \}, ([\psi_0],\ldots,[\psi_i]) )$ to $([\psi_0],\ldots,[\psi_i])$. In words, $p_i$ takes a configuration of submanifolds in which $i+1$ components have been marked (and ordered), and forgets all of the non-marked components. The map $q_i \colon Y_i \to B_i$ is defined in exactly the same way.
\end{defn}

These are clearly well-defined, and fit into a commutative square \eqref{e:firsta}.

\begin{lem}\label{l:Serre-fibrations-1}
The maps $p_i \colon X_i \to A_i$ and $q_i \colon Y_i \to B_i$ are Serre fibrations.
\end{lem}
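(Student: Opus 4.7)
The plan is to exhibit $p_i$ and $q_i$ as $\Diff_c$-equivariant maps whose targets are locally retractile for a suitable diffeomorphism group, and then invoke Proposition \ref{p:G-locally-retractile} to conclude they are fibre bundles, hence Serre fibrations. I describe $p_i$ in detail; the argument for $q_i$ is identical after replacing $M_1$ and $A_i$ by $M_0$ and $B_i$.

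First, since $M_1 = M \smallsetminus \lambda(\partial M \times [0,1])$ is open in $M$ and disjoint from $\partial M$, it is a manifold without boundary. As $P$ is closed the manifold $(i+1)P$ is compact, and as $G$ is open in $\Diff(P)$ the product $G^{i+1}$ is open in $\Diff((i+1)P)$. Proposition \ref{p:G-locally-retractile-orbitspace} therefore applies to show that $\Emb((i+1)P, M_1)/G^{i+1}$ is $\Diff_c(M_1)$-locally retractile, and hence also $\Diff_c(M_1)_0$-locally retractile by Lemma \ref{l:retractile-basic}(i). The resulting local sections land in the path-connected group $\Diff_c(M_1)_0$, so each point of the orbit space has a neighbourhood contained in its own path-component; path-components are therefore open, and are preserved by the connected group $\Diff_c(M_1)_0$. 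In particular $A_i$ is an open $\Diff_c(M_1)_0$-invariant subspace, and Lemma \ref{l:retractile-basic}(iii) gives that the action of $\Diff_c(M_1)_0$ on $A_i$ is locally retractile.

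Next, I would extend this action to $X_i$. Every element of $\Diff_c(M_1)_0$ extends by the identity to a compactly-supported diffeomorphism of $M$, which acts by post-composition on embeddings of submanifolds into $M_1 \subseteq M$. This yields continuous actions on $X$ (preserving its path-component, since the group is connected) and on $(E/G)^{i+1}$, whose diagonal action on the product preserves $X_i$: the defining conditions, namely distinctness of $[\psi_0], \ldots, [\psi_i]$ and their inclusion in $\{[\varphi_1], \ldots, [\varphi_n]\}$, are invariant under simultaneous post-composition. The map $p_i$ is equivariant by construction, so Proposition \ref{p:G-locally-retractile} produces local trivialisations and $p_i$ is a fibre bundle, in particular a Serre fibration.

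The only real obstacle is verifying a handful of routine compatibilities — openness and invariance of $A_i$ in the orbit space, and invariance of $X_i$ under the diagonal $\Diff_c(M_1)_0$-action; the local-retractility machinery of Section \ref{s:fibre-bundles} does all the substantive work.
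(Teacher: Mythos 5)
Your proof is correct and follows essentially the same route as the paper: Proposition \ref{p:G-locally-retractile-orbitspace} and Lemma \ref{l:retractile-basic} to make $A_i$ a locally retractile $\Diff_c(M_1)_0$-space, then equivariance of $p_i$ and Proposition \ref{p:G-locally-retractile} to conclude it is a fibre bundle. The only small deviation is your argument that the path-components of the orbit space are open (you derive it from local sections into the connected group, whereas the paper instead invokes local contractibility of embedding spaces via Fact \ref{fact:locally-contractible}); both are valid, and you should also note that your use of Lemma \ref{l:retractile-basic}(i) relies on the openness of $\Diff_c(M_1)_0$ in $\Diff_c(M_1)$, which again comes from Fact \ref{fact:locally-contractible}.
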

\begin{proof}
By Proposition \ref{p:G-locally-retractile-orbitspace}, the action of $\mathrm{Diff}_c(M_1)$ on the quotient space $\Emb((i+1)P,M_1)/G^{i+1}$ is locally retractile. Since $\mathrm{Diff}_c(M_1)$ is locally path-connected by Fact \ref{fact:locally-contractible}, Lemma \ref{l:retractile-basic} implies that the restriction of this action to the identity path-component $\mathrm{Diff}_c(M_1)_0$ is also locally retractile. Embedding spaces are locally contractible as long as the domain manifold is compact (by Fact \ref{fact:locally-contractible} again), so in particular they are locally path-connected, and so their path-components are open. The property of having open path-components passes to quotient spaces, so $A_i$ is an open subspace of $\Emb((i+1)P,M_1)/G^{i+1}$. It is also $\mathrm{Diff}_c(M_1)_0$-invariant, since it is a path-component and the group $\mathrm{Diff}_c(M_1)_0$ is path-connected. Therefore Lemma \ref{l:retractile-basic} tells us that the action of $\mathrm{Diff}_c(M_1)_0$ on $A_i$ is locally retractile.

There is also a well-defined action of $\mathrm{Diff}_c(M_1)_0$ on $X_i$ given by post-composition, and $p_i \colon X_i \to A_i$ is equivariant with respect to these actions. Thus Proposition \ref{p:G-locally-retractile} implies that it is a fibre bundle, in particular a Serre fibration.

An almost identical argument, replacing $M_1$ by $M_0$ everywhere, shows that $q_i \colon Y_i \to B_i$ is also a fibre bundle, and therefore a Serre fibration.
\end{proof}

Now choose a point $a_i = ([\psi_0],\ldots,[\psi_i]) \in A_i$ such that each $\psi_\alpha(P)$ is contained in $M_2$ and define
\[
\mathrm{im}(a_i) = \psi_0(P) \cup \psi_1(P) \cup \ldots \cup \psi_i(P).
\]

\begin{prop}\label{p:two-conditions}
Assume that the normal bundle of the embedding $\iota \colon P \hookrightarrow \partial M$ admits a non-vanishing section. Then the following two conditions are equivalent, for a configuration
\[
\{ [\varphi_1],\ldots,[\varphi_{n-i-1}] \} \in \Emb((n-i-1)P,M_1 \smallsetminus \mathrm{im}(a_i)) / (G \wr \Sigma_{n-i-1}).
\]
\begin{itemizeb}
\item[\textup{(1)}] There is a path in this space from $\{ [\varphi_1],\ldots,[\varphi_{n-i-1}] \}$ to a standard configuration, i.e.\ one of the form $\{ [\lambda(\iota(-),t_1)] ,\ldots, [\lambda(\iota(-),t_{n-i-1})] \}$ for distinct $t_1,\ldots,t_{n-i-1}$ in the interval $(1,2)$, \cf Definition \ref{d:standard}.
\item[\textup{(2)}] There is a path in the space $\Emb(nP,M_1)/(G \wr \Sigma_n)$ from $\{ [\varphi_1] ,\ldots, [\varphi_{n-i-1}] , [\psi_0] ,\ldots, [\psi_i] \}$ to a standard configuration, i.e.\ one of the form $\{ [\lambda(\iota(-),t_1)] ,\ldots, [\lambda(\iota(-),t_n)] \}$ for distinct $t_1,\ldots,t_n$ in the interval $(1,2)$.
\end{itemizeb}
\end{prop}

We defer the proof for a few paragraphs. This immediately implies:

\begin{coro}\label{c:two-conditions}
Under the same assumption as in Proposition \ref{p:two-conditions}, there is a canonical homeomorphism $p_i^{-1}(a_i) \cong X_{n-i-1}(M \smallsetminus \mathrm{im}(a_i))$ given by
\[
(\{ [\varphi_1],\ldots,[\varphi_n] \}, ([\psi_0],\ldots,[\psi_i]) ) \;\longmapsto\; \{ [\varphi_1],\ldots,[\varphi_{n-i-1}] \} ,
\]
where we assume that the indexing of the $[\varphi_1],\ldots,[\varphi_n]$ is chosen so that $[\psi_\alpha] = [\varphi_{n-\alpha}]$.
\end{coro}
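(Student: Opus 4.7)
The plan is to exhibit the stated bijection as a continuous map with a continuous inverse, reducing the content of the corollary entirely to Proposition \ref{p:two-conditions}. Writing $M' = M \smallsetminus \mathrm{im}(a_i)$, I will define the candidate map $\Phi \colon p_i^{-1}(a_i) \to X_{n-i-1}(M')$ as the composition of the projection $X_i \to X$ onto the first coordinate with the operation of deleting from the unordered $n$-element configuration exactly those $i+1$ components of the form $[\psi_\alpha]$. This deletion is well-defined since the $[\psi_\alpha]$ are pairwise distinct and the remaining components have images disjoint from $\mathrm{im}(a_i)$ by the definition of the fibre; and it is continuous since on a sufficiently small neighbourhood of any configuration one can identify the $i+1$ components near each fixed $[\psi_\alpha]$ locally and continuously (using that $E/G$ is Hausdorff and the components of the configuration are pairwise disjoint in $M$). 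The candidate inverse $\Psi$ will adjoin the fixed parametrisations $[\psi_0],\ldots,[\psi_i]$ back, sending $\{[\varphi_1],\ldots,[\varphi_{n-i-1}]\}$ to the pair $(\{[\varphi_1],\ldots,[\varphi_{n-i-1}],[\psi_0],\ldots,[\psi_i]\},([\psi_0],\ldots,[\psi_i]))$; this is continuous because adjoining fixed elements is a continuous operation on symmetric products, and pairs with the identity on the second factor.

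The core of the argument will then be to check that $\Phi$ and $\Psi$ actually land in their advertised targets, and this is exactly where Proposition \ref{p:two-conditions} is needed. Membership of a pair in $p_i^{-1}(a_i) \subseteq X \times (E/G)^{i+1}$ requires that the extended $n$-element configuration lie in the standard path-component $X$, which is condition (2) of the proposition; whereas membership of the truncated configuration in $X_{n-i-1}(M')$ requires that it lie in the path-component containing the standard configurations in $M'$, which is condition (1). Their equivalence will therefore make both $\Phi$ and $\Psi$ well-defined simultaneously, and the identities $\Phi\circ\Psi = \mathrm{id}$ and $\Psi\circ\Phi = \mathrm{id}$ will follow directly from the explicit formulas together with the indexing convention $[\psi_\alpha] = [\varphi_{n-\alpha}]$.

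Since Proposition \ref{p:two-conditions} is being deferred, the corollary itself presents no real obstacle: it is a packaging lemma whose only non-formal ingredient is that proposition. The main work, to be absorbed into the proof of \ref{p:two-conditions}, is the genuine path-connectedness argument---producing an isotopy that simultaneously moves an arbitrary sub-configuration of $n-i-1$ components in $M'_1$ into the standard position while keeping the fixed $\psi_\alpha$'s undisturbed, and conversely. That is where a geometric transversality argument (and presumably the dimension hypothesis $p \leq \tfrac{1}{2}(m-3)$) will be required; here, nothing beyond the formal manipulation above is needed.
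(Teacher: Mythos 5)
Your proposal is correct and follows the same route as the paper, which simply states that the corollary ``immediately'' follows from Proposition \ref{p:two-conditions}: the bijection is the evident deletion/adjunction of the fixed components $[\psi_0],\ldots,[\psi_i]$, and the only non-formal input is the equivalence of conditions (1) and (2). Your additional remarks on continuity of the two maps are correct (and more than the paper bothers to record).
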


\begin{rmk}
The notation $X_{n-i-1}(M \smallsetminus \mathrm{im}(a_i))$ means the space $X$ from Definition \ref{d:input-data}, with the same $P,\lambda,\iota,G$ as before, but with $n$ replaced by $n-i-1$ and with $M$ replaced by its open submanifold $M \smallsetminus \mathrm{im}(a_i)$.
\end{rmk}

\begin{proof}[Proof of Corollary \ref{c:two-conditions}]
The map given above defines a homeomorphism from $p_i^{-1}(a_i)$ onto the subspace of
\[
\Emb((n-i-1)P,M_1 \smallsetminus \mathrm{im}(a_i)) / (G \wr \Sigma_{n-i-1})
\]
consisting of those elements that satisfy condition (2) of Proposition \ref{p:two-conditions}. By definition, the space $X_{n-i-1}(M \smallsetminus \mathrm{im}(a_i))$ is the subspace of $\Emb((n-i-1)P,M_1 \smallsetminus \mathrm{im}(a_i)) / (G \wr \Sigma_{n-i-1})$ consisting of those elements that satisfy condition (1) of Proposition \ref{p:two-conditions}. The result therefore follows from Proposition \ref{p:two-conditions}.
\end{proof}

By exactly the same argument, replacing $M_1$ with $M_0$ everywhere, we also have a canonical homeomorphism $q_i^{-1}(a_i) \cong Y_{n-i-1}(M \smallsetminus \mathrm{im}(a_i))$, and these fit into a commutative diagram
\begin{equation}
\centering
\begin{split}
\begin{tikzpicture}
[x=1mm,y=1mm]
\node (tl) at (0,10) {$X_{n-i-1}(M \smallsetminus \mathrm{im}(a_i))$};
\node (tr) at (80,10) {$Y_{n-i-1}(M \smallsetminus \mathrm{im}(a_i))$};
\node (bl) at (0,0) {$p_i^{-1}(a_i)$};
\node (br) at (80,0) {$q_i^{-1}(a_i)$};
\draw[->] (tl) to node[above,font=\small]{$f_{n-i-1}(M \smallsetminus \mathrm{im}(a_i))$} (tr);
\node at (0,5) {\rotatebox{-90}{$\cong$}};
\node at (80,5) {\rotatebox{-90}{$\cong$}};
\draw[->] (bl) to node[below,font=\small]{restriction of $f_i$} (br);
\end{tikzpicture}
\end{split}
\end{equation}

Thus we have shown that the restriction of $f_i$ to $p_i^{-1}(a_i) \to q_i^{-1}(a_i)$ is in $\X(n-i-1)$.

\begin{proof}[Proof of Proposition \ref{p:two-conditions}]
First assume that we have a path of configurations $\{ [\varphi_1^t] ,\ldots, [\varphi_{n-i-1}^t] \}$ in $M_1 \smallsetminus \mathrm{im}(a_i)$ indexed by $t \in [0,1]$ with $[\varphi_\alpha^0] = [\varphi_\alpha]$ and $[\varphi_\alpha^1] = [\lambda(\iota(-),u_\alpha)]$ for distinct $u_1,\ldots,u_{n-i-1} \in (1,2)$. By definition of $A_i$, there is a path $([\psi_0^t] ,\ldots, [\psi_i^t])$ in $\Emb((i+1)P,M_1)/G^{i+1}$ with $[\psi_\alpha^0] = [\psi_\alpha]$ and $[\psi_\alpha^1] = [\lambda(\iota(-),v_\alpha)]$ for distinct $v_0,\ldots,v_i \in (1,2)$. By compactness, we may choose some $\epsilon > 0$ such that $\psi_\alpha^t(P) \subseteq M_{1+\epsilon}$ for all $\alpha$ and all $t$, in particular, $v_0,\ldots,v_i > 1+\epsilon$. Using this, we construct a path in $\Emb(nP,M_1)/(G \wr \Sigma_n)$ from $\{ [\varphi_1] ,\ldots, [\varphi_{n-i-1}] , [\psi_0] ,\ldots, [\psi_i] \}$ to a standard configuration in three steps.
\begin{itemizeb}
\item[(i)] The path $\{ [\varphi_1^t] ,\ldots, [\varphi_{n-i-1}^t] , [\psi_0] ,\ldots, [\psi_i] \}$ ends at the configuration
\[
\{ [\lambda(\iota(-),u_1)] ,\ldots, [\lambda(\iota(-),u_{n-i-1})] , [\psi_0] ,\ldots, [\psi_i] \}
\]
for some $u_1,\ldots,u_{n-i-1} \in (1,2)$.
\item[(ii)] Next, keep the $[\psi_0] ,\ldots, [\psi_i]$ fixed and move the other part of the configuration by gradually decreasing the values of $u_1,\ldots,u_{n-i-1}$ until they all lie in $(1,1+\epsilon)$. Note that this will not intersect any of $\psi_0(P), \ldots, \psi_i(P)$ since the latter are contained in $M_2$ by our assumption on the point $a_i \in A_i$.
\item[(iii)] Now the path $\{ [\lambda(\iota(-),u_1)] ,\ldots, [\lambda(\iota(-),u_{n-i-1})] , [\psi_0^t] ,\ldots, [\psi_i^t] \}$ ends at a standard configuration.
\end{itemizeb}
Hence we have shown that $\text{(1)} \Rightarrow \text{(2)}$.\footnote{A slight variation of the argument for this implication is as follows. One may show, by similar reasoning, that the subset of configurations satisfying condition (1) is a path-component of $\Emb((n-i-1)P,M_1 \smallsetminus \mathrm{im}(a_i)) / (G \wr \Sigma_{n-i-1})$, and the subset of configurations satisfying condition (2) is a non-empty union of path-components. Thus, once we have proven the opposite implication $\text{(2)} \Rightarrow \text{(1)}$ (which is proven just below), the implication $\text{(1)} \Rightarrow \text{(2)}$ is automatic.}

For the proof of the opposite implication, we will use the equivalent characterisation of condition (1) that \emph{there is a path from $\{ [\varphi_1] ,\ldots, [\varphi_{n-i-1}] \}$ to the image of $\mathit{st}_{n-i-1}$} and of condition (2) that \emph{there is a path from $\{ [\varphi_1] ,\ldots, [\varphi_{n-i-1}] , [\psi_0] ,\ldots, [\psi_i] \}$ to the image of $\mathit{st}_n$}. By the assumption that the normal bundle of $\iota$ admits a non-vanishing section, we may apply Proposition \ref{p:iotabb} and extend $\iota$ to an embedding $\iotabb$ of the form \eqref{eq:iotabb}, so the maps $\mathit{st}_n$ are indeed defined.

Assume that we are given a path of configurations
\[
\gamma \colon [0,1] \longrightarrow \Emb(nP,M_1)/(G \wr \Sigma_n)
\]
with $\gamma(0) = \{ [\varphi_1] ,\ldots, [\varphi_{n-i-1}] , [\psi_0] ,\ldots, [\psi_i] \}$ and $\gamma(1)$ in the image of $\mathit{st}_n$. We will write
\[
\gamma(t) = \{ [\varphi_1^t] ,\ldots, [\varphi_{n-i-1}^t] , [\psi_0^t] ,\ldots, [\psi_i^t] \}
\]
and note that it makes sense to talk about the various components $[\varphi_\alpha^t]$ and $[\psi_\alpha^t]$ individually as well as together, since they may be distinguished using unique path-lifting for the covering space $\Emb(nP,M_1)/G^n \to \Emb(nP,M_1)/(G \wr \Sigma_n)$. Without loss of generality, we may arrange that
\begin{itemizeb}
\item[(a)] $\psi_\alpha^t(P) \subseteq M_{1.5}$ for all $\alpha$ and $t$.
\item[(b)] $\varphi_\alpha^1(P) \subseteq M_1 \smallsetminus M_{1.5} = \lambda(\partial M \times (1,1.5])$ for all $\alpha$.
\end{itemizeb}
Condition (a) is possible to arrange, using the collar neighbourhood $\lambda$, since we have assumed that $\psi_\alpha^0(P) \subseteq M_2$. For condition (b): once we have arrived in the image of $\mathit{st}_n$, we may choose an appropriate path in the configuration space $C_n((-1,1) \times (1,2))$ whose image under $\mathit{st}_n$ fixes the $[\psi_\alpha^1]$ and moves the $[\varphi_\alpha^1]$ into $M_1 \smallsetminus M_{1.5}$.

Define a path $\gamma^\prime \colon [0,1] \to \Emb((i+1)P,M_{1.5})/(G \wr \Sigma_{i+1})$ by $t \mapsto \{ [\psi_0^t], \ldots, [\psi_i^t] \}$ and define
\begin{equation}\label{e:isotopy-extension}
\mathrm{Diff}_c(M_{1.5}) \longrightarrow \Emb((i+1)P,M_{1.5})/(G \wr \Sigma_{i+1})
\end{equation}
by $\Phi \mapsto \{ [\Phi \circ \psi_0], \ldots, [\Phi \circ \psi_i] \}$. By Propositions \ref{p:G-locally-retractile-orbitspace} and \ref{p:G-locally-retractile} this is a fibre bundle, thus a Serre fibration, so we may find a lift $\gamma'' \colon [0,1] \to \mathrm{Diff}_c(M_{1.5})$ of $\gamma'$ such that $\gamma''(0)$ is the identity. We now define a path
\[
\gamma''' \colon [0,1] \longrightarrow \Emb((n-i-1)P,M_1 \smallsetminus \mathrm{im}(a_i)) / (G \wr \Sigma_{n-i-1})
\]
by $\gamma'''(t) = \{ [\gamma''(t)^{-1} \circ \varphi_1^t] ,\ldots, [\gamma''(t)^{-1} \circ \varphi_{n-i-1}^t] \}$, where we are implicitly extending compactly-supported diffeomorphisms of $M_{1.5}$ to $M_1$ by the identity on $M_1 \smallsetminus M_{1.5}$. This is now a path from $\{ [\varphi_1],\ldots,[\varphi_{n-i-1}] \}$ to the image of $\mathit{st}_{n-i-1}$. Hence we have shown that $\text{(2)} \Rightarrow \text{(1)}$.
\end{proof}

We finish this subsection by defining a slightly more convenient (and homeomorphic) model for the restriction of $f_0$ to $p_0^{-1}(a_0) \to q_0^{-1}(a_0)$.

\begin{defn}
Recall from \S\ref{s:firstr} (see the proof of Proposition \ref{p:iotabb}) that we have chosen a tubular neighbourhood for the embedding $\iota \colon P \hookrightarrow \partial M$. In other words, writing $\nu_\iota \colon N \to P$ for the normal bundle of $\iota$ and $o_\iota \colon P \to N$ for its zero section, we have chosen an embedding $\iotab \colon N \hookrightarrow \partial M$ such that $\iotab \circ o_\iota = \iota$. We have also chosen a metric on the bundle $\nu_\iota$. Let $D(\nu_\iota) \colon D(N) \to P$ denote the closed unit disc subbundle of $\nu_\iota$ with respect to this metric, and define
\[
T = \iotab(D(N)).
\]
This is a compact codimension-zero submanifold of $\partial M$ with boundary $\iotab(S(N))$, where $S(\nu_\iota) \colon S(N) \to P$ is the unit sphere subbundle of $\nu_\iota$ with respect to the chosen metric.
\end{defn}

\begin{defn}\label{d:xbar}
Let $\xbar$ be the path-component of
$\Emb((n-1)P,M_1 \smallsetminus \lambda(T \times \{2\})) / (G \wr \Sigma_{n-1})$
containing the image of $\mathit{st}_{n-1}$. Let $\ybar$ be the path-component of
$\Emb(nP,M_0 \smallsetminus \lambda(T \times \{2\})) / (G \wr \Sigma_n)$
containing the image of $\mathit{st}_n$. There is a continuous map
\[
g \colon \xbar \longrightarrow \ybar
\]
defined by $\{ [\varphi_1] ,\ldots, [\varphi_{n-1}] \} \;\longmapsto\; \{ [\iotabb(-,0,\tfrac12)] , [\varphi_1] ,\ldots, [\varphi_{n-1}] \}$.

(See Proposition \ref{p:iotabb} for the construction of the embedding $\iotabb$ extending $\iota$. Recall, in particular, that it satisfies $\iotabb(-,0,0) = \iota(-)$ and more generally $\iotabb(-,0,t) = \lambda(\iota(-),t)$.)
\end{defn}

Recall that we have fixed a point $a_0 = [\psi_0] \in A_0$ with $\psi_0(P) \subseteq M_2$. Choose a diffeomorphism
\[
\Psi \colon M \smallsetminus \psi_0(P) \longrightarrow M \smallsetminus \lambda(T \times \{2\})
\]
that restricts to the identity on $M \smallsetminus M_{1.5} = \lambda(\partial M \times [0,1.5])$. This exists because, firstly, $[\psi_0]$ has a path in $E/G$ to $[\iotabb(-,0,2)]$, by definition of $A_0$. Since the map \eqref{e:isotopy-extension} (with $i=0$) is a Serre fibration, we may lift this to a path of diffeomorphisms, evaluate at $1$, extend by the identity on $M \smallsetminus M_{1.5}$ and then restrict to obtain a diffeomorphism
\[
M \smallsetminus \psi_0(P) \longrightarrow M \smallsetminus \iotabb(P \times \{0\} \times \{2\})
\]
that restricts to the identity on $M \smallsetminus M_{1.5}$. Now, since $\lambda(T \times \{2\})$ is a tubular neighbourhood of $\iotabb(P \times \{0\} \times \{2\}) \subset \lambda(\partial M \times \{2\})$, it is easy to construct a diffeomorphism
\[
M \smallsetminus \iotabb(P \times \{0\} \times \{2\}) \longrightarrow M \smallsetminus \lambda(T \times \{2\})
\]
that restricts to the identity on $M \smallsetminus M_{1.5}$. (We note that, to construct this, we use the fact (assumed at the beginning of \S\ref{s:detailed-statements}) that the collar neighbourhood $\lambda \colon \partial M \times [0,2] \hookrightarrow M$ extends to a slightly larger collar neighbourhood $\partial M \times [0,2+\epsilon] \hookrightarrow M$.) Composing these two diffeomorphisms gives the desired diffeomorphism $\Psi$.

\begin{rmk}\label{r:diffeomorphism-Psi}
If we are careful in how we define the second diffeomorphism above, we may ensure the following useful property of $\Psi$. Let $s \colon P \to N$ be any section of the normal bundle $\nu_\iota$. Consider the half-open path $[0,2) \to \Emb(P,M_1 \smallsetminus \lambda(T \times \{2\}))$ given by $t \mapsto \lambda(-,t) \circ \iotab \circ s$. Postcomposing at each time $t$ with the diffeomorphism $\Psi^{-1}$ defines a half-open path $\gamma \colon [0,2) \to \Emb(P,M_1 \smallsetminus \psi_0(P))$. Then this path may be extended continuously to a path $[0,2] \to \Emb(P,M_1)$ by setting $\gamma(2) = \psi_0$.

A second useful property of $\Psi$ is that if we consider $\Psi^{-1}$ as an embedding $M \smallsetminus \lambda(T \times \{2\}) \hookrightarrow M$, then it is isotopic to the inclusion through embeddings that restrict to the identity on $M \smallsetminus M_{1.5}$.
\end{rmk}

\begin{lem}\label{l:Psi}
Postcomposition with $\Psi^{-1}$ defines homeomorphisms $\xbar \to p_0^{-1}(a_0)$ and $\ybar \to q_0^{-1}(a_0)$ such that
\begin{equation}\label{e:psi-inverse}
\centering
\begin{split}
\begin{tikzpicture}
[x=1mm,y=1mm]
\node (tl) at (0,10) {$\xbar$};
\node (tr) at (40,10) {$\ybar$};
\node (bl) at (0,0) {$p_0^{-1}(a_0)$};
\node (br) at (40,0) {$q_0^{-1}(a_0)$};
\draw[->] (tl) to node[above,font=\small]{$g$} (tr);
\draw[->] (tl) to (bl);
\draw[->] (tr) to (br);
\draw[->] (bl) to node[below,font=\small]{\textup{restriction of $f_0$}} (br);
\end{tikzpicture}
\end{split}
\end{equation}
commutes.
\end{lem}
\begin{proof}
This is immediate from the constructions. One easy but important observation is that if a configuration $\{ [\varphi_1] ,\ldots, [\varphi_{n-1}] \}$ has a path to the image of $\mathit{st}_{n-1}$, then so does the configuration $\{ [\Psi^{-1} \circ \varphi_1] ,\ldots, [\Psi^{-1} \circ \varphi_{n-1}] \}$, since $\Psi^{-1}$ is the identity on $\lambda(\partial M \times [0,1.5])$.
\end{proof}

\subsection{Resolution by tubes to the boundary.}\label{s:secondr}

\begin{defn}[The second resolution]\label{d:secondr}
Let $\xbar_0$ be the subspace of $\xbar \times \Emb(P \times [0,2],M)$ consisting of all elements $(\{ [\varphi_1] ,\ldots, [\varphi_{n-1}] \} , \tau)$ with the following properties.
\begin{itemizeb}
\item[(a)] There exist $h \in (\tfrac12,1)$ and $\epsilon \in (0,1)$ so that $\tau(-,t) = \iotabb(-,h,t)$ for all $t \in [0,1+\epsilon] \cup [2-\epsilon,2]$.
\item[(b)] The image $\tau(P \times (1,2))$ is contained in $M_1 \smallsetminus ( \varphi_1(P) \cup \varphi_2(P) \cup \cdots \cup \varphi_{n-1}(P) \cup \lambda(T \times \{2\}) )$.
\end{itemizeb}
There is an obvious map $\xbar_0 \to \xbar$ given by forgetting $\tau$. More generally, for $i\geq 0$, let $\xbar_i$ be the subspace of $\xbar \times \Emb(P \times [0,2],M)^{i+1}$ consisting of all elements $(\{ [\varphi_1] ,\ldots, [\varphi_{n-1}] \},(\tau_0,\ldots,\tau_i))$ with the following properties.
\begin{itemizeb}
\item[(ab)] Each $\tau_\alpha$ satisfies the properties (a) and (b) above.
\item[(c)] For $\alpha \neq \beta$, the images $\tau_\alpha(P \times [0,2])$ and $\tau_\beta(P \times [0,2])$ are disjoint.
\item[(d)] Write $h_\alpha \in (\tfrac12,1)$ for the number associated to $\tau_\alpha$ by condition (a). Then $h_0 < h_1 < \cdots < h_i$.
\end{itemizeb}
There are maps $d_j \colon \xbar_i \to \xbar_{i-1}$ defined by forgetting $\tau_j$. These obviously satisfy the simplicial identities, so they give $\xbar_\bullet = \{ \xbar_i \}_{i\geq 0} \cup \{ \xbar \}$ the structure of an augmented semi-simplicial space.

The augmented semi-simplicial space $\ybar_\bullet$ is defined similarly --- the space $\ybar_i$ is the subspace of $\ybar \times \Emb(P \times [0,2],M)$ consisting of all elements $(\{ [\varphi_1] ,\ldots, [\varphi_{n-1}] \},(\tau_0,\ldots,\tau_i))$ with the properties (c) and (d) above, as well as the following variants of (a) and (b).
\begin{itemizeb}
\item[({\=a})] There exist $h \in (\tfrac12,1)$ and $\epsilon \in (0,1)$ so that $\tau(-,t) = \iotabb(-,h,t)$ for all $t \in [0,\epsilon] \cup [2-\epsilon,2]$.
\item[({\=b})] The image $\tau(P \times (0,2))$ is contained in $M_0 \smallsetminus ( \varphi_1(P) \cup \varphi_2(P) \cup \cdots \cup \varphi_{n-1}(P) \cup \lambda(T \times \{2\}) )$.
\end{itemizeb}
There are again maps forgetting $\tau_j$ that give $\ybar_\bullet = \{ \ybar_i \}_{i\geq 0} \cup \{ \ybar \}$ the structure of an augmented semi-simplicial space. There is a map of augmented semi-simplicial spaces
\[
g_\bullet \colon \xbar_\bullet \longrightarrow \ybar_\bullet
\]
given by $(\{ [\varphi_1] ,\ldots, [\varphi_{n-1}] \} , (\tau_0,\ldots,\tau_i)) \;\longmapsto\; (\{ [\iotabb(-,0,\tfrac12)] , [\varphi_1] ,\ldots, [\varphi_{n-1}] \} , (\tau_0,\ldots,\tau_i))$ on spaces of $i$-simplices. Clearly $g_{-1} = g$, i.e.\ this extends $g \colon \xbar \to \ybar$ to augmented semi-simplicial spaces.
\end{defn}

\begin{rmk}
A vertex of $\xbar_\bullet$ is intuitively a ``tube'' with cross-section $P$ going from the boundary $\partial M$ to $\lambda(T \times \{2\})$, which is a thickened copy of $P$ in the interior of $M$. This tube must be ``straight'' in a certain sense near each end, and it must be disjoint from the configuration (and the interior of the tube must also be disjoint from $\lambda(T \times \{2\})$). An ordered collection of such tubes forms a simplex if and only if they are pairwise disjoint and the ordering coincides with the intrinsic ordering that they inherit from the boundary condition near $\partial M$.
\end{rmk}

\begin{rmk}
\label{r:iotabb}
Since the spaces under consideration from now on are only defined if we assume the existence of (and choose) an embedding $\hat{\iota}$ of the form \eqref{eq:iotabb} extending $\iota$ (see condition (a) and its variants of Definition \ref{d:secondr} above), we will not mention this assumption again for the remainder of this section. Recall that we are, in any case, assuming throughout this section the dimension hypothesis $p \leq \tfrac12(m-3)$, which implies the existence of such an embedding $\hat{\iota}$ by Proposition \ref{p:iotabb}.
\end{rmk}

Our task in this subsection is to show that the induced maps
\[
\lVert \xbar_\bullet \rVert \longrightarrow \xbar \qquad\text{and}\qquad \lVert \ybar_\bullet \rVert \longrightarrow \ybar
\]
are weak equivalences. We will do this explicitly just for $\lVert \xbar_\bullet \rVert \to \xbar$, since the other case is almost identical. We will use the following theorem due to Galatius and Randal-Williams.

\begin{thm}[Theorem 6.2 of \cite{GalatiusRandalWilliams2014Stablemodulispaces}]\label{t:grw}
If $Z_\bullet \to Z$ is an augmented semi-simplicial space, the following conditions imply that the map $\lVert Z_\bullet \rVert \to Z$ is a weak equivalence.
\begin{itemizeb}
\item[\textup{(i)}] The map $Z_i \to Z_0 \times_Z \cdots \times_Z Z_0$ taking an $i$-simplex to the ordered set of its $i+1$ vertices is a homeomorphism onto an open subspace.
\item[\textup{(ii)}] Under this identification, an $(i+1)$-tuple of vertices $(v_0,v_1,\ldots,v_i)$ lies in $Z_i$ if and only if $(v_\alpha,v_\beta)$ lies in $Z_1$ for all $\alpha < \beta$.
\item[\textup{(iii)}] Denote the augmentation map $Z_0 \to Z$ by $a$. For every point $v \in Z_0$ there is an open neighbourhood $U$ of $a(v) \in Z$ and a section $s \colon U \to Z_0$ of $a$ such that $s(a(v)) = v$.
\item[\textup{(iv)}] For any finite set $\{ v_1,\ldots,v_k \}$ in a fibre of $a$, there is another vertex $v$ in the same fibre such that $(v_\alpha,v) \in Z_1$ for all $\alpha \in \{ 1,\ldots,k \}$.
\end{itemizeb}
\end{thm}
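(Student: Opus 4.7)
The plan is to prove that $\lVert Z_\bullet \rVert \to Z$ is a microfibration (in the sense of Weiss) with weakly contractible fibres, from which it follows that it is a weak equivalence.

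First I would analyse the fibres. By conditions (i) and (ii), the fibre $F_z$ of $\lVert Z_\bullet \rVert \to Z$ over a point $z \in Z$ is the geometric realisation of the sub-semi-simplicial space $Z_\bullet^z$ whose $i$-simplices are ordered $(i+1)$-tuples of vertices in $a^{-1}(z)$ with each pair lying in $Z_1$. To show $F_z$ is weakly contractible, I would use (iv): any continuous map $S^k \to F_z$ factors, by compactness, through the realisation of a finite sub-semi-simplicial set involving only finitely many vertices $v_1,\dots,v_N \in a^{-1}(z)$; condition (iv) produces a vertex $v$ with $(v_\alpha,v) \in Z_1$ for all $\alpha$, and then condition (ii) places every tuple $(v_{\alpha_0},\dots,v_{\alpha_i},v)$ into $Z_i$. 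The cone with apex $v$ on this finite subcomplex therefore lies in $F_z$ and nullhomotopes the given map.

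Next I would verify the microfibration property: any lifting problem against $\{0\} \hookrightarrow [0,\varepsilon]$ that is solvable at time $0$ can be solved on some smaller interval $[0,\varepsilon']$, and similarly with $[0,\varepsilon]$ replaced by $[0,\varepsilon]\times K$ for a finite CW complex $K$. Given a point $x \in \lVert Z_\bullet \rVert$ lying in the image of some $\sigma \in Z_i$ with vertices $v_0,\dots,v_i$ and a short path $\gamma \colon [0,\varepsilon] \to Z$ with $\gamma(0)=a(x)$, I would apply condition (iii) to each $v_\alpha$ to obtain a section $s_\alpha$ of $a$ near $a(x)$ with $s_\alpha(a(x))=v_\alpha$. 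After shrinking $\varepsilon$, the tuple $(s_0\circ\gamma,\dots,s_i\circ\gamma)$ lands in the iterated fibre product $Z_0\times_Z\cdots\times_Z Z_0$; by condition (i), the subspace $Z_i$ is \emph{open} in this fibre product, so shrinking $\varepsilon$ again guarantees the tuple lies in $Z_i$. Retaining the barycentric coordinates of $x$ within $\sigma$ then yields the desired lift. The parametric version with $K$ compact is handled by the same method after passing to a finite open cover of $K$.

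Finally, the combination of fibrewise contractibility with the microfibration property allows the use of the standard criterion (for example the argument behind Lemma 2.2 of Weiss's ``What does the classifying space of a category classify?'') to conclude that $\lVert Z_\bullet \rVert \to Z$ is a weak equivalence. The main obstacle is the microfibration step: although the combinatorial input in the fibre analysis is clean, the local lifting argument must continuously assemble sections coming from different vertices and then verify that the assembled section remains inside the open subspace $Z_i$ of the iterated fibre product. Without the openness in (i) this assembly could fail outright, and without (iii) no local sections would exist to begin with; all four conditions are needed precisely at the moment of gluing the combinatorial picture inside a fibre to the topological picture over $Z$.
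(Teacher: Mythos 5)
You are reproducing the strategy of the original proof: the paper itself only cites this result from Galatius--Randal-Williams and does not reprove it, and their argument is exactly ``Serre microfibration with weakly contractible fibres, hence (by Weiss's lemma in the form of their Proposition~2.6) a Serre fibration, hence a weak equivalence''. However, there is a genuine gap in your fibre analysis. You assert that a map $S^k \to F_z$ ``factors, by compactness, through the realisation of a finite sub-semi-simplicial set involving only finitely many vertices''. Compactness only places the image in a finite \emph{skeleton}; it does not bound the number of vertices, because $a^{-1}(z) \subseteq Z_0$ is a topological space rather than a discrete set (in the application in \S\ref{s:secondr} the fibre of the augmentation is a space of embedded tubes, which is far from discrete). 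Already a path in $a^{-1}(z)$ gives a compact subset of the $0$-skeleton of $\lVert Z_\bullet^z \rVert$ meeting uncountably many vertices. Condition (iv) supplies an apex only for a \emph{finite} set of vertices, and the openness in (i) does not upgrade this to compact sets, so the coning argument cannot be applied directly. The standard repair --- and the one used in the original proof --- is a simplicial-approximation step: using the openness of $Z_i$ in the iterated fibre product, one first homotopes any map from a compact polyhedron into $\lVert Z_\bullet^z \rVert$ to a map that is simplexwise linear with respect to a PL triangulation of the domain, so that it genuinely involves only the finitely many images of the triangulation's vertices; only then do (iv) and (ii) produce the cone. Without this step the contractibility of the fibres is not established.

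A second, lesser point: your microfibration argument is correct for a single point and a single path, but the parametric case is not ``the same method after passing to a finite open cover of $K$''. The sections $s_\alpha$ furnished by (iii) satisfy $s_\alpha(a(x)) = v_\alpha$ only at the one parameter value where they were chosen, so the formula $(s_0\circ\gamma_u,\ldots,s_i\circ\gamma_u)$ does not restrict to the given lift at time $0$ for nearby parameters $u$, and the lifts built over different members of the cover need not agree on overlaps. Arranging compatibility on $K\times\{0\}$ and gluing is where the real work in the microfibration half of the proof lies, and it deserves more than the closing sentence you give it.
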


\begin{rmk}
We note that, in \cite{GalatiusRandalWilliams2014Stablemodulispaces}, condition (iii) is slightly weaker and more complicated to state, and incorporates the $k=0$ part of condition (iv), i.e.\ surjectivity of $a \colon Z_0 \to Z$.
\end{rmk}

First note that conditions (i)--(iii) are clearly true for $Z_\bullet = \xbar_\bullet$. For (i) and (ii) this is because we defined an $i$-simplex to be an $(i+1)$-tuple of vertices satisfying conditions (c) and (d), which are open conditions that may be determined by looking at ordered sub-tuples of length $2$. For condition (iii), let $v = (\{ [\varphi_1] ,\ldots, [\varphi_{n-1}] \} , \tau) \in \xbar_0$. Define $U$ to be the open subspace of $\xbar$ consisting of all configurations $\{ [\varphi_1^{\prime}] ,\ldots, [\varphi_{n-1}^\prime] \}$ such that $\bigcup_{\alpha = 1}^{n-1} \varphi_{\alpha}^\prime (P)$ is disjoint from $\tau(P \times [0,2])$. This is an open neighbourhood of $a(v) = \{ [\varphi_1] ,\ldots, [\varphi_{n-1}] \}$ and we may define a section of $a \colon \xbar_0 \to \xbar$ on $U$ by
\[
s \colon U \longrightarrow \xbar_0 \qquad \{ [\varphi_1^{\prime}] ,\ldots, [\varphi_{n-1}^\prime] \} \;\longmapsto\; (\{ [\varphi_1^{\prime}] ,\ldots, [\varphi_{n-1}^\prime] \} , \tau),
\]
which sends $a(v)$ to $v$.

In order to verify condition (iv) for $Z_\bullet = \xbar_\bullet$ we will first take a detour to discuss transversality. One corollary of Thom's transversality theorem is the following.

\begin{thm}[Corollary II.4.12(b), page 56, \cite{GolubitskyGuillemin1973Stablemappingsand}]\label{t:gg}
Let $L$ and $N$ be smooth manifolds without boundary and $f \colon L \to N$ a smooth map. Let $W \subseteq N$ be a smooth submanifold and $A \subseteq B \subseteq L$ open subsets such that $\abar \subseteq B$. Let $\cU$ be an open neighbourhood of $f \in C^\infty(L,N)$ in the Whitney $C^\infty$-topology. Then there exists $g \in \cU$ such that
\begin{itemizeb}
\item[\textup{(1)}] $g|_A = f|_A$,
\item[\textup{(2)}] $g$ is transverse to $W$ on $L \smallsetminus B$.
\end{itemizeb}
\end{thm}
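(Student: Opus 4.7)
The plan is to prove this as a relative version of Thom's transversality theorem. First I would produce a small perturbation of $f$ that is transverse to $W$ on $L \smallsetminus B$, then use a smooth cutoff to glue it back to $f$ over $A$, arranging matters so that transversality is preserved on the region where we want it, namely $L \smallsetminus B$, while the modification is zero on $A$.

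\textbf{Construction via a parameterised family.} Choose a smooth cutoff $\rho \colon L \to [0,1]$ with $\rho \equiv 0$ on an open neighbourhood of $\abar$ and $\rho \equiv 1$ on $L \smallsetminus B$; such a $\rho$ exists by paracompactness since $\abar \subseteq B$. Fix a Whitney embedding $N \hookrightarrow \bR^k$ together with a tubular neighbourhood retraction $\pi \colon V \to N$, where $V \supseteq N$ is open in $\bR^k$. Choose a precompact open set $K \subseteq L \smallsetminus \abar$ containing $\{x : \rho(x) > 0\}$, and a finite-dimensional vector subspace $V_0 \subseteq C_c^\infty(K, \bR^k)$ whose evaluation map $(x, v) \mapsto v(x)$ is a surjection $V_0 \twoheadrightarrow \bR^k$ at every $x \in K$; such a $V_0$ can be built from finitely many bump functions covering $K$ tensored with the standard basis of $\bR^k$. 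For $v$ in a sufficiently small ball $D \subseteq V_0$ about the origin, define
\[
F \colon L \times D \longrightarrow N, \qquad F(x, v) = \pi\bigl(f(x) + \rho(x) v(x)\bigr),
\]
so that $F(-, 0) = f$ and $F(-, v) \in \cU$ for every $v \in D$.

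\textbf{Applying Thom--Sard.} At every point of $(L \smallsetminus B) \times D$ the partial derivative of $F$ in the $V_0$-direction is surjective onto the tangent space of $N$, since $\rho \equiv 1$ there and $V_0$ is evaluation-surjective; thus $F$ is a submersion, hence transverse to $W$, on all of $(L \smallsetminus B) \times D$. Consequently $F^{-1}(W) \cap \bigl((L \smallsetminus B) \times D\bigr)$ is a smooth submanifold, and Sard's theorem applied to its projection to $D$ produces a residual --- in particular non-empty --- set of $v \in D$ for which the map $g := F(-, v)$ is transverse to $W$ on $L \smallsetminus B$. By construction $g \in \cU$, and $g|_A = f|_A$ because $\rho$ vanishes on a neighbourhood of $\abar \supseteq A$.

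\textbf{Main obstacle.} The genuinely delicate point is arranging that $g$ lie inside the prescribed Whitney neighbourhood $\cU$, since on non-compact $L$ the Whitney $C^\infty$-topology is strictly finer than the compact-open one and constrains the behaviour of $g$ at infinity. The construction sidesteps this by supporting every perturbation $\rho \cdot v$ inside the single precompact set $K$; on functions with support in $K$ the Whitney topology coincides with the compact-open $C^\infty$-topology, so shrinking $D$ produces arbitrarily $C^r$-small perturbations and guarantees $F(-, v) \in \cU$. The subsidiary technical task of building an evaluation-surjective finite-dimensional $V_0$ on the compactum $\overline{K}$ is a standard partition-of-unity argument, and really relies on $K$ being precompact.
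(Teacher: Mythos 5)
There is a genuine gap. You ask for a precompact open $K \subseteq L \smallsetminus \abar$ containing $\{x : \rho(x) > 0\}$; but $\rho \equiv 1$ on $L \smallsetminus B$, so $\{x : \rho(x) > 0\} \supseteq L \smallsetminus B$, and the theorem (quoted verbatim from Golubitsky--Guillemin) places no compactness hypothesis on $L \smallsetminus B$, so no such $K$ need exist. Even if one drops the containment requirement and just takes some precompact $K$, the perturbation space $V_0 \subseteq C_c^\infty(K,\bR^k)$ consists of functions vanishing outside $K$, so the partial derivative of $F$ in the $V_0$-direction is surjective only at points of $K$; the parametric Sard argument then produces a $g$ transverse to $W$ only on $(L \smallsetminus B) \cap K$, not on all of $L \smallsetminus B$. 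What your construction actually proves is the special case where $\overline{L \smallsetminus B}$ is compact --- which, it so happens, covers every invocation of the theorem in this paper (Proposition~\ref{p:condition-iv} and Step~3 of the proof of Lemma~\ref{l:path-of-embeddings}) --- but not the statement as written.

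For the general case one cannot confine the perturbation to a single compactly-supported finite-dimensional family; this is precisely where the full Thom transversality machinery enters. The standard repair is an exhaustion: write $L \smallsetminus B$ as an increasing union of compacta $K_1 \subseteq K_2 \subseteq \cdots$, show for each $i$ that the set of maps equal to $f$ on $A$ and transverse to $W$ on $K_i$ is open (stability of transversality over a compactum) and dense near $f$ (your parametric argument, with the perturbation supported near $K_i$ and away from $\abar$), then intersect these countably many open dense conditions using that $C^\infty(L,N)$ with the Whitney $C^\infty$-topology is a Baire space --- a fact the paper itself records in \S\ref{ss:smooth-mapping-spaces}. Note also that the paper gives no proof of this statement; it is cited directly from Golubitsky--Guillemin, so there is no authorial argument against which to compare yours.
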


A useful corollary of this is the following.

\begin{coro}\label{c:transversality}
Let $L$ and $N$ be smooth manifolds without boundary and let $W \subseteq N$ be a smooth submanifold that is closed as a subset such that $\dim(W) + \dim(L) < \dim(N)$. Also, let $f \colon L \to N$ be a smooth map and $B \subseteq L$ an open subset such that $f(\bbar) \subseteq N \smallsetminus W$. Let $\cU$ be an open neighbourhood of $f \in C^\infty(L,N)$ in the Whitney $C^\infty$-topology. Then, for any open subset $A \subseteq B$ with $\abar \subseteq B$, there exists $g \in \cU$ such that
\begin{itemizeb}
\item[\textup{(1)}] $g|_A = f|_A$,
\item[\textup{({\^2})}] $g(L) \subseteq N \smallsetminus W$.
\end{itemizeb}
\end{coro}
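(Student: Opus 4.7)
The key observation is that the dimension hypothesis $\dim(W) + \dim(L) < \dim(N)$ upgrades ``transverse to $W$'' to ``disjoint from $W$'': if $g \colon L \to N$ is transverse to $W$ at a point $x$ with $g(x) \in W$, then $d g_x(T_x L) + T_{g(x)} W = T_{g(x)} N$, which is impossible for dimension reasons. So the goal is really to perturb $f$ inside $\cU$, keeping it fixed on $A$, to miss $W$ on all of $L$.

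The plan is to apply Theorem \ref{t:gg} not with $\cU$ itself but with a smaller open neighbourhood $\cV \subseteq \cU$ chosen so that every $h \in \cV$ already satisfies $h(\overline{B}) \cap W = \varnothing$. Such a $\cV$ exists because $W$ is closed in $N$ and $\overline{B}$ is closed in $L$, so $\overline{B} \times W$ is closed in $L \times N$; its complement $U = (L \times N) \smallsetminus (\overline{B} \times W)$ is then an open neighbourhood of the graph of $f$ (using the hypothesis $f(\overline{B}) \subseteq N \smallsetminus W$), and the corresponding basic open neighbourhood $\{ h \in C^\infty(L,N) \mid \mathrm{graph}(h) \subseteq U \}$ of $f$ in the Whitney $C^0$-topology -- hence also in the $C^\infty$-topology -- meets $\cU$ in the desired $\cV$.

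Now apply Theorem \ref{t:gg} to $f$, $W$, $A \subseteq B$ and $\cV$, obtaining $g \in \cV$ with $g|_A = f|_A$ and $g$ transverse to $W$ on $L \smallsetminus B$. Then on $\overline{B}$ (in particular on $B$) we have $g(\overline{B}) \cap W = \varnothing$ because $g \in \cV$, while on $L \smallsetminus B$ transversality plus the dimension hypothesis forces $g(L \smallsetminus B) \cap W = \varnothing$. Together these cover $L$, giving $(\hat{2})$, and $(1)$ is built into the conclusion of Theorem \ref{t:gg}.

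There is no real obstacle here: the only delicate point is the region $B \smallsetminus A$, where neither the hypothesis $f(\overline{B}) \subseteq N \smallsetminus W$ (which concerns $f$, not $g$) nor the transversality conclusion of Theorem \ref{t:gg} (which concerns $L \smallsetminus B$) directly applies. Shrinking $\cU$ to $\cV$ up front is exactly what bridges this gap.
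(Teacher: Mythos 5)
Your proposal is correct and is essentially identical to the paper's own proof: the same auxiliary neighbourhood $\cV = \{h \mid h(\bbar) \subseteq N \smallsetminus W\}$, the same observation that it is open in the graph ($C^0$, hence $C^\infty$) topology because $\bbar \times W$ is closed, and the same application of Theorem \ref{t:gg} to $\cU \cap \cV$ followed by combining disjointness on $\bbar$ with transversality-forced disjointness on $L \smallsetminus B$. You are in fact slightly more explicit than the paper about why the dimension hypothesis turns transversality into disjointness.
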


This says, roughly, that if we have a smooth map $f$ whose image is disjoint from $W$ on a closed subset $\bbar$, then we may find a smooth map $g$ that is arbitrarily close to $f$ (in the sense that $g \in \cU$), agrees with $f$ on a slightly smaller subset (namely $A$), and whose entire image is disjoint from $W$.

\begin{proof}
Let $\cV = \{ g \in C^\infty(L,N) \mid g(\bbar) \subseteq N \smallsetminus W \}$. This condition is equivalent to requiring that $\Gamma_g \subseteq (L \times N) \smallsetminus (\bbar \times W)$, where $\Gamma_g$ is the graph of $g$. Therefore $\cV$ is open in the graph topology (which is the Whitney $C^0$-topology) on $C^\infty(L,N)$, and therefore it is also open in the Whitney $C^\infty$-topology on $C^\infty(L,N)$. Applying Theorem \ref{t:gg} to the open neighbourhood $\cU \cap \cV$ of $f$ we obtain $g \in \cU$ such that $g|_A = f|_A$, $g(L \smallsetminus B) \subseteq N \smallsetminus W$ and $g(\bbar) \subseteq N \smallsetminus W$. The last two properties combined imply that $g(L) \subseteq N \smallsetminus W$.
\end{proof}

We will use this to prove:

\begin{prop}\label{p:condition-iv}
If $\mathrm{dim}(P) = p \leq \tfrac12(m-3) = \tfrac12(\mathrm{dim}(M) - 3)$, the augmented semi-simplicial space $Z_\bullet = \xbar_\bullet$ satisfies condition \textup{(iv)} of Theorem \ref{t:grw}.
\end{prop}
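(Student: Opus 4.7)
Write the given vertices as $v_\alpha = (x, \tau_\alpha)$ for $\alpha = 1, \ldots, k$, where $x = \{[\varphi_1], \ldots, [\varphi_{n-1}]\} \in \xbar$ and each $\tau_\alpha$ has height $h_\alpha \in (\tfrac12, 1)$ and collar parameter $\epsilon_\alpha > 0$. The plan is to take a ``straight'' candidate $\tau^{(0)}(z,t) := \iotabb(z, h, t)$ at some height $h \in (\max_\alpha h_\alpha, 1)$ and perturb it in the interior of $[0, 2]$ so that it avoids
\[
W := \bigcup_{i} \varphi_i(P) \,\cup\, \bigcup_{\alpha} \tau_\alpha(P \times [0,2]) \;\subseteq\; M,
\]
while remaining standard near the boundary of $P \times [0,2]$. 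The choice $h > \max_\alpha h_\alpha$ takes care of condition (d), and since $\iotabb$ is an embedding, the standard tubes at different heights are automatically disjoint, which handles the ``boundary part'' of condition (c).

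First, choose $\epsilon \in (0, \min_\alpha \epsilon_\alpha)$ small enough that $\tau^{(0)}(K) \cap W = \emptyset$ for $K := P \times ([0, 1+\epsilon] \cup [2-\epsilon, 2])$. This is possible because on $P \times [0, 1]$ the image of $\tau^{(0)}$ lies in $\lambda(\partial M \times [0,1])$, hence outside $M_1 \supseteq \bigcup_i \varphi_i(P)$, and is disjoint from each $\tau_\alpha(P \times [0, 1+\epsilon_\alpha])$ (standard tube at a different height); while near $t=2$, the endpoint $\tau^{(0)}(P \times \{2\}) \subseteq \lambda(T \times \{2\})$ is disjoint from $\bigcup_i \varphi_i(P)$ by the definition of $\xbar$ and from each $\tau_\alpha(P \times \{2\})$ because heights differ. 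Continuity and compactness of $K$ extend these disjointness properties to a collar of thickness $\epsilon$.

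Next, apply Corollary \ref{c:transversality} (after extending $\tau^{(0)}$ to the boundary-less domain $P \times (-\delta, 2+\delta)$, using the extension of $\lambda$ granted at the start of \S\ref{s:detailed-statements}), successively to each of the finitely many submanifolds $\varphi_i(P)$ and $\tau_\alpha(P \times [0,2])$ making up $W$, taking $A \subset B$ to be nested open neighborhoods of $K$ with $\tau^{(0)}(\bbar) \cap W = \emptyset$. The required dimension inequality is $(p+1) + (p+1) < m$, which is equivalent to $p \leq \tfrac12(m-3)$---this is the main obstacle, and is precisely where the paper's standing dimension hypothesis enters. The output is a smooth map $\tau \colon P \times [0,2] \to M$ in an arbitrarily small Whitney $C^\infty$-neighborhood of $\tau^{(0)}$, agreeing with $\tau^{(0)}$ on $A \supseteq K$, with $\tau(P \times [0,2]) \cap W = \emptyset$.

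Finally, shrink the neighborhood still further so that $\tau$ is still an embedding (open condition in the Whitney $C^\infty$-topology, as noted in \S\ref{ss:smooth-mapping-spaces}), has $\tau(P \times (1, 2-\epsilon/2)) \subseteq M_1$ (open condition, since $\tau^{(0)}$ does on this compact subdomain), and has $\tau(P \times [1+\epsilon/2, 2-\epsilon/2]) \cap \lambda(T \times \{2\}) = \emptyset$ (open condition, since $\tau^{(0)}$ meets $\lambda(T \times \{2\})$ only at $t=2$). Combined with $\tau = \tau^{(0)}$ on $K$, these verify that $\tau$ satisfies (a) and (b), and that $(v_\alpha, v) \in \xbar_1$ for every $\alpha$, where $v := (x, \tau)$. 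This gives condition (iv) of Theorem \ref{t:grw}.
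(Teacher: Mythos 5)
Your strategy is the same as the paper's: place a new straight tube $\tau^{(0)} = \iotabb(-,h,-)$ at a height $h > \max_\alpha h_\alpha$ (which settles condition (d), and (c) near the boundary, automatically), and then push its interior off the configuration and the other tubes by iterated applications of Corollary~\ref{c:transversality}, with the dimension bound $p \leq \tfrac12(m-3)$ entering exactly where you say. The only real issue is the setup of the transversality step. You cannot extend $\tau^{(0)}(z,t) = \lambda(\iotab(z,h),t)$ to $P \times (-\delta,2+\delta)$: $\lambda$ is a collar \emph{into} $M$, so there is no room past $\partial M$ at $t < 0$, and the extension of $\lambda$ granted at the start of \S\ref{s:detailed-statements} only goes to $t > 2$. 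You also do not specify the codomain $N$, which Corollary~\ref{c:transversality} requires to be a manifold \emph{without boundary}; and your $W$-pieces $\tau_\alpha(P \times [0,2])$ are manifolds \emph{with} boundary, whereas the corollary is stated for boundaryless submanifolds.

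The paper avoids all of this by \emph{restricting} rather than extending: it applies the corollary to $L = P \times (1,2)$, $N = M_1 \smallsetminus \lambda(T\times\{2\})$ and $W = \tau_\alpha(P\times(1,2))$ (all boundaryless), with $A \subseteq B$ taken to be collar neighbourhoods of the two open ends of $(1,2)$, and afterwards glues the perturbed $\sigma'$ back onto the untouched $\sigma$ on $P \times ([0,1] \cup \{2\})$. Note that $\tau_\alpha(P\times[0,2]) \cap N$ is precisely $\tau_\alpha(P\times(1,2))$, a boundaryless submanifold closed in $N$, so this choice of $N$ is exactly what makes the $W$'s admissible; it also makes the conditions you enforce at the end by shrinking the Whitney neighbourhood (image in $M_1$, disjointness from $\lambda(T\times\{2\})$) hold automatically from the start. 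Finally, when iterating, you should say why a later perturbation does not undo the disjointness achieved at an earlier step; the paper handles this by removing each $W_i$ from the codomain $N$ as it goes, so that later perturbations are valued in the shrunken $N$ and automatically avoid all previously excised submanifolds.
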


\begin{coro}\label{c:condition-iv}
If $p \leq \tfrac12(m-3)$, the maps $\lVert \xbar_\bullet \rVert \to \xbar$ and $\lVert \ybar_\bullet \rVert \to \ybar$ are weak equivalences.
\end{coro}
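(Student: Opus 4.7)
The plan is to apply Theorem~\ref{t:grw} (the Galatius--Randal-Williams criterion) to each of the augmented semi-simplicial spaces $\xbar_\bullet \to \xbar$ and $\ybar_\bullet \to \ybar$ in turn. Since the preceding discussion has already verified conditions (i), (ii) and (iii) for $Z_\bullet = \xbar_\bullet$, and Proposition~\ref{p:condition-iv} supplies condition (iv), the theorem immediately gives that $\lVert \xbar_\bullet \rVert \to \xbar$ is a weak equivalence; the task is then to check that exactly the same verifications go through for $\ybar_\bullet$.

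For the $\xbar_\bullet$ case, the proof is essentially a one-line assembly: conditions (i) and (ii) follow from Definition~\ref{d:secondr}, in which the space of $i$-simplices $\xbar_i$ was built explicitly as the open subspace of $(\xbar_0)^{i+1}_{/\xbar}$ cut out by the pairwise disjointness condition (c) and the strict ordering condition (d); both are pairwise (i.e.\ $2$-local) open conditions, so an $(i+1)$-tuple of vertices lies in $\xbar_i$ iff every pair lies in $\xbar_1$. Condition (iii) is the local section constructed above, obtained by taking $U$ to be the open set of configurations disjoint from a given tube $\tau$ and letting $s$ adjoin that same $\tau$. Condition (iv) is Proposition~\ref{p:condition-iv}.

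For the $\ybar_\bullet$ case, I would just observe that each of the four verifications carries over with only cosmetic changes. Conditions (i) and (ii) hold for identical reasons: the conditions (c), (d), (\={a}), (\={b}) imposed in Definition~\ref{d:secondr} to cut out $\ybar_i$ from $\ybar \times \Emb(P\times[0,2],M)^{i+1}$ are still pairwise-open in the $\tau_\alpha$, so the map $\ybar_i \to (\ybar_0)^{i+1}_{/\ybar}$ is a homeomorphism onto an open subspace detected by its restrictions to ordered pairs. Condition (iii) is proved by exactly the same formula $\{ [\varphi_\alpha'] \}\mapsto (\{ [\varphi_\alpha'] \},\tau)$, where $\tau$ now has the boundary behaviour (\={a}) but the proof that the set $U$ of configurations disjoint from $\tau(P\times[0,2])$ is an open neighbourhood and that $s$ is continuous is unchanged. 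For condition (iv), I would rerun the proof of Proposition~\ref{p:condition-iv} verbatim, with the single replacement that tubes are now required to start at the parameter $t=0$ rather than $t=1+\epsilon$ and to be disjoint (on $P\times(0,2)$ rather than $P\times(1,2)$) from the now $n$-component configuration together with $\lambda(T\times\{2\})$; the dimension count $\dim(P\times[0,2])+\dim(\tau_\alpha(P\times[0,2]))=2p+2 < m$ (used via Corollary~\ref{c:transversality}) is unaffected, since it only depended on the inequality $p\leq \tfrac12(m-3)$.

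I do not anticipate a serious obstacle at this stage: the entire content of the corollary has been packaged into Theorem~\ref{t:grw} and Proposition~\ref{p:condition-iv}, and the $\ybar_\bullet$ analogue presents no new geometric difficulty. The only mild care needed is in confirming that the proof of condition (iv) really is parallel in the $\ybar_\bullet$ setting --- in particular, that the transversality argument is applied to the extra tube used to stabilise the configuration, and that the extra stabilising component $[\iotabb(-,0,\tfrac12)]$ already present in every element of $\ybar$ does not interfere with the construction of a new vertex (since it too can be made disjoint from a generic tube by the same dimension count). Once these are noted, Theorem~\ref{t:grw} completes the proof.
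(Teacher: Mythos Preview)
Your proposal is correct and follows the same approach as the paper: verify conditions (i)--(iv) of Theorem~\ref{t:grw} for $\xbar_\bullet$ (with (iv) supplied by Proposition~\ref{p:condition-iv}), then observe that the $\ybar_\bullet$ case is formally parallel. The paper's proof is even terser, simply saying the $\ybar_\bullet$ argument is ``almost identical, replacing $n$ by $n+1$ and $M_1$ by $M_0$ everywhere.''

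One small inaccuracy worth flagging: you write that the component $[\iotabb(-,0,\tfrac12)]$ is ``already present in every element of $\ybar$.'' This is not so --- $\ybar$ is the path-component of $\Emb(nP, M_0 \smallsetminus \lambda(T\times\{2\}))/(G \wr \Sigma_n)$ containing the standard configurations, so a general point of $\ybar$ is an arbitrary $n$-component configuration in that path-component, not one with a distinguished component at $[\iotabb(-,0,\tfrac12)]$. (That specific component appears only in the \emph{image} of $g\colon \xbar \to \ybar$.) This does not affect your argument: the proof of condition (iv) only needs the new tube to avoid finitely many submanifolds of dimension at most $p$ or $p+1$, and whether there are $n-1$ or $n$ such components is immaterial to the transversality count.
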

\begin{proof}
By the discussion above, $Z_\bullet = \xbar_\bullet$ satisfies conditions (i)--(iii) and by Proposition \ref{p:condition-iv} it also satisfies condition (iv). Theorem \ref{t:grw} therefore implies that $\lVert \xbar_\bullet \rVert \to \xbar$ is a weak equivalence. The argument for $\lVert \ybar_\bullet \rVert \to \ybar$ is almost identical, replacing $n$ by $n+1$ and $M_1$ by $M_0$ everywhere.
\end{proof}

\begin{proof}[Proof of Proposition \ref{p:condition-iv}]
Fix a point $\varphi = \{ [\varphi_1] ,\ldots, [\varphi_{n-1}] \} \in \xbar$ and a collection of embeddings $\tau_1,\ldots,\tau_k \colon P \times [0,2] \hookrightarrow M$ such that $(\varphi,\tau_\alpha) \in \xbar_0$ for each $\alpha$. Let $h_\alpha \in (\tfrac12,1)$ be the number associated to $\tau_\alpha$ by condition (a) of Definition \ref{d:secondr}. We need to construct a new embedding
\begin{equation}\label{e:tau}
\tau \colon P \times [0,2] \lhto M
\end{equation}
such that $(\varphi,\tau) \in \xbar_0$ and $(\varphi,(\tau_\alpha,\tau)) \in \xbar_1$ for all $\alpha \in \{ 1,\ldots,k \}$. As a first step, choose $h \in (\tfrac12,1)$ such that $h > \mathrm{max}_{\alpha = 1}^k h_\alpha$ and define an embedding $\sigma \colon P \times [0,2] \hookrightarrow M$ by $\sigma = \iotabb(-,h,-)$.

It now suffices to find an embedding
\[
\sigma' \colon P \times (1,2) \lhto M_1 \smallsetminus \lambda(T \times \{2\})
\]
such that
\begin{itemizeb}
\item $\sigma = \sigma'$ on $P \times ((1,1+\epsilon) \cup (2-\epsilon,2))$ for some $\epsilon > 0$,
\item the image of $\sigma'$ is disjoint from $\varphi_1(P) \cup \cdots \cup \varphi_{n-1}(P)$ and $\tau_1(P \times (1,2)) \cup \cdots \cup \tau_k(P \times (1,2))$.
\end{itemizeb}
This is because we could then define \eqref{e:tau} to agree with $\sigma$ on $P \times ([0,1] \cup \{2\})$ and to agree with $\sigma'$ on $P \times (1,2)$, and it would satisfy all of the required conditions.

Let $\sigma_0 = \sigma|_{P \times (1,2)}$. We will construct $\sigma'$ from $\sigma_0$ by using Corollary \ref{c:transversality} to modify it to be disjoint from the manifolds $\tau_\alpha(P \times (1,2))$ and $\varphi_\alpha(P)$ one at a time.

Set $L = P \times (1,2)$, $N = M_1 \smallsetminus \lambda(T \times \{2\})$, $f=\sigma_0$ and $W = \tau_1(P \times (1,2))$. By a compactness argument, and using the fact that $h \neq h_1$ and property (a) of $\tau_1$, we may find $\delta > 0$ such that
\[
\sigma_0(P \times ((1,1+2\delta] \cup [2-2\delta,2))) \subseteq N \smallsetminus W.
\]
We may therefore set $B = P \times ((1,1+2\delta) \cup (2-2\delta,2))$. Since being an embedding is an open condition in the Whitney $C^\infty$-topology, we may take $\cU$ to be an open neighbourhood of $f=\sigma_0$ in $C^\infty(L,N)$ consisting of embeddings. Corollary \ref{c:transversality} therefore gives us an embedding
\[
\sigma_1 \colon P \times (1,2) \lhto M_1 \smallsetminus (\lambda(T \times \{2\}) \cup \tau_1(P \times (1,2)))
\]
such that $\sigma_1 = \sigma_0$ on $P \times ((1,1+\delta) \cup (2-\delta,2))$. Note that here we crucially used the fact that $\dim(P) \leq \frac12(\dim(M)-3)$ in order to satisfy the dimension condition of Corollary \ref{c:transversality}.

Iterating this, we next set $L = P \times (1,2)$, $N = M_1 \smallsetminus (\lambda(T \times \{2\}) \cup \tau_1(P \times (1,2)))$, $f = \sigma_1$ and $W = \tau_2(P \times (1,2))$ and apply Corollary \ref{c:transversality} to obtain an embedding
\[
\sigma_2 \colon P \times (1,2) \lhto M_1 \smallsetminus (\lambda(T \times \{2\}) \cup \tau_1(P \times (1,2)) \cup \tau_2(P \times (1,2)))
\]
such that $\sigma_2 = \sigma_1$ on $P \times ((1,1+\delta') \cup (2-\delta',2))$ for some $\delta' > 0$. After a finite number of further applications of Corollary \ref{c:transversality} we obtain an embedding $\sigma'$ with the required properties.
\end{proof}

\subsection{The second approximation.}\label{s:seconda}

\begin{defn}
For $j\geq 0$ define $\abar_j$ to be the subspace of $\Emb(P \times [0,2],M)^{i+1}$ consisting of tuples of embeddings $(\tau_0,\ldots,\tau_j)$ such that each $\tau_\alpha$ satisfies condition (a) of Definition \ref{d:secondr}, the tuple satisfies conditions (c) and (d) of Definition \ref{d:secondr} and each $\tau_\alpha$ also satisfies:
\begin{itemizeb}
\item[({\d b})] $\tau_\alpha(P \times (1,2)) \subseteq M_1 \smallsetminus \lambda(T \times \{2\})$.
\end{itemizeb}
Similarly, define $\bbar_j$ to be the subspace of $\Emb(P \times [0,2],M)^{i+1}$ consisting of $(\tau_0,\ldots,\tau_j)$ satisfying conditions ({\=a}), (c) and (d) of Definition \ref{d:secondr}, as well as
\begin{itemizeb}
\item[({\d{\={b}}})] $\tau_\alpha(P \times (0,2)) \subseteq M_0 \smallsetminus \lambda(T \times \{2\})$.
\end{itemizeb}
\end{defn}

\begin{lem}
The inclusion $\abar_j \hookrightarrow \bbar_j$ is a homotopy equivalence.
\end{lem}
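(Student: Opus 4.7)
The plan is to construct a strong deformation retraction of $\bbar_j$ onto $\abar_j$. The two conditions strengthened from $\bbar_j$ to $\abar_j$ are (i) the straightness near $t = 0$ (from ``straight on $[0, \epsilon]$'' in (\={a}) to ``straight on $[0, 1+\epsilon]$'' in (a)), and (ii) the location of the middle of each tube (from $M_0 \smallsetminus \lambda(T \times \{2\})$ to $M_1 \smallsetminus \lambda(T \times \{2\})$). Both are enforced simultaneously by homotoping each tube in a tuple to its canonical fully straight form along the collar $\lambda$.

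Define the retraction $r \colon \bbar_j \to \abar_j$ by sending $(\tau_0, \ldots, \tau_j)$ to $(\sigma_0, \ldots, \sigma_j)$, where $\sigma_\alpha(-, t) = \iotabb(-, h_\alpha, t)$ is the ``fully straight'' tube at height $h_\alpha$, with $h_\alpha \in (\tfrac12, 1)$ uniquely determined by $\tau_\alpha(-, 0) = \iotab(-, h_\alpha)$ (using condition (\={a})). This is continuous in $\tau_\alpha$ because evaluation at $t = 0$ is continuous and $\iotab$ is an embedding. One checks that $(\sigma_0, \ldots, \sigma_j) \in \abar_j$: condition (a) is immediate for any $\epsilon$; ({\d b}) holds because $\sigma_\alpha(P \times (1, 2)) \subseteq \lambda(T \times (1, 2)) \subseteq M_1 \smallsetminus \lambda(T \times \{2\})$; disjointness (c) follows from the distinctness of the $h_\alpha$ together with the fact that $\iotab$ sends distinct fibres of $\nu_\iota$ to disjoint subsets of $\partial M$; and the ordering (d) is preserved.

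The homotopy $H \colon \bbar_j \times [0, 1] \to \bbar_j$ from the identity to $\mathrm{incl} \circ r$ is constructed by straightening the tubes sequentially, in the order $\alpha = 0, 1, \ldots, j$. At the $\alpha$th stage, I seek a path from $\tau_\alpha$ to $\sigma_\alpha$ inside the space of embeddings $P \times [0, 2] \hookrightarrow M \smallsetminus (\lambda(T \times \{2\}) \cup \bigcup_{\beta < \alpha} \sigma_\beta(P \times [0, 2]) \cup \bigcup_{\beta > \alpha} \tau_\beta(P \times [0, 2]))$, with boundary behavior near $t = 0$ and $t = 2$ prescribed by condition (\={a}) (and the image on $t \in (0, 2)$ remaining in $M_0$). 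Such a path exists because, under the dimension hypothesis $p \leq \tfrac12(m - 3)$, general-position arguments (compare Lemma \ref{l:path-of-embeddings}) ensure that embeddings of $P \times [0, 2]$ into a connected open submanifold with the same standard boundary data lie in the same path-component. Continuity of this choice of path in the input tuple follows from the local retractility of embedding spaces with prescribed boundary behavior established in Proposition \ref{p:G-locally-retractile-boundary}: the space of embeddings with fixed boundary data is the fibre of a Serre fibration over a suitable base, and the action of $\mathrm{Diff}_c$ of the ambient manifold-minus-tubes is locally retractile.

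The main obstacle is preserving the disjointness condition (c) throughout the homotopy, which is why I straighten tubes one at a time rather than simultaneously: straightening $\tau_\alpha$ inside the complement of all other tubes (some original, some already-straightened) keeps the tuple in $\bbar_j$ at every time. A secondary subtlety is verifying that $H$ restricted to $\abar_j$ stays within $\abar_j$; this holds because the sequential construction, applied to a tuple already lying in $\abar_j$, can be taken to be the constant homotopy on that tuple (each $\tau_\alpha$ is already in standard form on $[0, 1 + \epsilon]$, so the straightening path can be chosen trivial there). This yields the required strong deformation retraction, proving that the inclusion $\abar_j \hookrightarrow \bbar_j$ is a homotopy equivalence.
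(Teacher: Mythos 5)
Your approach differs fundamentally from the paper's, and it has genuine gaps.

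First, your map $r$, which sends each tuple of tubes to the corresponding tuple of fully straight tubes $(\sigma_0,\ldots,\sigma_j)$, is not a retraction onto $\abar_j$: for a tuple already in $\abar_j$ but not fully straight, $r$ changes it, so $r|_{\abar_j} \neq \mathrm{id}$. This makes your concluding claim --- that the homotopy $H$ ``can be taken to be the constant homotopy'' on tuples already in $\abar_j$ --- internally inconsistent with $H(-,1) = r$, which moves such tuples. You could salvage the structure by using a deformation retraction in the weaker sense (one that preserves $\abar_j$ as a set but does not fix it pointwise, which is what the paper actually does), but then the ``constant homotopy'' claim must be replaced by a genuine verification that the homotopy preserves $\abar_j$, which is not straightforward for your construction.

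Second, the sequential-straightening construction requires a \emph{continuous} choice of path from $\tau_\alpha$ to $\sigma_\alpha$ inside a codomain (the complement of the other tubes) that itself varies with the input. Pointwise existence of such a path is not the same as a continuous choice depending on the full tuple. Invoking local retractility gives local sections of certain fibrations, but patching these into a global choice over $\bbar_j$, which is not a CW complex, is a nontrivial lifting problem that you have not resolved.

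Third, even the pointwise existence of an unlinking path is not clearly within reach. Lemma \ref{l:path-of-embeddings} applies to embeddings of a compact closed manifold $L$ under the hypothesis $\dim(N) \geq 2\dim(L) + 3$. The relevant domain here is $P \times [0,2]$ of dimension $p+1$, and the standing assumption $p \leq \tfrac12(m-3)$ gives only $m \geq 2(p+1) + 1$, which falls short of the $2(p+1) + 3$ the lemma's hypotheses would require. The tubes are also non-compact (with prescribed behaviour near both ends), so the lemma does not apply even formally. In these borderline dimensions one cannot simply quote general position to rule out linking obstructions between tubes.

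The paper avoids all three issues with an explicit formula. It chooses a smooth reparametrisation $\Upsilon(s,-)$ of the collar coordinate $[0,2]$ and defines $\tau_\alpha^s$ to be the standard tube $\iotabb(\,\cdot\,,h_\alpha,\,\cdot\,)$ on $[0,s]$ and $\bar{\Upsilon}_s \circ \tau_\alpha(\,\cdot\,,\Upsilon(s,\,\cdot\,))$ on $[s,2]$, where $\bar{\Upsilon}_s$ is an ambient self-embedding of $M$ supported in the collar. Because the same ambient map $\bar{\Upsilon}_s$ conjugates all the tubes simultaneously, disjointness is automatic, continuity is immediate, no unlinking is ever needed, and one can read off directly from the formula that the homotopy preserves $\abar_j$ (the tail of a tube already contained in $M_1$ stays in $M_1$, and the tube becomes standard on $[0,1]$ at time $s=1$). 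Notably, the paper does \emph{not} fully straighten the tubes --- at time $s=1$ each tube is only standardised on $[0,1]$ and compressed on $[1,2]$ --- and it does not need to.
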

\begin{proof}
We will define a deformation retraction for the inclusion, i.e.\ a map
\[
H \colon \bbar_j \times [0,1] \longrightarrow \bbar_j
\]
such that $H(-,0) = \mathrm{id}$ and $H((\abar_j \times [0,1]) \cup (\bbar_j \times \{1\})) \subseteq \abar_j$. For this, we choose a smooth map
\[
\Upsilon \colon S \longrightarrow [0,2],
\]
where $S = \{ (s,t) \in [0,1] \times [0,2] \mid s \leq t \}$, such that
\begin{itemizeb}
\item[(i)] $\Upsilon(0,t) = t$,
\item[(ii)] $\Upsilon(s,t) = t - s$ for $t \leq s+\tfrac14$,
\item[(iii)] $\Upsilon(s,t) = t$ for $t \geq \tfrac74$,
\item[(iv)] for each fixed $s \in [0,1]$ the map $\Upsilon(s,-)$ is a diffeomorphism $[s,2] \cong [0,2]$ with $\Upsilon(s,1) \leq 1$.
\end{itemizeb}
This is not hard (although a little fiddly) to construct. Note that the function $\Upsilon(s,t) = 2(\tfrac{t-s}{2-s})$ satisfies conditions (i) and (iv), but not (ii) or (iii). If we were working only with $C^0$ embeddings, then we would not need (ii) or (iii) and this version of $\Upsilon$ would work, but in order to ensure that the two parts of the definition below glue together to form a $C^\infty$ embedding we also need conditions (ii) and (iii).

We may now use this to ``conjugate'' each $\tau_\alpha$ by reparametrising its domain by $\Upsilon(s,-)$ and its codomain by $\Upsilon(s,-)^{-1}$. More precisely, we define the deformation retraction by
\[
H((\tau_0,\ldots,\tau_j),s) = (\tau_0^s,\ldots,\tau_j^s),
\]
where $\tau_\alpha^s \colon P \times [0,2] \hookrightarrow M$ is defined by
\[
\tau_\alpha^s(z,t) = \begin{cases}
\iotabb(z,h_\alpha,t) & \text{for } 0 \leq t \leq s \\
\bar{\Upsilon}_s \circ \tau_\alpha(z,\Upsilon(s,t)) & \text{for } s \leq t \leq 2,
\end{cases}
\]
where $\bar{\Upsilon}_s \colon M \hookrightarrow M$ is the self-embedding defined by $\lambda \circ (\mathrm{id} \times \Upsilon(s,-)^{-1}) \circ \lambda^{-1}$ on the collar neighbourhood and by the identity elsewhere.

The choice of smoothly varying reparametrisation $\Upsilon(s,-)$ ensures that the two pieces of this definition glue together to form a smooth embedding, and one may easily check that this $H$ is indeed a deformation retraction.
\end{proof}

Now define a map $\bar{p}_j \colon \xbar_j \to \abar_j$ taking $(\{ [\varphi_1] ,\ldots, [\varphi_{n-1}] \},(\tau_0,\ldots,\tau_j))$ to the tuple $(\tau_0,\ldots,\tau_j)$, and similarly $\bar{q}_j \colon \ybar_j \to \bbar_j$ taking $(\{ [\varphi_1] ,\ldots, [\varphi_{n}] \},(\tau_0,\ldots,\tau_j))$ to $(\tau_0,\ldots,\tau_j)$. These forgetful maps fit into a commutative square:
\begin{equation}
\label{eq:second-approximation}
\centering
\begin{split}
\begin{tikzpicture}
[x=1mm,y=1mm]
\node (tl) at (0,12) {$\xbar_j$};
\node (tr) at (20,12) {$\ybar_j$};
\node (bl) at (0,0) {$\abar_j$};
\node (br) at (20,0) {$\bbar_j$};
\draw[->] (tl) to node[above,font=\small]{$g_j$} (tr);
\draw[->] (tl) to node[left,font=\small]{$\bar{p}_j$} (bl);
\draw[->] (tr) to node[right,font=\small]{$\bar{q}_j$} (br);
\incl{(bl)}{(br)}
\end{tikzpicture}
\end{split}
\end{equation}

\begin{prop}\label{p:Serre-fibrations-2}
The maps $\bar{p}_j \colon \xbar_j \to \abar_j$ and $\bar{q}_j \colon \ybar_j \to \bbar_j$ are Serre fibrations.
\end{prop}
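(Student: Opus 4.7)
The plan is to apply Proposition~\ref{p:G-locally-retractile} to a suitable group action, following the pattern of Lemma~\ref{l:Serre-fibrations-1}. The natural starting point is the action of $\Diff_c(M)_0$ by post-composition, which makes $\bar{p}_j$ and $\bar{q}_j$ equivariant in an obvious way. The key difficulty is that the rigid boundary condition~(a) in the definition of $\abar_j$ --- requiring each tube to equal $\iotabb(-,h_\alpha,-)$ exactly on $P \times ([0, 1+\epsilon] \cup [2-\epsilon, 2])$ --- is not preserved by a general diffeomorphism of $M$. My plan is therefore to replace $\Diff_c(M)_0$ by a subgroup which does preserve condition~(a), for instance the diffeomorphisms that act, on a neighbourhood of the image of the standard parts of the tubes, as a reparametrisation in the ``$s$-coordinate'' of the explicit collar embedding $\iotabb \colon P \times (-1,1) \times [0,2] \hookrightarrow M$, i.e.\ of the form $\iotabb(z,s,t) \mapsto \iotabb(z,\sigma(s),t)$ for a smooth family of diffeomorphisms $\sigma$ of $(-1,1)$. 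Such diffeomorphisms take a tube with parameter $h$ to a tube with parameter $\sigma(h)$, so condition~(a) is preserved.

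To verify that $\abar_j$ is locally retractile under this subgroup, fix a point $(\tau_0,\ldots,\tau_j) \in \abar_j$ with parameters $(h_\alpha,\epsilon)$. I would take a small neighbourhood $V$ consisting of tuples $(\tau_0',\ldots,\tau_j')$ whose parameters $(h_\alpha',\epsilon')$ are close to $(h_\alpha,\epsilon)$, and construct, for each element of $V$, a diffeomorphism $\Phi$ in the subgroup with $\Phi \circ \tau_\alpha = \tau_\alpha'$ for all $\alpha$, depending continuously on the tuple and taking the basepoint to the identity. This construction naturally splits into two steps: (i) handle the standard parts via an explicit reparametrisation in the $s$-direction of $\iotabb$, exploiting the fact that the tube data near the two ends are entirely controlled by the parameters $h_\alpha$; (ii) handle the non-standard middle portion of each tube --- an embedding of $P \times [1+\tfrac{\epsilon}{2}, 2-\tfrac{\epsilon}{2}]$ into $M_1 \smallsetminus \lambda(T \times \{2\})$ with prescribed collar behaviour at both ends --- using Cerf's theorem (Theorem~\ref{t:Cerf}, equivalently Corollary~\ref{c:Cerf}) for embedding spaces of compact manifolds with boundary.

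The main obstacle is the technical work of patching these two pieces together smoothly and continuously, in particular arranging that the resulting $\Phi$ is a compactly-supported diffeomorphism of $M$ that genuinely lies in the chosen subgroup (i.e.\ has the required reparametrisation form on a uniform neighbourhood of the standard parts of all nearby tubes simultaneously). Once local retractility is established, Proposition~\ref{p:G-locally-retractile} directly yields that $\bar{p}_j$ is a fibre bundle, hence a Serre fibration. The argument for $\bar{q}_j$ is essentially identical, with $M_1$ replaced by $M_0$, $n-1$ by $n$, and condition~(a) by ({\=a}) throughout.
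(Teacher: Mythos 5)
You have correctly identified the central obstruction --- condition~(a) is not preserved by arbitrary diffeomorphisms --- and the right tool, Cerf's local retractility result for embeddings into manifolds with boundary. However, the proposed fix does not work as stated. To apply Proposition~\ref{p:G-locally-retractile} you must commit to a single topological group $G$, fixed once and for all, acting continuously on $\xbar_j$ and $\abar_j$ so that $\bar{p}_j$ is equivariant and $\abar_j$ is $G$-locally retractile. The subgroup you describe --- diffeomorphisms that are $s$-reparametrisations ``on a neighbourhood of the image of the standard parts of the tubes'' --- depends on the point of $\abar_j$ through the parameters $h_\alpha$ and $\epsilon$, so it is not a fixed group. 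And if you instead fix a region $R = \iotabb(P \times (\tfrac12,1) \times ([0,1+\delta] \cup [2-\delta,2]))$ in advance, you lose local retractility: the open piece $\iotabb(P \times (\tfrac12,1) \times (1,1+\delta])$ lies inside $M_1 \smallsetminus \lambda(T \times \{2\})$, which is precisely where the \emph{free} middle portions $\tau_\alpha(P \times (1+\epsilon_\alpha, 2-\epsilon_\alpha))$ of tubes are permitted to wander. Two nearby tuples in $\abar_j$ may differ inside $R$ in ways no $s$-reparametrisation can account for, so no map from a neighbourhood of $\abar_j$ into your $G$ can realise the required motion. Shrinking $\delta$ does not help, since $\delta$ must be fixed before the group is chosen, whereas the parameters $\epsilon_\alpha$ of points of $\abar_j$ can be arbitrarily small.

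The paper sidesteps the problem by changing the \emph{base space} rather than the group. It constructs a manifold with boundary $M_{\mathrm{cut}}$ whose interior is $M_1 \smallsetminus \lambda(T \times \{2\})$, and sets $\hat{A}_j = \mathrm{Emb}_{12}(P \times [1,2] \times \{0,\ldots,j\}, M_{\mathrm{cut}})$: tubes in this space are only required to satisfy \emph{boundary} conditions at $t=1$ and $t=2$, with no rigid ``standardness'' condition at all. Consequently the full group $\Diff_c(M_{\mathrm{cut}})_0$ acts on $\hat{A}_j$ and on the corresponding total space $\hat{X}_j$, Proposition~\ref{p:G-locally-retractile-boundary} gives local retractility, and the forgetful map $\hat{p}_j \colon \hat{X}_j \to \hat{A}_j$ is a fibre bundle by Proposition~\ref{p:G-locally-retractile}. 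The rigidity in $\abar_j$ is then handled by a separate device: restriction of tubes to $P \times [1,2]$ defines a topological embedding $\abar_j \hookrightarrow \hat{A}_j$ (injective because condition~(a) determines $\tau_\alpha$ on $P \times [0,1]$ from its value at $t=1$), and $\bar{p}_j$ is exhibited as the \emph{pullback} of $\hat{p}_j$ along this embedding. Since pullbacks of Serre fibrations are Serre fibrations, this finishes the proof without ever needing the rigid condition to be group-invariant.
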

\begin{proof}
We start by constructing a manifold with boundary $M_{\mathrm{cut}}$ whose interior is $M_1 \smallsetminus \lambda(T \times \{2\})$. Recall that $T \subseteq \partial M$ is a codimension-zero submanifold with boundary, and write $\mathring{T}$ for its interior. We will attach two pieces of boundary to $M_1 \smallsetminus \lambda(T \times \{2\})$. First, we (re-)attach $\lambda(\partial M \times \{1\})$ as one piece of boundary.\footnote{Note that we do not say one ``boundary-component'', because we are not assuming that $\partial M$ is connected.} Second, we attach $\lambda(\mathring{T} \times \{2\})$, \emph{but only along one side}, as the second piece of boundary. More precisely, we define
\begin{equation}\label{e:cut-manifold}
M_{\mathrm{cut}} \; = \; M \smallsetminus \lambda((\partial M \times [0,1)) \cup (T \times \{2\})) \; \underset{\lambda(\mathring{T} \times (1,2))}{\cup} \; \lambda(\mathring{T} \times (1,2]).
\end{equation}
The name $M_{\mathrm{cut}}$ comes from thinking of the operation of removing $\lambda(T \times \{2\})$ and re-attaching $\lambda(\mathring{T} \times \{2\})$ along one side only as making a ``cut'' in the interior of $M$ in order to get a new piece of boundary. One may see directly from the definition \eqref{e:cut-manifold} that $\mathrm{int}(M_{\mathrm{cut}}) = M_1 \smallsetminus \lambda(T \times \{2\})$ and $\partial M_{\mathrm{cut}} = \lambda(\partial M \times \{1\}) \sqcup \lambda(\mathring{T} \times \{2\})$ as described more heuristically above.

Now let $L = P \times [1,2] \times \{0,\ldots,j\}$ and decompose its boundary as
\[
\partial_1 L = P \times \{1\} \times \{0,\ldots,j\} \qquad \partial_2 L = P \times \{2\} \times \{0,\ldots,j\} .
\]
Similarly, decompose the boundary of $M_{\mathrm{cut}}$ as
\[
\partial_1 M_{\mathrm{cut}} = \lambda(\partial M \times \{1\}) \qquad \partial_2 M_{\mathrm{cut}} = \lambda(\mathring{T} \times \{2\}).
\]
Define $\hat{A}_j = \mathrm{Emb}_{12}(L,M_{\mathrm{cut}})$, where the embedding space $\mathrm{Emb}_{12}(-,-)$ is defined as in \S\ref{s:fibre-bundles} just before Proposition \ref{p:G-locally-retractile-boundary}. Equivalently an element of $\hat{A}_j$ consists of a tuple $(\tau_0,\ldots,\tau_j)$ of embeddings $\tau_\alpha \colon P \times [1,2] \hookrightarrow M$ with pairwise-disjoint images, such that
\begin{itemizeb}
\item $\tau_\alpha(P \times \{1\}) \subseteq \lambda(\partial M \times \{1\})$,
\item $\tau_\alpha(P \times \{2\}) \subseteq \lambda(\mathring{T} \times \{2\})$,
\item $\tau_\alpha(P \times (1,2)) \subseteq M_1 \smallsetminus \lambda(T \times [2,2+\epsilon))$ for some $\epsilon > 0$.
\end{itemizeb}
Now define $\hat{X}_j$ to be the subspace of $\xbar \times \hat{A}_j$ consisting of all $(\{ [\varphi_1] ,\ldots, [\varphi_{n-1}] \} , (\tau_0,\ldots,\tau_j))$ such that
\[
\bigcup_{\alpha = 1}^{n-1} \varphi_\alpha(P) \qquad\text{is disjoint from}\qquad \bigcup_{\alpha = 0}^{j} \tau_\alpha(P \times [1,2]).
\]
(Recall that the space $\xbar$ was defined in Definition \ref{d:xbar}.) There is a well-defined continuous action of $\mathrm{Diff}_c(M_{\mathrm{cut}})_0$ on both $\hat{X}_j$ and $\hat{A}_j$ given by post-composition, and the projection onto the second factor $\hat{p}_j \colon \hat{X}_j \to \hat{A}_j$ is equivariant with respect to these actions. (Note that it is important for the well-definedness of the actions that we are considering the path-component $\mathrm{Diff}_c(M_{\mathrm{cut}})_0$ of the identity in the group $\mathrm{Diff}_c(M_{\mathrm{cut}})$, so in particular we are considering actions of a path-connected group.) Propositions \ref{p:G-locally-retractile-boundary} and \ref{p:G-locally-retractile} now imply that $\hat{p}_j$ is a fibre bundle, in particular a Serre fibration.

Now we may define a topological embedding $\abar_j \hookrightarrow \hat{A}_j$ by $(\tau_0,\ldots,\tau_j) \mapsto (\tau_0|_{P \times [1,2]},\ldots,\tau_j|_{P \times [1,2]})$. By construction, the pullback of $\hat{p}_j$ along this embedding is exactly the map $\bar{p}_j \colon \xbar_j \to \abar_j$. Hence $\bar{p}_j$ is a Serre fibration, as required. The argument for $\bar{q}_j$ is essentially identical, so it is omitted.
\end{proof}

Now fix $\bar{a} = (\tau_0,\ldots,\tau_j) \in \abar_j$. We will show that the restriction of $g_j$ to the fibres $\bar{p}_j^{-1}(\bar{a}) \to \bar{q}_j^{-1}(\bar{a})$ lies in $\X(n-1)$. Define
\[
\breve{M} = M \smallsetminus (\tau_0(P \times [0,2]) \cup \cdots \cup \tau_j(P \times [0,2]) \cup \lambda(T \times \{2\}))
\]
and note that $\partial \breve{M} = \partial M \smallsetminus \iotabb(P \times \{h_0,\ldots,h_j\} \times \{0\})$. Choose $\epsilon > 0$ such that:
\begin{itemizeb}
\item $\tau_\alpha(z,t) = \iotabb(z,h_\alpha,t)$ for all $t \in [0,1+\epsilon]$, $z \in P$ and $\alpha \in \{ 0,\ldots,j \}$,
\item each $\tau_\alpha(P \times (1+\epsilon,2])$ is contained in $M_{1+\epsilon}$.
\end{itemizeb}
Choose a diffeomorphism $\theta \colon [0,2] \to [0,1+\epsilon]$ such that $\theta(t) = t$ for $t \in [0,1]$ and define
\[
\mu \colon \partial\breve{M} \times [0,2] \lhto \breve{M}
\]
by $\mu(z,t) = \lambda(z,\theta(t))$. This is a collar neighbourhood for the boundary of $\breve{M}$. Write $X_{n-1}(\breve{M},\mu)$ for the space $X$ from Definition \ref{d:input-data} with $P,\iota,G$ the same as before but with $n,M,\lambda$ replaced by $n-1,\breve{M},\mu$ respectively, and similarly for $Y_{n-1}(\breve{M},\mu)$ and the map $f_{n-1}(\breve{M},\mu)$.

\begin{rmk}\label{r:connectivity-of-M-breve}
In a moment we will show that the restriction of $g_j$ to the fibres $\bar{p}_j^{-1}(\bar{a}) \to \bar{q}_j^{-1}(\bar{a})$ may be identified (up to homeomorphism) with $f_{n-1}(\breve{M},\mu)$, and from this we would like to deduce that it lies in $\X(n-1)$. We therefore have to check that $\breve{M}$ and $\mu$ (as well as $P,\iota,G$) are valid input data for Definition \ref{d:input-data}. The only non-trivial issue with this is to see that $\breve{M}$ is connected. However, this follows from the fact that $M$ is connected, together with the dimension hypothesis $m \geq 2p + 3$; in fact, we only need the weaker assumption that $m \geq p + 3$ here. To see this, note that to obtain $\breve{M}$ from $M$ we first removed $\lambda(T \times \{2\})$ and then each $\tau_\alpha(P \times [0,2])$. The latter all have codimension $m - p - 1 \geq 2$ and the former deformation retracts onto its core $\lambda(\iota(P) \times \{2\})$ (recall that $T \subseteq \partial M$ is a tubular neighbourhood for $\iota(P) \subseteq \partial M$), which has codimension $m - p \geq 3$, so neither of these operations changes $\pi_0$. Therefore $\breve{M}$ is connected since $M$ is connected.
\end{rmk}

A little thought about the definitions yields the following descriptions of $\bar{p}_j^{-1}(\bar{a})$ and $X_{n-1}(\breve{M},\mu)$. First we fix some temporary notation. Write
\[
U \; = \; \mathrm{Emb}(P \times \{1,\ldots,n-1\} , M_1 \smallsetminus \lambda(T \times \{2\})) \; / \; (G \wr \Sigma_{n-1})
\]
and set $W = \tau_0(P \times (1,2)) \cup \cdots \cup \tau_j(P \times (1,2))$, which is a properly embedded submanifold of $M_1 \smallsetminus \lambda(T \times \{2\})$. Choose a tuple $(s_1,\ldots,s_{n-1})$ of distinct points in the interval $(1,1+\epsilon)$ and let $\varphi_{\mathrm{st}}$ be the embedding
\[
(z,\alpha) \; \longmapsto \; \iotabb(z,0,s_\alpha) \colon P \times \{1,\ldots,n-1\} \lhto M_1 \smallsetminus \lambda(T \times \{2\}).
\]
Note that the image of $\varphi_{\mathrm{st}}$ is disjoint from $W$ due to how we chose $\epsilon$ above. With this notation, we have
\[
X_{n-1}(\breve{M},\mu) \; \subseteq \; \bar{p}_j^{-1}(\bar{a}) \; \subseteq \; U,
\]
and, given an element $[\varphi] \in U$,
\begin{itemizeb}
\item $[\varphi] \in \bar{p}_j^{-1}(\bar{a})$ if and only if $\mathrm{im}(\varphi)$ is disjoint from $W$ and there is a path in $U$ from $[\varphi]$ to $[\varphi_{\mathrm{st}}]$,
\item $[\varphi] \in X_{n-1}(\breve{M},\mu)$ if and only if $\mathrm{im}(\varphi)$ is disjoint from $W$ and there is a path $t \mapsto [\varphi^t]$ in $U$ from $[\varphi]$ to $[\varphi_{\mathrm{st}}]$ such that $\mathrm{im}(\varphi^t)$ is disjoint from $W$ for all $t \in [0,1]$.
\end{itemizeb}
Replacing $M_1$ with $M_0$ and $n-1$ with $n$, we obtain a similar description of $Y_{n-1}(\breve{M},\mu) \subseteq \bar{q}_j^{-1}(\bar{a})$, and a commutative square
\begin{equation}
\label{eq:restriction-of-gj}
\begin{split}
\begin{tikzpicture}
[x=1mm,y=1mm]
\node (tl) at (0,10) {$\bar{p}_j^{-1}(\bar{a})$};
\node (tr) at (50,10) {$\bar{q}_j^{-1}(\bar{a})$};
\node (bl) at (0,0) {$X_{n-1}(\breve{M},\mu)$};
\node (br) at (50,0) {$Y_{n-1}(\breve{M},\mu)$};
\node at (0,5) {\rotatebox{90}{$\subseteq$}};
\node at (50,5) {\rotatebox{90}{$\subseteq$}};
\draw[->] (tl) to node[above,font=\small]{restriction of $g_j$} (tr);
\draw[->] (bl) to node[below,font=\small]{$f_{n-1}(\breve{M},\mu)$} (br);
\end{tikzpicture}
\end{split}
\end{equation}
(for commutativity we use the fact that the collar neighbourhoods $\lambda(z,t)$ and $\mu(z,t)$ of $M$ and $\breve{M}$ agree for $t \leq 1$ due to how we chose $\theta$ above; in particular they agree on $\iota(P) \times \{\tfrac12\}$).

It remains to show that in fact $X_{n-1}(\breve{M},\mu) = \bar{p}_j^{-1}(\bar{a})$ and $Y_{n-1}(\breve{M},\mu) = \bar{q}_j^{-1}(\bar{a})$. This will follow immediately from the descriptions above together with the following general lemma, in which we take $L = P \times \{1,\ldots,n-1\}$ and $N = M_1 \smallsetminus \lambda(T \times \{2\})$.

\begin{lem}\label{l:path-of-embeddings}
Let $L$ and $N$ be manifolds without boundary such that $\mathrm{dim}(N) \geq 2\,\mathrm{dim}(L) + 3$ and assume that $L$ is compact. Let $W \subseteq N$ be a closed subset that is a finite union of properly embedded submanifolds, each of dimension at most $\mathrm{dim}(N) - \mathrm{dim}(L) - 2$. Let $G$ be an open subgroup of $\mathrm{Diff}(L)$. Suppose we have a path
\[
\gamma \colon [0,1] \longrightarrow \mathrm{Emb}(L,N)/G
\]
such that $\gamma(0)(L)$ and $\gamma(1)(L)$ are disjoint from $W$. Then there exists another path
\[
\gamma' \colon [0,1] \longrightarrow \mathrm{Emb}(L,N)/G
\]
with the same endpoints as $\gamma$, such that $\gamma'(t)(L)$ is disjoint from $W$ for all $t \in [0,1]$, in other words $\gamma'$ has image contained in $\mathrm{Emb}(L,N \smallsetminus W)/G$.
\end{lem}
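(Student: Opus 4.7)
The plan is to lift $\gamma$ and then apply parametric transversality. By Corollary \ref{c:principal-H-bundle} the quotient map $\Emb(L, N) \to \Emb(L, N)/G$ is a principal $G$-bundle, hence a Serre fibration, so I would first lift $\gamma$ to a continuous path $\tilde\gamma \colon [0,1] \to \Emb(L, N)$; its endpoints represent $\gamma(0)$ and $\gamma(1)$ and so have images disjoint from $W$. Next I would smooth $\tilde\gamma$ to a smooth isotopy $H \colon L \times [0,1] \to N$ (i.e.\ a smooth map with $H(-,t)$ an embedding for every $t$) satisfying $H(-,0) = \tilde\gamma(0)$ and $H(-,1) = \tilde\gamma(1)$; by the standard trick of embedding $N$ into some $\bR^k$, $C^0$-approximating, and projecting back through a tubular neighbourhood --- followed by a collar cutoff near $L \times \{0,1\}$ --- one can moreover arrange that $H$ is independent of $t$ on collars $L \times [0,\epsilon]$ and $L \times [1-\epsilon, 1]$. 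The fibrewise-embedding property is preserved because it is open in the Whitney $C^\infty$-topology and the perturbation can be made arbitrarily small.

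The core step is a relative transversality perturbation. Write $W = W_1 \cup \cdots \cup W_r$ with each $W_i$ properly embedded in $N$ of dimension $\leq \dim N - \dim L - 2$. Extend $H$ trivially to $\widetilde H \colon L \times (-1, 2) \to N$ (constantly $\tilde\gamma(0)$ for $t \leq 0$ and constantly $\tilde\gamma(1)$ for $t \geq 1$; this is smooth thanks to the collars from the previous step) so that Corollary \ref{c:transversality} applies, and iterate it with target successively $N,\, N \smallsetminus W_1,\, N \smallsetminus (W_1 \cup W_2),\, \ldots$, using an open set $B$ containing $L \times ((-1, \tfrac{\epsilon}{2}] \cup [1-\tfrac{\epsilon}{2}, 2))$ on whose closure $\widetilde H$ already misses $W_i$ by construction. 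The output is a smooth map $\widetilde H' \colon L \times (-1, 2) \to N$ agreeing with $\widetilde H$ near $L \times \{0,1\}$, in any preassigned $C^\infty$-neighbourhood of $\widetilde H$, and with image disjoint from $W$; choosing that neighbourhood inside the open set of fibrewise embeddings keeps each slice $\widetilde H'(-, t)$ an embedding. The dimension count
\[
\dim(L \times (-1, 2)) + \dim W_i \;\leq\; (\dim L + 1) + (\dim N - \dim L - 2) \;=\; \dim N - 1 \;<\; \dim N
\]
turns transversality of $\widetilde H'$ to $W_i$ into disjointness from $W_i$; this is exactly where the hypothesis $\dim N \geq 2\dim L + 3$ enters.

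Restricting $\widetilde H'$ to $L \times [0,1]$ and taking adjoints then gives a smooth path $\tilde\gamma' \colon [0,1] \to \Emb(L, N \smallsetminus W)$ from $\tilde\gamma(0)$ to $\tilde\gamma(1)$; projecting to $\Emb(L, N \smallsetminus W)/G$ and including into $\Emb(L, N)/G$ yields the required $\gamma'$ with the same endpoints as $\gamma$. The hard part will be dovetailing these steps so that simultaneously each slice of $\widetilde H'$ remains an embedding (handled by openness of the fibrewise-embedding condition), the endpoints $\tilde\gamma(0), \tilde\gamma(1)$ are preserved \emph{exactly} (handled by the relative form of Corollary \ref{c:transversality} combined with the collar-constancy of $H$ from the smoothing step), and the image avoids $W$ (handled by the dimension count). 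No single ingredient is deep; the skill lies in shrinking the successive $C^\infty$-neighbourhoods enough that all three conditions survive together.
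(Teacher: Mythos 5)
Your proof is correct, but it takes a genuinely different route from the paper's. The paper makes the adjoint of (a tubular-neighbourhood extension of) the lifted path into an actual \emph{embedding} $L \times (\text{interval}) \hookrightarrow N$: Step~0 arranges the two endpoint images to be disjoint, Step~1 extrudes each endpoint along a trivial line subbundle of its normal bundle, and Step~2 invokes Whitney's theorem to upgrade the resulting injective-near-the-ends map to an embedding. Only then does it apply Corollary~\ref{c:transversality}, with $\cU$ chosen inside the (open) set of embeddings, so that restricting to slices automatically recovers a path in $\mathrm{Emb}(L,N)$. You instead bypass the adjoint-embedding step entirely and work with \emph{fibrewise} embeddings, applying Corollary~\ref{c:transversality} directly to the smoothed, collar-constant isotopy and arranging that the perturbation stays within the set of maps whose slices over a compact $t$-interval are embeddings. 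This works, and it buys something concrete: your argument never uses $\dim N \geq 2\dim L + 3$. The paper needs that hypothesis precisely for Whitney's theorem (which asks for $\dim N \geq 2\dim(L\times I)+1 = 2\dim L+3$) and for the normal-sphere-bundle section in the Step~1 sublemma; your route uses only $\dim W_i \leq \dim N - \dim L - 2$, and your final display only uses that bound too. So the sentence ``this is exactly where the hypothesis $\dim N \geq 2\dim L + 3$ enters'' is a misattribution --- that inequality enters nowhere in your argument, which therefore proves a slightly more general statement. Two points deserve to be spelled out rather than asserted: (i) that ``every slice $F(-,t)$ for $t$ in a fixed compact interval is an embedding'' is an open condition on $F \in C^\infty(L\times(-1,2),N)$ in the Whitney $C^\infty$-topology (true, by compactness of $L$, continuity of restriction to compact subsets, and the classical fact that a $C^1$-small perturbation of an embedding of a compact manifold is an embedding, applied uniformly over the compact $t$-interval); and (ii) that the $C^0$/mollification smoothing of the lifted path can be carried out with collar constancy near $t=0,1$ while keeping every slice an embedding --- the reparametrise-then-mollify-in-$t$ trick does this, but it needs the slice-perturbation to be $C^1$-small, not merely $C^0$-small.
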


\begin{coro}
\label{c:Xn1}
Assume the dimension hypothesis that $\mathrm{dim}(P) = p \leq \tfrac12(m-3) = \tfrac12(\mathrm{dim}(M) - 3)$. Then, in diagram \eqref{eq:second-approximation}, the restriction of $g_j$ to the fibres over any point $\bar{a} \in \abar_j$ lies in $\X(n-1)$.
\end{coro}
\begin{proof}
By the discussion preceding Lemma \ref{l:path-of-embeddings}, we have the commutative square \eqref{eq:restriction-of-gj}. Lemma \ref{l:path-of-embeddings} then tells us that the vertical inclusions in this diagram are equalities, so the restriction of $g_j$ to the fibres over $\bar{a}$ may be identified, up to homeomorphism, with the map $f_{n-1}(\breve{M},\mu)$. Here we have used the dimension hypothesis to ensure that Lemma \ref{l:path-of-embeddings} applies to $L = P \times \{1,\ldots,n-1\}$, $N = M_1 \smallsetminus \lambda(T \times \{2\})$ and $W = \tau_0(P \times (1,2)) \cup \cdots \cup \tau_j(P \times (1,2))$. This map lies in $\X(n-1)$, by definition (see the first paragraph of \S\ref{s:proof}), using Remark \ref{r:connectivity-of-M-breve} to verify that $\breve{M}$ is connected.
\end{proof}

We will use the following theorem of Whitney in the proof of Lemma \ref{l:path-of-embeddings}.

\begin{thm}[{\cite[Theorem 5 in \S 11]{Whitney1936Differentiablemanifolds}}]\label{t:Whitney}
Let $L$ and $N$ be smooth manifolds without boundary such that $\mathrm{dim}(N) \geq 2\,\mathrm{dim}(L) + 1$ and let $A \subseteq L$ be a closed subset. Suppose we are given a continuous map $\phi \colon L \to N$ such that its restriction to $A$ is a smooth injective immersion. Then there exists a smooth injective immersion $\psi \colon L \to N$ such that $\psi|_A = \phi|_A$.
\end{thm}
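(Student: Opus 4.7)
The plan is to construct $\psi$ in three successive relative perturbations of $\phi$: upgrade it to a smooth map, then to an immersion, then to an injection, leaving $\phi|_A$ untouched at every stage. I read the hypothesis ``$\phi|_A$ is a smooth injective immersion'' as meaning that $\phi$ admits a smooth injective immersive extension to some open neighborhood $U \supseteq A$, which I also denote $\phi$.

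Stage 1 (smoothing) uses the Whitney $C^0$ approximation theorem plus a cutoff. Pick a smooth bump $\chi \colon L \to [0,1]$ equal to $1$ near $A$ and supported in $U$. Approximate $\phi$ on $L \smallsetminus A$ by a smooth map and interpolate between this and $\phi$ via $\chi$ inside a tubular neighborhood of $\mathrm{graph}(\phi) \subseteq L \times N$, where convex combinations of nearby maps into $N$ are well-defined. This gives a smooth $\phi_1 \colon L \to N$ with $\phi_1|_A = \phi|_A$. Stage 2 (immersion) applies Thom transversality to the $1$-jet extension $j^1\phi_1 \colon L \to J^1(L,N)$. The non-immersion locus is stratified by submanifolds $\Sigma_k \subset J^1(L,N)$ of codimension $k(\dim N - \dim L + k)$. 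The hypothesis $\dim N \geq 2\dim L + 1$ gives $\dim N - \dim L + 1 > \dim L$, so every $\Sigma_k$ has codimension strictly greater than $\dim L$, and a residual set of perturbations of $\phi_1$ (in the strong $C^\infty$ Whitney topology) have $j^1$-prolongation disjoint from every $\Sigma_k$, hence are immersions. Since $\phi_1|_A$ is already an immersion and this is an open condition, the perturbation can be taken supported away from a small neighborhood of $A$, yielding a smooth immersion $\phi_2$ with $\phi_2|_A = \phi|_A$.

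Stage 3 (injectivity) is where the dimension hypothesis is used essentially. Consider the product $\Phi = \phi_2 \times \phi_2 \colon L \times L \to N \times N$ and the diagonal $\Delta_N \subset N \times N$, of codimension $\dim N$. Because $\phi_2$ is an immersion, there is an open neighborhood $V$ of $\Delta_L \subset L \times L$ on which $\Phi^{-1}(\Delta_N) \cap V = \Delta_L$ (immersions are locally embeddings). On the complement $(L \times L) \smallsetminus V$, the count $\dim(L \times L) = 2\dim L < \dim N$ forces transversality of $\Phi$ to $\Delta_N$ to mean empty intersection, so a generic small perturbation of $\phi_2$ makes $\phi_2$ globally injective. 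To execute this rel $A$, I would first use injectivity of $\phi|_A$ together with a tubular-neighborhood argument to find nested open neighborhoods $U'' \subset U' \subseteq U$ of $A$ such that $\phi_2|_{U'}$ is an injective immersion whose image is disjoint from $\phi_2(L \smallsetminus U'')$. The transversality perturbation is then supported outside $U''$, producing the final $\psi$.

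The main obstacle is the third stage, and specifically making the transversality argument truly relative to $A$. Transversality by itself only kills double points generically; it does not automatically prevent a perturbation from creating new double points of the form $(a,x)$ with $a \in A$ and $x$ nearby. Ruling these out requires the preparatory tubular-neighborhood construction above, whose existence rests on $\phi|_A$ already being an injective immersion. A secondary technical issue is that when $L$ is non-compact one must work in the strong (Whitney) $C^\infty$ topology, so that ``generic small'' perturbations are globally controlled and Baire category arguments still yield an actual map $\psi$ rather than only a residual subset of candidates.
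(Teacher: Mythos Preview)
The paper does not prove this theorem. It is stated with a citation to Whitney's 1936 paper and then used as a black box in Step~2 of the proof of Lemma~\ref{l:path-of-embeddings}. So there is no ``paper's own proof'' to compare against.

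Your sketch follows the standard modern route to Whitney's embedding theorem in the relative form: smooth approximation rel~$A$, then jet transversality to the rank-drop strata $\Sigma_k \subset J^1(L,N)$ to obtain an immersion, then the double-point argument on $L \times L \smallsetminus \Delta_L$ to obtain injectivity. The codimension counts are correct, and your reading of the hypothesis (that $\phi$ extends smoothly, immersively and injectively to an open neighbourhood of $A$) is the right one --- this is how Whitney himself sets it up, and without it ``smooth injective immersion on a closed set'' has no intrinsic meaning.

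The honest caveats you flag are real. In Stage~3 the issue is not just preventing new double points of type $(a,x)$ with $a \in A$; one must also ensure that the perturbation, which is supported away from $A$, does not push points of $L \smallsetminus U''$ onto $\phi(A)$. This is handled by treating $\phi(A)$ (or rather $\phi_2(\overline{U'})$) as a fixed submanifold of $N$ and making the perturbed map transverse --- hence disjoint, by the dimension count --- to it, in addition to the self-transversality argument on $L \times L$. For non-compact $L$ the strong topology is indeed necessary, and one should note that residual subsets in the strong $C^\infty$ topology are nonempty because $C^\infty(L,N)$ is a Baire space. With these points filled in, your outline would constitute a complete proof.
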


We will also use the following immediate corollary of Thom's transversality theorem.

\begin{prop}\label{p:Thom-paths}
Let $L$ and $N$ be smooth manifolds without boundary, $L$ assumed to be compact, and $X \subseteq N$ a countable union of submanifolds of $N$. Then given any embedding $\phi \in \mathrm{Emb}(L,N)$ there is a path $\gamma \colon [0,1] \to \mathrm{Emb}(L,N)$ with $\gamma(0) = \phi$ and $\gamma(1)$ transverse to $X$.
\end{prop}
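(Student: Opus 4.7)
The plan is to produce a single embedding $\psi$ that is transverse to $X$ and lies in a prescribed neighbourhood of $\phi$ in $\mathrm{Emb}(L,N)$, and then to connect $\phi$ to $\psi$ by a path inside that neighbourhood, using local path-connectedness of the embedding space.

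First I would write $X = \bigcup_{i\in\bN} X_i$ as a countable union of smooth submanifolds of $N$, and recall two standard facts. (i) By Thom's transversality theorem (as in \cite[\nopp II.4.9]{GolubitskyGuillemin1973Stablemappingsand}), for each $i$ the subset
\[
\mathcal{T}_i = \{ f \in C^\infty(L,N) \mid f \pitchfork X_i \}
\]
is open (here we use that $L$ is compact, so the strong and weak $C^\infty$ topologies coincide) and dense in $C^\infty(L,N)$ equipped with the Whitney $C^\infty$ topology. (ii) By Fact \ref{fact:locally-contractible} the subspace $\mathrm{Emb}(L,N) \subseteq C^\infty(L,N)$ is open (being an embedding is an open condition when $L$ is compact) and is locally contractible; in particular it is locally path-connected.

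Next, given $\phi \in \mathrm{Emb}(L,N)$, choose an open path-connected neighbourhood $\mathcal{U}$ of $\phi$ inside $\mathrm{Emb}(L,N)$. Since the Whitney $C^\infty$ topology makes $C^\infty(L,N)$ a Baire space (see \cite[\nopp II.4.4]{GolubitskyGuillemin1973Stablemappingsand}), the countable intersection $\bigcap_{i\in\bN} \mathcal{T}_i$ is residual and hence dense. Therefore its intersection with the open set $\mathcal{U}$ is non-empty, and any $\psi$ in this intersection is an embedding transverse to each $X_i$, i.e.\ transverse to $X$. Finally, since $\mathcal{U}$ is path-connected, we may choose a path $\gamma \colon [0,1] \to \mathcal{U} \subseteq \mathrm{Emb}(L,N)$ from $\phi$ to $\psi$, which is the required path.

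The main things to be careful about are (a) using compactness of $L$ so that the Whitney and compact-open $C^\infty$ topologies agree (which is what makes both the Baire property and the openness of $\mathrm{Emb}(L,N) \subseteq C^\infty(L,N)$ convenient to invoke), and (b) the passage from countably many dense open conditions to a single condition, where the Baire property is essential. Neither of these is a serious obstacle; the whole argument is a direct bookkeeping exercise on top of Thom's transversality theorem.
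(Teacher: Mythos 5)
Your proof takes essentially the same route as the paper: invoke the Thom transversality theorem (the same reference, \cite[\nopp II.4.9]{GolubitskyGuillemin1973Stablemappingsand}) to get density of the transverse embeddings, combine with local path-connectedness of $\mathrm{Emb}(L,N)$ (via Fact \ref{fact:locally-contractible}), and pick a path inside a path-connected neighbourhood. The paper collapses the countable-union/Baire bookkeeping into the citation; you unpack it, which is fine. One small imprecision to watch: $\mathcal{T}_i$ being \emph{open} requires $X_i$ to be a \emph{closed} submanifold of $N$, which the hypothesis does not guarantee. For a non-closed submanifold, Thom's theorem only gives that $\mathcal{T}_i$ is \emph{residual} (a countable intersection of dense open sets, obtained by exhausting $X_i$ by compact pieces). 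A countable intersection of residual sets is still residual, hence dense by the Baire property, so the rest of your argument goes through unchanged; but the intermediate claim of openness should be weakened to residuality.
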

\begin{proof}
By the transversality theorem of Thom (\cf \cite[Theorem II.4.9, page 54]{GolubitskyGuillemin1973Stablemappingsand}), the subset
\[
\{ \phi' \in \mathrm{Emb}(L,N) \mid \phi' \text{ is transverse to } X \} \; \subseteq \; \mathrm{Emb}(L,N)
\]
is dense in the strong $C^\infty$ topology. Since $\mathrm{Emb}(L,N)$ is locally path-connected (indeed, locally contractible since $L$ is compact, \cf Fact \ref{fact:locally-contractible}) we may choose a path-connected open neighbourhood $\cU \subseteq \mathrm{Emb}(L,N)$ of $\phi$. By density, we may find $\phi' \in \cU$ that is transverse to $X$, and then by path-connectedness of $\cU$ we may find a path from $\phi$ to $\phi'$.
\end{proof}

\begin{proof}[Proof of Lemma \ref{l:path-of-embeddings}]
We will prove this in $4 = \{ 0,1,2,3 \}$ steps.

\textbf{Step 0.}
First, we show that we may assume without loss of generality that $\gamma(0)(L)$ and $\gamma(1)(L)$ are disjoint subsets of $N$. So we assume temporarily that the lemma is true under this assumption, and let $\gamma \colon [0,1] \to \mathrm{Emb}(L,N)/G$ be a path such that $\gamma(0)(L)$ and $\gamma(1)(L)$ are disjoint from $W$ (but not necessarily from each other).

The projection map $\mathrm{Emb}(L,N) \to \mathrm{Emb}(L,N)/G$ is a surjective Serre fibration (by Corollary \ref{c:principal-H-bundle}) so we may lift $\gamma$ to $\bar{\gamma} \colon [0,1] \to \mathrm{Emb}(L,N)$. Now $\bar{\gamma}(0)$ is an embedding $L \hookrightarrow N \smallsetminus W$, and by Proposition \ref{p:Thom-paths} (replacing $N$ by $N \smallsetminus W$ and setting $X = \bar{\gamma}(1)(L)$), there is a path $\delta \colon [-1,0] \to \mathrm{Emb}(L,N \smallsetminus W)$ such that $\delta(0) = \bar{\gamma}(0)$ and $\delta(-1)$ is transverse to $\bar{\gamma}(1)(L)$, which implies (since $2\,\mathrm{dim}(L) < \mathrm{dim}(N)$) that $\delta(-1)(L)$ is disjoint from $\bar{\gamma}(1)(L)$.

Write $[\delta]$ for the composition of $\delta$ and the projection to $\mathrm{Emb}(L,N)/G$. Concatenating $[\delta]$ and $\gamma$ we obtain a path in $\mathrm{Emb}(L,N)/G$ whose endpoints $[\delta(-1)]$ and $\gamma(1)$ are (orbits of) embeddings whose images are disjoint from $W$ and from each other. Thus, by our temporary assumption, there exists a path in $\mathrm{Emb}(L,N \smallsetminus W)/G$ between $[\delta(-1)]$ and $\gamma(1)$. Concatenating this path with the reverse of $[\delta]$ we obtain a path in $\mathrm{Emb}(L,N \smallsetminus W)/G$ between $[\delta(0)] = \gamma(0)$ and $\gamma(1)$, as desired.

We may from now on assume that $\gamma(0)(L)$ and $\gamma(1)(L)$ are disjoint.

\textbf{Step 1.} (Extending $\gamma$ via tubular neighbourhoods.)

As in step 0, since the projection $\mathrm{Emb}(L,N) \to \mathrm{Emb}(L,N)/G$ is a surjective Serre fibration (by Corollary \ref{c:principal-H-bundle}), we may lift $\gamma$ to $\bar{\gamma} \colon [0,1] \to \mathrm{Emb}(L,N)$ such that $\bar{\gamma}(0)(L)$ and $\bar{\gamma}(1)(L)$ are disjoint from $W$ and each other. Denote the normal bundle of $\bar{\gamma}(0) \colon L \hookrightarrow N$ by $V_0 = \nu(\bar{\gamma}(0)) \to L$ and its zero section by $z_0 \colon L \to V_0$. Choose a tubular neighbourhood, i.e.\ an embedding $t_0 \colon V_0 \hookrightarrow N$ such that $t_0 \circ z_0 = \bar{\gamma}(0)$.

\begin{sublem}
The normal bundle $V_0 \to L$ has a trivial one-dimensional subbundle.
\end{sublem}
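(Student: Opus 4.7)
A trivial one-dimensional subbundle of $V_0$ is the same data as a nowhere-vanishing section of $V_0$ (given such a section $s$, the rank-one subbundle $\bR\cdot s \subseteq V_0$ is trivialised by $s$). So it suffices to produce a nowhere-vanishing section of $V_0 \to L$. I will do this by an obstruction-theoretic argument identical in spirit to the one already used in \S\ref{s:firstr} to build the extension $\iotabb$.

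First, compute the rank of $V_0$. By definition $V_0 = \nu(\bar\gamma(0))$, so $\mathrm{rank}(V_0) = \dim(N) - \dim(L) =: r$. Under the standing assumption $\dim(N) \geq 2\dim(L) + 3$ of Lemma \ref{l:path-of-embeddings}, we have
\[
r \;=\; \dim(N) - \dim(L) \;\geq\; \dim(L) + 3.
\]
In particular the fibre $S^{r-1}$ of the unit sphere subbundle $S(V_0) \to L$ is $(\dim(L)+1)$-connected.

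Next, a nowhere-vanishing section of $V_0$ is the same thing as a section of $S(V_0) \to L$, and the standard obstruction theory places the obstructions to such a section in the groups $H^i(L;\pi_{i-1}(S^{r-1}))$. If $i \leq \dim(L)$, then $i-1 < \dim(L) + 2 \leq r-1$, so $\pi_{i-1}(S^{r-1}) = 0$; if $i > \dim(L)$, the cohomology $H^i(L;-)$ already vanishes since $L$ is an $\dim(L)$-manifold (possibly non-compact, but in any case we may compute with a CW structure of dimension $\dim(L)$). Thus every obstruction vanishes and a section of $S(V_0) \to L$ exists, giving the required trivial line subbundle of $V_0$.

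The argument is entirely formal, so there is no real obstacle here; the only thing to check carefully is the numerical inequality $r - 1 > \dim(L)$, which is exactly where the hypothesis $\dim(N) \geq 2\dim(L) + 3$ of Lemma \ref{l:path-of-embeddings} is used. (An alternative presentation is via transversality: a generic smooth section of a rank-$r$ bundle over an $\dim(L)$-manifold has zero-set of expected dimension $\dim(L) - r < 0$, hence is empty; this gives the same conclusion without the language of obstruction theory.)
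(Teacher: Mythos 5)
Your proof is correct and follows exactly the same obstruction-theoretic argument as the paper: reduce to finding a section of the unit sphere subbundle, place the obstructions in $H^i(L;\pi_{i-1}(S^{r-1}))$, and kill them using the inequality $\dim(N) \geq 2\dim(L)+3$ together with the fact that $L$ has the homotopy type of a complex of dimension $\leq \dim(L)$. The transversality remark at the end is a nice alternative phrasing but not a different proof in substance.
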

\begin{proof}[Proof of the sublemma]
\let\qedsymboloriginal\qedsymbol
\renewcommand{\qedsymbol}{{\small (sublemma)} \qedsymboloriginal}
It suffices to show that the unit sphere subbundle $S(V_0) \to L$ has a section. The fibres of this bundle are $k$-spheres, where $k = \mathrm{dim}(N) - \mathrm{dim}(L) - 1$, and the obstruction classes to the existence of such a section live in the cohomology groups $H^i(L;\pi_{i-1}(S^k))$. If $i > \mathrm{dim}(L)$ this clearly vanishes. If $i \leq \mathrm{dim}(L)$, then
\[
i-1 \leq \mathrm{dim}(L) - 1 \leq \mathrm{dim}(N) - \mathrm{dim}(L) - 4 = k-3,
\]
so it also vanishes in this case.
\end{proof}

The inclusion of a trivial one-dimensional subbundle is an embedding $L \times \bR \hookrightarrow V_0$. Let us denote by $\hat{\gamma}_0 \colon L \times \bR \hookrightarrow N$ the composition of this embedding and the embedding $t_0$. Note that $\hat{\gamma}_0(-,0) = \bar{\gamma}(0)$. We may now choose disjoint open subsets $U_0$ and $U_1$ of $N$ such that
\begin{align*}
U_0 \; &\supseteq \; \hat{\gamma}_0(L \times \{0\}) \; = \; \bar{\gamma}(0)(L) \\
U_1 \; &\supseteq \; W \cup \bar{\gamma}(1)(L)
\end{align*}
since these are disjoint subsets of $N$ that are compact and closed respectively, and $N$ is regular (since it is a manifold). By compactness of $L$ we may find $\epsilon > 0$ such that $\hat{\gamma}_0(L \times [-\epsilon,\epsilon]) \subseteq U_0$, so in particular $\hat{\gamma}_0(L \times [-\epsilon,\epsilon])$ is disjoint from $W$ and $\bar{\gamma}(1)(L)$.

We may similarly use a tubular neighbourhood of $\bar{\gamma}(1)$ to extend it to an embedding $\hat{\gamma}_1 \colon L \times \bR \hookrightarrow N$ with $\hat{\gamma}_1(-,0) = \bar{\gamma}(1)$. As above, choose disjoint open subsets $U_0^\prime$ and $U_1^\prime$ of $N$ such that
\begin{align*}
U_0^\prime \; &\supseteq \; \hat{\gamma}_1(L \times \{0\}) \; = \; \bar{\gamma}(1)(L) \\
U_1^\prime \; &\supseteq \; W \cup \hat{\gamma}_0(L \times [-\epsilon,\epsilon]).
\end{align*}
By compactness of $L$ we may decrease $\epsilon > 0$ if necessary so that $\hat{\gamma}_1(L \times [-\epsilon,\epsilon]) \subseteq U_0^\prime$. Hence:
\begin{itemizeb}
\item The subsets $\hat{\gamma}_0(L \times [-\epsilon,\epsilon])$ and $\hat{\gamma}_1(L \times [-\epsilon,\epsilon])$ of $N$ are disjoint from $W$ and from each other.
\end{itemizeb}

Now we use $\hat{\gamma}_0$ and $\hat{\gamma}_1$ to extend $\bar{\gamma}$ to a slightly larger interval. Define $\mathring{\gamma} \colon (-3\epsilon , 1+3\epsilon) \to \mathrm{Emb}(L,N)$ by:
\[
\mathring{\gamma}(t)(z) = \begin{cases}
\hat{\gamma}_0(z,t+2\epsilon) & t \in (-3\epsilon,-\epsilon] \\
\hat{\gamma}_0(z,-t) & t \in [-\epsilon,0] \\
\bar{\gamma}(t)(z) & t \in [0,1] \\
\hat{\gamma}_1(z,t-1) & t \in [1,1+\epsilon] \\
\hat{\gamma}_1(z,1-t+2\epsilon) & t \in [1+\epsilon,1+3\epsilon).
\end{cases}
\]
This is a continuous path in $\mathrm{Emb}(L,N)$ extending $\bar{\gamma}$ (i.e.\ $\mathring{\gamma}|_{[0,1]} = \bar{\gamma}$) such that
\begin{itemizeb}
\item $\mathring{\gamma}(-2\epsilon) = \bar{\gamma}(0)$,
\item $\mathring{\gamma}(1+2\epsilon) = \bar{\gamma}(1)$,
\item $\mathring{\gamma}(t)(L)$ is disjoint from $W$ for $t \leq 0$ and for $t \geq 1$.
\end{itemizeb}

\textbf{Step 2.} (Making the adjoint of $\mathring{\gamma}$ into an embedding.)

The adjoint of $\mathring{\gamma}$ is a continuous map
\[
f_1 \colon L \times (-3\epsilon,1+3\epsilon) \longrightarrow N
\]
defined by $f_1(z,t) = \mathring{\gamma}(t)(z)$. Let $A_0 = L \times (-3\epsilon,-\epsilon]$ and $A_1 = L \times [1+\epsilon,1+3\epsilon)$. The restriction of $f_1$ to $A_0$ is (up to an affine shift in coordinates) equal to $\hat{\gamma}_0$, so it is an embedding. Similarly, up to an affine shift in coordinates, the restriction of $f_1$ to $A_1$ is equal to $\hat{\gamma}_1$, so it is also an embedding. Moreover, the closures of the images of $A_0$ and $A_1$ under $f_1$ are disjoint, so the restriction of $f_1$ to $A = A_0 \sqcup A_1$ is an embedding. Applying Theorem \ref{t:Whitney} to $\phi = f_1$ (note that the $L$ of the theorem is what we are calling $L \times (-3\epsilon , 1+3\epsilon)$ here) we obtain a smooth injective immersion
\[
f_2 \colon L \times (-3\epsilon,1+3\epsilon) \longrightarrow N
\]
such that $f_2|_A = f_1|_A$. Let
\[
f_3 \colon L \times (-2.5\epsilon,1+2.5\epsilon) \longrightarrow N
\]
be the restriction of $f_2$ to the relatively compact subset $L \times (-2.5\epsilon,1+2.5\epsilon)$. Note that
\begin{itemizeb}
\item $f_3$ is an embedding, since it is the restriction of an injective immersion to a relatively compact subset of the domain,
\item $f_3(B_1)$ is disjoint from $W$,
\end{itemizeb}
where for $r \in (0,2)$ we write
\[
B_r = L \times ((-2.5\epsilon , -r\epsilon) \cup (1 + r\epsilon , 1 + 2.5\epsilon)).
\]

\textbf{Step 3.} (Transversality and disjointness from $W$.)

We may write $W = W_1 \cup \cdots \cup W_k$, where each $W_i$ is a properly embedded submanifold of $N$ of dimension at most $\mathrm{dim}(N) - \mathrm{dim}(L) - 2$. We will use Corollary \ref{c:transversality} to modify $f_3$ to be disjoint from the $W_i$ one at a time. First, since $f_3(B_1)$ is disjoint from $W_1$, we may apply Corollary \ref{c:transversality} to $f_3$ and $W_1$ to obtain an embedding
\[
f_4 \colon L \times (-2.5\epsilon,1+2.5\epsilon) \lhto N \smallsetminus W_1
\]
such that $f_4$ agrees with $f_3$ on $B_{1.5}$. (We note that, in order to obtain an \emph{embedding}, we chose the $\cU$ in Corollary \ref{c:transversality} to be an open neighbourhood of $f_3$ consisting of embeddings.) Now, since $f_4(B_{1.5}) = f_3(B_{1.5})$ is disjoint from $W_2$, we may apply Corollary \ref{c:transversality} to $f_4$ and $W_2$ to obtain an embedding
\[
f_5 \colon L \times (-2.5\epsilon,1+2.5\epsilon) \lhto N \smallsetminus (W_1 \cup W_2)
\]
such that $f_5$ agrees with $f_4$ (and therefore also with $f_3$) on $B_{1.55}$. Iterating this procedure another $k-2$ times we eventually obtain an embedding
\[
f_{k+3} \colon L \times (-2.5\epsilon,1+2.5\epsilon) \lhto N \smallsetminus (W_1 \cup \cdots \cup W_k) = N \smallsetminus W
\]
such that $f_{k+3}$ agrees with $f_3$ on $B_{1.55\ldots 5}$. In particular, it agrees with $f_3$, and therefore with $f_1$, on $L \times \{-2\epsilon\}$ and $L \times \{1+2\epsilon\}$. We may now define
\[
\gamma' \colon [-2\epsilon , 1+2\epsilon] \longrightarrow \mathrm{Emb}(L,N)
\]
by $\gamma'(t)(z) = f_{k+3}(z,t)$ and note that
\begin{itemizeb}
\item $\gamma'(t)(L)$ is disjoint from $W$ for all $t \in [-2\epsilon , 1+2\epsilon]$,
\item $\gamma'(-2\epsilon) = f_{k+3}(\,\cdot\, ,-2\epsilon) = f_1(\,\cdot\, ,-2\epsilon) = \mathring{\gamma}(-2\epsilon) = \bar{\gamma}(0)$,
\item $\gamma'(1+2\epsilon) = f_{k+3}(\,\cdot\, ,1+2\epsilon) = f_1(\,\cdot\, ,1+2\epsilon) = \mathring{\gamma}(1+2\epsilon) = \bar{\gamma}(1)$.
\end{itemizeb}
Rescaling the interval $[-2\epsilon , 1+2\epsilon]$ to $[0,1]$ and post-composing with the projection $\mathrm{Emb}(L,N) \to \mathrm{Emb}(L,N)/G$, we obtain a path $\gamma'$ as claimed in the lemma.
\end{proof}

\begin{rmk}[\emph{The use of the dimension hypothesis in the proof of Lemma \ref{l:path-of-embeddings}.}]
\label{r:dimension-hypothesis-lemma}
In Lemma \ref{l:path-of-embeddings} we made two assumptions about the relative dimensions of the manifolds $L$ and $N$, and the union of submanifolds $W \subseteq N$, namely:
\[
\mathrm{dim}(N) \geq 2\,\mathrm{dim}(L) + 3 \qquad\text{and}\qquad \mathrm{dim}(W) \leq \mathrm{dim}(N) - \mathrm{dim}(L) - 2.
\]
The first assumption was essential in step 2 of the proof, in order to apply Theorem \ref{t:Whitney} to $N$ and $L \times (\text{interval})$. The second assumption was essential in step 3 of the proof, in order to apply Corollary \ref{c:transversality} to $N$, $W$ and $L \times (\text{interval})$. In step 0, only the weaker dimension hypothesis $\mathrm{dim}(N) \geq 2\,\mathrm{dim}(L) + 1$ was needed. In step 1, a sufficient hypothesis would have been that the orbits of embeddings $\gamma(0)$ and $\gamma(1) \in \mathrm{Emb}(L,N)/G$ both have the property that some (equivalently, any) representative embedding $L\hookrightarrow N$ admits a non-vanishing normal section. This normal section hypothesis is also guaranteed by the weaker dimension hypothesis $\mathrm{dim}(N) \geq 2\,\mathrm{dim}(L) + 1$.
\end{rmk}

\subsection{The weak factorisation condition.}\label{s:weak-factorisation}

We first quickly recap some choices and constructions that we have made so far in the proof.

\begin{recap}
In \S\ref{s:firsta} we chose an embedding $\psi_0 \colon P \hookrightarrow M_2$ such that $a_0 = [\psi_0] \in A_0 \subseteq B_0$, and we constructed a diffeomorphism
\[
\Psi \colon M \smallsetminus \psi_0(P) \longrightarrow M \smallsetminus \lambda(T \times \{2\}).
\]
In \S\ref{s:seconda} we chose an embedding $\tau_0 \colon P \times [0,2] \hookrightarrow M$ such that $\bar{a} = \tau_0 \in \abar_0 \subseteq \bbar_0$. With respect to these choices, there is a composite map
\begin{equation}\label{e:composite-map}
\eta \colon Y_{n-1}(\breve{M},\mu) = \bar{q}_0^{-1}(\bar{a}) \lhto \ybar_0 \longrightarrow \ybar \xrightarrow{\;\;\Psi^{-1} \,\circ\, -\;\;} q_0^{-1}(a_0) \lhto Y_0 \longrightarrow Y,
\end{equation}
where $Y_0 \to Y$ and $\ybar_0 \to \ybar$ are the augmentation maps of the semi-simplicial spaces $Y_\bullet$ and $\ybar_\bullet$ respectively, the two inclusions are the inclusions of fibres for the Serre fibrations $q_0 \colon Y_0 \to B_0$ and $\bar{q}_0 \colon \ybar_0 \to \bbar_0$ (\cf Lemma \ref{l:Serre-fibrations-1} and Proposition \ref{p:Serre-fibrations-2}) and the middle map is a homeomorphism induced by post-composing all embeddings with the inverse of $\Psi$ (\cf Lemma \ref{l:Psi}). Finally, $\bar{q}_0^{-1}(\bar{a})$ was identified in \S\ref{s:seconda} with $Y_{n-1}(\breve{M},\mu)$, where $\breve{M} = M \smallsetminus (\tau_0(P \times [0,2]) \cup \lambda(T \times \{2\}))$ and $\mu$ is a collar neighbourhood for the boundary of $\breve{M}$, a restriction and rescaling of $\lambda$.

In a similar way, we have a composite map $X_{n-1}(\breve{M},\mu) \to X$ and a commutative square (ignoring for now the grey diagonal map and homotopy):
\begin{equation}
\label{eq:final-commutative-square}
\begin{split}
\begin{tikzpicture}
[x=1mm,y=1mm]
\node (tl) at (0,15) {$X_{n-1}(\breve{M},\mu)$};
\node (tr) at (50,15) {$Y_{n-1}(\breve{M},\mu)$};
\node (bl) at (0,0) {$X$};
\node (br) at (50,0) {$Y$};
\draw[->] (tl) to node[above,font=\small]{$f_{n-1}(\breve{M},\mu)$} (tr);
\draw[->] (bl) to node[below,font=\small]{$f$} (br);
\draw[->] (tl) to (bl);
\draw[->] (tr) to node[right,font=\small]{$\eta$} (br);
\draw[->,densely dashed,black!50] (tr) to node[above,font=\small]{$\zeta$} (bl);
\node (homotopy) at (40,4) [black!50] {$\Longrightarrow$};
\node at (homotopy.north) [anchor=mid,font=\small,black!50] {$\cH$};
\end{tikzpicture}
\end{split}
\end{equation}
obtained by gluing together the following six commutative squares:
\begin{itemizeb}
\item two coming from the augmentations of the semi-simplicial maps $X_\bullet \to Y_\bullet$ and $\xbar_\bullet \to \ybar_\bullet$,
\item two coming from the inclusions of the fibres of \eqref{e:firsta} and \eqref{e:seconda} with $i=j=0$,
\item \eqref{e:psi-inverse},
\item \eqref{eq:restriction-of-gj} with $j=0$.
\end{itemizeb}
\end{recap}

\textbf{Last step of the proof.}
The commutative square \eqref{eq:final-commutative-square} above is the commutative square \eqref{eSquare2} of Theorem \ref{tAxiomatic} for which we must check the weak factorisation condition, i.e., we must check that the map $\eta$ in \eqref{eq:final-commutative-square} factors up to homotopy through $f$, as the last step of the proof of Theorem~\ref{tmain}. We will therefore construct a map $\zeta \colon Y_{n-1}(\breve{M},\mu) \to X$ and a homotopy $\cH$ from $f \circ \zeta$ to $\eta$, as pictured in grey in \eqref{eq:final-commutative-square}. First we describe $f$ and $\eta$ explicitly. By definition, $f \colon X \to Y$ acts by
\[
\{ [\varphi_1] ,\ldots, [\varphi_n] \} \;\longmapsto\; \{ [\iotabb(-,0,0.5)] , [\varphi_1] ,\ldots, [\varphi_n] \} .
\]
Combining the two augmentation maps, the diffeomorphism $\Psi$ and the identification $Y_{n-1}(\breve{M},\mu) = \bar{q}_0^{-1}(\bar{a})$, we see that $\eta \colon Y_{n-1}(\breve{M},\mu) \to Y$ acts by
\begin{equation}\label{e:eta}
\{ [\varphi_1] ,\ldots, [\varphi_n] \} \;\longmapsto\; \{ [\psi_0] , [\Psi^{-1} \circ \varphi_1] ,\ldots, [\Psi^{-1} \circ \varphi_n] \} .
\end{equation}

\textbf{Two paths of embeddings.}
To construct $\zeta$ and $\cH$ we will use two paths of embeddings $\Gamma$ and $\Delta$. First, recall from Remark \ref{r:diffeomorphism-Psi} that there is a path
\[
\Gamma \colon [0,1] \longrightarrow \mathrm{Emb}(M \smallsetminus \lambda(T \times \{2\}),M)
\]
such that $\Gamma(0) = \Psi^{-1}$, $\Gamma(1)$ is the inclusion and the restriction of $\Gamma(s)$ to $M \smallsetminus M_{1.5}$ is the identity for all $s \in [0,1]$. Second, recall that in \S\ref{s:seconda} we chose an $\epsilon > 0$ such that
\begin{itemizeb}
\item $\tau_0(z,t) = \iotabb(z,h_0,t)$ for all $t \in [0,1+\epsilon]$ and $z \in P$,
\item $\tau_0(P \times (1+\epsilon,2])$ is contained in $M_{1+\epsilon}$.
\end{itemizeb}
Now choose a path $\delta \colon [0,1] \to \mathrm{Emb}([0,2],[0,2])$ such that $\delta(0)$ is the identity, $\delta(1)$ takes $[0,2]$ to $[1,2]$ and $\delta(s)|_{[1+\epsilon,2]} = \mathrm{id}$ for all $s \in [0,1]$. This determines a path
\[
\Delta \colon [0,1] \longrightarrow \mathrm{Emb}(M,M)
\]
by defining $\Delta(s)|_{M \smallsetminus \lambda(\partial M \times [0,2])} = \mathrm{id}$ and $\Delta(s)(\lambda(z,t)) = \lambda(z,\delta(s)(t))$ for $(z,t) \in \partial M \times [0,2]$. Note that $\Delta(s)(\breve{M}) \subseteq \breve{M}$ for all $s \in [0,1]$, due to how we chose $\epsilon$ and $\delta$ above, so this may be regarded as a path
\[
\Delta \colon [0,1] \longrightarrow \mathrm{Emb}(\breve{M},\breve{M})
\]
with $\Delta(0) = \mathrm{id}$.

\textbf{The map $\zeta$.}
We now define $\zeta \colon Y_{n-1}(\breve{M},\mu) \to X$ to act by
\[
\{ [\varphi_1] ,\ldots, [\varphi_n] \} \;\longmapsto\; \{ [\Delta(1) \circ \varphi_1] ,\ldots, [\Delta(1) \circ \varphi_n] \} .
\]
This is easily seen to be well-defined and continuous. The map $f \circ \zeta \colon Y_{n-1}(\breve{M},\mu) \to Y$ therefore acts by
\begin{equation}\label{e:fzeta}
\{ [\varphi_1] ,\ldots, [\varphi_n] \} \;\longmapsto\; \{ [\iotabb(-,0,0.5)] , [\Delta(1) \circ \varphi_1] ,\ldots, [\Delta(1) \circ \varphi_n] \} .
\end{equation}

\textbf{The homotopy $\cH$.}
We will now construct a homotopy
\[
\cH \colon Y_{n-1}(\breve{M},\mu) \times [0,4] \longrightarrow Y
\]
between $\cH(-,0) = \eqref{e:fzeta}$ and $\cH(-,4) = \eqref{e:eta}$. We first define it explicitly and then explain what we are doing more intuitively. Explicitly, the definition is
\begin{equation}\label{e:homotopy}
\cH (\{ [\varphi_{1,\ldots,n}] \},t) = \begin{cases}
\{ [\iotabb(-,th_0,0.5)] , [\Delta(1) \circ \varphi_{1,\ldots,n}] \} & t \in [0,1] \\
\{ [\tau_0(-,0.5)] , [\Delta(2-t) \circ \varphi_{1,\ldots,n}] \} & t \in [1,2] \\
\{ [\tau_0(-,0.5)] , [\Gamma(3-t) \circ \varphi_{1,\ldots,n}] \} & t \in [2,3] \\
\{ [\Psi^{-1} \circ \tau_0(- , 1.5 t - 4)] , [\Psi^{-1} \circ \varphi_{1,\ldots,n}] \} & t \in [3,4) \\
\{ [\psi_0] , [\Psi^{-1} \circ \varphi_{1,\ldots,n}] \} & t = 4,
\end{cases}
\end{equation}
where the notation $[\text{\small (wxyz)}_{1,\ldots,n}]$ is an abbreviation of $[\text{\small (wxyz)}_1] ,\ldots, [\text{\small (wxyz)}_n]$. A more explanatory description is as follows:
\begin{itemizeb}
\item[(1)] At time $t=0$ the homotopy $\cH(-,0)$ pushes the configuration $[\varphi_{1,\ldots,n}]$ away from the boundary of $\breve{M}$ using the self-embedding $\Delta(1)$ (this is $\zeta$) and then adjoins a new embedded copy of $P$ via the embedding $[\iotabb(-,0,0.5)]$ (this is $f$).
\item[(2)] In the interval $t \in [0,1]$ the location of the new copy of $P$ is modified (by a straight line) from $[\iotabb(-,0,0.5)]$ to $[\iotabb(-,h_0,0.5)] = [\tau_0(-,0.5)]$. Since the rest of the configuration $[\Delta(1) \circ \varphi_{1,\ldots,n}]$ is contained in $M_1$, this does not result in any collisions.
\item[(3)] In the interval $t \in [1,2]$ the new copy of $P$ stays fixed at $[\tau_0(-,0.5)]$ and the rest of the configuration moves from $[\Delta(1) \circ \varphi_{1,\ldots,n}]$ to $[\Delta(0) \circ \varphi_{1,\ldots,n}] = [\varphi_{1,\ldots,n}]$. This does not result in any collisions with $\tau_0(P \times \{0.5\})$ since the original configuration $[\varphi_{1,\ldots,n}]$ is contained in $\breve{M}$, so in particular it is disjoint from
\begin{equation}\label{e:p-times-1e}
\tau_0(P \times [0,1+\epsilon]) = \iotabb(P \times \{h_0\} \times [0,1+\epsilon]) = \lambda(P_{h_0} \times [0,1+\epsilon]),
\end{equation}
where for $r \in (-1,1)$ we write $P_r = \iotabb(P \times \{r\} \times \{0\}) \subseteq \partial M$, and the self-embeddings $\Delta(s)$ act on the subset $\eqref{e:p-times-1e} \subseteq M$ by ``compressing'' the second coordinate.
\item[(4)] In the interval $t \in [2,3]$ the new copy of $P$ remains fixed at $[\tau_0(-,0.5)]$ and the rest of the configuration moves from $[\varphi_{1,\ldots,n}]$ to $[\Psi^{-1} \circ \varphi_{1,\ldots,n}]$ via the isotopy of embeddings $\Gamma$.
\item[(5)] In the half-open interval $t \in [3,4)$ the configuration $[\Psi^{-1} \circ \varphi_{1,\ldots,n}]$ remains fixed. Note that, since $[\varphi_{1,\ldots,n}]$ is disjoint from $\tau_0(P \times [0,2])$, the configuration $[\Psi^{-1} \circ \varphi_{1,\ldots,n}]$ is disjoint from $\Psi^{-1} \circ \tau_0(P \times [0,2])$. We may therefore move the new copy of $P$ along this embedded copy of $P \times [0,2]$ from $[\tau_0(-,0.5)] = [\Psi^{-1} \circ \tau_0(-,0.5)]$ \emph{almost} to $[\Psi^{-1} \circ \tau_0(-,2)]$ (but not quite, since $\Psi^{-1}$ is not defined exactly at $\tau_0(P \times \{2\}) \subseteq \lambda(T \times \{2\})$). As $t$ approaches $4$ (so $1.5t-4$ approaches $2$), the new copy of $P$ that we have adjoined approaches $\psi_0(P)$. More precisely:
\item[(6)] For $t \in [4-\epsilon',4)$, for some $\epsilon' > 0$, the path of embeddings $t \mapsto \Psi^{-1} \circ \tau_0(-,1.5 t - 4)$ is of the form considered in Remark \ref{r:diffeomorphism-Psi}, and it may therefore be continuously extended to $t=4$ with the embedding $\psi_0$. Hence we may define the homotopy $\cH(-,4)$ to send the configuration $[\varphi_{1,\ldots,n}]$ to the configuration $[\Psi^{-1} \circ \varphi_{1,\ldots,n}]$ with a new copy of $P$ adjoined via the embedding $[\psi_0]$. This is precisely $\eta$.
\end{itemizeb}
For a picture worth $\tfrac{10}{3}$ times as much as these approx.\ $300$ words, see Figure \ref{fhomotopy}. This homotopy
\[
\cH \;\colon\; f \circ \zeta \;\simeq\; \eta
\]
verifies the weak factorisation condition, and therefore completes the proof of Theorem \ref{tmain}.

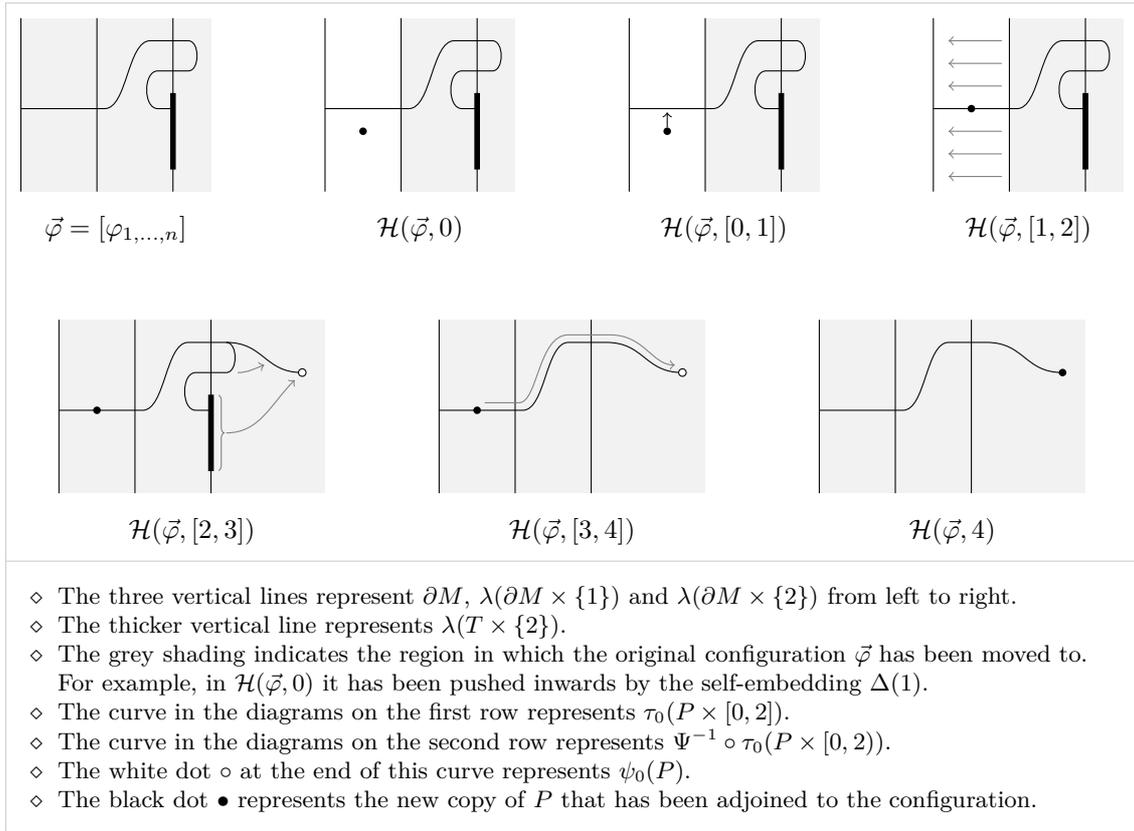
\begin{figure}[ht]
\centering
\begin{tikzpicture}
[x=1mm,y=1mm]

\begin{scope}[xshift=0mm,yshift=0mm]
\fill[black!5] (0,-8) rectangle (25,15);
\draw (0,-8)--(0,15);
\draw (10,-8)--(10,15);
\draw (20,-8)--(20,15);
\draw[fill] (19.7,-5) rectangle (20.3,5);
\draw (0,3) -- (11,3) .. controls (14,3) and (14,12) .. (17,12) -- (22,12) to[out=0,in=0] (22,8) -- (18,8) to[out=180,in=180] (18,3) -- (20,3);
\node at (12.5,-13) {$\vec{\varphi} = [\varphi_{1,\ldots,n}]$};
\end{scope}

\begin{scope}[xshift=40mm,yshift=0mm]
\fill[black!5] (10,-8) rectangle (25,15);
\draw (0,-8)--(0,15);
\draw (10,-8)--(10,15);
\draw (20,-8)--(20,15);
\draw[fill] (19.7,-5) rectangle (20.3,5);
\draw (0,3) -- (11,3) .. controls (14,3) and (14,12) .. (17,12) -- (22,12) to[out=0,in=0] (22,8) -- (18,8) to[out=180,in=180] (18,3) -- (20,3);
\node[circle,fill,inner sep=1pt] at (5,0) {};
\node at (12.5,-13) {$\cH(\vec{\varphi},0)$};
\end{scope}

\begin{scope}[xshift=80mm,yshift=0mm]
\fill[black!5] (10,-8) rectangle (25,15);
\draw (0,-8)--(0,15);
\draw (10,-8)--(10,15);
\draw (20,-8)--(20,15);
\draw[fill] (19.7,-5) rectangle (20.3,5);
\draw (0,3) -- (11,3) .. controls (14,3) and (14,12) .. (17,12) -- (22,12) to[out=0,in=0] (22,8) -- (18,8) to[out=180,in=180] (18,3) -- (20,3);
\draw[->] (5,0.5) -- (5,2.5);
\node[circle,fill,inner sep=1pt] at (5,0) {};
\node at (12.5,-13) {$\cH(\vec{\varphi},[0,1])$};
\end{scope}

\begin{scope}[xshift=120mm,yshift=0mm]
\fill[black!5] (10,-8) rectangle (25,15);
\draw (0,-8)--(0,15);
\draw (10,-8)--(10,15);
\draw (20,-8)--(20,15);
\draw[fill] (19.7,-5) rectangle (20.3,5);
\draw (0,3) -- (11,3) .. controls (14,3) and (14,12) .. (17,12) -- (22,12) to[out=0,in=0] (22,8) -- (18,8) to[out=180,in=180] (18,3) -- (20,3);
\node[circle,fill,inner sep=1pt] at (5,3) {};
\draw[->,black!50] (9,12) -- (2,12);
\draw[->,black!50] (9,9) -- (2,9);
\draw[->,black!50] (9,6) -- (2,6);
\draw[->,black!50] (9,0) -- (2,0);
\draw[->,black!50] (9,-3) -- (2,-3);
\draw[->,black!50] (9,-6) -- (2,-6);
\node at (12.5,-13) {$\cH(\vec{\varphi},[1,2])$};
\end{scope}

\begin{scope}[xshift=5mm,yshift=-40mm]
\fill[black!5] (0,-8) rectangle (35,15);
\draw (0,-8)--(0,15);
\draw (10,-8)--(10,15);
\draw (20,-8)--(20,15);
\draw[fill] (19.7,-5) rectangle (20.3,5);
\draw (0,3) -- (11,3) .. controls (14,3) and (14,12) .. (17,12) -- (22,12) to[out=0,in=0] (22,8) -- (18,8) to[out=180,in=180] (18,3) -- (20,3);
\node[circle,fill,inner sep=1pt] at (5,3) {};
\draw (22,12) .. controls (27,12) and (27,8) .. (32,8);
\node[circle,fill,inner sep=1pt,white] at (32,8) {};
\node[circle,inner sep=1pt,draw] at (32,8) {};
\draw[->,black!50] (23.5,8) to[out=0,in=200] (27,9);
\draw[black!50,decorate,decoration={brace,amplitude=2pt,mirror}] (21,-5) -- (21,5);
\draw[->,black!50] (22,0) to[out=0,in=225] (31,7);
\node at (17.5,-13) {$\cH(\vec{\varphi},[2,3])$};
\end{scope}

\begin{scope}[xshift=55mm,yshift=-40mm]
\fill[black!5] (0,-8) rectangle (35,15);
\draw (0,-8)--(0,15);
\draw (10,-8)--(10,15);
\draw (20,-8)--(20,15);
\node[circle,fill,inner sep=1pt] at (5,3) {};
\draw (0,3) -- (11,3) .. controls (14,3) and (14,12) .. (17,12) -- (22,12) .. controls (27,12) and (27,8) .. (32,8);
\node[circle,fill,inner sep=1pt,white] at (32,8) {};
\node[circle,inner sep=1pt,draw] at (32,8) {};
\node at (17.5,-13) {$\cH(\vec{\varphi},[3,4])$};
\draw[->,black!50] (6,4) -- (10.5,4) .. controls (13,4) and (13,13) .. (17,13) -- (22.5,13) .. controls (27,13) and (28,9) .. (31,9);
\end{scope}

\begin{scope}[xshift=105mm,yshift=-40mm]
\fill[black!5] (0,-8) rectangle (35,15);
\draw (0,-8)--(0,15);
\draw (10,-8)--(10,15);
\draw (20,-8)--(20,15);
\draw (0,3) -- (11,3) .. controls (14,3) and (14,12) .. (17,12) -- (22,12) .. controls (27,12) and (27,8) .. (32,8);
\node[circle,fill,inner sep=1pt] at (32,8) {};
\node at (17.5,-13) {$\cH(\vec{\varphi},4)$};
\end{scope}

\draw[black!20] (-2,-93) rectangle (147,17);
\draw[black!20] (-2,-57) -- (147,-57);

\node[text width=14.5cm,anchor=north west,font=\small] at (0,-59) {%
$\diamond\; $ The three vertical lines represent $\partial M$, $\lambda(\partial M \times \{1\})$ and $\lambda(\partial M \times \{2\})$ from left to right. \\%
$\diamond\; $ The thicker vertical line represents $\lambda(T \times \{2\})$. \\%
$\diamond\; $ The grey shading indicates the region in which the original configuration $\vec{\varphi}$ has been moved to. \\%
\textcolor{white}{$\diamond\; $} For example, in $\cH(\vec{\varphi},0)$ it has been pushed inwards by the self-embedding $\Delta(1)$. \\%
$\diamond\; $ The curve in the diagrams on the first row represents $\tau_0(P \times [0,2])$. \\%
$\diamond\; $ The curve in the diagrams on the second row represents $\Psi^{-1} \circ \tau_0(P \times [0,2))$. \\%
$\diamond\; $ The white dot $\circ$ at the end of this curve represents $\psi_0(P)$. \\%
$\diamond\; $ The black dot $\bullet$ represents the new copy of $P$ that has been adjoined to the configuration. %
};

\end{tikzpicture}
\caption{The homotopy $\cH$ verifying the weak factorisation condition.}\label{fhomotopy}
\end{figure}


\section{Appendix. Homotopy fibres of augmented semi-simplicial spaces}\label{s:appendix}

In this appendix, we give a proof of Lemma \ref{l:homotopy-fibres}, which is restated as Lemma \ref{l:rwappendix} below.

\subsection{Preliminaries}

We begin by mentioning an elementary lemma on colimits in topological spaces.

\begin{lem}\label{lem:closed-inclusion-colim}
Let $F,G\colon I\to \mathsf{Top}$ be diagrams of topological spaces and $\alpha \colon F\Rightarrow G$ be a natural transformation with the property that $\alpha_i \colon F(i) \to G(i)$ is a closed inclusion for each $i\in I$. Suppose also that for each $i\in I$ the square
\begin{equation}\label{eq:closed-inclusion-colim}
\centering
\begin{split}
\begin{tikzpicture}
[x=1mm,y=1mm]
\node (tl) at (0,12) {$F(i)$};
\node (tr) at (30,12) {$\colim(F)$};
\node (bl) at (0,0) {$G(i)$};
\node (br) at (30,0) {$\colim(G)$};
\draw[->] (tl) to node[above,font=\small]{$\mu_i$} (tr);
\draw[->] (bl) to node[below,font=\small]{$\nu_i$} (br);
\draw[->] (tl) to node[left,font=\small]{$\alpha_i$} (bl);
\draw[->] (tr) to node[right,font=\small]{$\alpha_*$} (br);
\end{tikzpicture}
\end{split}
\end{equation}
is \emph{Cartesian} in the category of sets, meaning that the canonical function from $F(i)$ to the pullback \textup{(}in sets\textup{)} of the rest of the diagram is a bijection. Then the map
\[
\alpha_* \colon \colim(F) \longrightarrow \colim(G)
\]
is also a closed inclusion. The same holds with ``closed inclusion'' replaced by ``open inclusion''.
\end{lem}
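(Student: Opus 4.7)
The plan is to establish three things in sequence: $(1)$ $\alpha_*$ is injective on underlying sets, $(2)$ its image is closed in $\colim(G)$, and $(3)$ $\alpha_*$ is a homeomorphism onto that image when the latter is equipped with the subspace topology. Throughout I will use that the colimit topology on $\colim(F)$ is final with respect to the maps $\mu_i$ (a subset is closed iff its $\mu_i$-preimage is closed in each $F(i)$), and similarly for $\colim(G)$.

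For injectivity, suppose $\alpha_*(b_1) = \alpha_*(b_2)$ with $b_1 = \mu_i(x_1)$ and $b_2 = \mu_j(y)$. Setting $a = \alpha_j(y) \in G(j)$, both pairs $(a,b_1)$ and $(a,b_2)$ satisfy $\nu_j(a) = \alpha_*(b_k)$, so they lie in the set-theoretic pullback $G(j) \times_{\colim(G)} \colim(F)$. The Cartesian hypothesis at $j$ produces unique lifts $z, z' \in F(j)$ with $\alpha_j(z) = a = \alpha_j(z')$ and $\mu_j(z) = b_1$, $\mu_j(z') = b_2$; since $\alpha_j$ is injective (being a closed inclusion), $z = z'$, hence $b_1 = b_2$. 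For the image being closed, the key identity is
\[
\nu_i^{-1}(\alpha_*(\colim F)) = \alpha_i(F(i)),
\]
obtained by unpacking the Cartesian condition: $a \in G(i)$ lies in the left side iff $\nu_i(a) = \alpha_*(b)$ for some $b$, which holds iff $a = \alpha_i(x)$ for some $x \in F(i)$. Since $\alpha_i(F(i))$ is closed in $G(i)$ for every $i$, the image is closed in $\colim(G)$.

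For the homeomorphism claim, a completely analogous manipulation gives, for any $V \subseteq \colim(F)$,
\[
\nu_j^{-1}(\alpha_*(V)) = \alpha_j(\mu_j^{-1}(V)) \qquad\text{and}\qquad \mu_i^{-1}(V) = \alpha_i^{-1}(\nu_i^{-1}(\alpha_*(V))),
\]
the first from the Cartesian property, the second from injectivity of $\alpha_*$ proven above. If $V$ is closed, then $\mu_j^{-1}(V)$ is closed in $F(j)$, and the closed inclusion $\alpha_j$ sends it to a closed subset of $G(j)$; as $j$ varies this shows $\alpha_*(V)$ is closed in $\colim(G)$. Conversely, if $\alpha_*(V)$ is closed, the second identity together with continuity of $\alpha_i$ shows $\mu_i^{-1}(V)$ is closed for every $i$, so $V$ is closed. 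Thus the assignment $V \mapsto \alpha_*(V)$ is a bijection between closed subsets of $\colim(F)$ and closed subsets of $\alpha_*(\colim F)$, proving that $\alpha_*$ is a homeomorphism onto its image. The open-inclusion variant is verbatim the same argument, with ``closed'' replaced by ``open'' throughout, using that an open inclusion sends opens to opens.

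I do not expect a serious obstacle: the proof is essentially formal, with the Cartesian hypothesis doing exactly the work needed to convert the levelwise inclusion property into the corresponding colimit-level property. The only mild care required is to keep the set-level and topology-level assertions cleanly separated, since the Cartesian hypothesis is formulated in $\mathsf{Set}$ while the conclusion is topological.
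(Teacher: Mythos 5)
Your proof is correct and takes essentially the same approach as the paper: the crucial identity $\nu_i^{-1}(\alpha_*(V)) = \alpha_i(\mu_i^{-1}(V))$, derived from the Cartesian hypothesis, is exactly the formula the paper uses to show $\alpha_*$ is a closed map. The paper dispatches injectivity with a single word (``immediate'') and leaves implicit that an injective continuous closed map is a closed embedding; you spell these two points out, which makes the write-up longer but adds no new ideas.
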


\begin{rmk}
It is sufficient to check that each square \eqref{eq:closed-inclusion-colim} is Cartesian in the category of sets for each $i$ in a given \emph{cofinal} subcategory $J$ of $I$, since restricting $F$ and $G$ to $J$ does not change their colimits (see \cite[\href{http://stacks.math.columbia.edu/tag/09WN}{09WN}]{stacks-project}).
\end{rmk}

\begin{proof}
It is immediate that $\alpha_*$ is injective, so it remains to show that it is a closed (resp.\ open) map. We write the proof in the closed case; the open case is identical. Let $A$ be a closed subset of $\colim(F)$. By the definition of the colimit topology, it suffices to show that $\nu_i^{-1}(\alpha_*(A))$ is closed in $G(i)$ for each $i$. But since the square above is Cartesian in the category of sets, we have $\nu_i^{-1}(\alpha_*(A)) = \alpha_i(\mu_i^{-1}(A))$, which is closed since $\alpha_i$ is a closed map.
\end{proof}

\begin{defn}
Let $Z_\bullet$ be a semi-simplicial space, and let $Y_\bullet$ be a semi-simplicial subspace. We call $Y_\bullet$ \emph{full} relative to $Z_\bullet$ if each square
\begin{equation}\label{eq:def-of-full-sssubspace}
\centering
\begin{split}
\begin{tikzpicture}
[x=1mm,y=1mm]
\node (tl) at (0,12) {$\Delta^n \times Y_n$};
\node (tr) at (30,12) {$\Delta^n \times Z_n$};
\node (bl) at (0,0) {$\lVert Y_\bullet \rVert^{(n)}$};
\node (br) at (30,0) {$\lVert Z_\bullet \rVert^{(n)}$};
\draw[->] (tl) to (tr);
\draw[->] (bl) to (br);
\draw[->] (tl) to (bl);
\draw[->] (tr) to (br);
\end{tikzpicture}
\end{split}
\end{equation}
is \emph{Cartesian} in the category of sets, meaning that the canonical function from $\Delta^n \times Y_n$ to the pullback (in sets) of the rest of the diagram is a bijection. Another way of stating this is that a simplex $\sigma \in Z_n$ is contained in $Y_n$ whenever at least one of its vertices is contained in $Y_0$. Another equivalent characterisation is that the subspace $\lVert Y^{\delta}_\bullet \rVert$ of $\lVert Z^{\delta}_\bullet \rVert$ is a union of path-components, where $Y_n^\delta$ denotes $Y_n$ given the discrete topology and similarly for $Z_n^\delta$.
\end{defn}

\begin{eg}\label{eg:augmented-ssspace-fibres}
Let $Z_\bullet$ be an \emph{augmented} semi-simplicial space, choose a subset $A \subseteq Z_{-1}$ and define $Y_n = f_n^{-1}(A)$ where $f_n \colon Z_n \to Z_{-1}$ is the unique composition of face maps. Then $Y_\bullet$ is a full semi-simplicial subspace of $Z_\bullet$, which can be seen as follows. Suppose $\sigma$ is a simplex in $Z_n$ and one of its vertices is in $Y_0$, in other words $d^n(\sigma)\in Y_0$, where $d^n$ is one of the possible compositions of face maps $Z_n \to Z_0$. Then we have $f_n(\sigma) = f_0(d^n(\sigma))\in A$ and so $\sigma \in Y_n$.
\end{eg}

The relevance of this definition is the following lemma.

\begin{lem}\label{lem:closed-inclusion}
Let $Z_\bullet$ be a semi-simplicial space and $Y_\bullet$ be a full semi-simplicial subspace. If each inclusion $Y_n \hookrightarrow Z_n$ is a closed inclusion then the map of geometric realisations
\begin{equation}\label{eq:map-of-geometric-realisations}
\lVert Y_\bullet \rVert \longrightarrow \lVert Z_\bullet \rVert
\end{equation}
is also a closed inclusion. Similarly when ``closed inclusion'' is replaced by ``open inclusion''.
\end{lem}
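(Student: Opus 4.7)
The plan is to use the skeletal filtration of the geometric realisation and apply Lemma \ref{lem:closed-inclusion-colim} twice: once to pass from each $n$-skeleton to the next via the usual pushout presentation, and once to pass from the skeleta to $\lVert Z_\bullet \rVert$. I will write up the closed case, since the open case is identical.

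First I would show by induction on $n$ that $\lVert Y_\bullet \rVert^{(n)} \hookrightarrow \lVert Z_\bullet \rVert^{(n)}$ is a closed inclusion. The base case $n=-1$ is trivial (both realisations are empty). For the inductive step, the $n$-skeleton of a semi-simplicial space is the pushout
\[
\lVert Z_\bullet \rVert^{(n)} \;=\; \lVert Z_\bullet \rVert^{(n-1)} \,\cup_{Z_n \times \partial\Delta^n}\, (Z_n \times \Delta^n),
\]
and similarly for $Y$. This expresses $\lVert Z_\bullet \rVert^{(n)}$ as a colimit of a three-object span diagram, together with an obvious natural transformation from the $Y$-span to the $Z$-span whose components are closed inclusions (for $Y_n \times \Delta^n \hookrightarrow Z_n \times \Delta^n$ and $Y_n \times \partial\Delta^n \hookrightarrow Z_n \times \partial\Delta^n$ this uses that $Y_n \hookrightarrow Z_n$ is closed and $\Delta^n$, $\partial\Delta^n$ are Hausdorff; for $\lVert Y_\bullet \rVert^{(n-1)} \hookrightarrow \lVert Z_\bullet \rVert^{(n-1)}$ it is the inductive hypothesis). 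Lemma \ref{lem:closed-inclusion-colim} then reduces the inductive step to verifying that the three squares from the span components into the pushouts are Cartesian in sets. For the $Y_n \times \Delta^n$ corner this is exactly the fullness hypothesis \eqref{eq:def-of-full-sssubspace}. For the $Y_n \times \partial\Delta^n$ corner, it follows from fullness applied to each of the codimension-one faces $Y_{n-1} \hookrightarrow Z_{n-1}$ (equivalently, each point of $\partial\Delta^n \times Z_n$ glues to a point of $\Delta^{n-1} \times Z_{n-1}$ via a face map, so membership in the $Y$-subspace at that stage reduces to lower-dimensional fullness). For the $\lVert Y_\bullet \rVert^{(n-1)}$ corner, the claim $\lVert Y_\bullet \rVert^{(n-1)} = \lVert Y_\bullet \rVert^{(n)} \cap \lVert Z_\bullet \rVert^{(n-1)}$ holds as sets because the non-degenerate $n$-simplices in $\lVert Z_\bullet \rVert^{(n)} \smallsetminus \lVert Z_\bullet \rVert^{(n-1)}$ are indexed by $Z_n$, and the corresponding fact for $Y$ identifies $\lVert Y_\bullet \rVert^{(n)} \smallsetminus \lVert Y_\bullet \rVert^{(n-1)}$ with the image of $Y_n \times \mathrm{int}(\Delta^n)$.

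Once each map $\lVert Y_\bullet \rVert^{(n)} \hookrightarrow \lVert Z_\bullet \rVert^{(n)}$ is known to be a closed inclusion, I would apply Lemma \ref{lem:closed-inclusion-colim} a second time to the sequential colimits $\lVert Y_\bullet \rVert = \colim_n \lVert Y_\bullet \rVert^{(n)}$ and $\lVert Z_\bullet \rVert = \colim_n \lVert Z_\bullet \rVert^{(n)}$. The component maps are the closed inclusions just established, and each Cartesian-in-sets square
\[
\begin{tikzpicture}[x=1mm,y=1mm,baseline=(current bounding box.center)]
\node (tl) at (0,12) {$\lVert Y_\bullet \rVert^{(n)}$};
\node (tr) at (30,12) {$\lVert Y_\bullet \rVert$};
\node (bl) at (0,0) {$\lVert Z_\bullet \rVert^{(n)}$};
\node (br) at (30,0) {$\lVert Z_\bullet \rVert$};
\draw[->] (tl) to (tr);
\draw[->] (bl) to (br);
\draw[->] (tl) to (bl);
\draw[->] (tr) to (br);
\end{tikzpicture}
\]
amounts to the fact that any point of $\lVert Y_\bullet \rVert$ lying in $\lVert Z_\bullet \rVert^{(n)}$ already lies in $\lVert Y_\bullet \rVert^{(n)}$, which is immediate from how points are represented by non-degenerate simplices.

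The main technical point where care is needed is verifying the Cartesian-in-sets conditions for the three corners of the pushout in the inductive step, particularly the $\lVert Y_\bullet \rVert^{(n-1)}$ corner and the propagation of fullness through face identifications on $\partial\Delta^n \times Y_n$. The open case follows by precisely the same argument, replacing ``closed'' by ``open'' throughout, since Lemma \ref{lem:closed-inclusion-colim} and the stability of open inclusions under products with $\Delta^n$ both hold.
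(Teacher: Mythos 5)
Your proposal is correct and follows essentially the same strategy as the paper: induction on skeleta using the pushout presentation together with Lemma~\ref{lem:closed-inclusion-colim}, then a second application of that lemma for the sequential colimit over skeleta. The only differences are cosmetic (starting the induction at $n=-1$ rather than $n=0$, and a slightly more roundabout justification of the Cartesian-in-sets condition for the $\partial\Delta^n \times Y_n$ corner, where the paper simply factors through the Cartesian square for $\Delta^n \times Y_n$).
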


\begin{proof}
The proof is identical for the ``open'' statement and the ``closed'' statement, so we will just prove the ``closed'' statement. We will first prove by induction that the map of skeleta
\begin{equation}\label{eq:map-of-skeleta}
\lVert Y_\bullet \rVert^{(n)} \longrightarrow \lVert Z_\bullet \rVert^{(n)}
\end{equation}
is a closed inclusion for all $n$. The case $n=0$ is part of the assumption, so we let $n\geq 1$ and assume the result for $n-1$ by inductive hypothesis. The map \eqref{eq:map-of-skeleta} is the map of pushouts induced by the following diagram:
\begin{equation}\label{eq:closed-inclusion-inductive-step}
\centering
\begin{split}
\begin{tikzpicture}
[x=1mm,y=1mm]
\node (t1) at (0,15) {$\Delta^n \times Y_n$};
\node (t2) at (30,15) {$\partial\Delta^n \times Y_n$};
\node (t3) at (60,15) {$\lVert Y_\bullet \rVert^{(n-1)}$};
\node (b1) at (0,0) {$\Delta^n \times Z_n$};
\node (b2) at (30,0) {$\partial\Delta^n \times Z_n$};
\node (b3) at (60,0) {$\lVert Z_\bullet \rVert^{(n-1)}$};
\draw[->] (t2) to (t1);
\draw[->] (b2) to (b1);
\draw[->] (t1) to (b1);
\draw[->] (t2) to (b2);
\draw[->] (t3) to (b3);
\draw[->] (t2) to (t3);
\draw[->] (b2) to (b3);
\end{tikzpicture}
\end{split}
\end{equation}
in which the vertical maps are closed inclusions by assumption and inductive hypothesis. To apply Lemma \ref{lem:closed-inclusion-colim} we need to check that three squares of the form \eqref{eq:closed-inclusion-colim} are Cartesian in the category of sets. The square corresponding to the left-hand side of \eqref{eq:closed-inclusion-inductive-step} is precisely \eqref{eq:def-of-full-sssubspace}, and therefore Cartesian in the category of sets by assumption. The square corresponding to the middle of \eqref{eq:closed-inclusion-inductive-step} therefore also has this property, since the left-hand square of \eqref{eq:closed-inclusion-inductive-step} is Cartesian (in spaces, not just in sets, although this is not relevant here). The square corresponding to the right-hand side of \eqref{eq:closed-inclusion-inductive-step} is
\begin{equation}\label{eq:closed-inclusion-inductive-step-2}
\centering
\begin{split}
\begin{tikzpicture}
[x=1mm,y=1mm]
\node (tl) at (0,12) {$\lVert Y_\bullet \rVert^{(n-1)}$};
\node (tr) at (30,12) {$\lVert Z_\bullet \rVert^{(n-1)}$};
\node (bl) at (0,0) {$\lVert Y_\bullet \rVert^{(n)}$};
\node (br) at (30,0) {$\lVert Z_\bullet \rVert^{(n)}$};
\draw[->] (tl) to (tr);
\draw[->] (bl) to (br);
\draw[->] (tl) to (bl);
\draw[->] (tr) to (br);
\end{tikzpicture}
\end{split}
\end{equation}
which is also Cartesian in the category of sets.\footnote{To see this, consider everything as subsets of the set $\lVert Z_\bullet \rVert$. There is a surjection
\[
k\colon \textstyle{\bigsqcup}_{m} (\Delta^m \times Z_m) \longrightarrow \lVert Z_\bullet \rVert
\]
and we need to show that the intersection of the subsets $k(\Delta^n \times Y_n)$ and $k(\Delta^{n-1} \times Z_{n-1})$ is precisely $k(\Delta^{n-1} \times Y_{n-1})$. Clearly this is contained in the intersection. To show the converse, consider $(t,y)\in \Delta^n \times Y_n$ and $(s,z)\in \Delta^{n-1} \times Z_{n-1}$ such that $k(t,y)=k(s,z)$. The equivalence relation defining $k$ glues each face of an $n$-simplex to exactly one $(n-1)$-simplex, so we must have that $z=d_i(y)$ for some face map $d_i$. Hence $z\in Y_{n-1}$ and so $k(s,z)\in k(\Delta^{n-1} \times Y_{n-1})$.} Lemma \ref{lem:closed-inclusion-colim} then implies that \eqref{eq:map-of-skeleta} is a closed inclusion.

Finally, we show that \eqref{eq:map-of-geometric-realisations} is a closed inclusion. By the same reasoning as above, the square
\begin{equation}\label{eq:closed-inclusion-final-step}
\centering
\begin{split}
\begin{tikzpicture}
[x=1mm,y=1mm]
\node (tl) at (0,12) {$\lVert Y_\bullet \rVert^{(n)}$};
\node (tr) at (30,12) {$\lVert Z_\bullet \rVert^{(n)}$};
\node (bl) at (0,0) {$\lVert Y_\bullet \rVert$};
\node (br) at (30,0) {$\lVert Z_\bullet \rVert$};
\draw[->] (tl) to (tr);
\draw[->] (bl) to (br);
\draw[->] (tl) to (bl);
\draw[->] (tr) to (br);
\end{tikzpicture}
\end{split}
\end{equation}
is Cartesian in the category of sets for all $n$. We already know that \eqref{eq:map-of-skeleta} is a closed inclusion for each $n$, so Lemma \ref{eq:closed-inclusion-colim} implies that \eqref{eq:map-of-geometric-realisations} is a closed inclusion.
\end{proof}

\subsection{Computing homotopy fibres levelwise}\label{ss:homotopy-fibres}

\begin{convention}
For a map $g\colon Y\to Z$ and point $z\in Z$, we take the following explicit model for the homotopy fibre $\hofib_z(g)$: denote the mapping space $\Map([0,1],\{0\};Z,\{z\})$ by $P_z Z$ and let $\hofib_z(g)$ be the pullback of $g$ and the evaluation map $\mathrm{ev}_1\colon P_z Z \to Z$.
\end{convention}

Let $X_\bullet$ be an augmented semi-simplicial space and write $f_n \colon X_n \to X \coloneqq X_{-1}$ for the unique composition of face maps. Note that, for any point $x\in X$, the homotopy fibres $\{\hofib_x(f_n)\}_{n\geq 0}$ naturally inherit face maps from $X_\bullet$ and so form a semi-simplicial space $\hofib_x(f_\bullet)$.

The following lemma appeared as Lemma 2.1 in the third arXiv version of \cite{Randal-Williams2016Resolutionsmodulispaces} (but not in the final, published version) and is also very similar to Lemma 2.14 of \cite{EbertRandal-Williams2019Semi-simplicialspaces} (they assume that the maps $f_n$ are all quasifibrations). We give a complete proof in this appendix, which is self-contained apart from an appeal to one theorem of Dold and Thom (Theorem \ref{thm:doldthom} below).

\begin{lem}\label{l:rwappendix}
The sequence $\lVert \hofib_x(f_\bullet) \rVert \to \lVert X_\bullet \rVert \to X$ is a homotopy fibre sequence over $x$.
\end{lem}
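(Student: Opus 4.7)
The strategy is to identify $\geomr{\hofib_x(f_\bullet)}$ on the nose with the strict pullback $\geomr{X_\bullet} \times_X P_x X$. By the point-set model of the homotopy fibre fixed in the convention above, and because $\mathrm{ev}_1 \colon P_x X \to X$ is a Hurewicz fibration, this pullback is precisely $\hofib_x(\geomr{X_\bullet} \to X)$, so the lemma will follow from such an identification.

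First, I would note that levelwise $\hofib_x(f_n) = X_n \times_X P_x X$, and the relation $f_{n-1} \circ d_i = f_n$ (which holds by definition of $f_n$ as the unique composition of face maps to $X$) implies that the face maps of $X_\bullet$ lift to face maps $(y, \gamma) \mapsto (d_i y, \gamma)$ on these fibre products, giving $\hofib_x(f_\bullet)$ the structure of a semi-simplicial space. Sending the class $[t, (y, \gamma)] \in \Delta^n \times (X_n \times_X P_x X)$ to the pair $([t, y], \gamma)$ then defines a natural continuous bijection
\[
\Phi \;\colon\; \geomr{\hofib_x(f_\bullet)} \longrightarrow \geomr{X_\bullet} \times_X P_x X ,
\]
well-defined because the compatibility condition $\gamma(1) = f_n(y)$ is preserved by face maps and matches the identifications on both sides; bijectivity is immediate from unwinding definitions.

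The main step is to upgrade $\Phi$ to a homeomorphism. Working in a convenient category of spaces (e.g.\ $\mathsf{kTop}$, or compactly generated weak Hausdorff spaces), the pullback functor $(-) \times_X P_x X$ preserves all colimits, since base change along any map is a left adjoint in a locally cartesian closed category. The geometric realisation of a semi-simplicial space is a sequential pushout along closed cofibrations --- namely $\geomr{X_\bullet}^{(n)} = \geomr{X_\bullet}^{(n-1)} \cup_{\partial\Delta^n \times X_n} (\Delta^n \times X_n)$ --- so pullback along $\mathrm{ev}_1$ commutes with its formation. A skeletal induction, comparing the two pushout squares at each stage, then shows that $\Phi$ restricts to a homeomorphism on each $n$-skeleton, and hence is a homeomorphism. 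Composed with the tautological identification of the strict pullback with the chosen model of the homotopy fibre, this proves the result.

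The main obstacle is this commutation of pullback with geometric realisation, which fails in $\mathsf{Top}$ in general. The cleanest resolution is to work throughout in $\mathsf{kTop}$, where the commutation is automatic. An alternative route, which appears to be what the author has in mind given the mention of a theorem of Dold and Thom (Theorem \ref{thm:doldthom}), is to avoid identifying the spaces on the nose and instead apply the Dold--Thom quasifibration theorem to the skeletal filtration of $\geomr{X_\bullet}$, showing inductively that each $\geomr{X_\bullet}^{(n)} \to X$ is sufficiently well-behaved for the fibre comparison to be a weak equivalence in the limit.
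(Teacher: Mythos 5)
Your proposal takes a genuinely different route from the paper, and the route has a real gap at its central step.

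\textbf{What you do differently.} The paper does not attempt to show that the canonical map $\Phi \colon \geomr{\hofib_x(f_\bullet)} \to \hofib_x(\geomr{X_\bullet} \to X)$ is a homeomorphism; it only notes (after diagram~\eqref{eq:tblr}) that it is a continuous bijection and then proves it is a weak equivalence. To do that the paper first replaces the $f_n$ by Serre fibrations $h_n \colon Z_n \to Z$ over a CW-approximation $Z$ of $X$, so that (i) the strict fibres $h_n^{-1}(z)$ are closed subsets of $Z_n$ and compute the homotopy fibres, (ii) Lemma~\ref{lem:closed-inclusion} (a point-set statement about full semi-simplicial subspaces built from closed inclusions) identifies $\geomr{\fib_z(h_\bullet)}$ with $\fib_z(h)$, and (iii) Dold--Thom (Theorem~\ref{thm:doldthom}) applied to a cover of $Z$ by \emph{contractible open sets} shows $h$ is a quasifibration. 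Your approach instead tries for the stronger statement that $\Phi$ itself is a homeomorphism, by arguing that pullback along $\mathrm{ev}_1$ commutes with the colimits defining the geometric realisation.

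\textbf{The gap.} The justification you offer for that commutation --- that ``base change along any map is a left adjoint in a locally cartesian closed category'' and that $\mathsf{kTop}$ or CGWH qualifies --- is not correct: neither compactly generated spaces nor compactly generated weak Hausdorff spaces is locally cartesian closed, so pullback along an arbitrary map does \emph{not} preserve arbitrary colimits there. What one actually needs is that pullback along $\mathrm{ev}_1$ preserves the two specific kinds of colimit appearing in the skeletal filtration: sequential colimits along closed inclusions, and pushouts along the closed cofibrations $\partial\Delta^n \times X_n \hookrightarrow \Delta^n \times X_n$. These can be checked with some care in CGWH (the former uses that the $n$-skeleta embed closedly so that the colimit topology on the union of the pulled-back skeleta agrees with the subspace topology; the latter uses that the pushout is computed by a quotient of a disjoint union and that the relevant attaching behaviour survives base change), but none of this is ``automatic'', and the paper's ambient category is plain $\mathsf{Top}$ rather than a convenient one, so a switch of foundations would have to be reconciled with the rest of the paper. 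Your ``alternative route'' gestures at Dold--Thom applied to the skeletal filtration, which is again not what the paper does: the quasifibration criterion is applied to a cover of the CW base by contractible open sets, not to the skeleta, and the reduction to a CW base is essential (it is used to guarantee that singletons, hence fibres, are closed, which feeds into Lemma~\ref{lem:closed-inclusion}). As written, the proposal therefore neither establishes the homeomorphism claim nor correctly reconstructs the quasifibration argument.
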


A sequence $A\to B\to C$ is a \emph{homotopy fibre sequence over $c\in C$} if the square
\begin{center}
\begin{tikzpicture}
[x=1mm,y=1mm]
\node (tl) at (0,10) {$A$};
\node (tr) at (20,10) {$B$};
\node (bl) at (0,0) {$\{c\}$};
\node (br) at (20,0) {$C$};
\draw[->] (tl) to (tr);
\draw[->] (bl) to (br);
\draw[->] (tl) to (bl);
\draw[->] (tr) to (br);
\end{tikzpicture}
\end{center}
is homotopy Cartesian, meaning that there exists a homotopy filling the square, and the canonical map (induced by this homotopy) from $A$ to the homotopy pullback of the rest of the diagram is a weak equivalence. This property is invariant under objectwise weak equivalence of diagrams (which will be used in the proof of the lemma, \cf diagram \eqref{eq:reduction-to-special-case} below). The lemma therefore equivalently states that the canonical map
\begin{equation}\label{eq:canonical-map}
\lVert \hofib_x(f_\bullet) \rVert \longrightarrow \hofib_x(\lVert X_\bullet \rVert \to X)
\end{equation}
is a weak equivalence.

Before proving this, we recall that a \emph{quasifibration}, introduced in \cite{DoldThom1958Quasifaserungenundunendliche}, is a map $f\colon X\to Y$ with the property that for each $y\in Y$ the natural map $f^{-1}(y) \to \hofib_y(f)$ is a weak equivalence. Quasifibrations are not closed under taking pullbacks,\footnote{See Bemerkung 2.3 of \cite{DoldThom1958Quasifaserungenundunendliche}.} but they do satisfy a useful local-to-global result, proved in \cite{DoldThom1958Quasifaserungenundunendliche} (see also \cite[Theorem 15.84]{Strom2011Modernclassicalhomotopy}).

\begin{thm}[{\cite[Satz 2.2]{DoldThom1958Quasifaserungenundunendliche}}]\label{thm:doldthom}
Let $f\colon X\to Y$ be a surjective map and $\cU$ a basis for the topology of $Y$. If the restriction $f^{-1}(U)\to U$ is a quasifibration for all $U\in \cU$, then $f$ is a quasifibration.
\end{thm}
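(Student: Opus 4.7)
My plan is to prove the lemma in three steps: (i) reduce to the case where each augmentation $k_n \colon X_n \to X$ is a Hurewicz fibration; (ii) identify the set-theoretic fibre of $g \coloneqq \lVert X_\bullet \rVert \to X$ over $x$ with $\lVert k_\bullet^{-1}(x) \rVert$, using the earlier results of this appendix; and (iii) show that $g$ is a quasifibration at $x$ via Theorem \ref{thm:doldthom}.

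For (i), set $\widetilde{X}_n \coloneqq \{(y,\gamma) \in X_n \times X^{[0,1]} : \gamma(0) = k_n(y)\}$, augmented to $X$ by $\mathrm{ev}_1$. The face maps $(y,\gamma) \mapsto (d_i y, \gamma)$ make $\widetilde{X}_\bullet$ into an augmented semi-simplicial space in which each augmentation $\widetilde{k}_n$ is a Hurewicz fibration (the mapping-path-space fibration), and the constant-path inclusions $X_n \hookrightarrow \widetilde{X}_n$ form a levelwise homotopy equivalence of augmented semi-simplicial spaces. Applying the fact that levelwise weak equivalences of semi-simplicial spaces induce weak equivalences on geometric realisations (\cite[Theorem 2.2]{EbertRandal-Williams2017Semi-simplicialspaces}), both to the map $X_\bullet\to\widetilde{X}_\bullet$ and to the corresponding map $\hofib_x(k_\bullet) \to \widetilde{k}_\bullet^{-1}(x) = \hofib_x(\widetilde{k}_\bullet)$, reduces the lemma to the situation where each $k_n$ is a Hurewicz fibration and $\hofib_x(k_n) = k_n^{-1}(x)$ as a subspace of $X_n$.

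For (ii), Example \ref{eg:augmented-ssspace-fibres} exhibits $k_\bullet^{-1}(x)$ as a full sub-semi-simplicial space of $X_\bullet$, and Lemma \ref{lem:closed-inclusion} then shows that $\lVert k_\bullet^{-1}(x) \rVert \hookrightarrow \lVert X_\bullet \rVert$ is a closed inclusion (the level inclusions $k_n^{-1}(x) \hookrightarrow X_n$ being closed once we assume without loss of generality that $X$ is $T_1$, e.g.\ after Kelleyfication). Its image is precisely $g^{-1}(x)$ as a set, so this closed inclusion is a homeomorphism onto the fibre. Granted that $g$ is a quasifibration at $x$, the inclusion of this fibre into $\lVert X_\bullet \rVert$ is weakly equivalent to the inclusion of the homotopy fibre, which is exactly what the lemma asserts.

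For (iii), I would apply Theorem \ref{thm:doldthom} to the basis $\cU$ of opens $U \subseteq X$ admitting a strong deformation retraction onto some point. The mapping-path-space construction endows each Hurewicz fibration $\widetilde{k}_n$ with a canonical lifting function (given by path concatenation), and crucially, because the face maps $(y,\gamma) \mapsto (d_i y, \gamma)$ leave the path coordinate $\gamma$ fixed, these lifting functions assemble into a \emph{compatible} family across all semi-simplicial levels. Consequently, a deformation retraction of $U$ onto any point $y \in U$ lifts coherently to a deformation retraction of $\lVert\widetilde{X}_\bullet\rVert|_U$ onto $g|_U^{-1}(y)$; repeating this for each $y \in U$ shows that every fibre of $g|_U$ is weakly equivalent to the total space $\lVert\widetilde{X}_\bullet\rVert|_U$, hence (since $U$ is contractible) to the homotopy fibre, so $g|_U$ is a quasifibration. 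The main obstacle I anticipate is that a general topological space $X$ need not admit a basis of deformation-retractable opens; to circumvent this I would either apply a CW-approximation to $X$ (and transfer the conclusion via Whitehead's theorem) or run the Dold--Thom argument inductively along the skeletal filtration of $\lVert X_\bullet \rVert$, using the pushout form of Dold--Thom in \cite{DoldThom1958Quasifaserungenundunendliche} at each step.
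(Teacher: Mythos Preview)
Your proposal does not address the stated theorem. Theorem~\ref{thm:doldthom} is Dold--Thom's local-to-global criterion for quasifibrations, which the paper simply cites from \cite{DoldThom1958Quasifaserungenundunendliche} without proof. What you have written is a proof sketch for Lemma~\ref{l:rwappendix} (equivalently Lemma~\ref{l:homotopy-fibres}), which \emph{applies} Theorem~\ref{thm:doldthom} as a black box.

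Comparing your sketch with the paper's proof of Lemma~\ref{l:rwappendix}: the overall strategy matches --- levelwise fibrant replacement, identification of the strict fibre of $\lVert X_\bullet\rVert\to X$ with $\lVert k_\bullet^{-1}(x)\rVert$ via Lemma~\ref{lem:closed-inclusion}, then Theorem~\ref{thm:doldthom} over a basis of contractible opens. There are, however, two gaps. In step~(ii) you need each level inclusion $k_n^{-1}(x)\hookrightarrow X_n$ to be closed, i.e.\ $\{x\}$ closed in $X$; you claim this holds ``without loss of generality, e.g.\ after Kelleyfication'', but $k$-ification does not make points closed. The paper resolves this by taking a CW-approximation $Z\to X$ of the base \emph{at the outset} and pulling the (already fibrant-replaced) semi-simplicial space back along it; since CW-complexes are both $T_1$ and locally contractible, this single move fixes the closed-fibre issue and simultaneously supplies the required basis of contractible opens for step~(iii). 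Second, Theorem~\ref{thm:doldthom} is stated for surjective maps; the paper arranges surjectivity by a preliminary reduction to the case where $f_0$ is $\pi_0$-surjective, which you do not address. You do propose CW-approximation at the end of step~(iii), so you have identified the correct fix --- you just need to apply it earlier and recognise that it also handles step~(ii).
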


\begin{proof}[Proof of Lemma \ref{l:rwappendix}]
For each $n\geq 0$ define $Y_n \coloneqq X^{[0,1]} \times_X X_n$, so the map $f_n$ factors as a weak equivalence followed by a Serre fibration $X_n \to Y_n \to X$. Denote the second map by $g_n$. The spaces $\{Y_n\}_{n\geq 0}$ inherit face maps from $X_\bullet$ and form an augmented semi-simplicial space $Y_\bullet \to X$ whose composition of face maps $Y_n \to X$ is $g_n$. Now choose a CW-approximation $w\colon Z \to X$ for $X$, and write $Z_\bullet \to Z$ for the levelwise pullback of $Y_\bullet$ along $w$. Note that, since $w$ is a weak equivalence and each $g_n \colon Y_n \to X$ is a Serre fibration, the map of augmented semi-simplicial spaces $Z_\bullet \to Y_\bullet$ over $w$ is a levelwise weak equivalence and the compositions of face maps $h_n \colon Z_n \to Z$ are Serre fibrations. Choose a point $z \in Z$ and a path in $X$ from $w(z)$ to $x$. This path induces homotopy equivalences $\hofib_{w(z)}(f_n) \simeq \hofib_x(f_n)$. Summarising, we have the following commutative diagram, where $\sim_\ell$ denotes a levelwise weak equivalence:
\begin{equation}\label{eq:reduction-to-special-case}
\centering
\begin{split}
\begin{tikzpicture}
[x=1.5mm,y=1.2mm]
\node (t1) at (0,20) {$\hofib_x(f_\bullet)$};
\node (t2) at (20,20) {$\hofib_{w(z)}(f_\bullet)$};
\node (t3) at (40,20) {$\hofib_{w(z)}(g_\bullet)$};
\node (t4) at (60,20) {$\hofib_z(h_\bullet)$};
\node (m2) at (20,10) {$X_\bullet$};
\node (m3) at (40,10) {$Y_\bullet$};
\node (m4) at (60,10) {$Z_\bullet$};
\node (b3) at (40,0) {$X$};
\node (b4) at (60,0) {$Z$};
\draw[->] (t1) to node[above,font=\small]{$\sim_\ell$} (t2);
\draw[->] (t2) to node[above,font=\small]{$\sim_\ell$} (t3);
\draw[->] (t4) to node[above,font=\small]{$\sim_\ell$} (t3);
\draw[->] (m2) to node[above,font=\small]{$\sim_\ell$} (m3);
\draw[->] (m4) to node[above,font=\small]{$\sim_\ell$} (m3);
\draw[->] (b4) to node[above,font=\small]{$\sim$} node[below,font=\small]{$w$} (b3);
\draw[->] (t1) to (m2);
\draw[->] (t2) to (m2);
\draw[->] (t3) to (m3);
\draw[->] (t4) to (m4);
\draw[->] (m2) to (b3);
\draw[->] (m3) to (b3);
\draw[->] (m4) to (b4);
\end{tikzpicture}
\end{split}
\end{equation}
In the induced diagram after taking (thick) geometric realisations, all horizontal maps are weak equivalences. Hence it suffices to prove that the sequence $\lVert \hofib_z(h_\bullet) \rVert \to \lVert Z_\bullet \rVert \to Z$ is a homotopy fibre sequence for any $z\in Z$. In other words, we would like to show that the map $t$ in the square
\begin{equation}\label{eq:tblr}
\centering
\begin{split}
\begin{tikzpicture}
[x=1mm,y=1mm]
\node (tl) at (0,15) {$\lVert \hofib_z(h_\bullet) \rVert$};
\node (tr) at (50,15) {$\hofib_z(h)$};
\node (bl) at (0,0) {$\lVert \fib_z(h_\bullet) \rVert$};
\node (br) at (50,0) {$\fib_z(h)$};
\draw[->] (tl) to node[above,font=\small]{$t$} (tr);
\draw[->] (bl) to node[below,font=\small]{$b$} (br);
\draw[->] (bl) to node[left,font=\small]{$\ell$} (tl);
\draw[->] (br) to node[right,font=\small]{$r$} (tr);
\end{tikzpicture}
\end{split}
\end{equation}
is a weak equivalence, where $h$ denotes the induced map $\lVert Z_\bullet \rVert \to Z$. Since each $h_n$ is a Serre fibration, the map $\ell$ in \eqref{eq:tblr} is a weak equivalence. It is easy to see that the map $b$ is a continuous bijection (in fact the same is true for $t$, although we will not use this). To see that it is in fact a homeomorphism, consider the triangle
\begin{equation}\label{eq:triangle}
\centering
\begin{split}
\begin{tikzpicture}
[x=1mm,y=1mm]
\node (l) at (0,0) {$\lVert \fib_z(h_\bullet) \rVert$};
\node (m) at (30,0) {$\fib_z(h)$};
\node (r) at (60,0) {$\lVert Z_\bullet \rVert$};
\draw[->] (l) to node[above,font=\small]{$b$} (m);
\draw[->] (m) to node[above,font=\small]{$i$} (r);
\draw[->] (l.south east) to [out=-15,in=195] node[below,font=\small]{$\lVert i_\bullet \rVert$} (r.south west);
\end{tikzpicture}
\end{split}
\end{equation}
where $i_n \colon \fib_z(h_n) \hookrightarrow Z_n$ and $i\colon \fib_z(h) \hookrightarrow \lVert Z_\bullet \rVert$ are the inclusions. The map $\lVert i_\bullet \rVert$ is injective, and $b$ will be a homeomorphism if and only if $\lVert i_\bullet \rVert$ is an inclusion, i.e.\ a topological embedding. By Lemma \ref{lem:closed-inclusion} (and Example \ref{eg:augmented-ssspace-fibres}), the map $\lVert i_\bullet \rVert$ will be a closed inclusion as long as each $i_n$ is a closed inclusion. So we need to show that $\fib_z(h_n) = h_n^{-1}(z)$ is a closed subset of $Z_n$. But $Z$ is a CW-complex, so in particular its points are closed, so $\{z\}$ is closed in $Z$ and so $h_n^{-1}(z)$ is closed in $Z_n$ by continuity of $h_n$. Hence the map $b$ in \eqref{eq:tblr} is a homeomorphism.

It remains to show that the map $r$ in \eqref{eq:tblr} is a weak equivalence for each $z\in Z$; in other words, we need to show that the map $h\colon \lVert Z_\bullet \rVert \to Z$ is a quasifibration. We will do this using Theorem \ref{thm:doldthom}. Since $Z$ is a CW-complex, it is locally contractible, so we may take $\cU$ to be a basis for its topology consisting of contractible subsets.

To apply Theorem \ref{thm:doldthom}, we need to know that $h$ is surjective, which will be true if and only if $h_0\colon Z_0 \to Z$ is surjective. But we may assume without loss of generality that $f_0\colon X_0\to X$ is $\pi_0$-surjective.\footnote{Suppose that Lemma \ref{l:rwappendix} holds with this assumption and let $X_\bullet$ be any augmented semi-simplicial space with basepoint $x\in X=X_{-1}$. Define $X^\prime$ to be the smallest union of path-components of $X$ containing the image of $\lVert X_\bullet \rVert$ (which is the same as the image of $X_0$) and define $X_n^\prime = X_n$ for $n\geq 0$. If $x\notin X^\prime$ then the lemma is vacuously true (since both $\hofib_x(\lVert X_\bullet \rVert \to X)$ and $\lVert \hofib_x(f_\bullet) \rVert$ are empty). Otherwise, applying the lemma to $X_{\bullet}^\prime$ we see that $\lVert \hofib_x(f_\bullet) \rVert \to \lVert X_\bullet \rVert \to X^\prime$ is a homotopy fibre sequence over $x$. Adding disjoint path-components to the third space in a homotopy fibre sequence does not affect the property of being a homotopy fibre sequence, so $\lVert \hofib_x(f_\bullet) \rVert \to \lVert X_\bullet \rVert \to X$ is also a homotopy fibre sequence over $x$.} By construction, this is equivalent to surjectivity of the map $g_0\colon Y_0\to X$. Now $h_0$ is the pullback of $g_0$ along $w$, and is therefore also surjective.

Let $U\in \cU$. We need to show that the restriction $h|_U\colon h^{-1}(U)\to U$ is a quasifibration. Let $z\in U$. Since each $h_n\colon Z_n \to Z$ is a Serre fibration, so is its restriction $h_n|_U\colon h_n^{-1}(U)\to U$. Hence we have weak equivalences
\[
h_n^{-1}(z) \xrightarrow{\sim} \hofib_z(h_n|_U) \xrightarrow{\sim} h_n^{-1}(U),
\]
where the second weak equivalence is due to the fact that $U$ is contractible. We therefore have a levelwise weak equivalence $h_{\bullet}^{-1}(z) \to h_{\bullet}^{-1}(U)$, and hence a weak equivalence on thick geometric realisations $\lVert h_{\bullet}^{-1}(z) \rVert \to \lVert h_{\bullet}^{-1}(U) \rVert$. But this is the inclusion $h^{-1}(z)\to h^{-1}(U)$, so the composition of the maps
\[
h^{-1}(z) \longrightarrow \hofib_z(h|_U) \longrightarrow h^{-1}(U)
\]
is a weak equivalence. The right-hand map is a weak equivalence since $U$ is contractible, so by 2-out-of-3, the left-hand map is also a weak equivalence. Since $z\in U$ was arbitrary, we have shown that $h|_U$ is a quasifibration.

Theorem \ref{thm:doldthom} now tells us that $h$ is a quasifibration, and so the map $r$ in \eqref{eq:tblr} is a weak equivalence. We showed above that $\ell$ is a weak equivalence and $b$ is a homeomorphism, so the map $t$ is also a weak equivalence. Thus the sequence $\lVert \hofib_z(h_\bullet) \rVert \to \lVert Z_\bullet \rVert \to Z$ is a homotopy fibre sequence for any $z\in Z$. By diagram \eqref{eq:reduction-to-special-case} this implies that $\lVert \hofib_x(f_\bullet) \rVert \to \lVert X_\bullet \rVert \to X$ is a homotopy fibre sequence for any $x\in X$, as required.
\end{proof}


\phantomsection
\addcontentsline{toc}{section}{References}
\renewcommand{\bibfont}{\normalfont\small}
\setlength{\bibitemsep}{0pt}
\printbibliography

\vfill

\noindent {\itshape Institutul de Matematică Simion Stoilow al Academiei Române,
21 Calea Griviței, 010702 București, România}

\noindent {\tt mpanghel@imar.ro}

\end{document}